\documentclass[a4paper,10pt]{article} 
\usepackage{amsmath, amsthm, amssymb}
\usepackage{url}

\usepackage{physics}
\usepackage{graphicx,lipsum}
\graphicspath{ {./downloads/} }

\usepackage[english]{babel}
\usepackage[T1]{fontenc}
\usepackage{lmodern}
\usepackage{caption}

\usepackage{titlesec} 
\titlespacing*{\section}{0pt}{2ex}{2ex}

\usepackage[colorlinks,citecolor=red,urlcolor=blue,bookmarks=false,hypertexnames=true]{hyperref}
\usepackage{tikz}
\usetikzlibrary{calc}
\usetikzlibrary{shapes}
\usepackage[autostyle]{csquotes}
\makeatletter

\usepackage[colorlinks]{hyperref}
\usepackage[nameinlink,capitalize]{cleveref}
\newtheorem{theorem}{Theorem}[section]

\newtheorem{lemma}[theorem]{Lemma}
 \newtheorem{prop}{Proposition}[section]
\newtheoremstyle{named}{}{}{\itshape}{}{\bfseries}{.}{.5em}{\thmnote{#3's }#1}
\theoremstyle{named}

\newtheoremstyle{remarkstyle}  
  {8pt}                       
  {8pt}                       
  {\normalfont}                
  {}                           
  {\bfseries}                  
  {.}                          
  {5pt plus 1pt minus 1pt}     
  {}         

\theoremstyle{remarkstyle}
\newtheorem{remark}{Remark}[section]
\theoremstyle{definition}
\newtheorem{definition}{Definition}[section]

\theoremstyle{plain} 

\usepackage{xspace}
\usepackage[margin=1.0in]{geometry}

\numberwithin{equation}{section}
\usepackage{setspace}

\usepackage{comment}

\usepackage{titlesec}

\usepackage{mathtools}

\DeclarePairedDelimiterX{\inp}[2]{\langle}{\rangle}{#1, #2}
\titleformat{\chapter}
  {\Large\bfseries} 
  {}                
  {0pt}            
  {\huge}

\begin{document}

\begin{center}
\fontsize{12pt}{10pt}\selectfont
    \textbf{GLOBAL WELL-POSEDNESS FOR A HIGHER-ORDER BENJAMIN-ONO-SCHRÖDINGER SYSTEM IN THE ENERGY SPACE}
    \end{center}
\vspace{0.1cm}
\begin{center}
   \fontsize{12pt}{10pt}\selectfont
    \textsc{Fáuster Santana}
\end{center}
\vspace{0.2cm}

\begin{abstract}
We study the Cauchy problem associated with the higher-order
Benjamin-Ono-Schrödinger system
\begin{equation*}
\begin{cases}
 \partial_{t}r-a\partial_{x}^{3}r-b\mathcal{H}\partial_{x}^{2}r
 =cr\partial_{x}r
 -d\partial_{x}(r\mathcal{H}\partial_{x}r+\mathcal{H}(r\partial_{x}r))
 +\beta \partial_{x}(|q|^{2}), 
 \quad x,t\in \mathbb{R},\\[0.2cm]
 i\partial_{t}q-\alpha \partial_{x}^{2}q=-\beta qr ,
\end{cases}
\end{equation*}
where $b,c,d,\alpha,\beta$ are positive constants and $a\neq 0$ is a real constant. This system was introduced by Kairzhan, Kennedy, and Sulem in \cite{Higher-order-Benjamin-Ono-NLS-System-Sulem}. We prove that this system is locally well-posed in the energy space
$H^{1}(\mathbb{R})\times H^{1}(\mathbb{R})$. Furthermore, in the case $a<0$, this result extends globally for initial data $(r_{0},q_0)$ with sufficiently small $H^{1}\times H^1$-norm. The proof combines compactness arguments with energy methods. To provide smooth solutions, we have to deal with the lost of the derivatives phenomenon introduced by higher-order derivatives and the Hilbert transform in the nonlinear terms when performing energy estimates. This is overcome by introducing a
modified energy functional that cancels the problematic terms
arising in the standard energy estimates. Once this is done, we extend the method put forward by Molinet and Pilod in \cite{HOBOinH1-Didier-Molinet} to study a single higher-order Benjamin-Ono equation. Their procedure includes the use of a gauge transformation of Tao's type \cite{GWP-BO-Tao}, and delicate bilinear estimates in Bourgain type spaces.
\end{abstract}

\noindent \footnotetext{
\textit{2020 Mathematics Subject Classification.}
35Q55, 35Q53, 35A01, 35A02.

\noindent \textit{Key words and phrases.}
well-posedness, global well-posedness, gauge transform, Benjamin-Ono, Benjamin-Ono-Schrödinger system}

\section{Introduction}

We are interested in the study of the initial value problem (IVP) associated to the \textbf{higher-order Benjamin-Ono-Schrödinger (HBOS) system}
\begin{spacing}{1.2}
\begin{equation} \label{BO-NLS}
\begin{cases}
     & \partial_{t}r-a\partial_{x}^{3}r-b\mathcal{H}\partial_{x}^{2}r=cr\partial_{x}r-d\partial_{x}(r\mathcal{H}\partial_{x}r+\mathcal{H}(r\partial_{x}r))+\beta \partial_{x}(|q|^{2}), \\
     & i\partial_{t}q-\alpha \partial_{x}^{2}q=-\beta qr, \\
     & r(x,0)=r_{0}(x), \ \ q(x,0)=q_{0}(x)
\end{cases}
\end{equation}
\end{spacing}
\noindent where $x,t\in \mathbb{R}$, $r=r(x,t)$ is a real-valued function, $q=q(x,t)$ is a complex-valued function, $a$ is a non-zero real number, $b,c,d,\alpha,\beta$ are positive real numbers, and $\mathcal{H}$ denotes the Hilbert transform, defined on the line as
$$
\mathcal{H}(f)(x) = p.v. \frac{1}{\pi}\int_{\mathbb{R}}\frac{f(y)}{x-y}dy.
$$

The system in \eqref{BO-NLS} is a particular case of the following more general system 
\begin{spacing}{1.2}
\begin{equation}\label{BO-NLS-General}
\left\{
\begin{aligned}
 \partial_{t}R-a\partial_{x}^{3}R-b\mathcal{H}\partial_{x}^{2}R&=cR\partial_{x}R-d\partial_{x}(R\mathcal{H}\partial_{x}R+\mathcal{H}(R\partial_{x}R))+\beta \partial_{x}(|S|^{2}) \\
&\quad +ia_{0}\partial_{x}(S\overline{\partial_{x}S}-\overline{S}\partial_{x}S)-a_{1}\mathcal{H}\partial_{x}^{2}(|S|^{2}),\\
i\partial_{t}S-\alpha \partial_{x}^{2}S&=-\beta SR+ia_{0}(\partial_{x}(RS)+R\partial_{x}S)+a_{1}S\mathcal{H}\partial_{x}R, \\
\end{aligned}
\right.
\end{equation}
\end{spacing}
\noindent where $a_{0}$ and $a_{1}$ denote additional real coupling parameters, $R=R(x,t)$ is a real-valued function, and $S=S(x,t)$ is a complex-valued function. This system was derived by Kairzhan, Kennedy, and Sulem in \cite{Higher-order-Benjamin-Ono-NLS-System-Sulem} using perturbation Hamiltonian theory. It corresponds to the evolution  of long internal waves and modulated surface wave packets in a density-stratified fluid composed of two immiscible layers separated by a sharp interface. More precisely, the model describes their resonant interaction through a system of equations where the internal wave is governed by a high-order Benjamin-Ono equation, coupled to a linear Schrödinger equation for the envelope of the free surface.

Our aim in this work is to establish a well-posedness theory for the system \eqref{BO-NLS} in low-regularity Sobolev spaces without using weights. The main difficulty arises from the fact that the first equation corresponds to a higher-order Benjamin-Ono type equation. More precisely, it takes the form
\begin{equation}\label{higher-order-BO}
    \partial_{t}r - a\partial_{x}^{3}r - b\mathcal{H}\partial_{x}^{2}r
    = cr\partial_{x}r - d\partial_{x}\big(r\mathcal{H}\partial_{x}r + \mathcal{H}(r\partial_{x}r)\big).
\end{equation}
Although the presence of the KdV-type dispersive term $\partial_x^3$, the associated dispersive effects are insufficient to yield well-posedness for \eqref{higher-order-BO} via a fixed point argument. Indeed, it has been shown by Pilod in \cite{Didier-HigherOrderDispersiveModels} that the flow map data-solution cannot be $C^2$ in any Sobolev space $H^{s}(\mathbb{R}),\ s\in \mathbb{R}$. It is due to the second-order nonlinear terms present on the right hand side of the equation.

We observe that the following quantities are conserved by the flow associated with \eqref{BO-NLS}
\begin{align}\label{Hamiltoniano}
    E_{1}&=\int_{\mathbb{R}}\left( -\frac{b}{2}r\mathcal{H}\partial_{x}r+\frac{a}{2}|\partial_{x}r|^{2}-\alpha|\partial_{x}q|^{2}-\frac{c}{6}r^{3}-\beta r|q|^{2}+\frac{d}{2}r^{2}\mathcal{H}\partial_{x}r \right) dx, \quad \textrm{(Hamiltonian)}
    \end{align}
\begin{align}\label{Mass}
    E_{2}&=\int_{\mathbb{R}}|q|^{2}dx,
\end{align}
and
\begin{equation}\label{Momento}
E_{3}=\frac{1}{2}\int_{\mathbb{R}}r^{2}dx+\textrm{Im}\int_{\mathbb{R}} q\overline{\partial_{x}q}\ dx.
\end{equation}

Another related model, and of interest of study, describing the interaction between long and short waves under a weakly coupled nonlinear regime, is given by the following system
\begin{equation}\label{extended-BO-NLS}
\left\{
\begin{aligned}
&i\partial_{t}q+\partial_{x}^{2}q=\mu qr+\gamma|q|^{2}q,\quad\quad \quad  x,t\in \mathbb{R}\\
&\partial_{t}r-\vartheta\mathcal{H}\partial_{x}^{2}r+\lambda r\partial_{x}r=\kappa \partial_{x}(|q|^{2}),
\end{aligned}
\right.
\end{equation}
where the parameters $\mu,\kappa >0$, $\lambda,  \gamma \in \mathbb{R}$, and $\vartheta \neq 0$. In the literature, this model is commonly referred to as the extended Schrödinger-Benjamin-Ono system (see \cite{NLSBO-Felipe-Didier}). Note that, except for the cubic term $|q|^{2}q$, the system above can be considered as a reduced version of \eqref{BO-NLS}. In particular, similar difficulties are anticipated regarding the well-posedness theory.

The initial value problem associated with the system \eqref{extended-BO-NLS}, as well as several of its particular cases, has been extensively studied in recent years. To begin with, consider the case $\gamma=\lambda=0$. When $|\vartheta|\neq 1$ (the \textit{non-resonant} case), Bekiranov, Ogawa and Ponce proved in \cite{PonceBekiranov} the local well-posedness of the system \eqref{extended-BO-NLS} in the Sobolev spaces $H^{s}(\mathbb{R})\times H^{s-\frac{1}{2}}(\mathbb{R})$ for all $s\geq 0$. In particular, due to the conservation laws satisfied by \eqref{extended-BO-NLS}, these solutions extend globally in time for $s\geq 1$ whenever $\frac{\mu\vartheta}{\kappa}>0$. Subsequently, Angulo, Matheus and Pilod improved this result in \cite{NLS-BO-Angulo-Pilod} by establishing global well-posedness for all $s\geq 0$. Later, in \cite{Shap-wellposedenss-Leandro}, Domingues established a local well-posedness result in the Sobolev spaces $H^{s}(\mathbb{R})\times H^{s'}(\mathbb{R})$ for indices $(s,s')\in \mathbb{R}^{2}$ satisfying
$
-\frac{1}{2}<s'-(s-\tfrac{1}{2})<1 $ and $
-\frac{1}{2}\leq s'\leq 2s-\frac{1}{2},
$
which is sharp in a certain sense. Moreover, this result improves the previously known local well-posedness results. On the other hand, concerning the case $|\vartheta|=1$ (the \textit{resonant} case), Pecher showed in \cite{NLS-BO-Pecher} the local well-posedness of the system for the same regularity obtained by Bekiranov, Ogawa and Ponce, except for the endpoint $(0,-\frac{1}{2})$. All the aforementioned results were obtained by combining the fixed point argument with the Fourier restriction norm method.

In the case $\mu=\kappa=\vartheta=1$ and $\lambda\neq0$, Linares, Mendez, and Pilod proved in \cite{NLSBO-Felipe-Didier} the local well-posedness of the system \eqref{extended-BO-NLS} in $H^{s+\frac{1}{2}}(\mathbb{R})\times H^{s}(\mathbb{R})$ for $s>\frac{5}{4}$. It is worth observing that, in this case, the system \eqref{extended-BO-NLS} is expected to exhibit difficulties similar to those arising in the Benjamin-Ono equation. In particular, the well-posedness in Sobolev spaces cannot be established using a contraction principle argument. Consequently, due to this feature, their approach relies on the modified energy method introduced by Kwon in \cite{5KDV-Kwon}, together with refined Strichartz estimates proved by Kenig and Koenig in \cite{Benjamin-Ono-Kenig-Koenig}. Recently, in \cite{Linares-Pilod-extended}, a set of results regarding global and local well-posedness for this system will be available in a forthcoming paper.

The well-posedness theory concerning the higher-order Benjamin-Ono equation \eqref{higher-order-BO} was first established by Linares, Pilod, and Ponce in \cite{Higher-order-BO-Linares-Pilod}. They proved that this equation is locally well-posed for initial data in $H^{s}(\mathbb{R}),\  s\geq 2,$ as well as in $H^{k}(\mathbb{R})\cap L^{2}(x^{2}dx),\ k\in \mathbb{Z}_{+},\ k\geq 2$. The result is obtained by using a gauge transformation, the contraction principle, and compactness arguments. However, the fixed point argument is carried out in weighted Sobolev spaces and does not present additional obstructions. Subsequently,  Molinet and Pilod proved in \cite{HOBOinH1-Didier-Molinet} that this equation is locally well-posed in the energy space $H^{1}(\mathbb{R})$. The proof is based on a gauge transformation introduced by Tao in \cite{GWP-BO-Tao}. Furthermore, due to the conserved quantities associated with the flow, this result was extended globally.

Regarding the HBOS system \eqref{BO-NLS}, Kairzhan, Kennedy, and Sulem proved in \cite{Higher-order-Benjamin-Ono-NLS-System-Sulem} the local well-posedness in 
$
H^{2}(\mathbb{R})\cap L^{2}(x^{2}dx) \times H^{3}(\mathbb{R})\cap L^{2}(x^{2}dx)
$. Their method relies on the contraction principle together with a gauge transformation, which is based on the arguments used by Linares, Pilod, and Ponce in \cite{Higher-order-BO-Linares-Pilod}. It is worth noting that, due to the presence of a higher-order Benjamin–Ono in the first equation of \eqref{BO-NLS}, one expects that arguments based on the contraction principle are not effective for establishing well-posedness in Sobolev spaces as well. However, this obstruction does not occur in certain weighted Sobolev spaces, where the contraction principle was effectively applied to obtain this result.

As far as we know, the only well-posedness result available for the system \eqref{BO-NLS} was obtained in weighted Sobolev spaces. A local well-posedness result established in the energy space $H^{1}\times H^{1}$, without weights, would allow one to extend the solutions globally by exploiting the conserved quantities \eqref{Hamiltoniano}–\eqref{Momento}. This is contained in our main result in this work.

Our main result is the following:

\begin{theorem}\label{maintheorem}
Let $s\geq 1$ and $s\leq s'\leq s+1$ be given. Then, for any $(r_{0},q_{0})\in H^{s}(\mathbb{R})\times H^{s'}(\mathbb{R})$, there exists a positive time $T=T(\|(r_{0},q_{0})\|_{H^{1}\times H^{s'-s+1}})$ for which the initial value problem \eqref{BO-NLS} admits a unique solution $(r,q)$ satisfying 
\begin{align}
    &(r,q)\in C([0,T]:H^{s}(\mathbb{R})\times H^{s'}(\mathbb{R})),\\
    &r\in L^{4}_{T}W^{s,4}_{x}\cap L^{2}_{x}L^{\infty}_{T}\cap X^{s-2\theta,\theta}_{T}, \quad \textrm{for all} \quad 0\leq \theta \leq 1,\\
    &q\in L^{2}_{x}L^{\infty}_{T}\cap Y^{s,1}_{T},
\end{align}
and
\begin{equation}\label{identityofw}
    w=\partial_{x}P_{+hi}(e^{iF})\in X^{s,\frac{1}{2},1}_{T},
\end{equation}
where $F=F[r]$ is a spatial primitive of $r$ as defined in subsection \ref{subsectionGT}.

 Moreover, the flow map data-solution is continuous from $H^{s}(\mathbb{R})\times H^{s'}(\mathbb{R})$ into $C([0,T]:H^{s}(\mathbb{R})\times H^{s'}(\mathbb{R}))$.
\end{theorem}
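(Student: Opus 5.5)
The plan is to follow the Molinet--Pilod scheme for the single higher-order Benjamin--Ono equation, adapted to the coupling with the Schrödinger component. It has four stages: a priori bounds for smooth solutions via a modified energy, a gauge transformation with Bourgain-space estimates, difference estimates for uniqueness, and a Bona--Smith argument for continuity of the flow.

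\textbf{Step 1 (smooth solutions and a priori bounds).} First I would obtain, for smooth data $(r_0,q_0)\in H^\infty\times H^\infty$, smooth solutions on a time interval depending only on $\|(r_0,q_0)\|_{H^{1}\times H^{s'-s+1}}$. Existence on some short interval follows from a parabolic regularization $\partial_t r-\epsilon\partial_x^4 r$ (or from the weighted result of Kairzhan--Kennedy--Sulem). The key is an energy estimate for $\|r\|_{H^k}^2+\|q\|_{H^{k'}}^2$.

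The standard estimate loses derivatives in three places:
\begin{itemize}
\item the term $d\,\partial_x(r\mathcal{H}\partial_x r+\mathcal{H}(r\partial_x r))$, which produces $\int \partial_x^{k}r\,\partial_x r\,\mathcal{H}\partial_x^{k+1} r$-type terms;
\item the coupling $\beta\partial_x(|q|^2)$, which produces $\int \partial_x^k r\,\partial_x^{k+1}q\,\bar q$;
\item the term $\beta qr$ in the Schrödinger equation, which produces $\int\partial_x^{k'}q\,\partial_x^{k'}r\,\bar q$.
\end{itemize}
For the first I would add a correction of the form $\int \partial_x^{k-1}r\,\partial_x^{k-1}r\,\mathcal{H}\partial_x r$, exploiting the $a\partial_x^3$ dispersion as in Kwon's modified energy. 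For the coupling terms I would add cross terms such as $\mathrm{Re}\int \partial_x^{k-2}r\,\partial_x^{k'-1}q\,\overline{q}$ chosen so that, when time-differentiated using the linear parts $a\partial_x^3$ and $i\alpha\partial_x^2$, they cancel the bad terms. This uses that the dispersions differ by an order, and it is where the restriction $s\le s'\le s+1$ enters.

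The remaining terms are controlled by $\|\partial_x r\|_{L^1_TL^\infty_x}$, $\|r\|_{L^4_TW^{1,4}}$ and similar quantities. These are in turn bounded by the refined Strichartz estimate for the Airy group (chopping time according to frequency) and by the maximal function estimates $L^2_xL^\infty_T$ for both $e^{ta\partial_x^3}$ and $e^{-i\alpha t\partial_x^2}$. The same ingredients give $q\in Y^{s,1}_T$ directly from the equation.

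\textbf{Step 2 (gauge and Bourgain-space estimates).} I would define $F$ as the primitive of $r$ and $w=\partial_xP_{+hi}(e^{iF})$, following Tao. The gauge conjugates away the worst high--low interaction $d\,\partial_x r\,\mathcal{H}\partial_x r$ at positive frequencies. I would derive the equation satisfied by $w$; compared with Molinet--Pilod it contains new terms coming from $\beta\partial_x|q|^2$, namely $e^{iF}|q|^2$-type terms. I would then prove the bilinear estimates in $X^{s,\frac12,1}$ and $X^{s-2\theta,\theta}$ needed to close bounds for $w$ and for $r$, with $r$ recovered from $w$ at high positive frequencies and from the equation at low frequencies. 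The coupling terms should be harmless because $q\in Y^{s,1}_T$ has a full time derivative of regularity, and the resonance function of the Airy--Schrödinger interaction is large away from a small set. This yields the stated auxiliary norms with constants depending only on the low-regularity norms, allowing a continuation argument in $H^\infty$.

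\textbf{Step 3 (uniqueness, existence, continuity).} For two solutions I would estimate differences at the $L^2\times H^{s'-s}$ level, or at the $H^{1/2}$-type level of the gauged variables as in Molinet--Pilod. This gives Lipschitz dependence in a weaker norm and hence uniqueness. Existence in $H^s\times H^{s'}$ then follows by approximating with smooth data and using compactness. Continuity of the flow and persistence $C([0,T];H^s\times H^{s'})$ follow from a Bona--Smith argument that combines the difference estimate with the a priori bound applied to frequency-truncated data.

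\textbf{Main obstacle.} I expect the main obstacle to be the difference estimates in Step 3 combined with the new coupling terms in the gauged equation for $w$. For differences, the gauge does not commute well; the Molinet--Pilod trick of estimating $w_1-w_2$ together with $e^{-iF_1}-e^{-iF_2}$ must be extended to absorb $\partial_x(|q_1|^2-|q_2|^2)$, which loses one derivative relative to the Schrödinger difference. I would first try to handle it with the $Y^{s,1}$ smoothing of $q$ together with the $L^2_xL^\infty_T$ maximal estimate paired against the Kato smoothing of the Airy group.
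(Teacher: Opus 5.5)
The gap is in Step 3. Every difference estimate in the gauge framework contains the factor $e^{\pm iF_1}-e^{\pm iF_2}$. This holds both for the equation satisfied by $w_1-w_2$ and for the recovery of $P_{+HI}(r_1-r_2)$ from $w_1-w_2$, as in \eqref{eqPHIrdifference}. Here $F_1-F_2$ is a primitive of $r_1-r_2$, and its low-frequency part, essentially $\partial_x^{-1}P_{lo}(r_1-r_2)$, is not controlled by any $L^2$-based norm of $r_1-r_2$. A perturbation of small $L^2$ norm can have a large integral. Heuristically it then shifts the phase of high-frequency waves by an amount comparable to $A\int(\varphi_1-\varphi_2)\,dx$, which is the same mechanism that makes Benjamin--Ono-type flows fail to be uniformly continuous.

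Your proposal never says how this factor is controlled, whether at the $L^2\times H^{s'-s}$ level or at an $H^{1/2}$-type level. A Bona--Smith argument also does not run on a weak-norm Lipschitz bound plus an a priori bound alone. To get convergence in $C([0,T];H^s\times H^{s'})$ and continuity of the flow you also need a difference estimate at top regularity, like the term $f_s$ in Proposition \ref{propenergyestimatedifference}. At $H^1$ regularity such an estimate must again go through the gauge and meets the same obstruction. So neither uniqueness nor continuity of the flow follows from Step 3 as written. The difficulty you single out, $\partial_x(|q_1|^2-|q_2|^2)$, is not the real one: the paper handles it with the same bilinear estimate, applied to $(e^{iF_1}q_1-e^{iF_2}q_2)\bar{q}_1$ and $e^{iF_2}q_2\bar{q}$.

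The missing idea, from Molinet--Pilod, is a Lipschitz bound at the full $H^s\times H^{s+k}$ level, but only for data with the same low-frequency part, $P_{LO}\varphi_1=P_{LO}\varphi_2$. Then $P_{lo}F_1(0)=P_{lo}F_2(0)$, and Bernstein's inequality with the Duhamel formula for $F$ gives \eqref{mvtF}--\eqref{mvtFtx}, namely $\|e^{\pm iF_1}-e^{\pm iF_2}\|_{L^\infty_{T,x}}\lesssim\|r\|_{L^\infty_TH^1_x}+\|q\|_{L^\infty_TH^1_x}$. Together with \eqref{estimateexponencialinHs2} and smallness of the data, this closes \eqref{flowmapboundlipchistz}.

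This restricted bound is all that is needed:
\begin{itemize}
\item The frequency truncations $(r_{0,j},q_{0,j})$ of a fixed datum share the same low-frequency part for $j\ge 10$. The smooth solutions are therefore Cauchy directly in every norm of the theorem, which gives existence and continuity in time without a separate compactness or Bona--Smith step.
\item Uniqueness is the case of identical data, after rescaling to small data.
\item Continuity of the flow follows from the triangle inequality \eqref{WPcontflow1} together with continuity of the smooth flow from Theorem \ref{existenceofsmoothsolutions}.
\end{itemize}

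Two further omissions also need repair. First, you never reduce to small $H^1\times H^{1+k}$ data by scaling. The paper needs this both for coercivity of the modified energy and to close the bootstrap for $N^{s,k}_T$. Second, the term $\partial_xP_{+hi}(e^{iF}|q|^2)$ is not harmless merely because $q\in Y^{s,1}_T$. You must treat $e^{iF}q$ as an unknown and bound it in $Y^{s-\theta,\theta}_T$ through its own Schr\"odinger equation \eqref{equationexpq}. You then apply a bilinear estimate in the pair $(e^{iF}q,q)$, exploiting the dominance of the cubic symbol over the two quadratic ones in the resonance function.
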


\begin{figure}[h]
    \centering
   \begin{tikzpicture}[scale=0.8]

\draw[dashed, gray] (-1,-1) grid (4,4);

\draw[->, thick] (-1,0) -- (4,0) node[right] {$s$};
\draw[->, thick] (0,-1) -- (0,4) node[above] {$s'$};

\foreach \x in {-1,0,1,2,3,4}
{
  \node[below=6pt] at (\x,-1) {\small $\x$};
}

\foreach \y in {-1,0,1,2,3,4}
{
  \node[left=6pt] at (-1,\y) {\small $\y$};
}

\fill[gray!70]
(1,1) -- (4,4) -- (3,4) -- (1,2) -- cycle;

\draw[thick] (1,1) -- (4,4);   
\draw[thick] (1,2) -- (3,4);   
\draw[thick] (1,1) -- (1,2);   

\node[rotate=45] at (2.2,2.7) {\small{L.W.P.}};

\end{tikzpicture}

\captionsetup{width=0.55\textwidth}
\caption{The gray
region contains indices $(s,s')$ for which local well-posedness is
achieved in Theorem \ref{maintheorem}.}

\end{figure}

In particular, due to the conservation laws satisfied by the solutions of \eqref{BO-NLS} and the local well-posedness at the low-regularity $H^{1}\times H^1$ given by Theorem \ref{maintheorem}, we have the following global well-posedness result.
\begin{theorem}\label{globalwellposedness}
 Let $a<0$ and $s\geq 1$. There exists a sufficiently small constant $\delta_{0}>0$ such that if $(r,q)\in C([0,T]:H^{s}(\mathbb{R})\times H^{s}(\mathbb{R}))$ is the local solution for \eqref{BO-NLS} given by Theorem \ref{maintheorem} and $\|(r_0,q_{0})\|_{H^{1}\times H^1}\leq \delta_{0}$, then $(r,q)$ can be extended globally as 
 \begin{equation*}
     (r,q)\in C_{b}(\mathbb{R}:H^{s}(\mathbb{R})\times H^{s}(\mathbb{R})).
 \end{equation*}
\end{theorem}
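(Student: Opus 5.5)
The global result follows from Theorem \ref{maintheorem} combined with an a priori bound on $\|(r,q)(t)\|_{H^1\times H^1}$ coming from the conserved quantities \eqref{Hamiltoniano}--\eqref{Momento}. The local existence time depends only on $\|(r_0,q_0)\|_{H^1\times H^{s'-s+1}}$, which equals the $H^1\times H^1$ norm when $s'=s$. A uniform-in-time $H^1\times H^1$ bound therefore lets us iterate the local theory with a fixed step $T$ and obtain a global solution in $C(\mathbb{R}:H^1\times H^1)$. Persistence of regularity, which is part of Theorem \ref{maintheorem} since the time depends only on the $H^1$-type norm, then yields the $H^s\times H^s$ statement. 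For $s>1$, boundedness of the $H^s$ norm in time is obtained by combining the uniform $H^1$ bound with the same modified energy used in the local theory on each interval. If that energy only gives exponential growth, I would interpret $C_b$ as referring to the $H^1\times H^1$ level.

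To get the a priori bound, I form the combination
\[
\mathcal{E}(t) = -E_1 + K\bigl(E_2 + E_3\bigr),
\]
with $K>0$ large. Since $a<0$, the term $\tfrac{a}{2}|\partial_x r|^2$ in $E_1$ contributes $\tfrac{|a|}{2}\|\partial_x r\|_{L^2}^2$ to $-E_1$. Likewise $-(-\alpha|\partial_x q|^2)$ contributes $\alpha\|\partial_x q\|_{L^2}^2$, and $\tfrac b2\int r\mathcal{H}\partial_x r=\tfrac b2\|D^{1/2}r\|_{L^2}^2\ge 0$ has a good sign. The quadratic part of $\mathcal{E}$ is
\[
\tfrac{|a|}{2}\|\partial_x r\|^2+\tfrac b2\|D^{1/2} r\|^2+\alpha\|\partial_x q\|^2+K\|q\|^2+\tfrac K2\|r\|^2+K\,\mathrm{Im}\!\int q\overline{\partial_x q}.
\]
The cross term $K|\mathrm{Im}\int q\overline{\partial_x q}|\le K\|q\|\|\partial_x q\|$ is not absorbed for large $K$. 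So I instead use $E_3$ with a small coefficient $\varepsilon$ and $E_2$ with a large coefficient $K$. By Young,
\[
\varepsilon\|q\|\|\partial_x q\|\le \tfrac{\alpha}{2}\|\partial_x q\|^2+\tfrac{\varepsilon^2}{2\alpha}\|q\|^2,
\]
so choosing $K\ge \varepsilon^2/\alpha$ makes the quadratic part comparable to $\|r\|_{H^1}^2+\|q\|_{H^1}^2$, with constants depending on $a,\alpha,\varepsilon$.

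The cubic terms are bounded by Gagliardo--Nirenberg and Sobolev:
\[
\int|r|^3\lesssim\|r\|_{H^1}^3,\qquad \int|r||q|^2\lesssim\|r\|_{L^\infty}\|q\|_{L^2}^2\lesssim \|(r,q)\|_{H^1}^3,
\]
\[
\Bigl|\int r^2\mathcal{H}\partial_x r\Bigr|\le\|r\|_{L^4}^2\|\partial_x r\|_{L^2}\lesssim\|r\|_{H^1}^3.
\]
Writing $N(t)=\|(r,q)(t)\|_{H^1\times H^1}^2$, this gives
\[
C_1N(t)-C_2N(t)^{3/2}\le\mathcal{E}(t)=\mathcal{E}(0)\le C_3N(0)+C_2N(0)^{3/2}\le C_4\delta_0^2 .
\]

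A standard continuity/bootstrap argument then closes the bound. The function $t\mapsto N(t)$ is continuous on the lifespan because the solution lies in $C([0,T]:H^1\times H^1)$. The map $N\mapsto C_1N-C_2N^{3/2}$ is increasing on $[0,N_*]$ for a small threshold $N_*$. Hence if $\delta_0$ is small enough, $N(t)$ can never cross $N_*/2$, and $N(t)\lesssim\delta_0^2$ for all $t$.

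The main point requiring care is rigour rather than algebra. The conservation of $E_1,E_2,E_3$ must be justified at $H^1\times H^1$ regularity. I would prove it for smooth solutions, say $H^\infty$ data, where integration by parts is legitimate, and pass to the limit using the continuity of the flow map in Theorem \ref{maintheorem} together with the continuity of $E_j$ on $H^1\times H^1$. The second point is the absorption of the momentum cross term, handled by the choice of coefficients above.
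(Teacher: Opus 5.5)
Your proof is correct and establishes the theorem as stated. The a priori $H^1\times H^1$ bound, however, comes from a different mechanism than the paper's.

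You build one coercive functional $\mathcal{E}=-E_1+KE_2+\varepsilon E_3$ whose quadratic part is equivalent to $\|(r,q)\|_{H^1\times H^1}^2$. This uses $a<0$, the good sign of $\frac b2\|D^{1/2}r\|_{L^2}^2$, and $K\geq\varepsilon^2/\alpha$ to absorb the momentum cross term. You then bound all cubic terms crudely by $N^{3/2}$ and bootstrap near the origin.

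The paper instead extracts only $\|\partial_xr\|_{L^2}^2+\|\partial_xq\|_{L^2}^2$ from $E_1$. It treats $\int r\mathcal{H}\partial_xr$ and the cubic terms as errors, using Gagliardo--Nirenberg and Young with a small factor on the derivative norms. What remains are powers of $\|r\|_{L^2}$ (up to $\|r\|_{L^2}^6$) and of $\|q_0\|_{L^2}$. It then bounds $\|r\|_{L^2}^2\le 2(|E_3|+\|q_0\|_{L^2}\|\partial_xq\|_{L^2})$ through the momentum, reaches \eqref{gweq10}, and closes by continuity.

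Your version is shorter, avoids tracking Gagliardo--Nirenberg exponents, and gives the sharper conclusion $\|(r,q)(t)\|_{H^1\times H^1}\lesssim\delta_0$ for all $t$. The cost is that it is purely perturbative and needs the whole $H^1\times H^1$ norm small. The paper's bookkeeping shows that the only term growing faster than $\gamma^2$ carries the factor $\|q_0\|_{L^2}^3$. That is what permits the variant in the remark after the theorem (arbitrary $\|r_0\|_{H^1}\le K$, small $q_0$), which your argument cannot reach.

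You should drop one claim from your first paragraph: that the modified energy gives a time-uniform bound on $\|(r,q)(t)\|_{H^s\times H^s}$ for $s>1$. Neither iterating Theorem \ref{maintheorem} nor the modified energy of Proposition \ref{propenergyestimate} does this. Iterating \eqref{boundNs} multiplies the norm by an implicit constant at each step, and \eqref{es3} carries a factor $e^{B_sT}$, so both allow at most exponential growth.

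Your fallback is the honest reading. The uniform $H^1\times H^1$ bound, together with the fact that $T$ depends only on the $H^1\times H^1$ norm, gives $(r,q)\in C(\mathbb{R}:H^s\times H^s)\cap C_b(\mathbb{R}:H^1\times H^1)$. The paper's proof has the same limitation: it only controls $\gamma(t)$ in \eqref{estimateGlobal} before iterating. So this is not a weakness of your route relative to the paper's.
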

\begin{remark}
One can also prove that for any $K>0$ there exists $\delta_{0}>0$ such that, if $\|r_{0}\|_{H^{1}}\leq K$ and $\|q_{0}\|_{H^{1}}\leq \delta_{0}$, then the solution $(r,q)$ with initial data $(r_{0},q_{0})$ extends globally in time. But the same result cannot be prove with $r_{0}$ and $q_{0}$ changing the roles. 
\end{remark}

The proof of Theorem \ref{maintheorem} follows the ideas developed by Molinet and Pilod in \cite{HOBOinH1-Didier-Molinet}, where the gauge transformation introduced by Tao in \cite{GWP-BO-Tao} is employed to establish local well-posedness for the higher-order Benjamin-Ono equation \eqref{higher-order-BO} in $H^{1}(\mathbb{R})$. This transformation allows us to weaken the high-low frequency interactions in the nonlinearity of the first equation in \eqref{BO-NLS}, which constitute the main difficulty in the analysis of the higher-order Benjamin-Ono equation with regard to the well-posedness theory in Sobolev spaces. More precisely, the main idea is to exploit several estimates in Sobolev and Bourgain spaces at the $H^{1}$ level obtained by Molinet and Pilod in that work, and to combine them with suitable estimates on the solution $q$ of the Schrödinger equation in \eqref{BO-NLS}, as well as with new bilinear estimates in Bourgain spaces, which are required to handle the additional term $\partial_x(|q|^2)$ appearing in the first equation of \eqref{BO-NLS}. By employing all these estimates together with a compactness argument, we are able to obtain the desired result.

However, this compactness argument relies on the existence of smooth solutions associated with smooth initial data for the system \eqref{BO-NLS}, which, to the best of our knowledge, has not yet been established. Therefore, in order to justify this step, we combine a parabolic regularization argument with energy estimates, in a way analogous to that used by Linares, Mendez, and Pilod for the extended Schrödinger-Benjamin-Ono system in \cite{NLSBO-Felipe-Didier}. On the other hand, similarly to what occurs in \cite{NLSBO-Felipe-Didier}, due to the higher-order nonlinear and coupling terms present in the first equation in \eqref{BO-NLS}, standard energy methods do not apply directly to obtain a priori estimates. Indeed, if we define $E(t):=\|(r(t),q(t))\|_{H^{s}\times H^{s}}$ and consider $s$ sufficiently large,
this argument only yields the following estimate
\begin{align*}
    \frac{d}{dt}E(t)^{2}&\lesssim P(E(t))+\left| \int_{\mathbb{R}} D_{x}^{s}\partial_{x}(|q|^{2})D_{x}^{s}rdx\right|
     + \left|\int_{\mathbb{R}} D_{x}^{s}r\partial_{x}D_{x}^{s}(r\mathcal{H}\partial_{x}r)dx\right|+\left| \quad \int_{\mathbb{R}}D_{x}^{s}r\partial_{x}D_{x}^{s}\mathcal{H}(r\partial_{x}r) dx\right|,
\end{align*}
where $P$ is a polynomial with no constant or linear terms. Observe that the last three terms cannot be readily controlled. To overcome this difficulty, we employ the modified energy method introduced by Kwon in \cite{5KDV-Kwon} for the fifth-order KdV equation (see also  \cite{M-Paulsen-System}, \cite{KdvHierarchy-Kenig-Didier}, \cite{NLSBO-Felipe-Didier}, \cite{Tanaka-ThirdOrderBenjaminOno}). This approach consists in modifying the standard energy functional by adding suitable correction terms. When this modified energy is differentiated with respect to time, the additional terms generate contributions that cancel the problematic ones, while the remaining terms can be suitably estimated.

It is worth noting that the conservation laws in \eqref{Hamiltoniano}--\eqref{Momento} are not sufficient to yield global well-posedness for initials data $r_0$ and $q_{0}$ with arbitrarily large $H^{1}$-norm. The main obstruction arises from the last term in the Hamiltonian functional \eqref{Hamiltoniano}. A natural approach would be to apply the Gagliardo–Nirenberg and Young inequalities in order to derive an estimate of the form
\begin{equation*}
    \left|\int_{\mathbb{R}} r^{2}\mathcal{H}\partial_{x}r \, dx \right|
    \lesssim C(\varepsilon)\|r\|_{L^{2}_{x}}^{K} + \varepsilon \|\partial_{x} r\|_{L^{2}_{x}}^{2},
\end{equation*}
for some $0 < K < 4$ and for arbitrarily small $\varepsilon >0$ . However, due to the cubic nature of the integrand and the presence of the derivative $\partial_{x}$, such an estimate cannot be rigorously justified using this argument.

To ensure the effectiveness of our methods, specifically to establish the coercivity of the modified energy functional and to derive \textit{a priori} estimates for the solutions in low-regularity Sobolev spaces, we require the initial data to have a sufficiently small norm. To obtain this, we applied the same ideas introduced by Zaiter to deal with similar diffilcuty for the Ostrovsky equation in \cite{Ostrovsky-Zaiter} (see also \cite{NLSBO-Felipe-Didier}). In fact, observe that this smallness condition is not restrictive, since if \((r, q)\) is a solution to the system \eqref{BO-NLS} with initial data \((r_0, q_0)\), we can rescale the solution by a parameter \(0 < \lambda \leq 1\), defining the rescaled functions \(r_\lambda\) and \(q_\lambda\) as follows

\begin{equation}\label{rescaledsolutions}
    r_{\lambda}(x,t):=\lambda r(\lambda x,\lambda^{3} t), \quad q_{\lambda}(x,t)=\lambda q(\lambda x,\lambda^{3}t),
\end{equation}
and $(r_{\lambda},q_{\lambda})$ will be the solution of the following rescaled system
\begin{align}\label{BO-NLS-rescaled}
\begin{cases}
    \partial_{t}r_{\lambda}-b\lambda \mathcal{H}\partial_{x}^{2}r_{\lambda}-a\partial_{x}^{3}r_{\lambda}=\lambda cr_{\lambda}\partial_{x}r_{\lambda}-d\partial_{x}(r_{\lambda}\mathcal{H}\partial_{x}r_{\lambda}+\mathcal{H}(r_{\lambda}\partial_{x}r_{\lambda}))+\lambda\beta\partial_{x}(|q_{\lambda}|^{2}),\\
    \partial_{t}q-\lambda\alpha \partial_{x}^{2}q=-\lambda\beta qr,\\
    r_{\lambda}(x,0)=\lambda r(\lambda x,0),\quad q_{\lambda}(x,0)=\lambda q(\lambda x,0).
\end{cases}
\end{align}

Since
\begin{equation*}
    \|r_{\lambda}(\cdot,0)\|_{H^{{s}}}\lesssim \lambda^{\frac{1}{2}}(1+\lambda^{s})\|r_{0}\|_{H^{s}}, \quad \|q_{\lambda}(\cdot,0)\|_{H^{s}} \lesssim \lambda^\frac{1}{2}(1+\lambda^{s})\|q_{0}\|_{H^{s}},
\end{equation*}
for all $s\in \mathbb{R}$, given $\varepsilon >0$, we always choose $\lambda$ small enough such that 
\begin{equation}
    \|(r_{\lambda}(0),q_{\lambda}(0))\|_{H^{s}\times H^{s'}}\leq \varepsilon,
\end{equation}
for all $s,s'\in \mathbb{R}$.

Moreover, assume that there exists a constant $\varepsilon > 0$ such that, for every $0 < \lambda \leq 1$, the smallness condition
$
\|(r_\lambda(0, \cdot), q_\lambda(\cdot, 0))\|_{H^s \times H^{s'}} \leq \varepsilon
$
ensures the existence of a positive time $T_\lambda > 0$ for which the system \eqref{BO-NLS-rescaled} admits a unique solution $(r_\lambda, q_\lambda) \in C([0, T_\lambda]: H^s(\mathbb{R}) \times H^{s'}(\mathbb{R}))$. Defining the rescaled functions
$$
(r(x,t), q(x,t)) := \lambda^{-1} \left(r_\lambda(\lambda^{-1}x, \lambda^{-3}t),\; q_\lambda(\lambda^{-1}x, \lambda^{-3}t)\right),
$$
it follows that $(r, q) \in C([0, T]: H^s(\mathbb{R}) \times H^{s'}(\mathbb{R}))$ is a solution to the original system \eqref{BO-NLS}, with the existence time satisfying the bound $\lambda^3 T_\lambda \lesssim T$. 

\begin{remark}
We observe that formally the solutions of system \eqref{BO-NLS} with appropriate parameters tend to solutions of system \eqref{extended-BO-NLS}. In a forthcoming work we shall show that this fact can be make precise.
\end{remark}

\setlength{\parskip}{1em}

The paper is organized as follows. In the next section, we introduce the
notation, define the function spaces, recall classical estimates, and prove
some linear and commutator estimates. Section 3 is devoted to establishing the energy estimates required to prove the existence of smooth solutions for the system. In Section 4, we introduce the gauge
transformation and obtain several key \textit{a priori} estimates for the solutions
of the HBOS system and for its gauge function. Finally, in Section 5, we gather
the estimates established in the previous section and prove
Theorems \ref{maintheorem} and \ref{globalwellposedness}.

\section{Notations and preliminaries}

\subsection{Notations} \label{Notations}

For any $A$ and $B$ positive real numbers, the notation $A\lesssim B$ means that there exists a positive constant $C$ such that $A\leq CB$. Furthermore, for all $A\in \mathbb{R}$, the notation $A_{+}$ or $A_{-}$ will denote a number slightly greater and less than $A$, respectively.
\newline
\indent For $f\in \mathcal{S}(\mathbb{R}^{2})$, $\mathcal{F}f=\widehat{f}$ will denote its Fourier transform. On the other hand, the notation $\mathcal{F}_{x}f=\widehat{f}^{x}$ and $\mathcal{F}_{t}f=\widehat{f}^{t}$ will denote its Fourier transform in space and time variables, respectively. For any $s\in \mathbb{R}$, we define the Bessel and Riesz potentials of order $s$, $J^{s}_{x}$ and $D_{x}^{s}$, as follows
\begin{equation*}
    J^{s}_{x}f=\mathcal{F}_{x}^{-1}((1+|\xi|^{2})^{\frac{1}{2}}\mathcal{F}_{x}f) \quad \textrm{and} \quad D^{s}_{x}f=\mathcal{F}_{x}^{-1}(|\xi|^{s}\mathcal{F}_{x}f).
\end{equation*}
\indent In what follows, we introduce the Littlewood-Paley projectors that will be employed in this work. Let $\eta$ be a cutoff function such that
\begin{equation*}
    \eta\in C^{\infty}_{0}(\mathbb{R}),\quad 0\leq \eta \leq 1, \quad \textrm{supp }\eta\subseteq [-2,2], \quad \eta \equiv 1 \ \textrm{on} \ [-1,1].
\end{equation*}
Then, for all $l\in \mathbb{Z}_{+}$, we define
\begin{equation*}
    \phi(\xi)=\eta(\xi)-\eta(2\xi) , \quad \phi_{2^{l}}(\xi)=\phi(2^{-l}\xi), \quad \textrm{and }\quad 
    \psi_{2^{l}}(\xi,\tau)=\phi_{2^l}(\tau -b\xi|\xi|+a\xi^{3}).
\end{equation*}
Furthermore, we also define
\begin{equation*}
    \phi_{0}(\xi)=\eta(2\xi), \quad\textrm{and}\quad   \psi_{0}(\xi,\tau)=\phi_{0}(\tau -b\xi|\xi|+a\xi^{3}) .
\end{equation*}
Throughout this work, summations over capitalized variables such as $N$ or $M$ are presumed to be dyadic with $N,M\geq 0$. Then, we have the following properties
\begin{equation}
    \sum_{N}\phi_{N}(\xi)=1 \quad \textrm{for all} \ \xi\neq 0 , \quad \textrm{supp }\phi_{N}\subseteq \{2^{-1}N\leq |\xi|\leq 2N\}, \ N\geq 1, \quad \textrm{and } \quad \textrm{supp }\phi_{0}\subseteq \{|\xi|\leq 1\} .
\end{equation}
The Littlewood-Paley projectors $P_{N}$ and $Q_{N}$ are defined as follows
\begin{equation}
    P_{N}f:=\mathcal{F}^{-1}_{x}(\phi_{N}\mathcal{F}_{x}f), \quad    Q_{N}f:=\mathcal{F}^{-1}(\psi_{N}\mathcal{F}f),
\end{equation}
and,
\begin{equation}
    P_{\geq N}:=\sum_{M\geq N}P_{M} , \quad  Q_{\geq N}:=\sum_{M\geq N}Q_{M}.
\end{equation}
Moreover, we also define the operators $P_{hi},P_{lo},P_{HI}$ and $P_{LO}$ by
\begin{equation*}
    P_{hi}=\sum_{N\geq 2}P_{N}, \quad P_{lo}=1-P_{hi}, \quad P_{HI}=\sum_{N\geq 2^{4}}P_{N}, \quad P_{LO}=1-P_{HI}.
\end{equation*}
\indent The operators $P_{+}$ and $P_{-}$ will denote the projections in positive and negative frequencies, i.e.
\begin{equation*}
    P_{\pm}f=\mathcal{F}_{x}^{-1}(\mathcal{X}_{\mathbb{R}_{\pm }}\mathcal{F}_{x}f).
\end{equation*}
Thus, we also define $P_{\pm X} := P_{\pm} P_{X}$, where $X \in \{\mathrm{hi}, \mathrm{lo}, \mathrm{HI}, \mathrm{LO}\}$ or $X=N$ dyadic.
Note that all the operators $P_{\mathrm{hi}}, P_{\mathrm{lo}}, P_{\mathrm{HI}}, P_{\mathrm{LO}}$, and $P_{N}$ are bounded on $L^{p}(\mathbb{R})$ for every $1 \leq p \leq \infty$. The same boundedness holds for $P_{\pm}$, except possibly at the endpoints $p = 1$ and $p = \infty$. Observe that the following identity holds true
\begin{equation*}
    \mathcal{H}=-i(P_{+}-P_{-}).
\end{equation*}
\indent Finally, we denote by $V(\cdot)$ and $U(\cdot)$ the linear group associated with the linear parts of the higher-order Benjamin–Ono equation and the linear Schrödinger equation, respectively, as given in equation~\eqref{BO-NLS}, i.e.
\begin{equation}
    \mathcal{F}_{x}(V(t)f)(\xi)=e^{it(b\xi|\xi|-a\xi^{3})}\mathcal{F}_{x}f(\xi) \quad \textrm{and} \quad \mathcal{F}_{x}(U(t)f)(\xi)=e^{\alpha it \xi^{2}}\mathcal{F}_{x}f(\xi).
\end{equation}

\subsection{Functions spaces}
For any $1 \leq p \leq \infty$, we denote by $L^{p}(\mathbb{R})$ the usual Lebesgue space endowed with the norm $\|\cdot\|_{L^{p}}$. Moreover, for any $s \in \mathbb{R}$, we denote by $H^{s}(\mathbb{R})$ and $W^{s,p}(\mathbb{R})$ the Sobolev spaces of order $s$, endowed respectively with the norms
\[
\|f\|_{H^{s}}=\|J^{s}_{x}f\|_{L^{2}} \quad \text{and} \quad \|f\|_{W^{s,p}}=\|J^{s}_{x}f\|_{L^{p}}.
\]
Let \( f = f(x,t) \) be a real or complex valued function with \( x, t \in \mathbb{R} \), and let \( T > 0 \). Let \( B \) denote any one of the above-mentioned spaces. Fix \( 1 \leq p, q \leq \infty \). We define the following mixed-norm spaces
\begin{equation*}
    \|f\|_{L^{p}_{T}B_{x}}:=\left(\int_{0}^{T}\|f(\cdot,t)\|_{B}^{p}\ dt\right)^{\frac{1}{p}}, \quad  \|f\|_{L^{p}_{T}B_{x}}:=\left(\int_{\mathbb{R}}\|f(\cdot,t)\|_{B}^{p}\ dt\right)^{\frac{1}{p}},
\end{equation*}
and 
\begin{equation*}
    \|f\|_{L^{p}_{x}L^{q}_{T}}=\left(\int_{\mathbb{R}}\left(\int_{0}^{T}|f(x,t)|^{q}\ dt\right)^{\frac{p}{q}} dx\right)^{\frac{1}{p}}.
\end{equation*}
Let $X$ denote one of the previous mixed-norm spaces defined above. We define its dyadic version $\widetilde{X}$ as
\begin{equation*}
    \|f\|_{\tilde{X}}=\left(\sum_{N}\|P_{N}f\|_{X}^{2}\right)^{\frac{1}{2}}.
\end{equation*}
Fix $s,b\in \mathbb{R}$. Now, we introduce $X^{s,b}$ and $Y^{s,b}$ as the Bourgain spaces associated with the linear part of the higher-order Benjamin-Ono equation and the Schrödinger equation given in \eqref{BO-NLS}, respectively. That is, $X^{s,b}$ and $Y^{s,b}$ are defined as the completion of $\mathcal{S}(\mathbb{R}^{2})$ under the following norm
\begin{equation*}
    \|r\|_{X^{s,b}}=\|r\|_{X^{s,b}_{\tau=b\xi|\xi|-a\xi^{3}}}:=\left(\int_{\mathbb{R}}\int_{\mathbb{R}}\langle \tau -b\xi|\xi|+a\xi^{3} \rangle^{2b}\langle \xi \rangle^{2s}|\widehat{r}(\xi,\tau)|^{2}d \xi d\tau   \right)^{\frac{1}{2}},
\end{equation*}
and,
\begin{equation}
    \|q\|_{Y^{s,b}}=\|q\|_{Y^{s,b}_{\tau=\alpha\xi^{2}}}:=\left(\int_{\mathbb{R}}\int_{\mathbb{R}}\langle \tau- \alpha\xi^{2} \rangle^{2b}\langle  \xi\rangle^{2b}|\widehat{q}(\xi,\tau)|^{2}d\xi d\tau\right)^{\frac{1}{2}},
\end{equation}
where $\langle x \rangle=(1+x^{2})^{\frac{1}{2}}.$

 In this work, we will also make use of a dyadic version of these spaces, as introduced in \cite{Tataru-WavesMap} in the context of wave maps. Fix $s,b\in \mathbb{R}$ and $1\leq q\leq \infty$. Then, $X^{s,b,q}$ is defined as the completion of $\mathcal{S}(\mathbb{R}^{2})$ under the following norm
\begin{equation*}
    \|r\|_{X^{s,b,q}}=\left(\sum_{N}\left(\sum_{L}\langle N\rangle^{sq}\langle L\rangle^{bq}\|P_{N}Q_{L}r\|_{L^{2}_{x,t}}^{q}\right)^{\frac{2}{q}}\right)^{\frac{1}{2}}.
\end{equation*}
Moreover, we define a time-localized version of these spaces. Fix $T>0$, and let $Z$ denote either $X^{s,b}$, $X^{s,b,q}$, or $Y^{s,b}$. Then, we define the space $Z_{T}$ equipped with the following norm
\begin{equation*}
    \|r\|_{Z_{T}}:=\inf \{\|\tilde{r}\|_{Z}\ |  \ \tilde{r}:\mathbb{R}\times \mathbb{R}\rightarrow\mathbb{C},\quad  \tilde{r}_{|\mathbb{R}\times [0,T]}=r  \},
\end{equation*}
where $r:\mathbb{R}\times [0,T] \rightarrow \mathbb{C}$.

The following are standard properties of the Bourgain spaces defined above (see Proposition 2.1 in \cite{HOBOinH1-Didier-Molinet}).

\begin{prop}\label{propimersionsbourgainspaces}
Fix $\delta>0$, $s\in \mathbb{R}$. Then it holds true
\begin{align}
    &\|r\|_{X^{s,\frac{1}{2}}}\lesssim \|r\|_{X^{s,\frac{1}{2},1}}\lesssim \|r\|_{X^{s,\frac{1}{2}+\delta}},\label{bourgainstandardtodiadic}\\
    & \|r\|_{L^{\infty}_{t}H^{s}_{x}}\lesssim \|\widehat{J^{s}_{x}r}\|_{L^{2}_{\xi}L^{1}_{\tau}}\lesssim \|r\|_{X^{s,\frac{1}{2},1}},\label{bourgaindiadictoHs}
\end{align}
and
\begin{equation}\label{Hstobourgain}
    \|f\|_{X^{s,-\frac{1}{2}+\delta}}+\|f\|_{Y^{s,-\frac{1}{2}+\delta}_{\tau=\pm \alpha\xi^{2}}}\lesssim \|f\|_{L^{1+\delta'}_{t}H^{s}_{x}},
\end{equation}
for $\delta'>0$ satisfying $1+\delta'=\frac{1}{1-\delta}$.
\end{prop}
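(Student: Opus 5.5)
The plan is to work entirely on the Fourier side and reduce each inequality to a one-dimensional estimate in $\tau$ for fixed $\xi$ (or for fixed dyadic blocks), using Cauchy--Schwarz together with the finite overlap of the modulation projectors $Q_L$. Throughout write $\sigma=\tau-b\xi|\xi|+a\xi^3$ for the modulation variable, so that $\langle\sigma\rangle\sim L$ on the support of $\psi_L$.

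\textbf{Estimate \eqref{bourgainstandardtodiadic}.} For the first inequality, since the $\psi_L$ form a partition of unity in $\sigma$ with bounded overlap,
\begin{equation*}
\|r\|_{X^{s,\frac12}}^2\sim\sum_N\langle N\rangle^{2s}\sum_L\langle L\rangle\|P_NQ_Lr\|_{L^2}^2\le\sum_N\Big(\sum_L\langle N\rangle^{s}\langle L\rangle^{\frac12}\|P_NQ_Lr\|_{L^2}\Big)^2 ,
\end{equation*}
because an $\ell^2$ norm is bounded by the corresponding $\ell^1$ norm. For the second inequality, fix $N$ and apply Cauchy--Schwarz in $L$:
\begin{equation*}
\sum_L\langle L\rangle^{\frac12}\|P_NQ_Lr\|\le\Big(\sum_L\langle L\rangle^{-2\delta}\Big)^{\frac12}\Big(\sum_L\langle L\rangle^{1+2\delta}\|P_NQ_Lr\|^2\Big)^{\frac12}.
\end{equation*}
The first factor is a convergent dyadic geometric sum. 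Summing the square of the second factor over $N$ gives $\|r\|_{X^{s,\frac12+\delta}}^2$.

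\textbf{Estimate \eqref{bourgaindiadictoHs}.} The first inequality follows from $|\widehat{J^s_xr}^{x}(\xi,t)|\le\int|\widehat{J^s_xr}(\xi,\tau)|\,d\tau$ for every $t$, followed by Plancherel in $x$. For the second, decompose in $N$ and $L$ and use Minkowski. For fixed $\xi$, Cauchy--Schwarz on the set where $\langle\sigma\rangle\sim L$, which has $\tau$-measure about $L$, gives
\begin{equation*}
\|\widehat{P_NQ_Lr}(\xi,\cdot)\|_{L^1_\tau}\lesssim L^{\frac12}\|\widehat{P_NQ_Lr}(\xi,\cdot)\|_{L^2_\tau}.
\end{equation*}
Taking the $L^2_\xi$ norm, summing over $L$ by the triangle inequality, and then using almost orthogonality in $N$ (the supports of $\phi_N$ overlap boundedly) yields $\|r\|_{X^{s,\frac12,1}}$.

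\textbf{Estimate \eqref{Hstobourgain}.} After replacing $f$ by $J^s_xf$ and taking the Fourier transform in $x$, the claim for $X^{s,-\frac12+\delta}$ reduces, for each fixed $\xi$, to
\begin{equation*}
\|\langle\tau-\omega(\xi)\rangle^{-\frac12+\delta}\hat g(\tau)\|_{L^2_\tau}\lesssim\|g\|_{L^{1+\delta'}_t},
\end{equation*}
with a constant uniform in $\omega(\xi)=b\xi|\xi|-a\xi^3$. Multiplying $g$ by $e^{-it\omega}$ removes the shift, so it suffices to treat $\omega=0$. This is the dual of $\|\widehat{\langle\tau\rangle^{-\frac12+\delta}h}\|_{L^{p'}_t}\lesssim\|h\|_{L^2}$ with $p'=\frac1\delta$, since $1+\delta'=\frac{1}{1-\delta}$ has conjugate exponent $\frac1\delta$. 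That inequality is the Sobolev embedding $H^{\frac12-\delta}(\mathbb{R})\hookrightarrow L^{\frac1\delta}(\mathbb{R})$, which holds because $\frac12-\frac{1}{1/\delta}=\frac12-\delta$ is exactly the critical exponent. Afterwards, integrate in $\xi$ and apply Minkowski's inequality to exchange $L^2_\xi$ and $L^{1+\delta'}_t$; this is valid since $1+\delta'\le2$. The same argument, with $\omega=\pm\alpha\xi^2$, gives the $Y$ bound. Note that the $Y$ norm as written carries the weight $\langle\xi\rangle^{2b}$; we read it as $\langle\xi\rangle^{2s}$, presumably the intended definition.

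\textbf{Main obstacle.} The only delicate point is \eqref{Hstobourgain}: one must identify the correct exponent pairing so that the critical Sobolev embedding applies, and track the uniformity of the constant in the phase $\omega(\xi)$.
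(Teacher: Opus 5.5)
Your proof is correct. The paper does not actually prove this proposition; it only refers to Proposition~2.1 of Molinet--Pilod. What you give is the standard argument behind it:
\begin{itemize}
\item for \eqref{bourgainstandardtodiadic}, bounded overlap of the $\phi_N$ and $\psi_L$, together with $\ell^2\subset\ell^1$ and Cauchy--Schwarz in $L$;
\item for \eqref{bourgaindiadictoHs}, Cauchy--Schwarz on the $\tau$-support of $\psi_L$;
\item for \eqref{Hstobourgain}, duality with $H^{\frac12-\delta}(\mathbb{R})\hookrightarrow L^{\frac1\delta}(\mathbb{R})$ followed by Minkowski, which applies since $1+\delta'\le 2$.
\end{itemize}

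Three small points:
\begin{itemize}
\item For $L=0$ the gain is $\langle L\rangle^{\frac12}$ rather than $L^{\frac12}$.
\item Your Sobolev step tacitly needs $0<\delta\le\frac12$. This is in fact the only range where \eqref{Hstobourgain} can hold. For $\frac12<\delta<1$ the exponent $-\frac12+\delta$ is positive, and $f=e^{iMt}\chi(t)\varphi(x)$ with $M\to\infty$ has bounded $L^{1+\delta'}_tH^s_x$ norm but $X^{s,-\frac12+\delta}$ norm of order $M^{-\frac12+\delta}$. The paper only uses the estimate with $\delta$ small.
\item You are right that the weight $\langle\xi\rangle^{2b}$ in the definition of $Y^{s,b}$ is a typo for $\langle\xi\rangle^{2s}$.
\end{itemize}
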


\subsection{Linear estimates}
We have the following linear estimates associated with the Bourgain spaces defined in the preceding subsection  (see \cite{PonceBekiranov}, \cite{NDE-Tao}, \cite{Tataru-WavesMap}).

\begin{lemma} Let $s\in \mathbb{R}$ and $\frac{1}{2}<b\leq1$. Then
\begin{align}
    &\|\eta(t)V(t)\phi\|_{X^{s,\frac{1}{2},1}}\lesssim \|\phi\|_{H^{s}}, \label{linearestimate}\\ 
    &\|\eta(t)U(t)\phi\|_{Y^{s,b}}\lesssim \|\phi\|_{H^{s}}. \label{linearestimateNLS}
\end{align}
\begin{lemma}
Let $s\in \mathbb{R}$ and $\frac{1}{2}<b\leq 1$. Then
\begin{align}
    &\left\|\eta(t)\int_{0}^{t}V(t-t')g(t')dt'\right\|_{X^{s,\frac{1}{2},1}}\lesssim \|g\|_{X^{s,-\frac{1}{2},1}}\label{nonlinearestimate},\\
    &\left\|\eta(t)\int_{0}^{t}U(t-t')g(t')dt'\right\|_{Y^{s,b}}\lesssim \|g\|_{Y^{s,b-1}}. \label{nonlinearestimateNLS}
\end{align}
\end{lemma}
\end{lemma}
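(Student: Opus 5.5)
Both estimates come from the standard reduction to a one-dimensional estimate in the time variable, obtained by conjugating with the linear group. For \eqref{nonlinearestimateNLS}, set $v(t)=U(-t)g(t)$. Then $\mathcal{F}_{x}v(\xi,\cdot)$ is $\widehat{g}(\xi,\cdot)$ translated in $\tau$ by $\alpha\xi^{2}$. Moreover
\[
\eta(t)\int_{0}^{t}U(t-t')g(t')\,dt'=U(t)\Big(\eta(t)\int_{0}^{t}v(t')\,dt'\Big).
\]
Since the $Y^{s,b}$ norm becomes a weighted $H^{b}_{t}$ norm after this conjugation (reading the spatial weight as $\langle\xi\rangle^{2s}$), it suffices to prove, for each fixed $\xi$ and uniformly in $\xi$,
\[
\Big\|\eta(t)\int_{0}^{t}F(t')\,dt'\Big\|_{H^{b}_{t}}\lesssim \|F\|_{H^{b-1}_{t}}.
\]
The estimate \eqref{nonlinearestimate} reduces in the same way, via $V(-t)$, to an estimate for each fixed $\xi$ in the time Besov spaces:
\[
\Big\|\eta(t)\int_{0}^{t}F\Big\|_{B^{1/2}_{2,1}}\lesssim\|F\|_{B^{-1/2}_{2,1}}.
\]
Here the dyadic pieces $Q_{L}$ correspond exactly to Littlewood--Paley pieces in $\tau$ after the conjugation. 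Squaring and integrating in $\xi$ with weight $\langle\xi\rangle^{2s}$ then closes both estimates.

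For the time estimate I would write
\[
\int_{0}^{t}F=\int\widehat{F}(\tau)\,\frac{e^{it\tau}-1}{i\tau}\,d\tau
\]
and split $\widehat F$ into the regions $|\tau|\le1$ and $|\tau|>1$. In the region $|\tau|\le 1$, I would expand $(e^{it\tau}-1)/(i\tau)=\sum_{k\ge1}(it)^{k}\tau^{k-1}/k!$. This gives the terms $\eta(t)t^{k}$ times the coefficients $\int_{|\tau|\le1}\tau^{k-1}\widehat F\,d\tau$. Each coefficient is bounded by $\|F\|$ in any of the norms involved, because the weight $\langle\tau\rangle$ is comparable to $1$ there. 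Also $\|t^{k}\eta\|_{H^{b}}+\|t^{k}\eta\|_{B^{1/2}_{2,1}}\lesssim C^{k}$, so the series is summable thanks to the factor $1/k!$.

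For $|\tau|>1$ (in the Besov case, for each dyadic block $F_{L}$ with $L\ge1$) there are two pieces. The first is $\eta(t)\,\mathcal{F}^{-1}_{t}\big(\widehat F/(i\tau)\big)$. Multiplication by $\eta$ is bounded on $H^{b}_{t}$ and on $B^{1/2}_{2,1}$, so this piece is controlled by $\|\langle\tau\rangle^{b-1}\widehat F\|_{L^{2}}$, respectively by $\sum_{L}L^{-1/2}\|F_{L}\|_{L^{2}}$. The second is the constant term $\eta(t)\int_{|\tau|>1}\widehat F/\tau\,d\tau$. By Cauchy--Schwarz it is bounded by
\[
\Big(\int_{|\tau|>1}|\tau|^{-2}\langle\tau\rangle^{2(1-b)}\,d\tau\Big)^{1/2}\|F\|_{H^{b-1}},
\]
which is finite precisely because $b>\tfrac12$. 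In the endpoint Besov case $b=\tfrac12$, the same Cauchy--Schwarz on each block gives $L^{-1}\cdot L^{1/2}\|F_{L}\|_{L^{2}}=L^{-1/2}\|F_{L}\|_{L^{2}}$. The $\ell^{1}$ summation in $L$ built into $B^{-1/2}_{2,1}$ is exactly what makes the sum converge, and this is why the dyadic space $X^{s,\frac12,1}$ replaces $X^{s,\frac12}$.

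I expect the main obstacle to be this endpoint $b=\tfrac12$ case, namely the uniform boundedness of multiplication by $\eta(t)$ on $B^{1/2}_{2,1}$. I would prove it with a paraproduct decomposition in time. When the frequency of $\eta$ is much lower than that of $h$, the product stays localized at the frequency of $h$, so the bound is immediate. When $\eta$ carries the high frequency, the rapid (Schwartz) decay of $\widehat\eta$ pays for the factor $L^{1/2}$ and for the $\ell^{1}$ sum, using Bernstein's inequality to control the low-frequency part of $h$ in $L^{\infty}$.
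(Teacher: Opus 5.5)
Your strategy is the standard one from the references the paper cites in place of a proof: conjugate by the group, reduce to a time estimate, Taylor-expand the modulations $|\tau|\le 1$, and split $|\tau|>1$ into $\eta\,\mathcal{F}_t^{-1}(\widehat F/i\tau)$ plus a constant. For \eqref{nonlinearestimateNLS} it is complete, because everything is $L^2$-based and the fixed-$\xi$ reduction is legitimate; your reading of $\langle\xi\rangle^{2b}$ as $\langle\xi\rangle^{2s}$ is also right.

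The gap is in the reduction of \eqref{nonlinearestimate}. Let $\omega$ be the phase of $V$ and $\lambda=\tau-\omega(\xi)$. Then
\[
\|u\|_{X^{s,\frac12,1}}=\Big(\sum_{N}\langle N\rangle^{2s}\Big(\sum_{L}\langle L\rangle^{\frac12}\big\|\phi_N(\xi)\phi_L(\lambda)\widehat{u}(\xi,\lambda+\omega(\xi))\big\|_{L^{2}_{\xi,\lambda}}\Big)^{2}\Big)^{\frac12},
\]
so for each $N$ the $\ell^1$ sum over modulations sits \emph{outside} the $L^2_\xi$ integral. A time-Besov bound for each fixed $\xi$, squared and integrated, only controls $\big\|\phi_N(\xi)\sum_L\langle L\rangle^{1/2}\|\phi_L(\lambda)\widehat u(\xi,\lambda+\omega(\xi))\|_{L^2_\lambda}\big\|_{L^2_\xi}$. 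By Minkowski this quantity is \emph{dominated by} the $N$-th piece of $\|u\|_{X^{s,\frac12,1}}$, not the reverse. That is harmless on the right-hand side. On the left-hand side, however, the step ``squaring and integrating in $\xi$ closes both estimates'' does not give \eqref{nonlinearestimate}. The conclusion is true, because the conjugated operator is the same for every $\xi$, but it has to be proved another way.

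The fix is to run your time argument for functions with values in the Hilbert space $L^2(\phi_N(\xi)^2d\xi)$, keeping the $L^2_\xi$ norm inside every modulation block. All your steps then survive:
\begin{itemize}
\item The Taylor terms and the constant term have tensor form $f(t)c(\xi)$, and for these the $N$-th piece factorizes exactly as $\|f\|_{B^{1/2}_{2,1}}\|\phi_N c\|_{L^2_\xi}$.
\item The coefficients are bounded by Cauchy--Schwarz in $\lambda$ on each block, then Minkowski in $\xi$, which now goes the right way. For example, $\|\phi_N\int_{|\lambda|>1}\widehat v(\xi,\lambda)\lambda^{-1}d\lambda\|_{L^2_\xi}\lesssim\sum_L \langle L\rangle^{-1/2}\|\phi_N\phi_L\widehat v\|_{L^2_{\xi,\lambda}}$.
\item For multiplication by $\eta$, prove block-to-block bounds $\|P_L(\eta P_Mh)\|_{L^2}\le C_{L,M}\|P_Mh\|_{L^2}$ with $\sup_M\langle M\rangle^{-1/2}\sum_L\langle L\rangle^{1/2}C_{L,M}<\infty$. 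These follow from the decay of $\widehat\eta$ and Young's inequality in $\lambda$. They hold verbatim for Hilbert-valued $h$ and cover every paraproduct case at once, including the high--high case you did not mention.
\end{itemize}

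Finally, you never treat \eqref{linearestimate}--\eqref{linearestimateNLS}. They are immediate: $\mathcal{F}(\eta V(\cdot)\phi)(\xi,\tau)=\widehat\eta(\tau-\omega(\xi))\widehat\phi(\xi)$ gives $\|\eta V(\cdot)\phi\|_{X^{s,\frac12,1}}\lesssim\|\eta\|_{B^{1/2}_{2,1}}\|\phi\|_{H^s}$, and the same computation with $\|\eta\|_{H^b}$ handles $U$.
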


Molinet and Pilod established the following estimates in Bourgain spaces associated with the Higher-order Benjamin-Ono equation (see \cite{HOBOinH1-Didier-Molinet}).

\begin{lemma}\label{lemmaMergulhoLpBpurgain}
Let $T>0$ and $0\leq \theta \leq 1$. Then, it holds that
\begin{equation}\label{MergulhoLpBpurgain1}
    \|r\|_{L^{p_{\theta}}_{x,t}}\lesssim \|r\|_{\widetilde{L^{p_{\theta}}_{x,t}}}\lesssim \|r\|_{X^{0,\frac{\theta}{2}+}},
\end{equation}
and,
\begin{equation}\label{MergulhoLpBpurgain2}
     \|r\|_{L^{p_{\theta}}_{x,T}}\lesssim \|r\|_{X^{0,\frac{\theta}{2}+}_{T}},
\end{equation}
where $\frac{1}{p_{\theta}}=\frac{\theta}{8}+\frac{1-\theta}{2}$.
\end{lemma}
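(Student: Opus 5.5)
We prove the lemma by interpolating between two endpoint estimates and then summing over dyadic frequency blocks. The two endpoints are $\theta=0$, where $p_0=2$, and $\theta=1$, where $p_1=8$.

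\textbf{Endpoint $\theta=0$.} By Plancherel,
\[
\|r\|_{L^2_{x,t}}=\|r\|_{X^{0,0}}\le \|r\|_{X^{0,0+}}.
\]

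\textbf{Endpoint $\theta=1$.} Here we need the Strichartz estimate for the group $V(t)$. Its phase is $\omega(\xi)=b\xi|\xi|-a\xi^3$, so $|\omega''(\xi)|=|2b\,\mathrm{sgn}(\xi)-6a\xi|$. This is bounded below by $c\langle\xi\rangle$ away from the point $\xi_0=b/(3a)\,\mathrm{sgn}$, where it may vanish. Two remedies are available. One is to use the Kenig–Ponce–Vega type estimate $\|D_x^{1/4}V(t)\phi\|_{L^8_{x,t}}$ together with the weaker $L^8$ estimate $\|V(t)\phi\|_{L^8_{x,t}}\lesssim\|\phi\|_{L^2}$. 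The other is to localize away from $\xi_0$ and handle the bounded frequency region separately.

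On a dyadic block $P_N$ we use the $L^8_tL^8_x$ Strichartz inequality obtained from the dispersive decay $|V(t)P_N\phi|_\infty\lesssim |t|^{-1/2}$. This decay follows from van der Corput's lemma with the second derivative when $\omega''$ is bounded below. At the degenerate point, which lies at bounded frequency, we use the third derivative instead: $\omega'''=-6a\neq0$ gives $|t|^{-1/3}$ decay. Since $(8,8)$ is admissible for the $\sigma=1/3$ scaling as well, up to harmless low-frequency constants, both regimes are covered. The $TT^*$ argument then gives
\[
\|V(t)P_N\phi\|_{L^8_{x,t}}\lesssim\|P_N\phi\|_{L^2}
\]
uniformly in $N$.

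The standard transference principle writes $r=\int e^{it\tau}V(t)f_\tau\,d\tau$, with $\widehat{f_\tau}(\xi)=\widehat r(\xi,\tau+\omega(\xi))$. Combining this with Cauchy–Schwarz in $\tau$ yields
\[
\|P_Nr\|_{L^8_{x,t}}\lesssim\|P_Nr\|_{X^{0,\frac12+}}.
\]

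\textbf{Interpolation.} Interpolating the two endpoint estimates blockwise by complex interpolation (Stein's analytic family) applied to the operator $P_N$ gives
\[
\|P_Nr\|_{L^{p_\theta}_{x,t}}\lesssim\|P_Nr\|_{X^{0,\frac\theta2+}}.
\]
Squaring and summing over $N$, and using the almost orthogonality $\sum_N\|P_Nr\|_{X^{0,b}}^2\sim\|r\|_{X^{0,b}}^2$, proves the second inequality in \eqref{MergulhoLpBpurgain1}.

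The first inequality, $\|r\|_{L^{p}}\lesssim\|r\|_{\widetilde{L^{p}}}$ for $p\ge2$, follows from the Littlewood–Paley square function estimate $\|r\|_{L^p}\sim\|(\sum|P_Nr|^2)^{1/2}\|_{L^p}$. Since $p/2\ge1$, Minkowski's inequality gives $\|(\sum|P_Nr|^2)^{1/2}\|_{L^p}\le(\sum\|P_Nr\|_{L^p}^2)^{1/2}$.

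\textbf{Time-localized estimate.} For \eqref{MergulhoLpBpurgain2}, take any extension $\tilde r$ of $r$ from $[0,T]$. Then
\[
\|r\|_{L^{p_\theta}_{x,T}}\le\|\tilde r\|_{L^{p_\theta}_{x,t}}\lesssim\|\tilde r\|_{X^{0,\frac\theta2+}},
\]
and taking the infimum over extensions gives the claim.

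The main obstacle is the uniform dispersive decay near the inflection point of $\omega$. I would treat it by splitting $P_{lo}$ from $P_{hi}$: on $P_{hi}$ the symbol $\omega''\gtrsim N$ gives even better decay, while on $P_{lo}$ the nonvanishing third derivative gives $|t|^{-1/3}$ decay.
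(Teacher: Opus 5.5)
Your overall scheme is the right one, and it is the argument behind the lemma the paper quotes from Molinet--Pilod. The paper gives no proof of this lemma itself, but its proof of Lemma \ref{lemmaimersionBourgainNLS} follows exactly this pattern: Strichartz, transference, $L^2_{x,t}=X^{0,0}$, Stein interpolation. Several of your steps are fine as they stand:
\begin{itemize}
\item Plancherel at $\theta=0$;
\item the transference to $X^{0,\frac12+}$;
\item blockwise interpolation and the $\ell^2$ summation over blocks;
\item Littlewood--Paley plus Minkowski for $\|r\|_{L^p}\lesssim\|r\|_{\widetilde{L^p}}$;
\item the extension argument for the $T$-localized bound.
\end{itemize}
The step that does not hold up as written is the one everything rests on: the bound $\|V(t)P_N\phi\|_{L^8_{x,t}}\lesssim\|P_N\phi\|_{L^2}$, uniformly in $N$, on the high-frequency blocks.

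You derive it from the $|t|^{-1/2}$ decay given by $|\omega''|\gtrsim N$, and you say $(8,8)$ is admissible for $\sigma=1/3$ ``as well''. It is not admissible for $\sigma=1/2$. There $TT^*$ gives the pairs $\frac2q+\frac1r=\frac12$, whose diagonal exponent is $6$. The Hardy--Littlewood--Sobolev step would need convolution with $|t|^{-3/8}$ to map $L^{8/7}_t\to L^8_t$, and it does not.

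On a dyadic block $L^8$ can still be reached, but only through bookkeeping in $N$ that your proposal omits. With decay constant $(N|t|)^{-1/2}$ one gets $L^8_tL^4_x$ with constant $N^{-1/8}$, and Bernstein $L^4_x\to L^8_x$ costs exactly $N^{1/8}$. So ``even better decay'' is not the point; the exact powers of $N$ are. Without them, the uniformity in $N$ that the $\ell^2$ sum needs is not established.

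The cleanest repair is to notice that the third-derivative bound you reserve for the inflection points holds at all frequencies. (Those points exist only when $a>0$, at $\pm b/(3a)$.) On each half-line $\{\xi>0\}$ and $\{\xi<0\}$, the phase $x\xi+t\omega(\xi)$ is a cubic polynomial with third derivative $-6at$. The symbol of $P_N$ has uniformly bounded variation. Hence van der Corput gives $\|V(t)P_N\phi\|_{L^\infty}\lesssim|at|^{-1/3}\|\phi\|_{L^1}$ uniformly in $N$. Then $TT^*$ with the kernel $|t|^{-1/4}$, which does map $L^{8/7}_t\to L^8_t$, yields $\|V(t)\phi\|_{L^8_{x,t}}\lesssim\|\phi\|_{L^2}$ for every $a\neq0$, with no frequency splitting.

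Finally, discard the aside about $\|D_x^{1/4}V(t)\phi\|_{L^8_{x,t}}$. At high frequency $V(t)$ behaves like the Airy group, for which $L^8_{x,t}$ is already the scale-invariant estimate, so no derivative gain is possible.
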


We further establish an analogous result to Lemma \ref{lemmaMergulhoLpBpurgain} in the context of the Bourgain spaces associated with the Schrödinger equation.
\begin{lemma}\label{lemmaimersionBourgainNLS} For all $0\leq \theta \leq 1$ and $T>0$ it holds that
\begin{equation}\label{imersionBourgainNLS1}
    \|q\|_{L^{p_{\theta}}_{x,t}}\lesssim \|q\|_{Y_{\tau=\pm\alpha \xi^{2}}^{0,\frac{\theta}{2}+}},
\end{equation}
and,
\begin{equation}\label{imersionBourgainNLS2}
    \|q\|_{L^{p_{\theta}}_{x,T}}\lesssim \|q\|_{Y^{0,\frac{\theta}{2}+}_{\tau=\pm\alpha \xi^{2},T}},
\end{equation}
where $\frac{1}{p_{\theta}}=\frac{\theta}{6}+\frac{1-\theta}{2}$.
\end{lemma}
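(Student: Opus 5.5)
We prove Lemma~\ref{lemmaimersionBourgainNLS} by interpolating between the trivial $L^2$ identity and the Strichartz estimate for the Schrödinger group $U(t)$, transferring the Strichartz estimate to Bourgain spaces by the standard argument. The endpoints are $\theta=0$, where $p_0=2$, and $\theta=1$, where $p_1=6$.

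\emph{Endpoint $\theta=0$.} Here $\|q\|_{L^2_{x,t}}=\|q\|_{Y^{0,0}}\le\|q\|_{Y^{0,0+}}$ by Plancherel.

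\emph{Endpoint $\theta=1$.} The one-dimensional Strichartz estimate $\|U(t)\phi\|_{L^6_{x,t}}\lesssim\|\phi\|_{L^2}$ holds for $U(t)=e^{\pm i\alpha t\partial_x^2}$. The sign and the size of $\alpha$ only rescale time, so the admissible pair $(6,6)$ is unaffected. To transfer it to Bourgain spaces, write
\[
q(x,t)=\int_{\mathbb{R}} e^{it\lambda}\,U(t)f_\lambda(x)\,d\lambda, \qquad \widehat{f_\lambda}(\xi)=\widehat{q}(\xi,\lambda\pm\alpha\xi^2).
\]
Minkowski's inequality followed by Cauchy--Schwarz in $\lambda$ gives
\[
\|q\|_{L^6_{x,t}}\lesssim\int\|f_\lambda\|_{L^2}\,d\lambda\lesssim\|q\|_{Y^{0,\frac12+}_{\tau=\pm\alpha\xi^2}},
\]
since $\int\langle\lambda\rangle^{-1-}\,d\lambda<\infty$.

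\emph{Interpolation.} For $0<\theta<1$, complex interpolation between $Y^{0,0+}\to L^2$ and $Y^{0,\frac12+}\to L^6$ yields $Y^{0,\frac{\theta}{2}+}\to L^{p_\theta}$ with $\frac1{p_\theta}=\frac{1-\theta}{2}+\frac{\theta}{6}$. The family $Y^{0,b}$ is a complex interpolation scale, being a weighted $L^2$ space on the Fourier side, and the Lebesgue spaces interpolate as usual. Alternatively, one can use Riesz--Thorin together with the Stein interpolation theorem for the analytic family $\langle\tau\mp\alpha\xi^2\rangle^{-z}$. This proves \eqref{imersionBourgainNLS1}.

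\emph{Time-localized version.} For \eqref{imersionBourgainNLS2}, let $\tilde q$ be any extension of $q$ from $[0,T]$. Then
\[
\|q\|_{L^{p_\theta}_{x,T}}\le\|\tilde q\|_{L^{p_\theta}_{x,t}}\lesssim\|\tilde q\|_{Y^{0,\frac\theta2+}},
\]
and taking the infimum over all extensions gives the claim.

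One point needs care: the weight in the definition of $Y^{s,b}$ in the paper appears to be written as $\langle\xi\rangle^{2b}$, which is presumably a typo for $\langle\xi\rangle^{2s}$. With $s=0$ the space is the standard $X^{0,b}$ space adapted to $\tau=\pm\alpha\xi^2$, and all steps above use only that structure. The remaining point to check is that the constant is uniform in $\theta$, up to the loss of $0+$ at each endpoint, so that the interpolation produces exactly the exponent $\frac{\theta}{2}+$.
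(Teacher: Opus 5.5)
Your proposal is correct and follows essentially the same route as the paper: the identity $\|q\|_{L^2_{x,t}}=\|q\|_{Y^{0,0}}$ at $\theta=0$, the $L^6_{x,t}$ Strichartz estimate transferred to $Y^{0,\frac12+}$ at $\theta=1$, Stein/complex interpolation between the two endpoints, and the infimum over extensions for the time-localized bound. The only differences are that the paper cites Ginibre's transference lemma where you write out the foliation argument, and that your closing worry about uniformity in $\theta$ is unnecessary, since the implicit constant may depend on $\theta$ (and interpolation gives a bound of the form $C_0^{1-\theta}C_1^{\theta}$ anyway).
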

\begin{proof} We will give the proof of these estimates only for the space $Y^{s,b}_{\tau=\alpha \xi^{2}}=Y^{s,b}$, and the other case follows from similar arguments. Note that, from Strichartz estimates associated with the Schrödinger equation  (see \cite[Theorem 4.2]{LinaresPonceNDE}), we have 
\begin{equation}\label{lpqinBourgainNLS1}
    \|U(t)\phi \|_{L^{6}_{x,t}}\lesssim \|\phi\|_{L^{2}}.
\end{equation}
Therefore, Lemma 3.3 in \cite{Ginibre-Lpq} ensures us that the estimate \eqref{lpqinBourgainNLS1} can be rewritten in the context of Bourgain's spaces as follows 
\begin{equation}\label{lpqinBourgainNLS3}
    \|q\|_{L^{6}_{x,t}}\lesssim \|q\|_{Y^{0,\frac{1}{2}+}}.
\end{equation}
On the other hand, we also have the following identification
\begin{equation}\label{lpqinBourgainNLS2}
    \|q\|_{L^{2}_{x,t}}=\|q\|_{Y^{0,0}}.
\end{equation}
Thus, from estimates \eqref{lpqinBourgainNLS3} and \eqref{lpqinBourgainNLS2}, along with Stein's interpolation theorem, we deduce that
\begin{equation}
    \|q\|_{L^{6}_{x,t}}\lesssim \|q\|_{Y^{0,\frac{\theta}{2}+}},
\end{equation}
for all $\theta \in [0,1]$ and $p_{\theta}$ as in the statement. Thus, it proves the estimate \eqref{imersionBourgainNLS1}. Now, the estimate \eqref{imersionBourgainNLS2} follows readily from \eqref{imersionBourgainNLS1} and the definition of $Y_{T}^{0,\frac{\theta}{2}+}$.
\end{proof}

The next lemma, proved by Sjölin in \cite{globalmaximalestNLS-Sjolin}, establishes a maximal estimate for the linear group associated with the Schrödinger equation. 

\begin{lemma}\label{maximalestimateNLS}
Let $s>\frac{1}{2} $ and $0<T\leq 1$. The following estimate holds true
\begin{equation}\label{eqmaximalestimateNLS}
    \|U(t)\phi\|_{L^{2}_{x}L^{\infty}_{T}}\lesssim \|\phi\|_{H^{s}_{x}}.
\end{equation}
\end{lemma}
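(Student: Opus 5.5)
The estimate is a Kenig--Ponce--Vega type maximal function bound, and I would prove it by a $TT^{*}$ argument. This reduces it to a pointwise decay estimate for a frequency-localized oscillatory kernel, and then sum over dyadic pieces.

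\textbf{Step 1: reduction to a piece localized at one frequency scale.} First, since $U(t)$ commutes with $P_{N}$, I decompose $\phi=\sum_{N}P_{N}\phi$. By the triangle inequality,
\begin{equation*}
\|U(t)\phi\|_{L^{2}_{x}L^{\infty}_{T}}\leq \sum_{N}\|U(t)P_{N}\phi\|_{L^{2}_{x}L^{\infty}_{T}}.
\end{equation*}
It therefore suffices to show
\begin{equation*}
\|U(t)P_{N}\phi\|_{L^{2}_{x}L^{\infty}_{T}}\lesssim \langle N\rangle^{\frac{1}{2}}\|P_{N}\phi\|_{L^{2}}
\end{equation*}
for $0<T\leq1$. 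Granting this, the sum $\sum_{N}\langle N\rangle^{\frac12-s}\langle N\rangle^{s}\|P_{N}\phi\|_{L^{2}}$ is bounded by Cauchy--Schwarz, with constant $\big(\sum_{N}\langle N\rangle^{1-2s}\big)^{1/2}$, which is finite exactly because $s>\frac12$.

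\textbf{Step 2: $TT^{*}$.} By $TT^{*}$, the bound for a fixed $N$ is equivalent to
\begin{equation*}
\Big\|\int_{0}^{T}U(t-t')\tilde P_{N}^{2}g(\cdot,t')\,dt'\Big\|_{L^{2}_{x}L^{\infty}_{T}}\lesssim \langle N\rangle\,\|g\|_{L^{2}_{x}L^{1}_{T}},
\end{equation*}
where $\tilde P_{N}$ is a slightly enlarged projector. The operator on the left is convolution in $x$ (and integration in $t'$) against the kernel
\begin{equation*}
K_{N}(x,t)=\int e^{i(x\xi+\alpha t\xi^{2})}\tilde\phi_{N}(\xi)^{2}\,d\xi .
\end{equation*}
Taking the supremum in $t,t'\in[-1,1]$, it suffices to prove $\|\sup_{|t|\leq 2}|K_{N}(\cdot,t)|\|_{L^{1}_{x}}\lesssim \langle N\rangle$, and then apply Young's inequality in $x$.

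\textbf{Step 3: the kernel bound.} I expect this to be the main step; it is where the restriction $T\leq1$ enters. I split into two regions of $x$.
\begin{itemize}
\item For $|x|\lesssim N$, I use the trivial bound $|K_{N}|\lesssim N$ together with the van der Corput bound $|K_{N}|\lesssim |t|^{-1/2}$. In fact $N$ alone suffices if I use the region $|x|\leq C$; for $C\leq|x|\leq 8\alpha N$ I need something better.
\item For $|x|\geq 8\alpha N$ (with $|t|\leq 2$), the phase satisfies $|x+2\alpha t\xi|\gtrsim |x|$ on the support. Two integrations by parts then give $|K_{N}|\lesssim N|x|^{-2}$, and this integrates over the region to $O(1)$.
\end{itemize}

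The intermediate region $C\leq |x|\leq 8\alpha N$ is the obstacle. Here I will use the stationary phase bound $|K_{N}(x,t)|\lesssim |t|^{-1/2}$, but this is only relevant when the stationary point $\xi_{0}=-x/(2\alpha t)$ lies in the support, which forces $|t|\sim |x|/N$. Hence
\begin{equation*}
\sup_{t}|K_{N}(x,t)|\lesssim \min\big(N,(N/|x|)^{1/2}\big).
\end{equation*}
When the stationary point is not in the support, integration by parts gives a better bound.

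Integrating this over $|x|\lesssim N$ gives $\int_{0}^{N}(N/x)^{1/2}dx\sim N$. So the $L^{1}$ norm is $\lesssim\langle N\rangle$, as required. Finally, I treat low frequencies ($N\lesssim1$) directly: there the kernel is bounded by $1$ and decays like $|x|^{-2}$, uniformly in $|t|\leq2$.
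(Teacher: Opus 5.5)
Your argument is correct. The paper gives no proof of this lemma; it imports the estimate from Sj\"olin's work on global maximal estimates. Your proposal is therefore an independent, self-contained route: the classical Kenig--Ponce--Vega scheme. It consists of a dyadic decomposition, then $TT^{*}$ to reduce each block to $\|\sup_{|t|\leq 2}|K_{N}(\cdot,t)|\|_{L^{1}_{x}}\lesssim\langle N\rangle$, then the pointwise bounds $\min(N,(N/|x|)^{1/2})$ for $|x|\lesssim N$ and $O(N|x|^{-2})$ beyond.

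The citation is shorter. Your route, however, shows exactly where $s>\frac{1}{2}$ enters (summing the $\langle N\rangle^{1/2}$ losses) and where $T\leq 1$ enters (the bounded range of $t-t'$ in the kernel). It also yields the frequency-localized bound $\|U(t)P_{N}\phi\|_{L^{2}_{x}L^{\infty}_{T}}\lesssim\langle N\rangle^{1/2}\|P_{N}\phi\|_{L^{2}}$. This is precisely the form in which the paper later applies the lemma, block by block, in \eqref{estimatesobolevq3}.

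Two small expository fixes are needed.

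First, in the intermediate region, split according to $|t|$ versus $|x|/N$ rather than according to whether $\xi_{0}$ lies in the support. Integration by parts alone degenerates when $\xi_{0}$ sits just outside the edge of the support. For $|t|\geq c|x|/N$, van der Corput gives $|t|^{-1/2}\lesssim (N/|x|)^{1/2}$ wherever $\xi_{0}$ is. For $|t|<c|x|/N$ with $c$ small, one has $|x+2\alpha t\xi|\geq |x|/2$ on the support, and a single integration by parts gives $\lesssim |x|^{-1}\leq (N/|x|)^{1/2}$.

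Second, with $|t|\leq 2$ and the enlarged support of $\tilde{\phi}_{N}$, the threshold $|x|\geq 8\alpha N$ does not by itself give $|x+2\alpha t\xi|\gtrsim |x|$. Replace it by $|x|\geq C\alpha N$ with $C$ large.
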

\vspace{-2em}
\subsection{Fractional Leibniz rules and commutator estimates}

The following result is the classical fractional Leibniz rule established by Kenig, Ponce, and Vega in the appendix of \cite{KDV-scattering-KPV}

\begin{lemma}[Fractional Leibniz rule] \label{fracleib}
Let $0<s<1$, $0\leq s_{1}, s_{2}\leq s$ with $s=s_{1}+s_{2}.$ Let $1<p,p_{1},p_{2}<\infty$ such that $\frac{1}{p}=\frac{1}{p_{1}}+\frac{1}{p_{2}}$. Then, 
    \begin{equation}\label{eqfracleib}
        \|D_{x}^{s}(fg)-fD_{x}^{s}g-gD_{x}^{s}f\|_{L^{p}}\lesssim \|D_{x}^{s_{1}}g\|_{L^{p_{1}}}\|D_{x}^{s_{2}}f\|_{L^{p_{2}}}.
    \end{equation}
    Moreover, for $s_{1}=0$, the value $p_{1}=\infty$ is allowed.
\end{lemma}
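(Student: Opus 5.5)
The plan is the standard Coifman--Meyer/Littlewood--Paley argument of Kenig, Ponce and Vega, which reduces the statement to bounds for paraproducts. First I would fix $f,g\in\mathcal{S}(\mathbb{R})$ and decompose the product with the projectors $P_N$ of Subsection \ref{Notations}:
\[
fg=\sum_{N\ll M}P_Nf\,P_Mg+\sum_{M\ll N}P_Nf\,P_Mg+\sum_{N\sim M}P_Nf\,P_Mg .
\]
I would then treat $D^s_x(fg)-fD^s_xg-gD^s_xf$ separately on each of the three pieces.

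\textbf{Low--high piece} ($f$ at low frequency $N\ll M$). Here $D^s_x(P_Nf\,P_Mg)-P_Nf\,D^s_xP_Mg$ has symbol $\bigl(|\xi+\eta|^s-|\eta|^s\bigr)\hat f(\xi)\hat g(\eta)$ with $|\xi|\ll|\eta|$. A Taylor expansion shows that this multiplier has the form $|\xi|^{s_2}|\eta|^{s_1}\,m(\xi,\eta)$, where $m$ is a Coifman--Meyer symbol; this uses $|\xi|^{1-s_2}\lesssim|\eta|^{1-s_2}$, which holds since $s_2\le s<1$. The remaining term $-P_Mg\,D^s_xP_Nf$ is itself a low--high paraproduct of $D^{s_2}_xf$ and $D^{s_1}_xg$, since $|\xi|^{s}=|\xi|^{s_2}|\xi|^{s_1}\lesssim |\xi|^{s_2}|\eta|^{s_1}$ in this region. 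The Coifman--Meyer theorem then bounds everything by $\|D^{s_2}f\|_{L^{p_2}}\|D^{s_1}g\|_{L^{p_1}}$.

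\textbf{High--low piece} ($N\gg M$). This is symmetric: one writes $|\xi+\eta|^s-|\xi|^s$ with $|\eta|\ll|\xi|$ and factors out $|\eta|^{s_1}|\xi|^{s_2}$. In the case $s_1=0$ this gives directly a paraproduct of $g$ and $D^{s}f$, which is why $p_1=\infty$ is allowed: paraproducts with the low-frequency factor in $L^\infty$ remain bounded.

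\textbf{High--high piece} ($N\sim M$), which I expect to be the main obstacle. Here $fD^s_xg$ and $gD^s_xf$ are paraproducts with symbols $|\eta|^s\sim|\xi|^{s_1}|\eta|^{s_2}$ and are handled as before. The term $D^s_x(P_Nf\,P_Mg)$ is the delicate one, because the output frequency may be much smaller than $N$. I would write
\[
D^s_x\sum_{N\sim M}P_Nf\,P_Mg=\sum_K P_K D^s_x\sum_{N\sim M\gtrsim K}P_Nf\,P_Mg ,
\]
gain the factor $K^s\lesssim N^{s_1}M^{s_2}$, and use the square-function characterization of $L^p$ (for $1<p<\infty$), Fefferman--Stein vector-valued maximal inequalities, and Hölder's inequality. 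This yields
\[
\Bigl(\sum_N |N^{s_2}P_Nf|^2\Bigr)^{1/2}\Bigl(\sum_M |M^{s_1}\tilde P_Mg|^2\Bigr)^{1/2}
\]
in $L^p$, and hence the required bound. At the endpoint $s_1=0$, $p_1=\infty$, I would instead put $g\in L^\infty$ and rely on the $L^p\times L^\infty\to L^p$ boundedness of Coifman--Meyer operators to avoid the square function of $g$. Since the statement is classical, in the paper I would simply cite the appendix of \cite{KDV-scattering-KPV} after this sketch.
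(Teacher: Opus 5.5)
Your overall strategy is the standard paraproduct proof from the appendix of \cite{KDV-scattering-KPV}. The paper gives no argument of its own for this lemma, only that citation. Your treatment of the term $D^{s}_{x}\sum_{N\sim M}P_Nf\,P_Mg$ (gain of $K^{s}$, summation in $K$, Fefferman--Stein, H\"older) is correct.

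One step, however, fails as written. In the low--high region the multiplier left after extracting $|\xi|^{s_2}|\eta|^{s_1}$ is
\[
m(\xi,\eta)=\frac{|\xi+\eta|^{s}-|\eta|^{s}}{|\xi|^{s_2}|\eta|^{s_1}}=\operatorname{sgn}(\xi)\operatorname{sgn}(\eta)\Bigl(\frac{|\xi|}{|\eta|}\Bigr)^{1-s_2}\widetilde{m}\Bigl(\frac{\xi}{\eta}\Bigr),\qquad \widetilde{m}(t)=\frac{(1+t)^{s}-1}{t}.
\]
It is bounded because $s_2<1$, but it is \emph{not} a Coifman--Meyer symbol when $s_2>0$. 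Near $\xi=0$ one has $|\partial_\xi m|\sim|\xi|^{-s_2}|\eta|^{s_2-1}$, which is not $O\bigl((|\xi|+|\eta|)^{-1}\bigr)$. The same defect affects the other fractional factors:
\begin{itemize}
\item $(|\xi|/|\eta|)^{s_1}$, which you use for $-g\,D^{s}_{x}f$ in the low--high region, when $s_1>0$;
\item $(|\eta|/|\xi|)^{1-s_1}$ and $(|\eta|/|\xi|)^{s_2}$ in the high--low region.
\end{itemize}
So the Coifman--Meyer theorem cannot be applied to these pieces as a whole. This also affects your endpoint remark for the low--high piece, where $g\in L^\infty$ is the high-frequency factor.

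The standard repair keeps the dyadic structure.
\begin{itemize}
\item Use homogeneous projectors, $N\in 2^{\mathbb{Z}}$. The inhomogeneous $P_N$ of Subsection~\ref{Notations} are unsuitable: $P_0$ contains $\xi=0$, where $|\xi|^{-s_2}$ is singular, and the estimate is dilation invariant.
\item Split the low--high sum as $N=2^{-l}M$, $l\geq 5$. On each block $m$ is smooth with $|\partial_\xi^{a}\partial_\eta^{b}m|\lesssim 2^{-l(1-s_2)}N^{-a}M^{-b}$. By degree-zero homogeneity, its Fourier-series expansion on the rescaled box has coefficients depending only on $l$ and decaying rapidly in the Fourier indices.
\item For fixed $l$, the resulting operators $\sum_M T_l(P_{2^{-l}M}D^{s_2}_xf,\,P_MD^{s_1}_xg)$ have output at frequency $\sim M$. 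They are therefore bounded in $L^p$, uniformly in $l$, by the maximal function of $D^{s_2}_xf$ times the square function of $D^{s_1}_xg$.
\item The sum over $l$ then converges thanks to $2^{-l(1-s_2)}$ for the commutator term, and $2^{-ls_1}$ for the term $-g\,D^{s}_{x}f$. When $s_1=0$ the latter is a plain paraproduct and needs no summation.
\end{itemize}
This geometric decay, rather than mere boundedness of $m$, is where $s<1$ is really used.

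At the endpoint $p_1=\infty$, the fixed-$l$ pieces are bounded using only $|P_Mg|\lesssim\|g\|_{L^\infty}$ together with square functions. When $g$ carries the high frequency, first pair with $h\in L^{p'}$ and replace $h$ by $\widetilde{P}_Mh$. The genuine $L^p\times L^\infty\to L^p$ Coifman--Meyer bound is needed only for the smooth degree-zero symbols, such as the high--high paraproducts.
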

\begin{remark}
A variant version of \eqref{eqfracleib}, which also covers the case $s\geq1$, was proved by L. Grafakos and S. Oh in \cite{Grafakos03062014} (see also \cite{KatoPonceLeibniz-Li}).
\end{remark}
 \indent Now, we have the following localized version of the estimate \eqref{eqfracleib} established by Molinet in \cite{Molinet-BO-circle}
    \begin{lemma}\label{LemmaFractionalDerivativeMolinet} Let $\alpha \geq 0$ and $1<p<\infty$. Then,
    \begin{equation}\label{FractionalDerivativeMolinet}
    \|D_{x}^{\alpha}P_{+}(fP_{-}\partial_{x}g)\|_{L^{^p}}\lesssim \|D_{x}^{\alpha_{1}}f\|_{L^{p_{1}}}\|D_{x}^{\alpha_{2}}g\|_{L^{p_{2}}},
    \end{equation}
    with $1<p_{j}<\infty,\ \frac{1}{p_{1}}+\frac{1}{p_{2}}=\frac{1}{p}$, $\alpha_{1}\geq \alpha,\alpha_{2}\geq 0$, and $\alpha_{1}+\alpha_{2}=1+\alpha$.
    \end{lemma}
The following commutator lemma is due to Li (see Theorem 5.1 and Corollary 5.3 in \cite{KatoPonceLeibniz-Li}), which extends the classical Kato–Ponce commutator estimates given in \cite{KDV-scattering-KPV}.
\begin{lemma}[Commutator estimates]\label{commestim}
Let $1<p<\infty$. Let $1<p_{1}, p_{2}, p_{3}, p_{4}\leq \infty$, satisfy
\begin{equation*}
    \frac{1}{p_{1}}+\frac{1}{p_{2}}=\frac{1}{p_{3}}+\frac{1}{p_{4}}=\frac{1}{p}.
\end{equation*}
Then, for all $f,g\in S(\mathbb{R})$, the following estimates are true 
\begin{itemize}
    \item[(i)] If $0<s\leq 1$, then
    \begin{equation}
        \|[D_{x}^{s};f]g\|_{L^{p}}\lesssim \|D_{x}^{s-1}\partial_{x}f\|_{L^{p_{1}}}\|g\|_{L^{p_{2}}}.
    \end{equation}
    \item[(ii)] If $s>1$, then
    \begin{equation}
        \|[D_{x}^{s};f]g\|_{L^{p}}\lesssim \|D_{x}^{s}f\|_{L^{p_{1}}}\|g\|_{L^{p_{2}}}+\|\partial_{x}f\|_{L^{p_{3}}}\|D_{x}^{s-1}g\|_{L^{p_{4}}}.
    \end{equation}
\end{itemize}
\end{lemma}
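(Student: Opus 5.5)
The statement to prove is the commutator estimate of Lemma \ref{commestim}; since it is quoted from Li, the plan is to reproduce the standard paraproduct argument. I would first reduce to $f,g\in\mathcal{S}(\mathbb{R})$ and write
\[
[D_x^s;f]g=D_x^s(fg)-fD_x^sg .
\]
Using the Littlewood--Paley projectors $P_N$, I would decompose $fg$ into three paraproducts: $\sum_N P_{\ll N}f\,P_Ng$ (low--high), $\sum_N P_Nf\,P_{\ll N}g$ (high--low), and $\sum_{N\sim M}P_Nf\,P_Mg$ (high--high). Each piece is then compared with the corresponding piece of $fD_x^sg$.

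\emph{Low--high piece.} Here the output frequency is comparable to $N$, and the commutator equals
\[
\sum_N \bigl(D_x^s(P_{\ll N}f\,P_Ng)-P_{\ll N}f\,D_x^sP_Ng\bigr).
\]
Writing its symbol as $|\xi+\eta|^s-|\eta|^s$ with $|\xi|\ll|\eta|$, I would Taylor expand in $\xi$. The first-order term is $s|\eta|^{s-2}\eta\,\xi$, which produces $\partial_xP_{\ll N}f$ times a multiplier of order $s-1$ applied to $P_Ng$; the remainder is a smooth bilinear symbol handled by Coifman--Meyer. For $s>1$ the order-$(s-1)$ multiplier falls on $g$, giving $\|\partial_xf\|_{L^{p_3}}\|D_x^{s-1}g\|_{L^{p_4}}$. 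For $0<s\le 1$ I would instead put all derivatives on $f$: the factor $N^{s-1}$ is bounded by $|\xi|^{s-1}$ because $|\xi|\ll N$ and $s-1\le 0$, so the piece is controlled by $\|D_x^{s-1}\partial_xf\|_{L^{p_1}}\|g\|_{L^{p_2}}$. To make this rigorous I would write the symbol as a rough bilinear multiplier and sum over dyadic scales of $\xi$ with geometric decay $(|\xi|/N)^{1-s}$ when $s<1$; the case $s=1$ is the classical Calderón commutator.

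\emph{High--low and high--high pieces.} In the high--low piece, $D_x^s$ of the product is essentially $D_x^sP_Nf\,P_{\ll N}g$, and $f\,D_x^sP_{\ll N}g$ carries less derivative. Both are bounded by Coifman--Meyer and the Littlewood--Paley square function, giving $\|D_x^sf\|_{L^{p_1}}\|g\|_{L^{p_2}}$; for $s\le1$, $D_x^sf$ equals $D_x^{s-1}\partial_x f$ up to $\mathcal H$. The high--high piece is treated the same way, with $D_x^s$ moved onto the $f$ factor since all frequencies are at most $\sim N$ and $s>0$.

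\emph{Endpoints and main obstacle.} Endpoints $p_j=\infty$ are allowed only for the factor carrying no derivatives or a full gradient; there one uses the $L^\infty\times L^p\to L^p$ Coifman--Meyer bound. The main obstacle is the low--high piece for $0<s<1$, where the symbol is not smooth at $\xi=0$, and making the endpoint cases work without square-function bounds at $L^\infty$. I would follow Li's approach there.
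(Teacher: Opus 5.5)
The paper gives no argument for this lemma: it quotes it from Li (Theorem 5.1 and Corollary 5.3 there). Your paraproduct reconstruction is therefore a genuinely different, self-contained route, and its architecture is sound.

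In the low--high cone, write $t=\xi/\eta$. The quotient $(|\xi+\eta|^s-|\eta|^s)/(\xi|\eta|^{s-1})=\mathrm{sgn}(\eta)\,((1+t)^s-1)/t$ is analytic in $t$, hence a Coifman--Meyer symbol. This gives the low--high part of (ii). For (i), the extra factor $(|\xi|/|\eta|)^{1-s}$ yields the decay $2^{-j(1-s)}$ once you split $|\xi|\sim 2^{-j}N$. Likewise, $P_Nf\,D_x^sP_{\ll N}g$ gains $2^{-js}$ after splitting $|\eta|\sim 2^{-j}N$. Finally, $D_x^s$ of the high--high product gains $2^{-js}$ after splitting its output frequency as $2^{-j}N$.

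The citation gives the full statement for free. Your route needs only the Coifman--Meyer theorem and Littlewood--Paley theory (the toolkit of Lemma~\ref{geralcommestm}), and it shows exactly where $s>0$ and $s\le 1$ are used.

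Two things need fixing.

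First, your sentence that $p_j=\infty$ is allowed only for the factor carrying no derivative or a full gradient misreads the lemma. The hypothesis $1<p_1,\dots,p_4\le\infty$ also permits $\|D_x^{s-1}\partial_xf\|_{L^\infty}$ with $0<s<1$ in (i), $\|D_x^sf\|_{L^\infty}$ in (ii), and $\|D_x^{s-1}g\|_{L^\infty}$ in (ii). As written, your plan proves a strictly weaker lemma. Your own decomposition closes this, and the obstacle you defer to Li is not really there.
\begin{itemize}
\item In the dyadically split pieces above, both inputs are single Littlewood--Paley blocks, and for fixed $j$ the outputs for different $N$ are almost orthogonal. After the Littlewood--Paley inequality on the output (plus Fefferman--Stein in the high--high case), put the $L^\infty$ factor under $\sup_N$ and the other under a square function. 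This is uniform in $j$, so the geometric factors sum.
\item The remaining pieces are $D_x^s(P_Nf\,P_{\ll N}g)$, the low--high part of (ii), and $\sum_N P_Nf\,D_x^s\widetilde{P}_Ng$. They have smooth Coifman--Meyer symbols, so the $L^\infty\times L^p\to L^p$ Coifman--Meyer bound applies whichever factor sits in $L^\infty$.
\item This requires absorbing the fractional derivative into the symbol block by block, e.g. $\sup_N\|N^sP_Nf\|_{L^\infty}\lesssim\|D_x^{s-1}\partial_xf\|_{L^\infty}$. Do not trade $D_x^sf$ for $D_x^{s-1}\partial_xf$ through $\mathcal{H}$, which is unbounded on $L^\infty$.
\end{itemize}

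Second, ``the high--high piece is treated the same way'' is not right for $D_x^s(P_Nf\,\widetilde{P}_Ng)$. Its symbol $|\xi+\eta|^s$ is singular on $\xi+\eta=0$, which lies inside the high--high region, so Coifman--Meyer does not apply. What handles it is the output splitting together with $s>0$, exactly as the paper does in the proof of Lemma~\ref{geralcommestm}.
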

\begin{remark}
The estimates in Lemmas \ref{fracleib} and \ref{commestim} are still valid for $\mathcal{H}D_{x}^{s}$ instead of $D_{x}^{s}.$
\end{remark}
Next, we stated the Coifman-Meyer lemma, which will be our key tool to derive the generalized commutator estimates used in this work.

\begin{lemma}[Coifman-Meyer] \label{CoifmanMeyerThm}
Let $\sigma \in C^{\infty}(\mathbb{R}^{n}\times \mathbb{R}^{n}-\{(0,0)\})$ satisfy
\begin{equation}\label{coifmanmeyer1}
|\partial_{\xi_{1}}^{\gamma}\partial_{\xi_{2}}^{\theta}\sigma(\xi_{1},\xi_{2})|\lesssim_{\gamma,\theta} (|\xi_{1}+\xi_{2}|)^{-|\gamma|-|\theta|}
\end{equation}
for $(\xi_{1},\xi_{2})\neq (0,0)$ and any $\gamma,\theta \in (\mathbb{Z}_{+})^{n}$. If $\sigma(D)$ denotes the bilinear operator
\begin{equation*}
    \sigma(D)(u,v)(x)=\iint e^{ix\cdot (\xi_{1}+\xi_{2})}\sigma(\xi_{1},\xi_{2})\widehat{u}(\xi_{1}) \widehat{v\ }(\xi_{2}) d\xi_{1}d\xi_{2},
\end{equation*}
then
$$\|\sigma(D)(u,v)\|_{L^{2}}\lesssim \|u\|_{L^{\infty}}\|v\|_{L^{2}}.$$
\end{lemma}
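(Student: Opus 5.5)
The plan is to reduce the estimate to a paraproduct-type decomposition adapted to the \emph{output} frequency $\xi_1+\xi_2$, since that is the only variable controlled by the hypothesis \eqref{coifmanmeyer1}. First, by density it suffices to take $u,v\in\mathcal{S}(\mathbb{R}^n)$. I would insert a Littlewood–Paley partition in the output variable,
\begin{equation*}
\sigma(\xi_1,\xi_2)=\sum_{K}\phi_K(\xi_1+\xi_2)\,\sigma(\xi_1,\xi_2)=:\sum_K\sigma_K(\xi_1,\xi_2).
\end{equation*}
Since $\sigma_K(D)(u,v)$ has Fourier support in $\{|\xi|\sim K\}$, almost orthogonality gives
\begin{equation*}
\|\sigma(D)(u,v)\|_{L^2}^2\lesssim\sum_K\|\sigma_K(D)(u,v)\|_{L^2}^2 .
\end{equation*}
It then remains to bound each piece by $\|u\|_{L^\infty}$ times an $\ell^2$-summable quantity in $v$.

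For a fixed $K$, I would change variables to $(\xi_1,\eta)$ with $\eta=\xi_1+\xi_2$, and write $\sigma_K(\xi_1,\xi_2)=\tau_K(\xi_1,\eta)$. By the chain rule, $\partial_{\xi_1}\tau_K=\partial_{\xi_1}\sigma-\partial_{\xi_2}\sigma$ and $\partial_\eta\tau_K=\partial_{\xi_2}\sigma$. Hence \eqref{coifmanmeyer1} yields $|\partial^\gamma_{\xi_1}\partial^\theta_\eta\tau_K|\lesssim K^{-|\gamma|-|\theta|}$ on the support $|\eta|\sim K$. After rescaling by $K$, the symbol is smooth with uniformly bounded derivatives and compact support in $\eta$. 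I would then split $\xi_1$ into cubes $I_j$ of side $K$ with a smooth partition, and expand $\tau_K$ on each box $I_j\times\{|\eta|\sim K\}$ in a Fourier series. The coefficients $c_{m,n,j}$ decay like $\langle(m,n)\rangle^{-N}$ uniformly in $j,K$. Each term is then a modulated translate of a product: for fixed $(m,n)$,
\begin{equation*}
\sigma_K(D)(u,v)=\sum_{m,n}\sum_j c_{m,n,j}\,\tilde P_K^{(n)}\big(u_{j}^{(m)}\,v_{j}^{(m)}\big),
\end{equation*}
where $u_j^{(m)}=P_{I_j}u(\cdot+m/K)$ and $v_j$ is the corresponding localization of $v$ to $-I_j+O(K)$. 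The rapid decay in $(m,n)$ lets us fix $(m,n)$, since translations and modulations are harmless in $L^\infty$ and $L^2$.

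The main obstacle is summing over the input cubes $j$ and then over $K$ using only $\|u\|_{L^\infty}$. The naive bound $\sum_j\|P_{I_j}u\|_{L^\infty}\|v_j\|_{L^2}$ loses, because $\sup_j\|P_{I_j}u\|_\infty$ is controlled by $\|u\|_\infty$ but its sum over $j$ is not. For the low–high part of the output (boxes with $|\xi_1|\lesssim K$, i.e.\ $j$ in a bounded set), only $O(1)$ cubes occur. There I would not localize $u$ at all, and treat $u$ as a bounded multiplier of $P_{\sim K}v$; square-summing in $K$ then gives $\|u\|_\infty\|v\|_{L^2}$. For the high–high part ($|\xi_1|\sim|\xi_2|\gg K$), I would avoid splitting $u$ and argue by duality. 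Pair with $w=P_{\sim K}w$ and write the trilinear form as $\int u\,\overline{B_K(w,v)}$, where $B_K$ is a bilinear operator whose symbol in $(\eta,\xi_2)$ again has scale-$K$ derivative bounds. Showing $\|B_K(w,v)\|_{L^1}\lesssim\|w\|_{L^2}\|v\|_{L^2}$ is a Cauchy–Schwarz over the cubes $I_j$, since the $v_j$ are orthogonal. One then needs the sum over $K$ of the dual pieces to be controlled in $L^1$, and this is exactly where I expect the real difficulty to lie.

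If this summation fails for a general symbol obeying only \eqref{coifmanmeyer1}, I would first test the claim on $\sigma=m(\xi_1+\xi_2)$, where $\sigma(D)(u,v)=m(D)(uv)$ and the bound is immediate. In that case I would restrict to the symbol classes actually needed later, namely those supported where $|\xi_1|\lesssim|\xi_1+\xi_2|$, for which the low–high argument above already closes.
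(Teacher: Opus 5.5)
The paper gives no proof of this lemma. It quotes the classical Coifman--Meyer theorem, whose hypothesis is $|\partial_{\xi_1}^{\gamma}\partial_{\xi_2}^{\theta}\sigma|\lesssim(|\xi_1|+|\xi_2|)^{-|\gamma|-|\theta|}$. The $|\xi_1+\xi_2|$ in \eqref{coifmanmeyer1} is almost certainly a misprint, and your plan is built around the misprinted condition.

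\textbf{The printed statement is false.} Your suspicion in the last paragraph is right: under \eqref{coifmanmeyer1} as printed the lemma fails, so the high--high summation cannot be closed. Take $\lambda=4^{N}$, $y_k=2^k/(2\lambda)$ (so $0<y_k\le 2^{-k}$), $c_k=e^{i4^k/(4\lambda)}$, and $\tilde\phi\in C_c^\infty([\frac34,\frac54])$ equal to $1$ on $[\frac78,\frac98]$. Define
\[
\sigma_N(\xi_1,\xi_2)=\sum_{k=1}^{N}c_k\,\tilde\phi\big(2^{-k}(\xi_1+\xi_2)\big)\,e^{-iy_k\xi_1}.
\]
This symbol satisfies \eqref{coifmanmeyer1} with constants independent of $N$, and $\sigma_N(D)(u,v)=\sum_k c_k\tilde P_{2^k}[u(\cdot-y_k)v]$. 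Test with $v=\chi(x)e^{i\lambda x^2}$ and $w=\sum_k e^{i2^kx}\psi(x)$, where $\widehat\psi\ge 0$ is supported in $[-\frac14,\frac14]$. The choice of $y_k$ cancels the linear phases:
\[
\langle \sigma_N(D)(u,v),w\rangle=\int u(z)\,e^{i\lambda z^2}\sum_{k=1}^{N}(\chi\overline{\psi})(z+y_k)\,dz .
\]
Take $\chi\ge 0$ supported where $\mathrm{Re}\,\psi>0$; then for a suitable unimodular $u$ this is $\gtrsim N$. Since $\|v\|_{L^2}\sim 1$ and $\|w\|_{L^2}\sim\sqrt N$, the operator norm is $\gtrsim\sqrt N$.

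Your fallback does not save the argument. The paper applies the lemma in \eqref{pgct18} and \eqref{pgec26} to symbols supported in $\mathrm{supp}\,\pi_{hh}$. That support contains the anti-diagonal $\xi_1+\xi_2=0$, so these symbols are not supported in $\{|\xi_1|\lesssim|\xi_1+\xi_2|\}$.

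\textbf{The ``easy'' low--high step also has a gap, even under the correct hypothesis.} On the boxes with $|\xi_1|\lesssim K\sim|\xi_1+\xi_2|$, the second input is only localized to $|\xi_2|\lesssim K$, not $|\xi_2|\sim K$. Consider the subcase $|\xi_1|\sim K\gg|\xi_2|$, where the $L^\infty$ function carries the output frequency. Your bound gives $\|u\|_{L^\infty}\|P_{\lesssim K}v\|_{L^2}$, which is not square-summable in $K$ (take $v$ at unit frequency).

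\textbf{What is missing is the $BMO$/Carleson input behind the classical theorem.} Under the correct hypothesis, the symbol is smooth at scale $|\xi_1|+|\xi_2|$ on Whitney boxes. Its Fourier expansion there has uniformly decaying coefficients, and this reduces $\sigma(D)$ to paraproducts with bounded coefficients. Each piece is then handled as follows:
\begin{itemize}
\item The high--low piece $\sum_K P_Ku\,P_{\ll K}v$ is controlled by the Carleson embedding, because $\sum_K|P_Ku|^2\,dx\,\delta_{t=1/K}$ is a Carleson measure for $u\in L^\infty\subset BMO$.
\item The high--high piece is handled by your duality idea, with one change. Do not sum $L^1$ norms over $K$. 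Instead, regroup the dual form by its own output frequency $M$ as $\sum_M P_M(\overline{\tilde P_M v}\,P_{\ll M}w)$. Then bound its $H^1$ norm by $\|(\sum_M|\tilde P_Mv|^2|P_{\ll M}w|^2)^{1/2}\|_{L^1}\lesssim\|v\|_{L^2}\|w\|_{L^2}$.
\end{itemize}
This regrouping into $P_{\ll M}w$ is exactly what the misprinted hypothesis breaks. There the multiplier acting on the high-frequency input may change with $K$ at scale $K$, which is what the counterexample above exploits.
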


In the following lemma, we establish commutator estimates that allow us to simplify the problematic terms in \eqref{integraisproblematicas} below, which arise when attempting to perform standard energy estimates in the system \eqref{BO-NLS}. The key observation is that differentiation of a product distributes across the factors, giving rise to commutator terms that can be handled more effectively. Our approach follows the method introduced by Kwon in \cite{5KDV-Kwon}, where he used the Coifman-Meyer lemma to do this.

\begin{lemma}[Generalized commutator estimates] \label{geralcommestm} Let $s>1$. Then, the estimates
\vspace{-3em}
\begin{spacing}{1.5}
    \begin{equation}\label{gce1}
    \begin{split}
        \displaystyle \left\|D_{x}^{s}\partial_{x}\mathcal{H}(u\partial_{x}v)+uD_{x}^{s-1}\partial_{x}^{3}v+\partial_{x}vD_{x}^{s-1}\partial_{x}^{2}u+(s+1)\partial_{x}uD_{x}^{s-1}\partial_{x}^{2}v\right\|_{L^{2}}&\lesssim \|\partial_{x}^{2}v\|_{L^{\infty}}\|D_{x}^{s}u\|_{L^{2}}\\
        &\quad +\|\partial_{x}^{2}u\|_{L^{\infty}}\|D_{x}^{s}v\|_{L^{2}},
        \end{split}
   \end{equation}
   \vspace{-3em}
   \begin{align}
        &\left\|D_{x}^{s}(u\mathcal{H}\partial_{x}^{2}v)-uD_{x}^{s+1}\partial_{x}v-s\partial_{x}uD_{x}^{s+1}v\right\|_{L^{2}}\lesssim \|\partial_{x}^{2}u\|_{L^{\infty}}\|D_{x}^{s}v\|_{L^{2}}+\|\mathcal{H}\partial_{x}^{2}v\|_{L^{\infty}}\|D_{x}^{s}u\|_{L^{2}},\label{gce2}\\
    &\left\|D_{x}^{s}(\partial_{x}u\mathcal{H}\partial_{x}v)-\mathcal{H}\partial_{x}vD_{x}^{s}\partial_{x}u-\partial_{x}uD_{x}^{s+1}v\right\|_{L^{2}}\lesssim \|\partial_{x}^{2}u\|_{L^{\infty}}\|D_{x}^{s}v\|_{L^{2}}+\|\mathcal{H}\partial_{x}^{2}v\|_{L^{\infty}}\|D_{x}^{s}u\|_{L^{2}},\label{gce3}
    \end{align}
\end{spacing}
\vspace{-2em}
hold true.
\end{lemma}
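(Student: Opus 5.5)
We work on the Fourier side and reduce each of \eqref{gce1}--\eqref{gce3} to the Coifman--Meyer Lemma \ref{CoifmanMeyerThm}. For \eqref{gce1}, write the Fourier transform of $D_x^s\partial_x\mathcal{H}(u\partial_x v)$ at $\xi=\xi_1+\xi_2$ ($\xi_1$ the frequency of $u$, $\xi_2$ that of $v$) as
\[
m(\xi_1,\xi_2)\,\widehat u(\xi_1)\widehat v(\xi_2),\qquad m(\xi_1,\xi_2)=|\xi|^{s}(i\xi)(-i\,\mathrm{sgn}\,\xi)(i\xi_2).
\]
The three subtracted terms correspond to multipliers depending only on $\xi_2$ (for $uD^{s-1}\partial_x^3v$), only on $\xi_1$ (for $\partial_x v D^{s-1}\partial_x^2u$), and to the first-order cross term $(s+1)\partial_xu\,D^{s-1}\partial_x^2v$. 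These are precisely the Taylor terms of $m$: its expansion around $\xi_1=0$ up to order one, and its expansion around $\xi_2=0$ up to order zero in the "$v$-low" region. The plan is to split the frequency plane with a smooth partition $1=\chi_{1}(\xi_1,\xi_2)+\chi_{2}(\xi_1,\xi_2)+\chi_{3}(\xi_1,\xi_2)$, homogeneous of degree $0$, with the following supports:
\begin{itemize}
\item $\chi_1$ supported in $|\xi_1|\ll|\xi_2|$;
\item $\chi_2$ supported in $|\xi_2|\ll|\xi_1|$;
\item $\chi_3$ supported in $|\xi_1|\sim|\xi_2|$.
\end{itemize}

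In the region $|\xi_1|\ll|\xi_2|$ we Taylor-expand $\xi\mapsto |\xi|^{s}\xi\,\mathrm{sgn}(\xi)\xi_2$ at $\xi=\xi_2$ to second order. On this region $\mathrm{sgn}\,\xi=\mathrm{sgn}\,\xi_2$, so there is no singularity, and the zeroth and first order terms reproduce exactly $-uD^{s-1}\partial_x^3v$ and $-(s+1)\partial_xu D^{s-1}\partial_x^2v$. Signs are to be checked using $D^{s-1}\partial_x^2=-D^{s+1}$ and $\mathcal{H}\partial_x=D$. The remainder is
\[
\xi_1^2\int_0^1(1-\theta)\,\partial^2_\xi\bigl(|\xi|^s\xi\,\mathrm{sgn}\,\xi\bigr)(\xi_2+\theta\xi_1)\,d\theta\;\xi_2 ,
\]
which has size $\xi_1^2|\xi_2|^{s}$. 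We factor it as $(\xi_1^2)\cdot(|\xi_2|^s)\cdot\sigma(\xi_1,\xi_2)$, with $\sigma$ homogeneous of degree $0$, smooth away from the origin, and satisfying \eqref{coifmanmeyer1}. Applying Lemma \ref{CoifmanMeyerThm} with $\partial_x^2u\in L^\infty$ and $D^sv\in L^2$ bounds this piece by $\|\partial_x^2u\|_{L^\infty}\|D^s_xv\|_{L^2}$.

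Symmetrically, in the region $|\xi_2|\ll|\xi_1|$ we expand at $\xi=\xi_1$ to first order. The zeroth order term gives $\partial_xv\,D^{s-1}\partial_x^2u$ (with the factor $i\xi_2$ kept as $\partial_xv$). The remainder is of size $|\xi_2|^2|\xi_1|^{s}$, and is controlled by $\|\partial_x^2v\|_{L^\infty}\|D^s u\|_{L^2}$.

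The subtracted terms must also be handled outside the regions where they arise. On the complementary supports, each of them, and $m$ itself on the region $|\xi_1|\sim|\xi_2|$, has total order $s+2$ with the two frequencies comparable. Writing such a multiplier as $\xi_j^2|\xi_k|^{s}\tilde\sigma$, with $\tilde\sigma$ a Coifman--Meyer symbol (and $j\ne k$ chosen so that the $L^\infty$ factor carries the two derivatives), gives the same bounds.

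Estimates \eqref{gce2} and \eqref{gce3} follow identically. The multipliers are $|\xi|^s\,|\xi_2|\xi_2$ (up to sign) for \eqref{gce2}, and $|\xi|^s\xi_1\,\mathrm{sgn}(\xi_2)\xi_2$ for \eqref{gce3}. We subtract the zeroth order expansion at $\xi_2$ and the first order term $s\xi_1|\xi_2|^{s-1}\mathrm{sgn}\cdots$, respectively the zeroth order expansions in each variable. Here the $L^\infty$ factor is $\mathcal{H}\partial_x^2v$ when $v$ is low frequency; this is why the right-hand sides contain $\|\mathcal{H}\partial_x^2v\|_{L^\infty}$. Accordingly, when $v$ is the low-frequency function we write $|\xi_2|\xi_2$ as the symbol of $\mathcal{H}\partial_x^2$ rather than as $\xi_2^2$.

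The main obstacle is twofold. First, the Coifman--Meyer hypothesis \eqref{coifmanmeyer1} must hold with the degree $-|\gamma|-|\theta|$ measured in $|\xi_1+\xi_2|$, and this fails near $\xi_1+\xi_2=0$. Second, $\mathrm{sgn}\,\xi$ and $|\xi|^s$ are non-smooth at the origin. We will treat the near-cancellation region $|\xi_1|\sim|\xi_2|$, $|\xi|\ll|\xi_1|$ separately. There we estimate directly by Plancherel/Bernstein, using $s>1$ to absorb the factor $|\xi|^s\le|\xi_1|^{s}$. Alternatively, we can apply the standard variant of Coifman--Meyer valid for symbols homogeneous of degree $0$ in $(\xi_1,\xi_2)$ and smooth on the sphere, after multiplying by a cutoff. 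Checking this smoothness, and keeping track of the constant $s+1$ versus $s$ and of the signs, is the delicate bookkeeping.
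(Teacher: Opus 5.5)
Your proposal is correct and follows essentially the paper's argument. Both use a low--high/high--low/high--high paraproduct splitting, apply the Coifman--Meyer lemma on the first two regions and on the non-degenerate part of the third, and bound the high--high piece near $\xi_1+\xi_2=0$ directly by Littlewood--Paley/Bernstein. The real obstruction there is that $|\xi_1+\xi_2|^{s+1}$ is not smooth on that set, not the form of \eqref{coifmanmeyer1}, whose $|\xi_1+\xi_2|$ should be read as $|\xi_1|+|\xi_2|$.

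The only difference is bookkeeping: you Taylor-expand the whole symbol $|\xi|^{s+1}\xi_2$ at the high frequency, whereas the paper first splits $(\xi_1+\xi_2)^2$ into three monomials and expands $|\xi_1+\xi_2|^{s-1}$ binomially. One fix is needed in the cross terms. Write the low-frequency factor as $\xi_k|\xi_k|^{s-1}$ (i.e. $D^{s-1}_x\partial_x$), not $|\xi_k|^s$, so that every sign factor lands on the $L^2$ function and the leftover multiplier (e.g. $\xi_1/\xi_2$) is smooth; with $|\xi_k|^s$ you get $|\xi_1|/\xi_2$, which is not a Coifman--Meyer symbol.
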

\begin{proof}
The proof of this lemma can be obtained by an application of the Coifman-Meyer lemma (Lemma \ref{CoifmanMeyerThm}), and it is similar to the proof of the Kato-Ponce commutator estimate (see \cite{Kato-Ponce-NavierStokes}). Let us prove \eqref{gce1}. Note that, since $D_{x}^{1}=\partial_{x}\mathcal{H}$, we have
\begin{equation*}
    D^{s}_{x}\partial_{x}\mathcal{H}(u\partial_{x} v)=D_{x}^{s+1}(u\partial_{x}v)
\end{equation*}
In this case, we can write this pseudo-differential operator in the following integral form
\begin{equation}
    D_{x}^{s+1}(u\partial_{x}v)(x)=c\int_{\mathbb{R}}\int_{\mathbb{R}}e^{ix(\xi_{1}+\xi_{2})}\sigma_{1}(\xi_{1},\xi_{2})\widehat{u}(\xi_{1})\widehat{v}(\xi_{2})d\xi_{1}d\xi_{2},
\end{equation}
where we have that
\begin{equation}
    \sigma_{1}(\xi_{1},\xi_{2}):=i|\xi_{1}+\xi_{2}|^{s+1}\xi_{2}
\end{equation}
is the symbol of the pseudo-differential operator above. Also observe that
\begin{align}\label{pgce10}
    \sigma_{1}(\xi_{1},\xi_{2})&=i|\xi_{1}+\xi_{2}|^{s-1}(\xi_{1}+\xi_{2})^{2}\xi_{2}\nonumber \\
    &=i|\xi_{1}+\xi_{2}|^{s-1}\xi_{1}^{2}\xi_{2}+2i|\xi_{1}+\xi_{2}|^{s-1}\xi_{2}^{2}\xi_{1}+i|\xi_{1}+\xi_{2}|^{s-1}\xi_{2}^{3}.
\end{align}
The idea now is to deal with such functions in the above decomposition at low-low, low-high, and high-high frequencies (paraproducts) and then apply the Coifman-Meyer theorem. Then let $\{P_{N}\}_{N\in 2^{\mathbb{Z}}}$ be the standard Littlewood-Paley projectors, namely, we have $
P_{N}(f):=\mathcal{F}^{-1}_{x}\big[\phi_{N}\mathcal{F}_{x}(f)\big]
$ where $\phi_{N}(\xi)=\eta(N^{-1}\xi)$ ($\phi$ as in Notations \ref{Notations}). 
Then define smooth functions $\pi_{hh},\pi_{lh},\pi_{hl}:\mathbb{R}^{2}\rightarrow [0,+\infty)$ such that $\pi_{hh}+\pi_{lh}+\pi_{hl}=1$ as follows
\begin{align*}
    &\pi_{hh}(\xi_{1},\xi_{2})=\sum_{j}\sum_{|k-j|\leq 4}\phi_{2^{j}}(\xi_{1})\phi_{2^{k}}(\xi_{2});
    \quad \pi_{lh}(\xi_{1},\xi_{2})=\sum_{j}\sum_{k>j+4}\phi_{2^{j}}(\xi_{1})\phi_{2^{k}}(\xi_{2});\\
    &\pi_{hl}(\xi_{1},\xi_{2})=\sum_{j}\sum_{k<j-4}\phi_{2^{j}}(\xi_{1})\phi_{2^{k}}(\xi_{2}).
\end{align*}
Then, we decompose any symbol $\sigma$ into the low-high, high-low, and high-high paraproducts as follows
\begin{equation*}
    \sigma(\xi_{1},\xi_{2})=\sigma^{lh}(\xi_{1},\xi_{2})+\sigma^{hl}(\xi_{1},\xi_{2})+\sigma^{hh}(\xi_{1},\xi_{2}),
\end{equation*}
where $\sigma^{k}(\xi_{1},\xi_{2}):=\sigma(\xi_{1},\xi_{2})\pi_{k}(\xi_{1},\xi_{2})$ for $k=hl,\ lh$ or $hh$. Let us look at frequencies high-low first, i.e., $(\xi_{1},\xi_{2})$ in the support of $\pi_{hl}$. In this case, we have that
$
|\xi_{1}|\geq 4|\xi_{2}|.
$
Note first that, since $s>1$, then we can write
\begin{align*}
    |\xi_{1}+\xi_{2}|^{s-1}\xi_{1}^{2}\xi_{2}\frac{1}{\xi_{1}\xi_{2}^{2}|\xi_{1}|^{s-1}}\pi_{hl}(\xi_{1},\xi_{2})&=\frac{\xi_{1}}{\xi_{2}}\left( 1+\frac{\xi_{2}}{\xi_{1}} \right)^{s-1}\pi_{hl}(\xi_{1},\xi_{2})\\
    &=\frac{\xi_{1}}{\xi_{2}}\sum_{k=1}^{\infty}\binom{s-1}{k}\left(\frac{\xi_{2}}{\xi_{1}}\right)^{k}\pi_{hl}(\xi_{1},\xi_{2})\\
    &=\left[\frac{\xi_{1}}{\xi_{2}}+\sum_{k=1}^{\infty}\binom{s-1}{k}\left(\frac{\xi_{2}}{\xi_{1}}\right)^{k-1}\right]\pi_{hl}(\xi_{1},\xi_{2}).
\end{align*}
Thus,
\begin{equation}
\left(|\xi_{1}+\xi_{2}|^{s-1}\xi_{1}^{2}\xi_{2}-\xi_{1}^{2}\xi_{2}|\xi_{1}|^{s-1}\right)\frac{1}{\xi_{1}\xi_{2}^{2}|\xi_{1}|^{s-1}}\pi_{hl}(\xi_{1},\xi_{2})=\sum_{k=1}^{\infty}\binom{s-1}{k}\left(\frac{\xi_{2}}{\xi_{1}}\right)^{k-1}\pi_{hl}(\xi_{1},\xi_{2}),
\end{equation}
where the above series and all its derivatives converge absolutely since $\displaystyle \left|\frac{\xi_{2}}{\xi_{1}}\right|\leq \frac{1}{4}$. So, defining
\begin{equation}\label{pgce12}
    \sigma_{1,1}(\xi_{1},\xi_{2}):=|\xi_{1}+\xi_{2}|^{s-1}\xi_{1}^{2}\xi_{2}-\xi_{1}^{2}\xi_{2}|\xi_{1}|^{s-1},
\end{equation}
we can see readily that $ \sigma_{1,1}^{hl}(\xi_{1},\xi_{2})\frac{1}{\xi_{1}\xi_{2}^{2}|\xi_{1}|^{s-1}}$ is a smooth Coifman-Meyer multiplier, i.e., it is in $ C^{\infty}(\mathbb{R}\times \mathbb{R}-\{0,0\})$ and satisfies hypothesis \eqref{coifmanmeyer1}. Thus, from Lemma \ref{CoifmanMeyerThm}, we have that
\begin{align}
    &\left\| \int_{\mathbb{R}}\int_{\mathbb{R}} e^{ix(\xi_{1}+\xi_{2})}\sigma_{1,1}^{hl}(\xi_{1},\xi_{2})\widehat{u}(\xi_{1})\widehat{v}(\xi_{2})\  d\xi_{1}d\xi_{2}\right\|_{L^{2}} \nonumber\\
    =&\left\| \int_{\mathbb{R}}\int_{\mathbb{R}} e^{ix(\xi_{1}+\xi_{2})}\sigma_{1,1}^{hl}(\xi_{1},\xi_{2})\frac{1}{\xi_{1}(\xi_{2})^{2}|\xi_{1}|^{s-1}}\widehat{D_{x}^{s-1}\partial_{x}u}(\xi_{1})\widehat{\partial_{x}^{2}v}(\xi_{2})\ d\xi_{1}d\xi_{2} \right\|_{L^{2}}
    \lesssim\|\partial_{x}^{2}v\|_{L^{\infty}}\|D_{x}^{s}u\|_{L^{2}}.\label{pgce13}
\end{align}
Now, regarding low-high frequencies, i.e., $(\xi_{1},\xi_{2})$ in the support of $\pi_{lh}$ satisfying
$
\left|\frac{\xi_{1}}{\xi_{2}}\right|\leq \frac{1}{4},
$
we estimate the terms of $\sigma_{11}^{lh}$ separately. In fact, we can write
\begin{align}
    &|\xi_{1}+\xi_{2}|^{s-1}\xi_{1}^{2}\xi_{2}\frac{1}{\xi_{1}^{2}\xi_{2}|\xi_{2}|^{s-1}}\pi_{hl}(\xi_{1},\xi_{2})=\left(1+\frac{\xi_{1}}{\xi_{2}}\right)^{s-1}\pi_{hl}(\xi_{1},\xi_{2}),\label{pgec14}\\
    &\xi_{1}^{2}\xi_{2}|\xi_{1}|^{s-1}\frac{1}{\xi_{1}\xi_{2}^{2}|\xi_{1}|^{s-1}}\pi_{hl}(\xi_{1},\xi_{2})=\frac{\xi_{1}}{\xi_{2}}\pi_{hl}(\xi_{1},\xi_{2})\label{pgce15},
\end{align}
and, in this case, it is not difficult to check that the expressions in \eqref{pgec14} and \eqref{pgce15} define a smooth Coifman-Meyer multiplier. Therefore, from Lemma \ref{CoifmanMeyerThm} and calculations similar to those made in \eqref{pgce13}, we obtain
\begin{equation}\label{pgce16}
\left\| \int_{\mathbb{R}}\int_{\mathbb{R}} e^{ix(\xi_{1}+\xi_{2})}\sigma_{1,1}^{lh}(\xi_{1},\xi_{2})\widehat{u}(\xi_{1})\widehat{v}(\xi_{2})\  d\xi_{1}d\xi_{2}\right\|_{L^{2}}
\lesssim \|\partial_{x}^{2}u\|_{L^{\infty}}\|D_{x}^{s}v\|_{L^{2}}+\|\partial_{x}^{2}v\|_{L^{\infty}}\|D_{x}^{s}u\|_{L^{2}}. 
\end{equation}
\indent When it comes to high-high frequencies we have the problem that $|\xi_{1}+\xi_{2}|^{s-1}\xi_{1}^{2}\xi_{2}$ does not define a smooth function on $C^{\infty}(\mathbb{R}\times \mathbb{R}-\{(0,0)\})$, since it has singularities at $(\xi,-\xi)$, and obviously the support of $\pi_{hh}$ contains points to this term. So, we cannot apply the Coifman-Meyer lemma here. To get around this problem we approach this case using the Littlewood-Paley projectors defined earlier, as it is done in \cite{5KDV-Kwon}.
In fact, define for all $j\in \mathbb{Z}$
$$
\widetilde{P}_{2^{j}}:=\sum_{|k-j|\leq 4}P_{2^{k}}.
$$
Thus, notice that
\begin{align*}
&\int_{\mathbb{R}}\int_{\mathbb{R}}e^{ix(\xi_{1}+\xi_{2})}|\xi_{1}+\xi_{2}|^{s-1}\xi_{1}^{2}\xi_{2}\ \pi_{hh}(\xi_{1},\xi_{2})\widehat{u}(\xi_{1})\widehat{v}(\xi_{2})\ d\xi_{1}d\xi_{2}\\
=&\sum_{j}\sum_{|k-j|\leq 4}\int_{\mathbb{R}}\int_{\mathbb{R}}e^{ix(\xi_{1}+\xi_{2})}|\xi_{1}+\xi_{2}|^{s-1}\xi_{1}^{2}\xi_{2}\widehat{P_{2^{j}}u}(\xi_{1})\widehat{P_{2^{k}}v}(\xi_{2})\ d\xi_{1}d\xi_{2}\\
=&i\sum_{N}D_{x}^{s-1}(P_{N}\partial_{x}^{2}u\ \widetilde{P}_{N}\partial_{x}v).
\end{align*}
Then, using the Littlewood-Paley inequality and that $\sum_{N}\widetilde{P}_{N}=9$, it follows that
\begin{align}
&\left\| \int_{\mathbb{R}}\int_{\mathbb{R}}e^{ix(\xi_{1}+\xi_{2})}|\xi_{1}+\xi_{2}|^{s-1}\xi_{1}^{2}\xi_{2}\ \pi_{hh}(\xi_{1},\xi_{2})\widehat{u}(\xi_{1})\widehat{v}(\xi_{2})\ d\xi_{1}d\xi_{2} \right\|_{L^{2}}\nonumber\\
\lesssim& \sum_{j}\left(\sum_{N} \left\|P_{2^{-j}N}D_{x}^{s-1}(P_{N}\partial_{x}^{2}u\ \widetilde{P}_{N}\partial_{x}v)\right\|_{L^{2}}^{2} \right)^{\frac{1}{2}}\nonumber \\
\lesssim&_{s} \sum_{j}2^{-j(s-1)}\left(\sum_{N} 
 N^{2(s-1)}\|P_{N}\partial_{x}^{2}u\|_{L^{\infty}}^{2}\|\widetilde{P}_{N}\partial_{x}v\|_{L^{2}}^{2}\right)^{\frac{1}{2}}\nonumber \\
\lesssim & \|\partial_{x}^{2}u\|_{L^{\infty}}\sum_{j}2^{-j(s-1)}\left( 
\sum_{N}N^{2(s-1)} \|\widetilde{P}_{N}\partial_{x}v\|_{L^{2}}^{2}\right)^{\frac{1}{2}}\nonumber \\
\lesssim&_{s} \|\partial_{x}^{2}u\|_{L^{\infty}}\sum_{j}2^{-j(s-1)}\|D_{x}^{s-1}\partial_{x}v\|_{L^{2}}\nonumber \\
\lesssim&_{s}\|\partial_{x}^{2}u\|_{L^{\infty}}\|D_{x}^{s}v\|_{L^{2}}.
\label{pgce17}
\end{align}
On the other hand, regarding the second term of $\sigma_{1,1}^{hh}$, still at high-high frequencies, we can write
\begin{align*}
    \xi_{1}^{2}\xi_{2}|\xi_{1}|^{s-1}\frac{1}{\xi_{1}\xi_{2}^{2}|\xi_{1}|^{s-1}}\pi_{hh}(\xi_{1},\xi_{2})=\frac{\xi_{1}}{\xi_{2}}\pi_{hh}(\xi_{1},\xi_{2}),
\end{align*}
and thus, $  \xi_{1}^{2}\xi_{2}|\xi_{1}|^{s-1}\frac{1}{\xi_{1}\xi_{2}^{2}|\xi_{1}|^{s-1}}\pi_{hh}(\xi_{1},\xi_{2})$ is a smooth Coifman-Meyer multiplier. Therefore, from Lemma \ref{CoifmanMeyerThm}, we have that
\begin{equation}\label{pgct18}
   \left\| \int_{\mathbb{R}} \int_{\mathbb{R}}e^{ix(\xi_{1}+\xi_{2})}\xi_{1}^{2}\xi_{2}|\xi_{1}|^{s-1}\pi_{hh}(\xi_{1},\xi_{2})\widehat{u}(\xi_{1})\widehat{v}(\xi_{2})d\xi_{1}d\xi_{2}\right\|_{L^{2}}\lesssim \|\partial_{x}^{2}v\|_{L^{\infty}}\|D_{x}^{s}u\|_{L^{2}}.
\end{equation}
By \eqref{pgce13}, \eqref{pgce16}, \eqref{pgce17} and \eqref{pgct18}, it follows that
\begin{equation}\label{pgce19}
 \left\| \int_{\mathbb{R}} \int_{\mathbb{R}}e^{ix(\xi_{1}+\xi_{2})}\sigma_{1,1}(\xi_{1},\xi_{2})\widehat{u}(\xi_{1})\widehat{v}(\xi_{2})d\xi_{1}d\xi_{2}\right\|_{L^{2}}\lesssim \|\partial_{x}^{2}v\|_{L^{\infty}}\|D_{x}^{s}u\|_{L^{2}}+\|\partial_{x}^{2}u\|_{L^{\infty}}\|D_{x}^{s}v\|_{L^{2}}.
\end{equation}
Now, defining
\begin{equation}\label{pgce20}
    \sigma_{1,2}(\xi_{1},\xi_{2}):=|\xi_{2}+\xi_{1}|^{s-1}\xi_{2}^{2}\xi_{1}-\xi_{2}^{2}\xi_{1}|\xi_{2}|^{s-1},
\end{equation}
similarly to \eqref{pgce19}, by symmetry, we have that
\begin{equation}\label{pgce21}
 \left\| \int_{\mathbb{R}} \int_{\mathbb{R}}e^{ix(\xi_{1}+\xi_{2})}\sigma_{1,2}(\xi_{1},\xi_{2})\widehat{u}(\xi_{1})\widehat{v}(\xi_{2})d\xi_{1}d\xi_{2}\right\|_{L^{2}}\lesssim \|\partial_{x}^{2}v\|_{L^{\infty}}\|D_{x}^{s}u\|_{L^{2}}+\|\partial_{x}^{2}u\|_{L^{\infty}}\|D_{x}^{s}v\|_{L^{2}}.
\end{equation}
Finally, note that at low-high frequencies we can write
\begin{align*}
    |\xi_{1}+{\xi_{2}}|^{s-1}\xi_{2}^{3}\frac{1}{\xi_{1}^{2}\xi_{2}|\xi_{2}|^{s-1}}\pi_{lh}(\xi_{1},\xi_{2})&=\left(\frac{\xi_{2}}{\xi_{1}}\right)^{2}\left(1+\frac{\xi_{1}}{\xi_{2}}\right)^{s-1}\pi_{lh}(\xi_{1},\xi_{2})\\
    &=\left(\frac{\xi_{2}}{\xi_{1}}\right)^{2}\left[\sum_{k=0}^{\infty}\binom{s-1}{k}\left(\frac{\xi_{1}}{\xi_{2}}\right)^{k}\right]\pi_{lh}(\xi_{1},\xi_{2})\\
    &=\left[\left(\frac{\xi_{2}}{\xi_{1}}\right)^{2}+(s-1)\frac{\xi_{2}}{\xi_{1}}+\sum_{k=2}^{\infty}\binom{s-1}{k}\left(\frac{\xi_{1}}{\xi_{2}}\right)^{k-2} \right]\pi_{lh}(\xi_{1},\xi_{2}),
\end{align*}
and therefore,
\begin{align*}
    &\left(|\xi_{1}+\xi_{2}|^{s-1}\xi_{2}^{3}-\xi_{2}^{3}|\xi_{2}|^{s-1}-(s-1)\xi_{1}\xi_{2}^{2}|\xi_{2}|^{s-1}\right)\frac{1}{\xi_{1}^{2}\xi_{2}|\xi_{2}|^{s-1}}\pi_{lh}(\xi_{1},\xi_{2})\\
    =&\sum_{k=2}^{\infty}\binom{s-1}{k}\left(\frac{\xi_{1}}{\xi_{2}}\right)^{k-2}\pi_{lh}(\xi_{1},\xi_{2}).
\end{align*}
Thus, defining the symbol
\begin{equation}\label{pgce22}
    \sigma_{1,3}(\xi_{1},\xi_{2}):=|\xi_{1}+\xi_{2}|^{s-1}\xi_{2}^{3}-\xi_{2}^{3}|\xi_{2}|^{s-1}-(s-1)\xi_{1}\xi_{2}^{2}|\xi_{2}|^{s-1},
\end{equation}
we have that $\sigma_{1,3}^{lh}\frac{1}{\xi_{1}^{2}\xi_{2}|\xi_{2}|^{s-1}}$ is a smooth Coifman-Meyer multiplier. Therefore, from Lemma \ref{CoifmanMeyerThm}, it follows that
\begin{equation}\label{pgec23}
    \left\|\int_{\mathbb{R}}\int_{\mathbb{R}} e^{ix(\xi_{1}+\xi_{2})}\sigma_{1,3}^{lh}(\xi_{1},\xi_{2})\widehat{u}(\xi_{1})\widehat{v}(\xi_{2}) \ d\xi_{1}d\xi_{2}\right\|_{L^{2}}\lesssim \|\partial_{x}^{2}u\|_{L^{\infty}}\|D_{x}^{s}v\|_{L^{2}}.
\end{equation}
Regarding to high-low frequencies, we can write
\begin{align*}
    |\xi_{1}+\xi_{2}|^{s-1}\xi_{2}^{3}\frac{1}{\xi_{2}^{2}\xi_{1}|\xi_{1}|^{s-1}}\pi_{hl}(\xi_{1},\xi_{2})&=\frac{\xi_{2}}{\xi_{1}}\left(1+\frac{\xi_{2}}{\xi_{1}}\right)^{s-1}\pi_{hl}(\xi_{1},\xi_{2})=\left[\sum_{k=1}^{\infty}\binom{s-1}{k}\left(\frac{\xi_{2}}{\xi_{1}}\right)^{k}\right]\pi_{hl}(\xi_{1},\xi_{2}),\\
    \xi_{2}^{3}|\xi_{2}|^{s-1}\frac{1}{\xi_{1}^{2}\xi_{2}|\xi_{2}|^{s-1}}\pi_{hl}(\xi_{1},\xi_{2})&=\left(\frac{\xi_{2}}{\xi_{1}}\right)^{2}\pi_{hl}(\xi_{1},\xi_{2}),\\
    \xi_{1}\xi_{2}^{2}|\xi_{2}|^{s-1}\frac{1}{\xi_{1}^{2}\xi_{2}|\xi_{2}|^{s-1}}\pi_{hl}(\xi_{1},\xi_{2})&=\frac{\xi_{2}}{\xi_{1}}\pi_{hl}(\xi_{1},\xi_{2}),
\end{align*}
and it is not difficult to see that the above expressions define smooth Coifman-Meyer multipliers. Thus, the Lemma \ref{CoifmanMeyerThm} yields that
\begin{equation}\label{pgec24}
    \left\| \int_{\mathbb{R}}\int_{\mathbb{R}}e^{ix(\xi_{1}+\xi_{2})}\sigma_{1,3}^{hl}(\xi_{1},\xi_{2})\widehat{u}(\xi_{1})\widehat{v}(\xi_{2}) \ d\xi_{1}d\xi_{2} \right\|_{L^{2}}\lesssim \|\partial_{x}^{2}v\|_{L^{\infty}}\|D_{x}^{s}u\|_{L^{2}}+\|\partial_{x}^{2}u\|_{L^{\infty}}\|D_{x}^{s}v\|_{L^{2}}.
\end{equation}
At high-high frequencies we have the same problem as before regarding the term $\displaystyle |\xi_{1}+\xi_{2}|^{s-1}\xi_{2}^{3}$. But, also using the Littlewood-Paley projectors as in \eqref{pgce17}, we can obtain
\begin{equation}\label{pgec25}
    \|D_{x}^{s-1}(u\partial_{x}^{3}v)\pi_{hh}\|_{L^{2}}\lesssim \|\partial_{x}^{2}v\|_{L^{\infty}}\|D_{x}^{s}u\|_{L^{2}},
\end{equation}
where we observe that $D_{x}^{s-1}(u\partial_{x}^{3}v)\pi_{hh}$ does not make sense as a multiplication, but as a differential operator with the symbol $|\xi_{1}+\xi_{2}|^{s-1}(i\xi_{2})^{3}\pi_{hh}(\xi_{1},\xi_{2})$. Continuing, we can write
\begin{align*}
    &\xi_{2}^{3}|\xi_{2}|^{s-1}\frac{1}{\xi_{1}^{2}\xi_{2}|\xi_{2}|^{s-1}}\pi_{hh}(\xi_{1},\xi_{2})=\left(\frac{\xi_{1}}{\xi_{2}}\right)^{2}\pi_{hh}(\xi_{1},\xi_{2}),\\
    &\xi_{1}\xi_{2}^{2}|\xi_{2}|^{s-1}\frac{1}{\xi_{1}^{2}\xi_{2}|\xi_{2}|^{s-1}}\pi_{hh}(\xi_{1},\xi_{2})=\frac{\xi_{2}}{\xi_{1}}\pi_{hh}(\xi_{1},\xi_{2}),
\end{align*}
and thus, since we are considering high-high frequencies, the above terms define smooth Coifman-Meyer multipliers. Then, from Lemma \ref{CoifmanMeyerThm}, we deduce that
\begin{equation}\label{pgec26}
    \left\| \int_{\mathbb{R}}\int_{\mathbb{R}} e^{ix(\xi_{1}+\xi_{2})}(\sigma_{1,3}(\xi_{1},\xi_{2})-|\xi_{1}+\xi_{2}|^{s-1}\xi_{2}^{3})\pi_{hh}(\xi_{1},\xi_{2})\widehat{u}(\xi_{1})\widehat{v}(\xi_{2})\ d\xi_{1}d\xi_{2} \right\|_{L^{2}}\lesssim \|\partial_{x}^{2}u\|_{L^{\infty}}\|D_{x}^{s}v\|_{L^{2}}.
\end{equation}
It follows from \eqref{pgec23}-\eqref{pgec26} that
\begin{align}\label{pgec27}
      &\left\| \int_{\mathbb{R}}\int_{\mathbb{R}}e^{ix(\xi_{1}+\xi_{2})}\sigma_{1,3}(\xi_{1},\xi_{2})\widehat{u}(\xi_{1})\widehat{v}(\xi_{2}) \ d\xi_{1}d\xi_{2} \right\|_{L^{2}} 
      \lesssim\|\partial_{x}^{2}u\|_{L^{\infty}}\|D_{x}^{s}v\|_{L^{2}}+\|\partial_{x}^{2}v\|_{L^{\infty}}\|D_{x}^{s}u\|_{L^{2}}.
\end{align}
Hence, by \eqref{pgce10}, \eqref{pgce12}, \eqref{pgce19}, \eqref{pgce21}, \eqref{pgce22}, and \eqref{pgec27}, it follows \eqref{gce1}. 
\newline
\indent In the proof of the estimate \eqref{gce1}, we provided a systematic approach to constructing the commutator associated with the pseudo-differential operator $D_{x}^{s}\partial_{x}\mathcal{H}(u\partial_{x}v)$,  ensuring throughout that the resulting estimates did not involve more than \( s \) derivatives in the \( L^2 \)-norm of the functions \( u \) and \( v \). Therefore, building upon the techniques and arguments previously introduced, the proof of estimates \eqref{gce2} and \eqref{gce3} does not present any extra difficulty, and it will be omitted.
\end{proof}
Dawson, McGahagan and Ponce proved the following estimate in the appendix of \cite{NLSDecaimentoDeSolucoes}.

\begin{lemma}\label{commutatorhilbertestimate} Let $1<p,q<\infty$. Then, for all $f,g\in \mathcal{S}(\mathbb{R})$ and $l,m$ integers such that $l,m\geq 0$, the following estimate is true
\begin{equation}
    \|\partial_{x}^{l}[\mathcal{H};f]\partial_{x}^{m}g\|_{L^{p}}\lesssim_{l,m,p} \|\partial_{x}^{l+m}f\|_{L^{\infty}}\|g\|_{L^{p}}.
\end{equation}

\end{lemma}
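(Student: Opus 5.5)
The plan is to write the commutator as an explicit integral operator and reduce everything to bounds for the Hilbert transform. The key fact is that $[\mathcal{H};f]g(x)=\frac{1}{\pi}\,\mathrm{p.v.}\int\frac{f(y)-f(x)}{x-y}\,g(y)\,dy$ up to sign, so the commutator kernel $K(x,y)=\frac{f(x)-f(y)}{x-y}$ is smooth. Two model cases, $l+m=0$ and $l+m=1$, then feed an induction.

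\textbf{Case $l=m=0$.} The claim reads $\|[\mathcal{H};f]g\|_{L^p}\lesssim\|f\|_{L^\infty}\|g\|_{L^p}$. This is immediate from $L^p$-boundedness of $\mathcal{H}$ for $1<p<\infty$, since $\|\mathcal{H}(fg)\|_{L^p}+\|f\mathcal{H}g\|_{L^p}\lesssim \|f\|_{L^\infty}\|g\|_{L^p}$.

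\textbf{Case $l+m=1$.} The identity $\partial_x[\mathcal{H};f]g=[\mathcal{H};f]\partial_xg+[\mathcal{H};\partial_xf]g$ shows that $l=1,m=0$ reduces to $l=0,m=1$ plus an already-known term. For $l=0,m=1$ I integrate by parts in $y$ and use
\begin{equation*}
\partial_y\frac{f(x)-f(y)}{x-y}=\frac{f(x)-f(y)-f'(y)(x-y)}{(x-y)^2}.
\end{equation*}
This gives
\begin{equation*}
[\mathcal{H};f]\partial_x g(x)=c\int \frac{f(x)-f(y)-f'(y)(x-y)}{(x-y)^2}\,g(y)\,dy.
\end{equation*}
I then split the kernel. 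Near the diagonal, Taylor's theorem bounds it by $\|f''\|$, which is not what we want. Instead I would rewrite the kernel as $\frac{1}{x-y}\big(\frac{f(x)-f(y)}{x-y}-f'(y)\big)$. The quantity $\frac{f(x)-f(y)}{x-y}=\int_0^1 f'(y+t(x-y))\,dt$ is an average of $f'$. The operator therefore becomes $\int_0^1$ of Calderón commutator-type operators, which are bounded by $\|f'\|_{L^\infty}$. This is Calderón's first commutator theorem, and I expect it to be the main obstacle.

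To avoid citing Calderón, I would first try a Fourier route. The symbol of $[\mathcal{H};f]\partial_x$ acting on $(\widehat f(\xi_1),\widehat g(\xi_2))$ is $-i\big(\mathrm{sgn}(\xi_1+\xi_2)-\mathrm{sgn}(\xi_2)\big)i\xi_2$. This vanishes unless $|\xi_2|\le|\xi_1|$ with opposite signs, in which case $|\xi_2|/|\xi_1|\le1$. Factoring out $i\xi_1$, the symbol becomes $\frac{\xi_2}{\xi_1}\mathbf 1_{\{\mathrm{sgn}\,\xi_2\neq\mathrm{sgn}(\xi_1+\xi_2)\}}$. This is bounded but not smooth, so the Coifman--Meyer lemma (Lemma \ref{CoifmanMeyerThm}) does not apply directly. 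I would therefore fall back on a paraproduct decomposition: the high--low part is smooth, and the remaining part is handled with $P_\pm$ and Lemma \ref{LemmaFractionalDerivativeMolinet}. That lemma, with $\alpha=0$, $\alpha_1=1$, $\alpha_2=0$, and with $p_1=\infty$ allowed after interpolation, should give $\|P_+(fP_-\partial_xg)\|_{L^p}\lesssim\|\partial_xf\|_{L^\infty}\|g\|_{L^p}$. Indeed $[\mathcal{H};f]\partial_xg=-2i\big(P_+(fP_-\partial_xg)-P_-(fP_+\partial_xg)\big)$, so this identity reduces everything to exactly such terms.

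\textbf{General $l,m$.} I argue by induction on $l+m$. Writing $\partial_x^m g=\partial_x(\partial_x^{m-1}g)$, the same $P_\pm$ identity gives
\begin{equation*}
\partial_x^l[\mathcal{H};f]\partial_x^mg=-2i\,\partial_x^l\big(P_+(fP_-\partial_x^mg)-P_-(fP_+\partial_x^mg)\big).
\end{equation*}
In $P_+(fP_-\partial_x^m g)$ the output frequency is positive and the $g$-frequency negative, so $|\xi_2|\le|\xi_1|$ and $|\xi_1+\xi_2|\le|\xi_1|$. All $l+m$ derivatives can then be placed on $f$. Formally this is Lemma \ref{LemmaFractionalDerivativeMolinet} with $\alpha=l$, $\alpha_1=l+m$ and $\alpha_2=0$, applied to $\partial_x^{m-1}g$, since $\alpha_1+\alpha_2=1+\alpha+(m-1)$ once $g$ is replaced by $D^{-(m-1)}$-type factors. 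Rigorously, I would proceed by repeated integration by parts, moving $\partial_x$ from $g$ onto $f$ through the Leibniz rule inside $P_+(\cdot\,P_-\cdot)$, and pass to $p_1=\infty$ by a limiting or duality argument.
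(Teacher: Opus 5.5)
Your reductions are sound. The case $l=m=0$ is fine. The identity $\partial_x[\mathcal{H};f]=[\mathcal{H};f]\partial_x+[\mathcal{H};\partial_xf]$, iterated, gives $\partial_x^l[\mathcal{H};f]\partial_x^mg=\sum_{k=0}^{l}\binom{l}{k}[\mathcal{H};\partial_x^kf]\partial_x^{l+m-k}g$, which reduces everything to $l=0$. The formula $[\mathcal{H};f]h=-2i\big(P_+(fP_-h)-P_-(fP_+h)\big)$ is correct, and so is the frequency picture $0<\xi<\xi_1$, $|\xi_2|<\xi_1$ inside $P_+(fP_-\partial_x^mg)$. (The paper gives no proof of this lemma; it quotes it from the appendix of Dawson--McGahagan--Ponce.)

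What is missing is the analytic core, the endpoint bound $\|P_+(fP_-\partial_x^mg)\|_{L^p}\lesssim\|\partial_x^mf\|_{L^\infty}\|g\|_{L^p}$. None of the ways you propose to get it works.
\begin{itemize}
\item Lemma \ref{LemmaFractionalDerivativeMolinet} requires $1<p_1,p_2<\infty$. Taking $p_2=p$ forces $p_1=\infty$, which interpolation never reaches. A limit $p_1\to\infty$ would need constants uniform in $p_1$. Even then the right-hand side would be $\|D_x^1f\|_{L^\infty}=\|\mathcal{H}\partial_xf\|_{L^\infty}$, which is not controlled by $\|\partial_xf\|_{L^\infty}$.
\item For $m\ge2$ your exponent count needs $\alpha_2=1-m<0$, which that lemma does not allow.
\item The ``rigorous'' fallback is circular: $P_+(fP_-\partial_xh)=\partial_xP_+(fP_-h)-P_+(\partial_xf\,P_-h)$ moves the derivative onto the output. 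That produces another instance of the lemma with $l\ge1$.
\end{itemize}
Note that $P_+[\mathcal{H};f]\partial_xg=-2iP_+(fP_-\partial_xg)$. So for $m=1$ the endpoint bound is equivalent to Calder\'on's commutator estimate; rewriting in terms of $P_\pm$ does not by itself make it easier. Likewise, for $0<t<1$ the fixed-$t$ operators in your $\int_0^1$ representation are bilinear Hilbert transforms in $(f',g)$. Uniform $L^\infty\times L^p$ bounds for them are not something you can quote; only the $t$-average is controlled, and that is Calder\'on's theorem. Finally, even quoting Calder\'on settles only $l+m\le1$. For $m\ge2$ the kernel of $[\mathcal{H};f]\partial_x^m$ is, up to a constant, $\big(f(x)-\sum_{j=0}^{m}\frac{f^{(j)}(y)}{j!}(x-y)^j\big)(x-y)^{-m-1}$. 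This is a higher-order commutator, and the identities above do not reduce it to $m=1$.

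Your $P_\pm$ route can be closed with the standard Coifman--Meyer theorem. That theorem includes $L^\infty\times L^p\to L^p$ for $1<p<\infty$ under the bounds $|\partial_{\xi_1}^{\gamma}\partial_{\xi_2}^{\theta}\sigma|\lesssim(|\xi_1|+|\xi_2|)^{-\gamma-\theta}$; Lemma \ref{CoifmanMeyerThm} as stated covers only $p=2$. Decompose $P_+(fP_-\partial_x^mg)$ dyadically.
\begin{itemize}
\item The low--high part ($|\xi_2|\gg|\xi_1|$) has negative output frequency, so $P_+$ kills it.
\item The high--low part ($|\xi_2|\ll|\xi_1|$) is \emph{not} smooth as you claimed, because $\mathbf{1}_{\{\xi_2<0\}}$ jumps at $\xi_2=0$. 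Write $P_-=\frac12(I-i\mathcal{H})$ on the $g$ factor; the term becomes a Coifman--Meyer operator applied to $(\partial_x^mf,\,g-i\mathcal{H}g)$.
\item In the high--high part ($|\xi_2|\sim|\xi_1|$), $\xi_2$ stays away from $0$, so the cutoff $\mathbf{1}_{\{\xi_2<0\}}$ is harmless. The only singular factor is the outer $P_+$; remove it by the $L^p$-boundedness of $P_+$. What remains is a Coifman--Meyer operator in $(\partial_x^mf,g)$.
\end{itemize}
This gives the endpoint bound for every $m\ge0$. The lemma then follows from the $P_\pm$ identity and the Leibniz reduction. Alternatively, quote Calder\'on's commutator theorem together with its higher-order analogues for $m\ge2$.
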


The next lemma establishes estimates for given operators in terms of the Riesz and Bessel potentials.

\begin{lemma}\label{boundedoperator1}
The linear operators $T_{1}(D):=J_{x}^{-1}\mathcal{H}\partial_{x}^{2}-\partial_{x}$ and $T_{2}(D)=J_{x}^{-2}\partial_{x}^{3}+\partial_{x}$ are bounded in $L^{2}(\mathbb{R})$, i.e., there exists a constant $c_{1}>0$ such that
\begin{equation}
    \|T_{1}(D)f\|_{L^{2}}\leq c\|f\|_{L^{2}}, \quad \forall f\in L^{2}(\mathbb{R}),
\end{equation}
and 
\begin{equation}\label{boundedoperator2}
\|T_{2}(D)f\|_{L^{2}}\leq c\|f\|_{L^{2}}, \quad \forall f\in L^{2}(\mathbb{R}).
\end{equation}
\begin{proof}
In fact, it is sufficient to observe that $T_{j}(D)$ is a Fourier multiplier given by
\begin{equation*}
    T_{j}(D)f=\left(m_{j}(\xi)\widehat{f}\ \right)^{\vee},\ \quad j=1,2, \quad 
\end{equation*}
where
\begin{equation*}
     m_{1}(\xi)=\frac{i\xi}{(1+\xi^{2})^{\frac{1}{2}}}(|\xi|-(1+\xi^{2})^{\frac{1}{2}}), \quad m_{2}(\xi)=\frac{i\xi}{1+\xi^{2}},
\end{equation*}
and $m_{j}\in L^{\infty}(\mathbb{R})$. Then, the inequality \eqref{boundedoperator1} follows from Plancherel's identity.
\end{proof}
\end{lemma}
The following result is due to Kenig and Pilod (see Lemma 2.2 in \cite{KdvHierarchy-Kenig-Didier})

\begin{lemma}\label{sum3derivative}
Let $f,\ g$ and $h$ be smooth functions defined in $\mathbb{R}$. Then
\begin{equation}
    \int_{\mathbb{R}}\partial_{x}^{3}fgh \ dx+\int_{\mathbb{R}}f\partial_{x}^{3}gh \ dx+\int_{\mathbb{R}}fg\partial_{x}^{3}h =3\int_{\mathbb{R}}\partial_{x}f\partial_{x}g\partial_{x}h \ dx.
\end{equation}
\end{lemma}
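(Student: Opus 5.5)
The statement to prove is Lemma \ref{sum3derivative}, the identity
\[
\int \partial_x^3 f\,g\,h+\int f\,\partial_x^3 g\,h+\int f\,g\,\partial_x^3 h=3\int \partial_x f\,\partial_x g\,\partial_x h .
\]
We assume throughout that $f,g,h$ and enough of their derivatives decay at infinity, for instance Schwartz functions, so that all boundary terms vanish. Otherwise the identity is false: take $f=g=h=1$ and the left side is $0$ only trivially, while nonzero boundary contributions appear for general smooth functions. This decay is the implicit hypothesis in \cite{KdvHierarchy-Kenig-Didier}. The idea is to integrate by parts once in each of the three integrals, moving one derivative off the thrice-differentiated factor.

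\textbf{Step 1.} Since $\partial_x(gh)=\partial_x g\,h+g\,\partial_x h$, we get
\[
\int \partial_x^3 f\,gh=-\int \partial_x^2 f\,\partial_x g\,h-\int \partial_x^2 f\,g\,\partial_x h .
\]
The same move in the other two integrals gives
\[
\int f\,\partial_x^3 g\,h=-\int \partial_x f\,\partial_x^2 g\,h-\int f\,\partial_x^2 g\,\partial_x h,
\]
\[
\int fg\,\partial_x^3 h=-\int \partial_x f\,g\,\partial_x^2 h-\int f\,\partial_x g\,\partial_x^2 h .
\]

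\textbf{Step 2.} Group the six resulting terms into pairs. Each pair contains a second derivative of one function, a first derivative of another, and the third function undifferentiated. Pairing by the undifferentiated function:

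\begin{itemize}
\item For $h$ undifferentiated:
\[
-\int(\partial_x^2 f\,\partial_x g+\partial_x f\,\partial_x^2 g)\,h=-\int \partial_x(\partial_x f\,\partial_x g)\,h=\int \partial_x f\,\partial_x g\,\partial_x h .
\]
\item For $g$ undifferentiated:
\[
-\int(\partial_x^2 f\,\partial_x h+\partial_x f\,\partial_x^2 h)\,g=\int \partial_x f\,\partial_x g\,\partial_x h .
\]
\item For $f$ undifferentiated: the pair $-\int f(\partial_x^2 g\,\partial_x h+\partial_x g\,\partial_x^2 h)$ likewise equals $\int \partial_x f\,\partial_x g\,\partial_x h$.
\end{itemize}

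Each pair is an exact derivative of a product, and one more integration by parts converts it into the same symmetric term. Summing the three pairs yields the factor $3$ and proves the identity.

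The computation is routine. The only point requiring care is the decay assumption that makes every boundary term vanish; this is automatic for the functions to which the lemma is later applied, namely $H^\infty$ solutions and their derivatives.
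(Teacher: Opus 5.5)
Your proof is correct. The three single integrations by parts are right, and grouping the six resulting terms by the undifferentiated factor turns each pair into $-\int \partial_x(\partial_x f\,\partial_x g)\,h\,dx$ or one of its two analogues. Each of these integrates by parts to $\int \partial_x f\,\partial_x g\,\partial_x h\,dx$, which gives the factor $3$.

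The paper does not prove this lemma; it only cites Lemma 2.2 of Kenig--Pilod, and integration by parts is the standard route. An equivalent one-line check works on the Fourier side. There $\int fgh\,dx$ is an integral over the plane $\xi_1+\xi_2+\xi_3=0$, and the identity reduces to $\xi_1^3+\xi_2^3+\xi_3^3=3\xi_1\xi_2\xi_3$ on that plane. This follows from $a^3+b^3+c^3-3abc=(a+b+c)(a^2+b^2+c^2-ab-bc-ca)$.

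One correction to your side remark on hypotheses: $f=g=h=1$ is not a counterexample, since both sides vanish identically. A genuine counterexample, with every integral absolutely convergent, is $f=g=1$ and $h$ chosen with $\partial_x^2 h=\arctan x$. Then the left side equals $\int \partial_x^3 h\,dx=\pi$, while the right side is $0$. So the decay hypothesis you add (e.g.\ Schwartz or $H^\infty$ functions) is genuinely needed. It is exactly what makes boundary terms such as $\partial_x^2 f\,g\,h$ and $\partial_x f\,\partial_x g\,h$ vanish at $\pm\infty$.
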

Next, we state a crucial lemma that enables us to derive the \textit{a priori} estimates required in this work. Essentially, it provides an estimate in Sobolev spaces for the product of a term of the form \( e^{\pm iF} \) with an arbitrary function, where $F$ is the primitive of a function. This result is due to Molinet and Pilod (see \cite{HOBOinH1-Didier-Molinet}). With only minor modifications, one can also obtain a corresponding estimate for the difference of exponential terms.

\begin{lemma}\label{lemmaestimateexponencialinHs}
Let $2\leq p<\infty$ and $1\leq s\leq \frac{3}{2}$. Consider $F$ a real-valued function such that $r=\partial_{x}F$ belongs to $L^{2}(\mathbb{R})$. Then, it holds that
\begin{equation}\label{estimateexponencialinHs}
    \|J^{s}_{x}(e^{\pm iF}g)\|_{L^{p}}\lesssim (1+\|r\|_{H^{1}}^{2})\|J^{s}_{x}g\|_{L^{p}}.
\end{equation}
Furthermore, assume that $2\leq p\leq 4$ and consider $F_{1},F_{2}$ real-valued functions such that $\partial_{x}F_{j}=r_{j}$ belong to $L^ {2}(\mathbb{R})$. Then, it holds that
\begin{equation}\label{estimateexponencialinHs2}
    \|J^{s}_{x}[(e^{\pm iF_{1}}-e^{i\pm F_{2}})g]\|_{L^{p}}\lesssim (1+\|r_{1}\|_{H^{1}}^{2}+\|r_{2}\|_{H^{1}}^{2})( \|r_{1}-r_{2}\|_{H^{1}}+\|e^{\pm iF_{1}}-e^{i\pm F_{2}}\|_{L^{\infty}})\|J^{s}_{x}g\|_{L^{p}}.
\end{equation}
\end{lemma}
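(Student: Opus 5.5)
The plan for \eqref{estimateexponencialinHs} is to treat $s=1$ and $1<s\leq \tfrac32$ separately, using only that $|e^{\pm iF}|=1$ and $\partial_x e^{\pm iF}=\pm i r e^{\pm iF}$ with $r\in H^1$. Since $J^s_x$ and $D^s_x$ differ by a bounded operator on $L^p$ at low frequencies, it suffices to bound $\|e^{\pm iF}g\|_{L^p}$ (trivially $\le \|g\|_{L^p}$) and the homogeneous part.

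For $s=1$ we write $\partial_x(e^{\pm iF}g)=e^{\pm iF}(\partial_x g\pm i rg)$. Then $\|rg\|_{L^p}\le \|r\|_{L^\infty}\|g\|_{L^p}\lesssim \|r\|_{H^1}\|g\|_{L^p}$ by Sobolev embedding. This gives a bound with factor $(1+\|r\|_{H^1})$.

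For $1<s\le\tfrac32$ we put $\sigma=s-1\in(0,\tfrac12]$ and apply $D^{\sigma}_x$ to $e^{\pm iF}(\partial_x g\pm i rg)$. We then use the fractional Leibniz rule, Lemma \ref{fracleib}, with the choice $s_1=0$, $p_1=\infty$ on the factor $h:=\partial_xg\pm irg$:
\[
\|D^\sigma_x(e^{\pm iF}h)\|_{L^p}\lesssim \|D^\sigma_x h\|_{L^p}+\|h\|_{L^{p_2'}}\|D^{\sigma}_x e^{\pm iF}\|_{L^{q}} .
\]
The delicate point is $D^\sigma_x e^{\pm iF}$, since $e^{\pm iF}\notin L^q$. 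I would instead use the homogeneous bound $\|D^\sigma_x e^{\pm iF}\|_{L^{q}}\lesssim \|D^{\sigma-1}_x\partial_x e^{\pm iF}\|$ with $q=1/\sigma$-type exponents, or more simply interpolate: $\|D^\sigma_x e^{\pm iF}\|_{L^\infty}$-type control via $\|e^{\pm iF}\|_{L^\infty}^{1-\sigma}\|\partial_x e^{\pm iF}\|_{L^\infty}^{\sigma}$. In the Leibniz step I would arrange that the full $D^\sigma$ falls on the exponential only in an $L^\infty$ or $L^{1/\sigma}$ norm bounded by $\|r\|_{L^\infty}+\|r\|_{L^2}\lesssim\|r\|_{H^1}$, pairing it with $\|h\|_{L^{p}}$ or a slightly higher $L^{p_2}$ handled by Sobolev embedding of $J^s g$.

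The remaining term $\|D^\sigma_x(rg)\|_{L^p}$ is estimated again by Lemma \ref{fracleib}:
\[
\|D^\sigma_x(rg)\|_{L^p}\lesssim \|r\|_{L^\infty}\|D^\sigma g\|_{L^p}+\|D^\sigma_x r\|_{L^{2/(1-\ldots)}}\|g\|_{L^{\ldots}}.
\]
Here $D^\sigma r$ with $\sigma\le \tfrac12$ lies in $L^{q}$ for $q\in[2,\infty)$ appropriately by $H^1$ Sobolev embedding, and $g$ is placed in $L^\infty$ or a higher $L^{\tilde p}$ via $J^s g\in L^p$, $s\ge1$. The quadratic factor $\|r\|_{H^1}^2$ arises in the cross term where $D^\sigma$ hits $e^{\pm iF}$ multiplying $rg$. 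This is the reason the statement carries $1+\|r\|_{H^1}^2$, and it is why $s\le \tfrac32$ is needed: it keeps $\sigma\le\tfrac12$, so no more than $\tfrac12$ derivative falls on $r$, which sits in $H^{1}$ with the margin needed for $L^\infty$-type endpoints.

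For the difference estimate \eqref{estimateexponencialinHs2}, I would repeat the computation with $E=e^{\pm iF_1}-e^{\pm iF_2}$. Here $\partial_xE=\pm i(r_1-r_2)e^{\pm iF_1}\pm ir_2E$, so every term picks up either $\|E\|_{L^\infty}$ or $\|r_1-r_2\|_{H^1}$. The restriction $p\le4$ is used when placing products of two $H^1$-controlled functions and $g$ in Hölder exponents.

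The main obstacle I expect is the fractional derivative landing on the non-decaying factor $e^{\pm iF}$. This must be handled by commutator or Leibniz forms in which only $\partial_x e^{\pm iF}=\pm i re^{\pm iF}$ appears, never $e^{\pm iF}$ in $L^q$ with $q<\infty$. Concretely, I would write $D^\sigma(e^{\pm iF}h)=e^{\pm iF}D^\sigma h+[D^\sigma;e^{\pm iF}]h$ and apply Lemma \ref{commestim}(i), which gives $\|D^{\sigma-1}\partial_x e^{\pm iF}\|_{L^{p_1}}\|h\|_{L^{p_2}}$. I would then bound $\|D^{\sigma-1}(re^{\pm iF})\|_{L^{p_1}}$ by Hardy–Littlewood–Sobolev and $\|r\|_{L^2\cap L^\infty}$.
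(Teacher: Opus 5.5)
The crux of the lemma is controlling the fractional derivative that lands on the non-decaying factor $e^{\pm iF}$, and the mechanism you finally commit to does not handle it. In your ``concrete'' plan you bound $\|D_x^{\sigma-1}(re^{\pm iF})\|_{L^{p_1}}$ by Hardy--Littlewood--Sobolev and $\|r\|_{L^2\cap L^\infty}$. For $\sigma=s-1\in(0,\tfrac12]$, however, $D_x^{\sigma-1}$ is a Riesz potential of order $1-\sigma\ge\tfrac12$. HLS would then need $re^{\pm iF}\in L^m$ with $\tfrac1m=\tfrac1{p_1}+1-\sigma>\tfrac12$, i.e.\ $m<2$, and $r\in H^1$ does not give this. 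The choices $p_1=\infty$, or your ``$q=1/\sigma$'' option (which forces $m=1$), are excluded endpoints.

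This is not an exponent-bookkeeping problem. HLS forgets that $re^{\pm iF}=\mp i\partial_x e^{\pm iF}$ is the derivative of a bounded function, and without that structure the bound is false. For example, take $r(x)=|x|^{-a}$ for large $|x|$ with $\tfrac12<a<1$, so $r\in H^1$. Then $|D^\sigma_x e^{\pm iF}(x)|\approx|r(x)|^\sigma$ at infinity, so $D^\sigma_x e^{\pm iF}\notin L^{1/\sigma}$.

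Your other suggestion, $\|D^\sigma_x e^{\pm iF}\|_{L^\infty}\lesssim\|e^{\pm iF}\|_{L^\infty}^{1-\sigma}\|\partial_x e^{\pm iF}\|_{L^\infty}^{\sigma}$, is the right idea. It is not a standard interpolation inequality at the $L^\infty$ endpoint, though, and you give no argument for it. It does hold for $0<\sigma<1$: write $D^\sigma_x f(x)=c_\sigma\int_{\mathbb{R}}(f(x)-f(x+y))|y|^{-1-\sigma}\,dy$, which converges absolutely for bounded Lipschitz $f$, and split the integral at $|y|=\|f\|_{L^\infty}/\|f'\|_{L^\infty}$. The same argument works for the odd kernel of $D_x^{\sigma-1}\partial_x=-\mathcal{H}D^\sigma_x$, if you prefer Lemma~\ref{commestim}(i) with $p_1=\infty$.

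With that inequality supplied, your chain-rule route closes, and it differs genuinely from the paper's. Apply Lemma~\ref{fracleib} with $s_1=0$, $p_1=\infty$ on $e^{\pm iF}$ and $h=\partial_x g\pm irg$:
\[
\|D^\sigma_x(e^{\pm iF}h)\|_{L^p}\lesssim\|D^\sigma_x h\|_{L^p}+\|r\|_{L^\infty}^{\sigma}\|h\|_{L^p}.
\]
For the product term, $\|D^\sigma_x(rg)\|_{L^p}\lesssim\|r\|_{L^\infty}\|D^\sigma_x g\|_{L^p}+\|g\|_{L^\infty}\|D^\sigma_x r\|_{L^p}\lesssim\|r\|_{H^1}\|J^s_x g\|_{L^p}$. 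This is where $\sigma\le\tfrac12$ enters. Altogether you get the factor $(1+\|r\|_{H^1})^{1+\sigma}$. For the difference estimate, use $\|D^\sigma_x E\|_{L^\infty}\lesssim\|E\|_{L^\infty}+\|\partial_x E\|_{L^\infty}$ with $\partial_x E=\pm i(r_1-r_2)e^{\pm iF_1}\pm ir_2E$; this gives the stated structure without needing $p\le 4$.

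The paper (following Molinet--Pilod) never takes a fractional derivative of the bounded exponential itself. It splits $e^{\pm iF}=P_{lo}e^{\pm iF}+P_{hi}e^{\pm iF}$ and treats the low part by Bernstein. For the high part it uses the Leibniz rule with $g\in L^{p_1}$, controlled by $D^{s-1}_x g$ in $L^p$, and $D^s_x P_{hi}e^{\pm iF}\in L^{p_2}$, controlled via Sobolev by $\|\partial_x^2 P_{hi}e^{\pm iF}\|_{L^2}\lesssim\|\partial_x r\|_{L^2}+\|r\|_{L^4}^2$. That last bound is the origin of the quadratic factor there. The constraints $s\le\tfrac32$ and $p\le 4$ in the paper are just the admissibility conditions for those Sobolev exponents.
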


\begin{proof}
We give only the proof of \eqref{estimateexponencialinHs2} since \eqref{estimateexponencialinHs} was already established in \cite{HOBOinH1-Didier-Molinet}. It follows from similar arguments to those used in Lemma 2.7 in \cite{BOinL2-Didier}. First, note that
\begin{align}\label{extexp1,1}
    &\|J_{x}^{s}(e^{\pm iF_{1}}-e^{\pm iF_{2}})g\|_{L^{p}}\nonumber\\
    &\lesssim \|(e^{\pm iF}-e^{\pm iF_{2}})g\|_{L^{p}}+\|D_{x}^{s}(e^{\pm iF_{1}}-e^{\pm iF_{2}})g\|_{L^{p}}\nonumber \\
    &\lesssim \|e^{\pm i F_{1}}-e^{\pm i F_{2}}\|_{L^{\infty}}\|g\|_{L^{p}}+\|D_{x}^{s}(P_{lo}(e^{\pm iF_{1}}-e^{{\pm iF_{2}}})g)\|_{L^{p}}+\|D_{x}^{s}(P_{hi}(e^{\pm iF_{1}}-e^{\pm i F_{2}})g)\|_{L^{p}}.
\end{align}
On the one hand, from the fractional Leibniz rule (Lemma \ref{fracleib}) and the Bernstein inequality, it follows that
\begin{align}\label{estexplo1,1}
    \|D_{x}^{s}(P_{lo}(e^{\pm iF_{1}}-e^{\pm iF_{2}} )g)\|_{L^{p}}&\lesssim \|D_{x}^{s}P_{lo}(e^{\pm iF_{1}}-e^{\pm iF_{2}})\|_{L^{\infty}}\|g\|_{L^{p}}+\|P_{lo}(e^{\pm iF_{1}}-e^{\pm iF_{2}})\|_{L^{\infty}}\|D_{x}^{s}g\|_{L^{p}} \nonumber \\
    &\lesssim \|e^{\pm iF_{1}}-e^{\pm iF_{2}}\|_{L^{\infty}}\|J_{x}^{s}g\|_{L^{p}}.
\end{align}
On the other hand, again by the fractional Leibniz rule, we have
\begin{align}\label{estexphi1,4}
    \|D_{x}^{s}(P_{hi}(e^{\pm iF_{1}}-e^{\pm iF_{2}})g)\|_{L^{p}}\lesssim \|P_{hi}(e^{\pm iF_{1}}-e^{\pm iF_{2}})\|_{L^{\infty}}\|D_{x}^{s}g\|_{L^{p}}+\|g\|_{L^{p_{1}}}\|D_{x}^{s}P_{hi}(e^{\pm iF_{1}}-e^{\pm iF_{2}})\|_{L^{p_{2}}},
\end{align}
where $\frac{1}{p_{1}}+\frac{1}{p_{2}}=\frac{1}{p}$ and $1<p_{1},p_{2}\leq \infty$. So, assuming that $1\leq s\leq 1+\frac{1}{p}$, we can choose $p_{1}$ and $p_{2}$ satisfying
\begin{equation*}
    s-1=\frac{1}{p}-\frac{1}{p_{1}}, \quad \frac{3}{2}-s=\frac{1}{2}-\frac{1}{p_{2}}.
\end{equation*}
Thus, from Sobolev embedding and the Bernstein inequality, we obtain
\begin{equation}\label{estexphi1,5}
\begin{split}
     \|g\|_{L^{p_{1}}}\|D_{x}^{s}P_{hi}(e^{\pm iF_{1}}-e^{\pm iF_{2}})\|_{L^{p_{2}}}&\lesssim \|D_{x}^{s-1}g\|_{L^{p}}\|D_{x}^{\frac{3}{2}-s}P_{hi}(e^{\pm iF_{1}}-e^{\pm iF_{2}})\|_{L^{2}} \\
    &\lesssim \|J^{s}_{x}g\|_{L^{p}}\|\partial_{x}^{2}P_{hi}(e^{\pm iF_{1}}-e^{\pm F_{2}})\|_{L^{2}} \\
    &\lesssim \|J^{s}_{x}g\|_{L^{p}}(1+\|r_{1}\|_{H^{1}}^{2}+\|r_{2}\|_{H^{1}}^{2})(\|r_{1}-r_{2}\|_{H^{1}}+\|e^{\pm iF_{1}}-e^{\pm iF_{2}}\|_{L^{\infty}}).
\end{split}
\end{equation}
From \eqref{estexphi1,4} and \eqref{estexphi1,5}, it follows that
\begin{equation}\label{estexphi1,7}
    \|D_{x}^{s}(P_{hi}(e^{\pm iF_{1}}-e^{\pm i F_{2}})g)\|_{L^{p}}\lesssim (1+\|r_{1}\|_{H^{1}}^{2}+\|r_{2}\|_{H^{1}}^{2})(\|r_{1}-r_{2}\|_{H^{1}}+\|e^{\pm iF_{1}}-e^{\pm iF_{2}}\|_{L^{\infty}})\|J^{s}_{x}g\|_{L^{p}}.
\end{equation}
Therefore, from \eqref{extexp1,1}, \eqref{estexplo1,1}, and \eqref{estexphi1,7}, it follows the result in the first case. Now, assume that $1+\frac{1}{p}< s\leq \frac{3}{2}$. In this case, we choose $p_{1}$ and $p_{2}$ satisfying
\begin{equation*}
    s-\frac{3}{2}+\frac{1}{p}=\frac{1}{p}-\frac{1}{p_{1}}, \quad 2-s-\frac{1}{p}=\frac{1}{2}-\frac{1}{p_{2}}.
\end{equation*}
Since $p\leq 4$, it is not difficult to see that $\displaystyle 0<\frac{1}{p_{2}}\leq \frac{1}{p}$. Therefore, from Sobolev embedding, we have
\begin{align}\label{estexphi1.6}
 \|g\|_{L^{p_{1}}}\|D_{x}^{s}P_{hi}(e^{\pm iF_{1}}-e^{\pm iF_{2}})\|_{L^{p_{2}}}&\lesssim \|D_{x}^{s-\frac{3}{2}+\frac{1}{p}}g\|_{L^{p}}\|D_{x}^{\frac{3}{2}}P_{hi}(e^{\pm iF_{1}}-e^{\pm iF_{2}})\|_{L^{2}}\nonumber \\
    &\lesssim \|J_{x}^{s}g\|_{L^{p}}(1+\|r_{1}\|_{H^{1}}^{2}+\|r_{2}\|_{H^{1}}^{2})(\|r_{1}-r_{2}\|_{H^{1}}+\|e^{\pm iF_{1}}-e^{\pm iF_{2}}\|_{L^{\infty}}).
\end{align}
Thus, the estimates \eqref{extexp1,1}, \eqref{estexplo1,1}, \eqref{estexphi1,4}, and \eqref{estexphi1.6} imply \eqref{estimateexponencialinHs}, and this concludes the result.
\end{proof}

\begin{remark}\label{remarkS>3/2}
A version of the above Lemma \ref{lemmaestimateexponencialinHs} could also be stated for $s>\frac{3}{2}$. This extension is worth emphasizing, as the proof of Theorem \ref{maintheorem} is presented only for the range $1\leq s\leq \frac{3}{2}$. However, analogous techniques can be employed to establish the result for $s>\frac{3}{2}$ as well.
\end{remark}

\section{Energy estimates and existence of smooth solutions for \eqref{BO-NLS}}

For simplicity in notation, let us consider the constants in \eqref{BO-NLS} satisfying $-a=b=c=d=\alpha=\beta =1.$

\subsection{Energy estimates for solutions of \eqref{BO-NLS}}
Firstly, we observe that we cannot obtain a priori estimates considering the usual energy associated with system \eqref{BO-NLS}, i.e.
$$
E^{s}(t)=\|r(t)\|_{H^{s}_{x}}^{2}+\|q(t)\|_{H^{s}_{x}}^{2}.
$$
In fact, we have terms that cannot be handled directly. More precisely,
\begin{equation}\label{integraisproblematicas}
\int_{\mathbb{R}} D_{x}^{s}\partial_{x}(|q|^{2})D_{x}^{s}r\ dx, \quad \int_{\mathbb{R}} D_{x}^{s}r\partial_{x}D_{x}^{s}(r\mathcal{H}\partial_{x}r)dx, \quad \textrm{and} \quad \int_{\mathbb{R}}D_{x}^{s}r\partial_{x}D_{x}^{s}\mathcal{H}(r\partial_{x}r)\ dx.
\end{equation}
\indent So, in order to obtain a priori estimates, we need to modify the usual energy. In this case, we define the following functional:

\begin{definition}
For all $t\geq 0$ and $s>\frac{3}{2}$, we define the modified energy as being
\begin{align} \label{em1}
    E_{m}^{s}(t)&=\|r(t)\|_{L^{2}_{x}}^{2}+\frac{1}{2}\|D_{x}^{s}r(t)\|_{L^{2}_{x}}^{2}+\|q(t)\|_{L^{2}_{x}}^{2}+\|D_{x}^{s}q(t)\|_{L^{2}_{x}}^{2}\\
    &\quad +\displaystyle\int_{\mathbb{R}} D_{x}^{s}(|q|^{2})D_{x}^{s-2}r\  dx +
     \tau_{s}\cdot \displaystyle\int_{\mathbb{R}}D_{x}^{s-1}\mathcal{H}r\cdot r\cdot D_{x}^{s-1}\partial_{x}r\ dx, \nonumber
\end{align}
where $\displaystyle \tau_{s}=\frac{2s+1}{3}.$
\end{definition}
Now, considering this modified energy functional, we can indeed establish the a priori estimates we need. Namely, we have the following result.

\begin{prop}\label{propenergyestimate}
Let $s>\frac{3}{2}$ and $T>0$. There exist positive constants $A_{s}$ and $B_{s}$ such that for any $(r,q)\in C([0,T]:H^{s}(\mathbb{R})\times H^{s}(\mathbb{R}))$ solution of \eqref{BO-NLS} satisfying
$$
\|(r(t),q(t))\|_{H^{s}_{x}\times H^{s}_{x}}:=\big(\|r\|_{L^{2}_{x}}^{2}+\|D_{x}^{s}r\|_{L^{2}_{x}}^{2}+\|q\|_{L^{2}_{x}}^{2}+\|D_{x}^{s}q\|_{L^{2}_{x}}^{2}\big)^{\frac{1}{2}}\leq \frac{1}{A_{s}},
$$
for all $t\in [0,T]$, the following estimates hold true.
\begin{enumerate}
    \item \textbf{Coercivity:}
    \begin{equation}\label{es1}
        \frac{1}{2}\left( \|r(t)\|_{H^{s}_{x}}^{2}+\|q(t)\|_{H^{s}_{x}}^{2}\right)\leq E_{m}^{s}(t)\leq \frac{3}{2}\left( \|r(t)\|_{H^{s}_{x}}^{2}+\|q(t)\|_{H^{s}_{x}}^{2}\right);
    \end{equation}

    \item \textbf{Energy estimate:}
    \begin{equation}\label{es2}
        \frac{d}{dt}E_{m}^{s}(t) \lesssim \left(1+\|\partial_{x}^{2}r\|_{L^{\infty}_{x}}+\|\mathcal{H}\partial^{2}_{x}r\|_{L^{\infty}_{x}}\right)E_{m}^{s}(t),
    \end{equation}
    and as a consequence,
    \begin{align}\label{es3}
        \sup_{t\in [0,T]}\|(r(t),q(t))\|_{H^{s}_{x}\times H^{s}_{x}}\leq 2\textrm{exp}\left\{B_{s} \left(T+\|\partial_{x}^{2}r\|_{L^{1}_{T}L^{\infty}_{x}}+\|\mathcal{H}\partial^{2}_{x}r\|_{L^{1}_{T}L^{\infty}_{x}}\right)\right\}\|(r_{0},q_{0})\|_{H^{s}\times H^{s}}.
    \end{align}
\end{enumerate}
\end{prop}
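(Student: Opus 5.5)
We prove coercivity first, then differentiate $E_m^s$ along the flow, and finally close with Gronwall. For \eqref{es1} only the two correction terms in \eqref{em1} need control. For the first one, Cauchy--Schwarz gives
\begin{equation*}
\Big|\int D_x^s(|q|^2)D_x^{s-2}r\,dx\Big|\le \|D_x^{s}(|q|^2)\|_{L^2}\|D_x^{s-2}r\|_{L^2}.
\end{equation*}
Splitting $D_x^{s-2}r$ into low and high frequencies, the fractional Leibniz rule (Lemma \ref{fracleib} and the Grafakos--Oh variant) together with $H^s\hookrightarrow L^\infty$ yields a bound $\lesssim \|q\|_{H^s}^2\|r\|_{H^s}$. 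For the cubic term we put $D_x^{s-1}\mathcal{H}r$ and $D_x^{s-1}\partial_x r$ in $L^2$ and $r$ in $L^\infty$, which gives $\lesssim\|r\|_{H^s}^3$. Both corrections are therefore $\lesssim A_s^{-1}\|(r,q)\|_{H^s\times H^s}^2$. The leading part of \eqref{em1} is already comparable to $\|(r,q)\|^2_{H^s\times H^s}$, once the harmless factor $1/2$ on $\|D^s_xr\|^2$ is compensated by $\|r\|_{L^2}^2$; this is a minor renormalization of constants. Taking $A_s$ large then gives \eqref{es1}.

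For \eqref{es2}, first note that the $L^2$ parts of $\frac{d}{dt}E_m^s$ are harmless: $\|q\|_{L^2}$ is conserved, and $\frac{d}{dt}\|r\|_{L^2}^2$ involves only the coupling $\int r\partial_x|q|^2$, which is bounded by $E_m^s$. Next we compute $\frac{d}{dt}\frac12\|D^s_xr\|^2$ and $\frac{d}{dt}\|D^s_xq\|^2$. The linear parts are skew-adjoint, so they vanish. The nonlinear parts leave the three bad integrals \eqref{integraisproblematicas}, plus terms from $D_x^s(qr)$ in the Schrödinger energy.

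\emph{Schrödinger terms.} By the Kato--Ponce commutator (Lemma \ref{commestim}), $\operatorname{Im}\int D^s_x(qr)\overline{D^s_xq}$ reduces to $\operatorname{Im}\int qD_x^sr\,\overline{D_x^sq}$ plus acceptable terms. This coupling has no derivative loss and is bounded by $\|q\|_{L^\infty}\|r\|_{H^s}\|q\|_{H^s}$. The term $\int D_x^s\partial_x(|q|^2)D_x^sr$, however, carries one derivative too many.

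\emph{Bad terms from $r$.} We treat the two higher-order terms with the generalized commutator estimates \eqref{gce1}--\eqref{gce3} applied to $u=v=r$. After integration by parts, $\int D^s_xr\,\partial_xD_x^s(r\mathcal{H}\partial_x r)$ and $\int D_x^sr\,\partial_xD^s_x\mathcal{H}(r\partial_xr)$ reduce to integrals of the form $\int \partial_xr\,D_x^{s-1}\partial_x r\,D^{s}_x\partial_x\mathcal{H}r$-type expressions, i.e. $\int \partial_x r (D^{s+1/2}\cdot)^2$-type terms. The terms with all derivatives on one factor cancel by symmetry: $\int rD^{s+1}\partial_x r\,D^sr$ is zero up to lower order, using the antisymmetry of $\mathcal{H}$ and $\partial_x$. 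The remainders are bounded by $(\|\partial_x^2r\|_{L^\infty}+\|\mathcal{H}\partial_x^2r\|_{L^\infty})\|r\|_{H^s}^2$.

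\emph{Cancellation by the corrections.} The surviving resonant contribution is $c_s\int \partial_x r\,|D_x^{s}\partial_x^{1/2}\cdots|$, concretely a multiple of $\int \partial_xr\,D^s_x r\,D_x^{s}\mathcal{H}\partial_x r$. To cancel it, we differentiate the cubic correction $\tau_s\int D_x^{s-1}\mathcal{H}r\, r\, D_x^{s-1}\partial_xr$ and insert the linear part $\partial_t r=-\partial_x^3r+\mathcal{H}\partial_x^2r+\dots$. Lemma \ref{sum3derivative} converts the $\partial_x^3$ contributions into $3\int \partial_xD^{s-1}\mathcal{H}r\,\partial_xr\,\partial_xD_x^{s-1}\partial_xr$. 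This equals the resonant term up to sign, and $\tau_s=(2s+1)/3$ is exactly the coefficient that kills it; the factor $(s+1)$ in \eqref{gce1} and $s$ in \eqref{gce2} add up to $2s+1$. The $\mathcal{H}\partial_x^2$ contributions and the nonlinear contributions are lower order and are bounded by $E_m^s$ times $(1+\|\partial_x^2 r\|_{L^\infty}+\|\mathcal{H}\partial_x^2r\|_{L^\infty})$, using smallness to absorb higher powers. Similarly, differentiating $\int D^s_x(|q|^2)D^{s-2}_xr$ with $\partial_tr\approx-\partial_x^3r$ produces $\int D_x^s(|q|^2)D_x^{s-2}\partial_x^3 r$. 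Moving $D_x^{s-2}\partial_x^2$ over and using $D^2=-\partial_x^2$ shows this cancels $\int D_x^s\partial_x(|q|^2)D_x^sr$. The cross terms from $\partial_t q$ inside $|q|^2$, namely $2\operatorname{Re}(\bar q\,i\partial_x^2q)$, have $s+2$ derivatives on $q$ against $D^{s-2}r$. After integration by parts, each factor takes at most $s$ derivatives (with $\|r\|_{W^{1,\infty}}$ or $\|q\|_{W^{1,\infty}}$), and Sobolev embedding with $s>3/2$ bounds them.

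Gronwall on \eqref{es2} combined with \eqref{es1} gives \eqref{es3}, with the factor $2$ coming from $\sqrt{3}$. The main obstacle is the exact bookkeeping of the cubic cancellation, i.e. verifying that $\tau_s=(2s+1)/3$ matches the coefficients produced by \eqref{gce1}--\eqref{gce3}. The step I would check first is the symbol identity underlying Lemma \ref{sum3derivative} applied to $(D^{s-1}\mathcal{H}r,r,D^{s-1}\partial_x r)$.
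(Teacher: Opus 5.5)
\textbf{The step that fails.} Your architecture matches the paper's: the same modified energy, the expansions \eqref{gce1}--\eqref{gce3}, the cancellation of $\int D_x^s\partial_x(|q|^2)D_x^sr$ by the $\partial_x^3$-contribution of the first correction, and Lemma \ref{sum3derivative} applied to the cubic correction with $\tau_s=(2s+1)/3$. The problem is the step on which the whole cancellation rests.

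You claim that the terms with all derivatives on one factor cancel up to lower order by antisymmetry. These are $\int r\,D_x^{s+1}\partial_xr\,D_x^sr$, which up to sign is $\int D_x^sr\,r\,D_x^{s-1}\partial_x^3r$; they are exactly what \eqref{gce1} and \eqref{gce2} leave behind. The claim is false. Put $f=D_x^sr$, so the term is $\int f\,r\,\mathcal{H}\partial_x^2 f$. Skew-adjointness of $\mathcal{H}\partial_x^2$ only gives $-\tfrac12\int f\,[\mathcal{H}\partial_x^2,r]f$. That commutator is first order, with principal part $2\partial_xr\,\mathcal{H}\partial_x$, which is exactly the resonant level. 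In Fourier variables with $\xi_1+\xi_2+\xi_3=0$ and $|\xi_2|\ll|\xi_1|$, the symmetrized multiplier is $\xi_1|\xi_1|+\xi_3|\xi_3|=-2|\xi_1|\xi_2+O(\xi_2^2)$, not $O(\xi_2^2)$. The paper's treatment of $II_{3,2}$ (integration by parts plus Lemma \ref{commutatorhilbertestimate}), and likewise of $II_{2,4}$, gives
\[
\int D_x^sr\,r\,D_x^{s-1}\partial_x^3r=-\int D_x^sr\,\partial_xr\,D_x^{s-1}\partial_x^2r+O\big(\|\partial_x^2r\|_{L^\infty}\|D_x^sr\|_{L^2}^2\big).
\]

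\textbf{Consequence for the coefficient count.} Your count reaches $2s+1$ only by accident. Set $B:=\int D_x^sr\,\partial_xr\,D_x^{s-1}\partial_x^2r$.
\begin{itemize}
\item For $u=v=r$, \eqref{gce1} produces $(s+1)+1=s+2$ copies of $B$, because the term $\partial_xv\,D_x^{s-1}\partial_x^2u$ also contributes.
\item \eqref{gce2} and \eqref{gce3} together produce $s+1$ copies, since \eqref{gce3} contributes $\partial_xu\,D_x^{s+1}v$.
\item Each of the two one-factor terms contributes $-1$.
\end{itemize}
The total is $(s+2)+(s+1)-2=2s+1=3\tau_s$. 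Your $(s+1)+s$ omits two $+1$'s and two $-1$'s, which happen to cancel. If you apply \eqref{gce1}--\eqref{gce3} faithfully while discarding the one-factor terms as you propose, you get $2s+3$, and $\tau_s=(2s+1)/3$ no longer cancels. Since you yourself flag this bookkeeping as the crux, it must be redone with the one-factor terms kept.

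\textbf{Minor point on coercivity.} For $\tfrac32<s<2$, $\|D_x^{s-2}r\|_{L^2}$ is not controlled by $\|r\|_{H^s}$: the weight $|\xi|^{s-2}$ is singular at $0$ and $\widehat r$ is only in $L^2$. Write the pairing as $\int D_x^{s-1}(|q|^2)\,D_x^{s-1}r$ instead, as the paper does in $V_2$.
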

\begin{remark}\label{remarkenergyestimate1}
The estimates above can also be established for the system \eqref{BO-NLS-rescaled}, with constants that are uniform with respect to $\gamma$.
\end{remark}
\begin{proof}
The proof of \eqref{es1} is not difficult and will be omitted. Let us prove \eqref{es2}. Considering $E_{m}^{s}(t)$ as in \eqref{em1} and differentiating with respect to $t$, we have 
\begin{align} \label{estenerg1}
    \frac{d}{dt}E_{m}^{s}(t)& =  \frac{d}{dt}\left(\int_{\mathbb{R}}r^{2} dx\right)+\frac{1}{2}\frac{d}{dt}\left(\int_{\mathbb{R}}(D_{x}^{s}r)^{2}dx\right) +\frac{d}{dt}\left(\int_{\mathbb{R}}|q|^{2}dx\right)
    +\frac{d}{dt}\left(\int_{\mathbb{R}}|D_{x}^{s}q|^{2} dx\right)\nonumber \\
    &\quad +\frac{d}{dt}\left(\int_{\mathbb{R}}D_{x}^{s}(|q|^{2})D_{x}^{s-2}r dx\right) +\tau_{s}\frac{d}{dt}\left(\int_{\mathbb{R}} D_{x}^{s-1}\mathcal{H}r\cdot r\cdot D_{x}^{s-1}\partial_{x}r \ dx\right)\nonumber \\
     &= I+II+III+IV+V+VI.
\end{align}
Note that using the first equation in \eqref{BO-NLS}, we have
\begin{align*}
    I&=2\int_{\mathbb{R}}r(r\partial_{x}r)dx-2\int_{\mathbb{R}}r\partial_{x}(r\mathcal{H}\partial_{x}r)dx-2\int_{\mathbb{R}}r\partial_{x}\mathcal{H}(r\partial_{x}r)dx+2\int_{\mathbb{R}}r\partial_{x}(|q|^{2})dx \nonumber\\
    &= I_{1}+I_{2}+I_{3}+I_{4}.
\end{align*}
On the one hand, by integration by parts, we can readily obtain that
\begin{equation}
    I_{1}=0\quad \textrm{and } \quad I_{2}+I_{3}=0.
\end{equation}
On the other hand, by Hölder's inequality and the fact that $H^{1}$ is a algebra, it follows that
\begin{equation}
    |I_{4}|\lesssim \|r\|_{L^{2}_{x}}\||q|^{2}\|_{H^{1}_{x}}\lesssim \|r\|_{H^{s}_{x}}\|q\|_{H^{s}_{x}}^{2}.
\end{equation}
\indent Now, using once again the first equation in \eqref{BO-NLS}, we see that
\begin{align}
II&=\int_{\mathbb{R}}D^{s}_{x}r D_{x}^{s}(r\partial_{x}r) dx-\int_{\mathbb{R}}D_{x}^{s}rD_{x}^{s}\partial_{x}(r\mathcal{H}\partial_{x}r)dx-\int_{\mathbb{R}}D_{x}^{s}rD_{x}^{s}\partial_{x}\mathcal{H}(r\partial_{x}r)dx 
+ \int_{\mathbb{R}}D_{x}^{s}rD_{x}^{s}\partial_{x}(|q|^{2})dx \nonumber \\
&= II_{1}+II_{2}+II_{3}+II_{4}.
\end{align}
But note that by integrating by parts, we have
\begin{align*}
II_{1}=\int_{\mathbb{R}}D_{x}^{s}r[D_{x}^{s};r]\partial_{x}rdx+\int_{\mathbb{R}}D_{x}^{s}r\cdot r \cdot D_{x}^{s}\partial_{x}rdx&= \int_{\mathbb{R}}D_{x}^{s}r[D_{x}^{s};r]\partial_{x}rdx-\frac{1}{2}\int_{\mathbb{R}}(D_{x}^{s}r)^{2}\partial_{x}rdx\\
&= II_{1,1}+II_{1,2},
\end{align*}
Thus, by Lemma \ref{commestim} and the Sobolev embedding $H^{s-1}(\mathbb{R})\hookrightarrow L^{\infty}(\mathbb{R})$, since $s-1>\frac{1}{2}$, we get
\begin{align}
    &|II_{1,1}|+|II_{1,2}|\lesssim \|\partial_{x}r\|_{L^{\infty}_{x}}\|D^{s}_{x}r\|_{L^{2}_{x}}^{2}+\|\partial_{x}r\|_{L^{\infty}_{x}}\|D^{s-1}_{x}\partial_{x}r\|_{L^{2}_{x}}\lesssim \|r\|_{H^{s}_{x}}^{3}.
\end{align}
\indent Now, considering $II_{3}$, observe that
\begin{align*}
    II_{3}&=-\int_{\mathbb{R}}D_{x}^{s}r\left\{D_{x}^{s}\partial_{x}\mathcal{H}(r\partial_{x}r)+rD_{x}^{s-1}\partial_{x}^{3}r+(s+2)\partial_{x}rD_{x}^{s-1}\partial_{x}^{2}r\right\}dx\\
    &\quad +\int_{\mathbb{R}}D_{x}^{s}r\ r\ D_{x}^{s-1}\partial_{x}^{3}r\ dx+(s+2)\int_{\mathbb{R}}D_{x}^{s}r\partial_{x}rD_{x}^{s-1}\partial_{x}^{2}r \ dx\\
    &=II_{3,1}+II_{3,2}+II_{3,3}.
\end{align*}
Then, by Lemma \ref{geralcommestm},  we have that
\begin{equation}
    |II_{3,1}|\lesssim \|D_{x}^{s}r\|_{L^{2}_{x}}\|\partial_{x}^{2}r\|_{L^{\infty}_{x}}\|D_{x}^{s}r\|_{L^{2}_{x}}.
\end{equation}
We now observe that the term \( II_{3,2} \) presents a significant difficulty, as a straightforward modification of the energy functional is insufficient to control it. Indeed, such a modification invariably leads to residual terms that cannot be bounded 
suitably. However, it is possible to decompose \( II_{3,2} \) into a sum of terms that can be estimated, along with the remainder term \( II_{3,3} \). But, as we will see, the adjustment of the energy functional made in \eqref{em1} is effective in controlling \( II_{3,3} \).
A similar argument was employed by Tanaka in \cite{Tanaka-ThirdOrderBenjaminOno} in the context of the third-order Benjamin-Ono equation. In fact, integrating by parts and using that $\partial_{x}=-D_{x}^{1}\mathcal{H}$, we have that
\begin{align}
II_{3,2}&=-\int_{\mathbb{R}}D_{x}^{s}\partial_{x}r \ r \ D_{x}^{s-1}\partial_{x}^{2}r \ dx-\int_{\mathbb{R}}D_{x}^{s}r \ \partial_{x}r \ D_{x}^{s-1}\partial_{x}^{2}r \ dx\nonumber \\
&=\int_{\mathbb{R}}D_{x}^{s}\partial_{x}r \ r \ D_{x}^{s}\mathcal{H}\partial_{x}r \ dx-\int_{\mathbb{R}}D_{x}^{s}r \ \partial_{x}r \ D_{x}^{s-1}\partial_{x}^{2}r \ dx \label{eq1forII32}
\end{align}
But one more integration by parts yields
\begin{align}
\int_{\mathbb{R}}D_{x}^{s}\partial_{x}r \ r \ D_{x}^{s}\mathcal{H}\partial_{x}r \ dx&=-\int_{\mathbb{R}}D_{x}^{s}\partial_{x}^{2}r \ r \ D_{x}^{s}\mathcal{H}r \ dx-\int_{\mathbb{R}}D_{x}^{s}\partial_{x}r \ \partial_{x}r \ D_{x}^{s}\mathcal{H}r \ dx\nonumber \\
&=\int_{\mathbb{R}}\mathcal{H}(D_{x}^{s}\partial_{x}^{2}r \ r)D_{x}^{s}r \ dx+\int_{\mathbb{R}}D_{x}^{s}r\ \partial_{x}^{2}r \ D_{x}^{s}\mathcal{H}r \ dx+\int_{\mathbb{R}}D_{x}^{s}r \ \partial_{x}r \ D_{x}^{s}\mathcal{H}\partial_{x}r \ dx\nonumber \\
&=\int_{\mathbb{R}}[\mathcal{H};r]D_{x}^{s}\partial_{x}^{2}r \ D_{x}^{s}r \ dx-\int_{\mathbb{R}}D_{x}^{s}r \ r \ D_{x}^{s-1}\partial_{x}^{3}r \ dx +\int_{\mathbb{R}}D_{x}^{s}r \ \partial_{x}^{2}r \ D_{x}^{s}\mathcal{H}r \ dx \nonumber\\
&\quad -\int_{\mathbb{R}}D_{x}^{s}r \ \partial_{x}r \ D_{x}^{s-1}\partial_{x}^{2}r \ dx. \label{eq2forII32}
\end{align}
Therefore, from \eqref{eq1forII32} and \eqref{eq2forII32}, we deduce that
\begin{align}
II_{3,2}&= \frac{1}{2}\int_{\mathbb{R}}[\mathcal{H};r]D_{x}^{s}\partial_{x}^{2}r \ D_{x}^{s}r \ dx+\frac{1}{2}\int_{\mathbb{R}}D_{x}^{s}r \ \partial_{x}^{2}r \ D_{x}^{s}\mathcal{H}r \ dx-\int_{\mathbb{R}}D_{x}^{s}r \ \partial_{x}r \ D_{x}^{s-1}\partial_{x}^{2}r \ dx \nonumber \\
&=II_{3,2,1}+II_{3,2,2}+II_{3,2,3}.\label{eqforII32}
\end{align}
Now, by Hölder's inequality and Lemma \ref{commutatorhilbertestimate}, it follows that
\begin{align}
    |II_{3,2,1}|+ |II_{3,2,2}|\lesssim\|\partial_{x}^{2}r\|_{L^{\infty}_{x}}\|D_{x}^{s}r\|_{L^{2}_{x}}^{2}. 
\end{align}
\indent We will examine the terms $II_{3,2,3}$ and $II_{3,3}$ later, as they will require a detailed description of the terms contained in \( VI \). Specifically, certain terms arising in the decomposition of \( VI \) will cancel with \( II_{3,2,3} \) and \( II_{3,3} \).

Concerning $II_{2}$, note that 
\begin{align*}
II_{2}&= -\int_{\mathbb{R}}D_{x}^{s}r\left[D_{x}^{s}(\partial_{x}r\mathcal{H}\partial_{x}r)-\mathcal{H}\partial_{x}rD_{x}^{s}\partial_{x}r-\partial_{x}rD_{x}^{s+1}r\right]dx-\int_{\mathbb{R}}D_{x}^{s}r\ \mathcal{H}\partial_{x}r \ D_{x}^{s}\partial_{x}r \ dx\\
    &\quad -\int_{\mathbb{R}} D_{x}^{s}r\left[D_{x}^{s}(r\mathcal{H}\partial_{x}^{2}r)-D_{x}^{s+1}\partial_{x}r -s\partial_{x}rD_{x}^{s+1}r\right]dx
    +\int_{\mathbb{R}}D_{x}^{s}r\ r\ D_{x}^{s-1}\partial_{x}^{3}r\ dx\\
    &\quad +(s+1)\int_{\mathbb{R}}D_{x}^{s}r \partial_{x} r D_{x}^{s-1}\partial_{x}^{2}r\ dx\\
    &= II_{2,1}+II_{2,2}+II_{2,3}+II_{2,4}+II_{2,5}.
\end{align*}
From Lemma \ref{geralcommestm} we obtain
\begin{align}
    |II_{2,1}|+|II_{2,3}|\lesssim (\|\mathcal{H}\partial_{x}^{2}r\|_{L^{\infty}}+\|\partial_{x}^{2}r\|_{L^{\infty}})\|D_{x}^{s}r\|_{L^{2}}^{2}.
\end{align}
On the other hand, integrating by parts, we deduce that
\begin{equation}
    |II_{2,2}|=\frac{1}{2}\left|\int_{\mathbb{R}} (D_{x}^{s}r)^{2}\mathcal{H}\partial_{x}^{2}r\ dx\right|\lesssim \|\mathcal{H}\partial_{x}^{2}r\|_{L^{\infty}}\|D_{x}^{s}r\|_{L^{2}}^{2}.
\end{equation}
Furthermore, as in \eqref{eqforII32}, we have
\begin{align*}
II_{2,4}&= \frac{1}{2}\int_{\mathbb{R}}[\mathcal{H};r]D_{x}^{s}\partial_{x}^{2}r \ D_{x}^{s}r \ dx+\frac{1}{2}\int_{\mathbb{R}}D_{x}^{s}r \ \partial_{x}^{2}r \ D_{x}^{s}\mathcal{H}r \ dx-\int_{\mathbb{R}}D_{x}^{s}r \ \partial_{x}r \ D_{x}^{s-1}\partial_{x}^{2}r \ dx \nonumber \\
&=II_{2,4,1}+II_{2,4,2}+II_{2,4,3},
\end{align*}
where from Lemma \ref{commutatorhilbertestimate}, it follows that
\begin{equation}
    |II_{2,4,1}|+|II_{2,4,2}|\lesssim \|\partial_{x}^{2}r\|_{L^{\infty}_{x}}\|D_{x}^{s}r\|_{L^{2}_{x}}^{2}.
\end{equation}
In the same way as for the terms $II_{3,2,3},\ II_{3,3}$, we will treat the terms $II_{2,4,3},\ II_{2,5}$ and $II_{4}$ later.

Regarding the term $III$ we can use the fact that the $L^{2}$-norm of $q$ is conserved, and thus
 \begin{equation}
     III=0.
 \end{equation}
\indent Next, notice that using the second equation in \eqref{BO-NLS} and integrating by parts, we obtain
\begin{align*}
IV=2\textrm{Re}\left(\int_{\mathbb{R}} D_{x}^{s}\partial_{t}qD_{x}^{s}\bar{q} dx\right)&=-2\textrm{Re}\left(\int_{\mathbb{R}}D_{x}^{s}(i\partial_{x}^{2}q)D_{x}^{s}\bar{q} \ dx\right)+2\textrm{Re}\left(\int_{\mathbb{R}} D_{x}^{s}(irq)D_{x}^{s}\bar{q} \ dx\right)\\
&=2\textrm{Im}\left(\int_{\mathbb{R}}|D_{x}^{s}\partial_{x}q|^{2} \ dx\right)-2\textrm{Im}\left(\int_{\mathbb{R}} D_{x}^{s}(rq)D_{x}^{s}\bar{q} \ dx\right)\\
&=-2\textrm{Im}\left(\int_{\mathbb{R}} D_{x}^{s}(rq)D_{x}^{s}\bar{q} \ dx\right).
\end{align*}
Thus, using that $H^{s}(\mathbb{R})$ is a Banach algebra, since $s>\frac{3}{2}$, we have that
\begin{equation}
    |IV|\lesssim \|r\|_{H^{s}_{x}}\|q\|_{H^{s}_{x}}^{2}.
\end{equation}
 \indent Now, for the term $V$, observe that
\begin{align*}
    V&=\int_{\mathbb{R}}D_{x}^{s}(|q|^{2})D_{x}^{s-2}\partial_{t}r\ dx +2\textrm{Re}\left(\int_{\mathbb{R}}D_{x}^{s}(\partial_{t}q\bar{q})D_{x}^{s-2}r\ dx\right)  \\
    &=V_{1}+V_{2}.
\end{align*}
On the other hand, using the first equation in \eqref{BO-NLS}, it follows that
\begin{align*}
    V_{1}&=-\int_{\mathbb{R}} D_{x}^{s}(|q|^{2})D_{x}^{s-2}\partial_{x}^{3}r \ dx+\int_{\mathbb{R}} D_{x}^{s}(|q|^{2})D_{x}^{s-2}\mathcal{H}\partial_{x}^{2}r \ dx+\int_{\mathbb{R}} D_{x}^{s}(|q|^{2})D_{x}^{s-2}(r\partial_{x}r) \ dx\\
    &\quad -\int_{\mathbb{R}} D_{x}^{s}(|q|^{2})D_{x}^{s-2}\partial_{x}(r\mathcal{H}\partial_{x}r) \ dx-\int_{\mathbb{R}} D_{x}^{s}(|q|^{2})D_{x}^{s-2}\partial_{x}H(r\partial_{x}r) \ dx+\int_{\mathbb{R}} D_{x}^{s}(|q|^{2})D_{x}^{s-2}\partial_{x}(|q|^{2}) \ dx\\
    &=V_{1,1}+V_{1,2}+V_{1,3}+V_{1,4}+V_{1,5}+V_{1,6}.
\end{align*}
First, an integration by parts and the identity $\partial_{x}=-\mathcal{H}D_{x}^{1}$ yield
\begin{equation*}
    V_{1,1}=\int_{\mathbb{R}} D_{x}^{s}\partial_{x}(|q|^{2})D_{x}^{s-2}\mathcal{H}^{2}D_{x}^{2}r \ dx=-\int_{\mathbb{R}} D_{x}^{s}\partial_{x}(|q|^{2})D_{x}^{s}r \ dx.
\end{equation*}
Thus,
\begin{equation}
    V_{1,1}+II_{4}=0.
\end{equation}
Furthermore, we can write
\begin{align*}
    \sum_{j=2}^{6}V_{1,j}=&-\int_{\mathbb{R}} D_{x}^{s}(|q|^{2})D_{x}^{s}\mathcal{H}r \ dx+\int_{\mathbb{R}} D_{x}^{s}(|q|^{2})D_{x}^{s-2}(r\partial_{x}r) \ dx
    +\int_{\mathbb{R}} D_{x}^{s}(|q|^{2})D_{x}^{s-1}\mathcal{H}(r\mathcal{H}\partial_{x}r) \ dx\\
    &-\int_{\mathbb{R}} D_{x}^{s}(|q|^{2})D_{x}^{s-1}(r\partial_{x}r) \ dx-\int_{\mathbb{R}} D_{x}^{s}(|q|^{2})D_{x}^{s-1}\mathcal{H}(|q|^{2}) \ dx.
\end{align*}
Thus, since $H^{s}(\mathbb{R})$ and $H^{s-1}(\mathbb{R})$ are Banach algebras for $s>\frac{3}{2}$, it follows that
\begin{equation}
    \left| \sum_{i=2}^{6}V_{1,i} \right|\lesssim \|q\|_{H^{s}_{x}}^{2}\|r\|_{H^{s}_{x}}+\|q\|_{H^{s}_{x}}^{2}\|r\|_{H^{s}_{x}}^{2}+\|q\|_{H^{s}_{x}}^{4}
\end{equation}
On the other hand, now using the second equation in \eqref{BO-NLS} and integrating by parts, we see that
\begin{align*}
    V_{2}&=-2\textrm{Re}\left(i\int_{\mathbb{R}} D_{x}^{s}(\partial_{x}^{2}q\ \bar{q})D_{x}^{s-2}r \ dx\right)+2\textrm{Re}\left(\int_{\mathbb{R}} D_{x}^{s}(r|q|^{2})D_{x}^{s-2}r \ dx\right)\\
    &=2\textrm{Re}\left(i\int_{\mathbb{R}} D_{x}^{s-1}(\partial_{x}q\ \partial_{x}\bar{q})D_{x}^{s-1}r \ dx\right)+2\textrm{Re}\left(i\int_{\mathbb{R}} D_{x}^{s-1}(\partial_{x}q\ \bar{q})D_{x}^{s-1}\partial_{x}r \ dx\right)\\
    &+2\textrm{Re}\left(\int_{\mathbb{R}} D_{x}^{s}(r|q|^{2})D_{x}^{s-2}r \ dx\right).
\end{align*}
Therefore, using once again that $H^{s-1}(\mathbb{R})$ is a Banach algebra, we obtain
\begin{equation}
    |V_{2}|\lesssim \|q\|_{H^{s}_{x}}^{2}\|r\|_{H^{s}_{x}}+\|q\|_{H^{s}_{x}}^{2}\|r\|_{H^{s}_{x}}^{2}.
\end{equation}

We are left to examine the term \( VI \). But note that 
\begin{align*}
\frac{1}{\tau_{s}}\cdot VI&=\int_{\mathbb{R}}D_{x}^{s-1}\mathcal{H}\partial_{t}r\ rD_{x}^{s-1}\partial_{x}r\ dx+\int_{\mathbb{R}}D_{x}^{s-1}\mathcal{H}r\cdot \partial_{t}r\cdot D_{x}^{s-1}\partial_{x}r\ dx+\int_{\mathbb{R}}D_{x}^{s-1}\mathcal{H}r\cdot r\cdot D_{x}^{s-1}\partial_{x}\partial_{t}r\ dx\\
&=VI_{1}+VI_{2}+VI_{3}.
\end{align*}
Now, by the first equation in \eqref{BO-NLS}, it follows that 
\begin{align*}
    VI_{1}&=-\int_{\mathbb{R}}D_{x}^{s-1}\mathcal{H}\partial_{x}^{3}r\ r\ D_{x}^{s-1}\partial_{x}r\ dx+\int_{\mathbb{R}}D_{x}^{s-1}\mathcal{H}\mathcal{H}\partial_{x}^{2}r \ r\ D_{x}^{s-1}\partial_{x}r\ dx\\
    &\quad +\int_{\mathbb{R}}D_{x}^{s-1}\mathcal{H}(r\partial_{x}r) r\ D_{x}^{s-1}\partial_{x}r\ dx
    -\int_{\mathbb{R}}D_{x}^{s-1}\mathcal{H}\partial_{x}(r\mathcal{H}\partial_{x}r) \ r\ D_{x}^{s-1}\partial_{x}r\ dx\\
    &\quad -\int_{\mathbb{R}}D_{x}^{s-1}\mathcal{H}\partial_{x}\mathcal{H}(r\partial_{x}r) \ r\ D_{x}^{s-1}\partial_{x}r\ dx+\int_{\mathbb{R}}D_{x}^{s-1}\mathcal{H}\partial_{x}(|q|^{2}) \ r\ D_{x}^{s-1}\partial_{x}r\ dx=\sum_{j=1}^{6}VI_{1,j}.
\end{align*}
The term $VI_{1,1}$ will be discussed later. Integrating by parts again, we obtain
\begin{equation*}
VI_{1,2}= -\int_{\mathbb{R}} D_{x}^{s-1}\partial_{x}^{2}r\ r\ D_{x}^{s-1}\partial_{x}r\ dx\\
=\frac{1}{2}\int_{\mathbb{R}} (D_{x}^{s-1}\partial_{x}r)^{2}\partial_{x}r \ dx.
\end{equation*}
Thus, from Hölder's inequality and the Sobolev embedding, along with the fact taht $H^{s-1}(\mathbb{R})$ is a algebra, we obtain 
\begin{align}
&|VI_{1,2}|+|VI_{1,3}|\lesssim \|\partial_{x} r\|_{L^{\infty}}\|D_{x}^{s}r\|^{2}_{L ^{2}_{x}}+ \|r\|_{L^{\infty}_{x}}\|r\|_{H^{s}}^{3}\lesssim \|r\|_{H^{s}_{x}}^{3}+\|r\|_{H^{s}_{x}}^{4}.
\end{align}
For term $VI_{1,4}$, using that $D^{1}_{x}=\partial_{x}\mathcal{H}$ and integrating by parts, we have that
\begin{align*}
    VI_{1,4}
    &=-\int_{\mathbb{R}}[D_{x}^{s};r] \mathcal{H}\partial_{x}r \ r\ D_{x}^{s-1}\partial_{x}r \ dx-\int_{\mathbb{R}}D_{x}^{s}\mathcal{H}\partial_{x}r\ r^{2} \ D_{x}^{s-1}\partial_{x}r \ dx\\
    &=-\int_{\mathbb{R}}[D_{x}^{s};r] \mathcal{H}\partial_{x}r \ r\ D_{x}^{s-1}\partial_{x}r \ dx-\frac{1}{2}\int_{\mathbb{R}}(D_{x}^{s-1}\partial_{x}r)^{2}\ \partial_{x}(r^{2})  \ dx \\
    &=VI_{1,4,1}+V_{1,4,2}.
\end{align*}
Therefore, from Lemma \ref{commestim} and Sobolev's embedding, it follows that
\begin{align}
|V_{1,4,1}|+|V_{1,4,2}|&\lesssim \|r\|_{L^{\infty}_{x}}\|\mathcal{H}\partial_{x}r\|_{L^{\infty}_{x}}\|D_{x}^{s}r\|^{2}+\|r\|_{L^{\infty}_{x}}\|\partial_{x}r\|_{L^{\infty}_{x}}\|D_{x}^{s}r\|^{2}\lesssim \|r\|_{H^{s}_{x}}^{4}.
\end{align}
Concerning the term $VI_{1,5}$, the same arguments applied in $V_{1,4}$ yield that
\begin{equation}
    |V_{1,5}|\lesssim \|r\|_{H^{s}_{x}}^{4}.
\end{equation}
Now, the term $VI_{1,6}$ can be estimated readily using the Sobolev embedding and that $H^{s}(\mathbb{R})$ is a Banach algebra, namely
\begin{equation}
|VI_{1,6}|\lesssim \|q\|_{H^{s}_{x}}^{2}\|r\|_{H^{s}_{x}}^{2}.
\end{equation}
\indent Regarding the term $VI_{2}$, we can again use the first equation in \eqref{BO-NLS}, and note that
\begin{align*}
 VI_{2}&=-\int_{\mathbb{R}}D_{x}^{s-1}\mathcal{H}r (\partial_{x}^{3}r) D_{x}^{s-1}\partial_{x}r\ dx +\int_{\mathbb{R}}D_{x}^{s-1}\mathcal{H}r (\mathcal{H}\partial^{2}_{x}r) D_{x}^{s-1}\partial_{x}r\ dx \\
 &\quad +\int_{\mathbb{R}}D_{x}^{s-1}\mathcal{H}r (r\partial_{x}r) D_{x}^{s-1}\partial_{x}r\ dx-\int_{\mathbb{R}}D_{x}^{s-1}\mathcal{H}r\  \partial_{x}(r\mathcal{H}\partial_{x}r)  D_{x}^{s-1}\partial_{x}r\ dx  \\
 &\quad -\int_{\mathbb{R}}D_{x}^{s-1}\mathcal{H}r\ \partial_{x}\mathcal{H}(r\partial_{x}r)  D_{x}^{s-1}\partial_{x}r\ dx+\int_{\mathbb{R}}D_{x}^{s-1}\mathcal{H}r \ \partial_{x}(|q|^{2})  D_{x}^{s-1}\partial_{x}r\ dx= \sum_{j=1}^{6}VI_{2,j}.
\end{align*}
\indent In the same way as for the term $VI_{1,1}$, we will treat the term $VI_{2,1}$ later. Now, using the Sobolev embedding, we can readily obtain that
\begin{equation}
   |VI_{2,2}|+|VI_{2,3}|\lesssim \|\mathcal{H}\partial_{x}^{2}r\|_{L^{\infty}_{x}}\|r\|_{H^{s}_{x}}^{2}+\|r\|_{L^{\infty}_{x}}\|\partial_{x}r\|_{L^{\infty}_{x}}\|r\|_{H^{s}_{x}}^{2}\lesssim\|\mathcal{H}\partial_{x}^{2}r\|_{L^{\infty}_{x}}\|r\|_{H^{s}_{x}}^{2}+\|r\|_{H^{s}_{x}}^{4}.
\end{equation}
On the other hand, using again the Sobolev embedding, the terms $VI_{2,j},\ j=4,...,6$ can be estimated as follows
\begin{align}
    |V_{2,4}|+|VI_{2,5}|+|VI_{2,6}| \lesssim& \|D_{x}^{s-1}\mathcal{H}r\|_{L^{\infty}_{x}}\left(\|\partial_{x}(r\mathcal{H}\partial_{x}r)\|_{L^{2}_{x}}+\|\partial_{x}\mathcal{H}(r\partial_{x}r)\|_{L^{2}_{x}}+\|\partial_{x}(|q|^{2})\|_{L^{2}_{x}}\right) \|D_{x}^{s}r\|_{L^{2}_{x}}\nonumber \\
    \lesssim &\|r\|_{H^{s}_{x}}\left( \|\partial_{x}r\|_{L^{\infty}_{x}}\|\partial_{x}r\|_{L^{2}_{x}}+\|\mathcal{H}\partial_{x}^{2}r\|_{L^{\infty}_{x}}\|r\|_{L^{2}_{x}}+\|\partial_{x}^{2}r\|_{^{\infty}_{x}}\|r\|_{L^{2}_{x}}+\|q\|_{H^{1}_{x}}^{2} \right)\|r\|_{H^{s}_{x}} \nonumber\\
    \lesssim&(\|\mathcal{H}\partial_{x}^{2}r\|_{L^{\infty}_{x}}+\|\partial_{x}^{2}r\|_{L^{\infty}_{x}})\|r\|_{H^{s}_{x}}^{3}+\|r\|_{H^{s}_{x}}^{4}+\|q\|_{H^{s}_{x}}^{2}\|r\|_{H^{s}_{x}}^{2}.
\end{align}
\indent For the term $VI_{3}$, once again by the first equation in \eqref{BO-NLS}, we have
\begin{align*}
VI_{3}&=-\int_{\mathbb{R}}D_{x}^{s-1}\mathcal{H}r\ r\ D_{x}^{s-1}\partial_{x}^{4}r \ dx+\int_{\mathbb{R}}D_{x}^{s-1}\mathcal{H}r\ r\ D_{x}^{s-1}\partial_{x}\mathcal{H}\partial_{x}^{2}r \ dx\\
&\quad +\int_{\mathbb{R}}D_{x}^{s-1}\mathcal{H}r\ r\ D_{x}^{s-1}\partial_{x} (r\partial_{x}r)\ dx-\int_{\mathbb{R}}D_{x}^{s-1}\mathcal{H}r\ r\ D_{x}^{s-1}\partial_{x}^{2}(r\mathcal{H}\partial_{x}r) \ dx\\
&\quad -\int_{\mathbb{R}}D_{x}^{s-1}\mathcal{H}r\ r\ D_{x}^{s-1}\partial_{x}^{2}\mathcal{H}(r\partial_{x}r) \ dx+\int_{\mathbb{R}}D_{x}^{s-1}\mathcal{H}r\ r\ D_{x}^{s-1}\partial_{x}^{2}(|q|^{2}) \ dx=\sum_{j=1}^{6}VI_{3,j}.
\end{align*}
Note that, from Lemma \ref{sum3derivative}, we have 
\begin{equation*}
VI_{1,1}+VI_{2,1}+VI_{3,1}=-3\int_{\mathbb{R}}D_{x}^{s}r \ \partial_{x}r \ D_{x}^{s-1}\partial_{x}^{2}r \ dx,
\end{equation*}
and then,
\begin{equation}
    II_{3,2,3}+II_{3,3}+II_{2,4,3}+II_{2,5}+\tau_{s}\cdot (VI_{1,1}+VI_{2,1}+VI_{3,1})=0.
\end{equation}
For the term $VI_{3,2}$, by integration by parts, note that
\begin{align*}
VI_{3,2}&=-\int_{\mathbb{R}} D_{x}^{s}r \ r \ D_{x}^{s}\partial_{x}r \ dx-\int_{\mathbb{R}} D_{x}^{s-1}\mathcal{H}r \ \partial_{x}r \ D_{x}^{s}\partial_{x}r \ dx\\
&= \frac{1}{2}\int_{\mathbb{R}} (D_{x}^{s}r)^{2}\partial_{x}r \ dx+\int_{\mathbb{R}}D_{x}^{s}r \ \partial_{x}r \ D_{x}^{s}r \ dx +\int_{\mathbb{R}} D_{x}^{s-1}\mathcal{H}r \ \partial_{x}^{2}r \ D_{x}^{s}r \ dx. 
\end{align*}
Thus, from the Sobolev embedding, it follows that
\begin{equation}
    |VI_{3,2}|\lesssim \left( \|\partial_{x}r\|_{L^{\infty}_{x}}+\|\partial_{x}^{2}r\|_{L^{\infty}_{x}} \right)\|r\|_{H^{s}_{x}}^{2}\lesssim \|r\|_{H^{s}_{x}}^{3}+\|\partial_{x}^{2}r\|_{L^{\infty}_{x}}\|r\|_{H^{s}_{x}}^{2}
\end{equation}
See also that, integrating by parts, we obtain
\begin{equation*}
    VI_{3,3}=-\int_{\mathbb{R}} D_{x}^{s}r \ r \ D_{x}^{s-1}(r\partial_{x}r) \ dx-\int_{\mathbb{R}} D_{x}^{s-1}\mathcal{H}r \ \partial_{x}r\ D_{x}^{s-1}(r\partial_{x}r) \ dx.    
\end{equation*}
Thus, from the Sobolev embedding and the fact that $H^{s-1}(\mathbb{R})$ is a Banach algebra, we get
\begin{equation}
    |VI_{3,3}|\lesssim \|r\|_{H^{s}_{x}}^{4}.
\end{equation}
Concerning the term $VI_{3,4}$ note that, integrating by parts, we have that
\begin{align*}
    VI_{3,4}&=\int_{\mathbb{R}} D_{x}^{s}r \ r \ D_{x}^{s-1}\partial_{x}(r\mathcal{H}\partial_{x}r)\ dx+\int_{\mathbb{R}} D_{x}^{s-1}\mathcal{H}r \ \partial_{x}r \ D_{x}^{s-1}\partial_{x}(r\mathcal{H}\partial_{x}r) \ dx\\
    &=\int_{\mathbb{R}} D_{x}^{s}r \ r \ D_{x}^{s-1}(\partial_{x}r\mathcal{H}\partial_{x}r) \ dx+\int_{\mathbb{R}}D_{x}^{s}r \ r \ D_{x}^{s-1}(r\mathcal{H}\partial_{x}^{2}r) \ dx\\
    &\quad -\int_{\mathbb{R}}D_{x}^{s}r \ \partial_{x}r \ D_{x}^{s-1}(r\mathcal{H}\partial_{x}r) \ dx- \int_{\mathbb{R}} D_{x}^{s-1}\mathcal{H}r \ \partial_{x}^{2}r \ D_{x}^{s-1}(r\mathcal{H}\partial_{x} r) \ dx\\
    &=VI_{3,4,1}+VI_{3,4,2}+VI_{3,4,3}+VI_{3,4,4}.
\end{align*}
But, using the Sobolev embedding and that $H^{s-1}(\mathbb{R})$ is a Banach algebra, we obtain
\begin{equation}
    |VI_{3,4,1}|+|VI_{3,4,3}|+|VI_{3,4,4}|\lesssim \|r\|_{H^{s}_{x}}^{4}+\|\partial_{x}^{2}r\|_{L^{\infty}_{x}}\|r\|_{H^{s}_{x}}^{3}.
\end{equation}
On the other hand, see that
\begin{align*}
    VI_{3,4,2}&=\int_{\mathbb{R}} D_{x}^{s}r \ r \ [D_{x}^{s-1};r]\mathcal{H}\partial_{x}^{2}r\ dx+\int_{\mathbb{R}}D_{x}^{s}r \ r^{2} \ D_{x}^{s}\partial_{x}r \ dx\\
    &= \int_{\mathbb{R}} D_{x}^{s}r \ r \ [D_{x}^{s-1};r]\mathcal{H}\partial_{x}^{2}r\ dx-\frac{1}{2}\int_{\mathbb{R}}(D_{x}^{s}r)^{2} \ \partial_{x}(r^{2}) \ dx,
\end{align*}
and thus, by the Lemma \ref{commestim} and the Sobolev embedding, we have that
\begin{equation}
    |VI_{3,4,2}|\lesssim (\|\mathcal{H}\partial_{x}^{2}r\|_{L^{\infty}_{x}}+\|\partial_{x}r\|_{L^{\infty}_{x}})\|r\|_{H_{x}^{s}}^{3}\lesssim \|\mathcal{H}\partial_{x}^{2}r\|_{L^{\infty}_{x}}\|r\|_{H^{s}_{x}}^{3}+\|r\|_{H^{s}_{x}}^{4}.
\end{equation}
The term $VI_{3,5}$ can be treated similarly as it was done for the term $VI_{3,4}$. Namely, we can obtain
\begin{equation}
    |VI_{3,5}|\lesssim \|\partial_{x}^{2}r\|_{L^{\infty}_{x}}\|r\|_{H^{s}_{x}}^{3}+\|r\|_{H^{s}_{x}}^{4}.
\end{equation}
Finally, an integration by parts and the Sobolev embedding lead us to
\begin{equation} \label{estenerg2}
    |VI_{3,6}|\lesssim \|r\|_{H^{s}_{x}}^{2}\|q\|_{H^{s}_{x}}^{2}.
\end{equation}
Therefore, since 
\begin{equation*}
    \|(r(t),q(t))\|_{H_{x}^{s}\times H_{x}^{s}}\leq \frac{1}{A_{s}},
\end{equation*}
then \eqref{es2} follows from combining estimates \eqref{estenerg1}-\eqref{estenerg2}. The proof of the estimate \eqref{es3} is basically an application of \eqref{es1}, \eqref{es2} and the Gronwall inequality.
\end{proof}

\subsection{Energy estimates for the difference of two solutions of \eqref{BO-NLS}}

Let $(r_{1},q_{1})$ and $(r_{2},q_{2})$ be two solutions of the system \eqref{BO-NLS}. We define
\begin{equation*}
\left\{
\begin{array}{ll}
     & r=r_{1}-r_{2},\quad
      u=r_{1}+r_{2},\\
     &q=q_{1}-q_{2},\quad
     v=q_{1}+q_{2}.
\end{array}
\right.
\end{equation*}
In this case, we have that $(r,q)$ is a solution of the following system
\begin{equation}\label{NLS-BO2}
\begin{cases}
     &  \partial_{t}r-\mathcal{H}\partial_{x}^{2}r+\partial_{x}^{3}r=\frac{1}{2}\partial_{x}(ur)-\partial_{x}(r\mathcal{H}\partial_{x}r_{1}+\mathcal{H}(r\partial_{x}r_{1}))-\partial_{x}(r_{2}\mathcal{H}\partial_{x}r+\mathcal{H}(r_{2}\partial_{x}r))+\partial_{x}\textrm{Re}(\bar{v}q),\\
     & i\partial_{t}q-\partial_{x}^{2}q=-\frac{1}{2}(rv+qu).
\end{cases}
\end{equation}

The purpose of this subsection is to establish energy estimates for the difference between two solutions $(r,q)$. In this regard, similarly to \eqref{em1}, we define a modified energy associated with $(r,q)$ as follows:
\begin{definition}
For all $t\in \mathbb{R}$, we define the modified energy as follows:
\begin{itemize}
\item[(i)] Case $s>\frac{3}{2}$.
\begin{align} \label{em2}
    \widetilde{E}_{m}^{s}(t)&=\|r(t)\|_{L^{2}_{x}}^{2}+\frac{1}{2}\|D_{x}^{s}r(t)\|_{L^{2}_{x}}^{2}+\|q(t)\|_{L^{2}_{x}}^{2}+\|D_{x}^{s}q(t)\|_{L^{2}_{x}}^{2}\nonumber \\
    &\quad +\textrm{Re}\int_{\mathbb{R}}D_{x}^{s-1}(\bar{v}q)D_{x}^{s-1}r \ dx 
    \nonumber+ \tau_{s,1}\int_{\mathbb{R}}D_{x}^{s-1}\mathcal{H}r \ r_{1}\ D_{x}^{s-1}\partial_{x}r \ dx\nonumber
    \\ &\quad +\tau_{s,2}\int_{\mathbb{R}}D_{x}^{s-1}\mathcal{H}r \ r_{2}\ D_{x}^{s-1}\partial_{x}r \ dx ,
\end{align}
where $\tau_{s,1}:=\frac{1}{3}$ and $\tau_{s,2}:=\frac{2s}{3}$.
\item[(ii)] Case $s=0$.
\begin{align}\label{em3}
    \widetilde{E}_{m}^{0}(t)=\frac{1}{2}\|r(t)\|_{L^{2}_{x}}^{2}+\|q(t)\|_{L^{2}_{x}}^{2}-\tau_{0}\int_{\mathbb{R}}\mathcal{H}r \ J_{x}^{-1}\mathcal{H}r \ r_{1} \ dx+\textrm{Re}\int_{\mathbb{R}}J_{x}^{-2}r \ \bar{v} q \ dx,
\end{align}
where $\tau_{0}=\frac{1}{3}$.
\end{itemize}
\end{definition}
In this case, the following proposition provides the energy estimates we require:
\begin{prop}\label{propenergyestimatedifference}
Let $s>\frac{3}{2}$ and $T>0$. There exists a positive constant $\widetilde{A}_{s}$ such that for any $(r_{1},q_{1}),\ (r_{2},q_{2})\in C([0,T]:H^{s}(\mathbb{R})\times H^{s}(\mathbb{R}))$ solutions of \eqref{BO-NLS} satisfying
\begin{equation}\label{boundedrq}
\|(r_{i}(t),q_{i}(t))\|_{H^{s}_{x}\times H^{s}_{x}}\leq \frac{1}{\widetilde{A}_{s}}, \quad i=1,2,
\end{equation}
the following estimates hold true.
\begin{enumerate}
    \item \textbf{Coercivity:} for $\sigma =0$ or $\sigma=s>\frac{3}{2}$
    \begin{equation}\label{esd1}
        \frac{1}{2}\left( \|r(t)\|_{H^{\sigma}_{x}}^{2}+\|q(t)\|_{H^{\sigma}_{x}}^{2}\right)\leq \widetilde{E}_{m}^{\sigma}(t)\leq \frac{3}{2}\left( \|r(t)\|_{H^{\sigma}_{x}}^{2}+\|q(t)\|_{H^{\sigma}_{x}}^{2}\right).
    \end{equation}

    \item $H^{s}\times H^{s}$ \textbf{- energy estimate:}
    \begin{equation}\label{esd2}
        \frac{d}{dt}\widetilde{E}_{m}^{s}(t) \lesssim \sum_{i=1}^{2}\left(1+\|\partial_{x}^{2}r_{i}(t)\|_{L^{\infty}_{x}}+\|\mathcal{H}\partial^{2}_{x}r_{i}\|_{L^{\infty}_{x}}\right)\widetilde{E}_{m}^{s}(t)+f_{s}(t),
    \end{equation}
    where $f_{s}=f_{s}(t)$ is defined by
\begin{align*}
f_{s}(t) &:= 
\|D_{x}^{s}r\|_{L^{2}_{x}} \|r\|_{L^{2}_{x}} \|D_{x}^{s}\partial_{x}u\|_{L^{\infty}_{x}} 
+ \|D_{x}^{s}r\|_{L^{2}_{x}} \|\partial_{x}^{2}r\|_{L^{\infty}_{x}} \|D_{x}^{s}r_{1}\|_{L^{2}_{x}} \\
&\quad + \|D_{x}^{s}r\|_{L^{2}_{x}} \|r\|_{L^{2}_{x}} \|D_{x}^{s+1}\partial_{x}r_{1}\|_{L^{\infty}_{x}}
+\|D_{x}^{s}r\|_{L^{2}_{x}} \|\partial_{x}r\|_{L^{2}_{x}} \|D_{x}^{s+1}r_{1}\|_{L^{\infty}_{x}} \\
&\quad + \|D_{x}^{s}r\|_{L^{2}_{x}} \|\mathcal{H}\partial_{x}^{2}r\|_{L^{\infty}_{x}} \|D_{x}^{s}r_{2}\|_{L^{2}_{x}} 
+ \|D_{x}^{s}r\|_{L^{2}_{x}} \|\partial_{x}r\|_{L^{2}_{x}} \|D_{x}^{s}\partial_{x}r_{2}\|_{L^{\infty}_{x}} \\
&\quad + \|D_{x}^{s}r\|_{L^{2}_{x}} \|\partial_{x}^{2}r\|_{L^{\infty}_{x}} \|D_{x}^{s}r_{2}\|_{L^{2}_{x}} 
+ \|D_{x}^{s}r\|_{L^{2}_{x}} \|\partial_{x}r\|_{L^{2}_{x}} \|D_{x}^{s+1}r_{2}\|_{L^{\infty}_{x}} \\
&\quad + \|D_{x}^{s+1}r_{1}\|_{L^{\infty}_{x}} \|r_{1}\|_{L^{\infty}_{x}} \|r\|_{L^{2}_{x}} \|D_{x}^{s}r\|_{L^{2}_{x}} 
 +\|D_{x}^{s}r\|_{L^{2}_{x}} \|r\|_{L^{2}_{x}} \|r_{1}\|_{L^{\infty}_{x}} \|D_{x}^{s}\partial_{x}r_{1}\|_{L^{\infty}_{x}}\\
&\quad + \|D_{x}^{s}r\|_{L^{2}_{x}} \|r\|_{L^{2}_{x}} \|D_{x}^{s+1}\partial_{x}r_{2}\|_{L^{\infty}_{x}}
 + \|D_{x}^{s}r\|_{L^{2}_{x}} \|\mathcal{H}\partial_{x}^{2}r\|_{L^{\infty}_{x}} \|D_{x}^{s}r_{1}\|_{L^{2}_{x}} \\
&\quad + \|D_{x}^{s}r\|_{L^{2}_{x}} \|\partial_{x}r\|_{L^{2}_{x}} \|D_{x}^{s}\partial_{x}r_{1}\|_{L^{\infty}_{x}} +\|D_{x}^{s+1}r_{2}\|_{L^{\infty}_{x}} \|r_{2}\|_{L^{\infty}_{x}} \|r\|_{L^{2}_{x}} \|D_{x}^{s}r\|_{L^{2}_{x}} 
 \\
 &\quad +\|D_{x}^{s}r\|_{L^{2}_{x}} \|r\|_{L^{2}_{x}} \|r_{2}\|_{L^{\infty}_{x}} \|D_{x}^{s}\partial_{x}r_{2}\|_{L^{\infty}_{x}}.
\end{align*}
\item $L^{2}\times L^{2}$ \textbf{- energy estimate:}
\begin{align}\label{esd3}
\frac{d}{dt}\widetilde{E}_{m}^{0}(t)\lesssim \sum_{i=1}^{2}\left(1+\|\partial_{x}^{2}r_{i}(t)\|_{L^{\infty}_{x}}+\|\mathcal{H}\partial^{2}_{x}r_{i}\|_{L^{\infty}_{x}}\right) \widetilde{E}_{m}^{0}(t).
\end{align}
\end{enumerate}
\end{prop}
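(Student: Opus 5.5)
The plan is to repeat the scheme of the proof of Proposition \ref{propenergyestimate}, now applied to the difference system \eqref{NLS-BO2}. Throughout, $r_1,r_2$ (and $q_1,q_2$) are treated as coefficients, and we keep track of which factor carries the high derivatives.

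\textbf{Coercivity \eqref{esd1}.} The correction terms in \eqref{em2} are bounded by Hölder and Sobolev embedding. First,
\[
\Big|\textrm{Re}\int D_x^{s-1}(\bar v q)D_x^{s-1}r\,dx\Big|\lesssim \|v\|_{H^s}\|q\|_{H^{s}}\|r\|_{H^s},
\]
since $H^{s-1}$ is an algebra. Second, the terms $\tau_{s,i}\int D_x^{s-1}\mathcal{H}r\, r_i\, D_x^{s-1}\partial_x r\,dx$ are bounded by $\|r_i\|_{L^\infty}\|r\|_{H^s}^2$. With \eqref{boundedrq}, both are at most $\widetilde A_s^{-1}(\|r\|_{H^s}^2+\|q\|_{H^s}^2)$. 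For \eqref{em3} the argument is the same, using that $J_x^{-1}\mathcal{H}$ and $J_x^{-2}$ are bounded. Taking $\widetilde A_s$ large then gives the factors $\tfrac12$ and $\tfrac32$.

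\textbf{$H^s$ estimate \eqref{esd2}.} Differentiate $\widetilde E^s_m$ and insert \eqref{NLS-BO2}. The linear parts vanish as before. The transport term $\tfrac12\partial_x(ur)$ is handled by the Kato--Ponce commutator (Lemma \ref{commestim}) plus integration by parts. This produces $\|\partial_x u\|_{L^\infty}\|D^s_xr\|^2$, plus terms where all derivatives land on $u$, which give the contribution $\|D^s r\|\|r\|\|D^s\partial_x u\|_{L^\infty}$ to $f_s$.

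For the terms $\partial_x(r\mathcal{H}\partial_x r_1+\mathcal{H}(r\partial_x r_1))$ and $\partial_x(r_2\mathcal{H}\partial_x r+\mathcal{H}(r_2\partial_x r))$, expand with Lemma \ref{geralcommestm} applied bilinearly, taking $(u,v)=(r,r_1)$ or $(r_2,r)$. When the derivatives fall on $r_i$, the error terms are exactly the ones listed in $f_s$: $\|\partial_x^2 r\|_{L^\infty}\|D^s r_i\|$, $\|r\|_{L^2}\|D^{s+1}\partial_x r_1\|_{L^\infty}$, and so on. The principal parts
\[
\int D^s_x r\, r_i\, D_x^{s-1}\partial_x^3 r\,dx,\qquad \int D^s_x r\,\partial_x r_i\, D^{s-1}_x\partial^2_x r\,dx
\]
are handled as in \eqref{eqforII32}. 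There, $\int D^s_x r\, r_i\, D_x^{s-1}\partial_x^3 r\,dx$ reduces, up to terms controlled by Lemma \ref{commutatorhilbertestimate} in terms of $\|\partial_x^2 r_i\|_{L^\infty}\|D^s r\|^2$, to a multiple of $\int D^s_x r\,\partial_x r_i\,D^{s-1}_x\partial_x^2 r\,dx$.

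The main step is the cancellation of these remaining terms against the time derivative of the two $\tau_{s,i}$ correctors. In $\tfrac{d}{dt}\int D_x^{s-1}\mathcal{H}r\, r_i\, D_x^{s-1}\partial_xr\,dx$, the dispersive part $\partial_x^3$ hits each of the three factors; note that $r_i$ solves the original equation. Lemma \ref{sum3derivative} gives
\[
-3\int D^s_xr\,\partial_x r_i\,D^{s-1}_x\partial_x^2 r\,dx .
\]
The coefficients differ because $r_1$ only appears as $r\cdot(\text{derivatives of }r_1)$, while $r_2$ multiplies derivatives of $r$. Matching the counts gives $\tau_{s,1}=\tfrac13$ and $\tau_{s,2}=\tfrac{2s}{3}$, and I would verify this bookkeeping explicitly, since it is the delicate point. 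All other contributions of the correctors are bounded as in the terms $VI_{1,j},VI_{2,j},VI_{3,j}$, using Sobolev embedding, the smallness \eqref{boundedrq}, and the $L^\infty$ norms of $\partial_x^2 r_i$ and $\mathcal{H}\partial_x^2 r_i$.

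The coupling term $\int D^s_xr\,D^s_x\partial_x\textrm{Re}(\bar v q)\,dx$ is cancelled by the leading part of $\tfrac{d}{dt}\,\textrm{Re}\int D_x^{s-1}(\bar vq)D_x^{s-1}r\,dx$. Indeed, $\partial_x^3$ acting on $r$ produces $-\int D^s\partial_x(\bar vq)D^sr$ after writing $\partial_x^2=-D_x^2$, which is the analogue of $V_{1,1}+II_4=0$. The Schrödinger parts and the remaining terms are lower order: $\partial_x^2$ on $q$ or $v$ is redistributed by integration by parts as in $V_2$.

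\textbf{$L^2$ estimate \eqref{esd3}.} Here the same mechanism works at level $0$. The bad term $\int r\,\partial_x(r_2\mathcal{H}\partial_x r+\mathcal{H}(r_2\partial_xr))$ becomes, after integrating by parts, harmless commutators with $\|\partial_x^2 r_2\|_{L^\infty}$. The term with $r_1$ leaves $\int r\,\partial_x(r\mathcal{H}\partial_x r_1+\mathcal{H}(r\partial_x r_1))$, which contains $\int \mathcal{H}\partial_x r\, r\,\partial_x r_1$-type pieces with one derivative too many. This is cancelled by the $\partial_x^3$ contribution in $\tfrac{d}{dt}\int \mathcal{H}r\,J_x^{-1}\mathcal{H}r\,r_1\,dx$, via Lemma \ref{sum3derivative}, Lemma \ref{boundedoperator1} (writing $J^{-1}\mathcal{H}\partial_x^2=\partial_x+T_1$), and $\tau_0=\tfrac13$. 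The term $\int r\,\partial_x\textrm{Re}(\bar vq)\,dx$ is cancelled by $\tfrac{d}{dt}\,\textrm{Re}\int J_x^{-2}r\,\bar vq\,dx$ using $J^{-2}\partial_x^3=-\partial_x+T_2$. Everything else is bounded by $(1+\|\partial_x^2r_i\|_{L^\infty}+\|\mathcal{H}\partial_x^2r_i\|_{L^\infty})\widetilde E^0_m$.
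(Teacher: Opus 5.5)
Your proposal is correct and follows the paper's proof essentially step by step. It uses the same commutator expansions (Lemmas \ref{geralcommestm} and \ref{commutatorhilbertestimate}), the reduction \eqref{eqforII32}, and the cancellation of $\int D_x^s r\,\partial_x r_i\,D_x^{s-1}\partial_x^2 r\,dx$ against the correctors via Lemma \ref{sum3derivative}. The bad terms carry total coefficient $1$ for $r_1$ and $2s$ for $r_2$, which is exactly what $\tau_{s,1}=\frac13$ and $\tau_{s,2}=\frac{2s}{3}$ compensate. At the $L^2$ level it uses the same mechanism with $\tau_0=\frac13$ and the operators $T_1,T_2$ of Lemma \ref{boundedoperator1}; your ``harmless'' $r_2$-terms there in fact cancel exactly.

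The one point your ``everything else is bounded'' passes over is the Schr\"odinger parts of $\partial_t\bar v$ and $\partial_t q$ in $\frac{d}{dt}\textrm{Re}\int J_x^{-2}r\,\bar v q\,dx$. These must be combined, not bounded separately. Taken one at a time they produce $\pm\textrm{Im}\int J_x^{-2}r\,\partial_x\bar v\,\partial_x q\,dx$ (equivalently, terms involving $\partial_x^2 v$), which for $s<2$ are not controlled by $\|r\|_{L^2_x}\|q\|_{L^2_x}$. In the sum they cancel, as in the paper.
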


\begin{remark}\ \label{remarkenergyestimate2}
\vspace{-1em}
\begin{itemize}
    \item[1.] As in Proposition \ref{propenergyestimate}, the estimates above can also be established for the system \eqref{BO-NLS-rescaled}, with constants that are uniform with respect to $0<\gamma \leq 1$.
    
    \item[2.] Note that in the expression for the function $f_s(t)$ above, several derivatives of order higher than $s$ appear. However, this does not introduce any extra difficulty, since all such derivatives make part of the terms $r_i, q_{i}$ not of their differences $r$ and $q$. In what follows, these terms will be handled with the Bona–Smith argument.
\end{itemize}
\end{remark}

\begin{proof}
The estimate \eqref{esd1} follows readily from the Cauchy-Schwarz inequality, the Sobolev embedding, and the fact that $H^{s}(\mathbb{R})$ is an algebra.

Considering now $\widetilde{E}_{m}^{s}(t)$ as in \eqref{em2} and differentiating with respect to $t$, we obtain
\begin{align}\label{ddtE~s}
    \frac{d}{dt}\widetilde{E}_{m}^{s}(t)&=\frac{d}{dt}\left(\int_{\mathbb{R}}r^{2}\ dx\right)+\frac{1}{2}\frac{d}{dt}\left(\int_{\mathbb{R}}(D_{x}^{s}r)^{2}\ dx\right)+\frac{d}{dt}\left(\int_{\mathbb{R}}|q|^{2}\ dx\right)+\frac{d}{dt}\left(\int_{\mathbb{R}}|D_{x}^{s}q|^{2}\ dx\right)\nonumber\\
    &\quad +\textrm{Re}\left(\frac{d}{dt}\int_{\mathbb{R}}D_{x}^{s-1}(\bar{v}q)D_{x}^{s-1}r \ dx\right)
    + \tau_{s,1}\frac{d}{dt}\int_{\mathbb{R}}D_{x}^{s-1}\mathcal{H}r \ r_{1}\ D_{x}^{s-1}\partial_{x}r \ dx\nonumber \\
    &\quad +\tau_{s,2}\frac{d}{dt}\int_{\mathbb{R}}D_{x}^{s-1}\mathcal{H}r \ r_{2}\ D_{x}^{s-1}\partial_{x}r \ dx \nonumber \\
    &= \widetilde{I}+\widetilde{II}+\widetilde{III}+\widetilde{IV}+\widetilde{V}+\widetilde{VI}+\widetilde{VII}.
\end{align}
From the first equation in \eqref{NLS-BO2} and integration by parts, we obtain
\begin{align*}
    \widetilde{I}&=\int_{\mathbb{R}}r\partial_{x}(ur)\ dx-2\int_{\mathbb{R}}r\partial_{x}(r\mathcal{H}\partial_{x}r_{1})\ dx-2\int_{\mathbb{R}}r\partial_{x}(r_{2}\mathcal{H}\partial_{x}r)\ dx -2\int_{\mathbb{R}}r\partial_{x}\mathcal{H}(r\partial_{x}r_{1})\ dx\nonumber \\
    &\quad -2\int_{\mathbb{R}}r\partial_{x}\mathcal{H}(r_{2}\partial_{x}r)\ dx+2\textrm{Re}\int_{\mathbb{R}}r\partial_{x}(\bar{v}q)\ dx \nonumber \\
    &=\int_{\mathbb{R}}r\partial_{x}(ur)\ dx+2\int_{\mathbb{R}}\partial_{x}r\ r\ \mathcal{H}\partial_{x}r_{1}\ dx+2\int_{\mathbb{R}}\partial_{x}r\ r_{2}\ \mathcal{H}\partial_{x}r\ dx +2\int_{\mathbb{R}}\partial_{x}r\mathcal{H}(r\partial_{x}r_{1})\ dx\nonumber \\
    &\quad +2\int_{\mathbb{R}}\partial_{x}r\mathcal{H}(r_{2}\partial_{x}r)\ dx+2\textrm{Re}\int_{\mathbb{R}}r\partial_{x}(\bar{v}q)\ dx.
\end{align*}
Thus, by using Hölder's inequality and the Sobolev embedding, the term $\widetilde{I}$ can be estimated as follows
\begin{equation}
    |\widetilde{I}|\lesssim (\|r_{1}\|_{H^{s}_{x}}+\|r_{2}\|_{H^{s}_{x}}+\|u\|_{H^{s}_{x}}+\|v\|_{H^{s}_{x}})(\|r\|_{H^{s}_{x}}^{2}+\|q\|_{H^{s}_{x}}^{2}).
\end{equation}
\indent Now, considering the term $\widetilde{II}$, we can again use the first equation in \eqref{NLS-BO2} and obtain
\begin{align*}
    \widetilde{II}&=\frac{1}{2}\int_{\mathbb{R}} D_{x}^{s}rD_{x}^{s}\partial_{x}(ur)  \ dx-\int_{\mathbb{R}} D_{x}^{s}rD_{x}^{s}\partial_{x}(r\mathcal{H}\partial_{x}r_{1}) \ dx-\int_{\mathbb{R}} D_{x}^{s}rD_{x}^{s}\partial_{x}\mathcal{H}(r\partial_{x}r_{1}) \ dx\\
    &\quad -\int_{\mathbb{R}} D_{x}^{s}rD_{x}^{s}\partial_{x}(r_{2}\mathcal{H}\partial_{x}r) \ dx-\int_{\mathbb{R}} D_{x}^{s}rD_{x}^{s}\partial_{x}\mathcal{H}(r_{2}\partial_{x}r) \ dx+\textrm{Re}\int_{\mathbb{R}} D_{x}^{s}rD_{x}^{s}\partial_{x}(\bar{v}q) \ dx\\
    &=\widetilde{II}_{1}+\widetilde{II}_{2}+\widetilde{II}_{3}+\widetilde{II}_{4}+\widetilde{II}_{5}+\widetilde{II}_{6}.
\end{align*}
Concerning the term $\widetilde{II}_{1}$, note that
\begin{align*}
    \widetilde{II}_{1}&=\frac{1}{2}\int_{\mathbb{R}} D_{x}^{s}rD_{x}^{s}(\partial_{x}u\  r)\ dx+\frac{1}{2}\int_{\mathbb{R}} D_{x}^{s}rD_{x}^{s}(u \partial_{x}r)\ dx\\
    &=\frac{1}{2}\int_{\mathbb{R}}D_{x}^{s}r[D_{x}^{s};r]\partial_{x}u \ dx+\frac{1}{2}\int_{\mathbb{R}} D_{x}^{s}r \ r\ D_{x}^{s}\partial_{x}u \ dx+
    \frac{1}{2}\int_{\mathbb{R}}D_{x}^{s}r[D_{x}^{s};u]\partial_{x}r \ dx\\
    &\quad +\frac{1}{2}\int_{\mathbb{R}}D_{x}^{s}r \ u\ D_{x}^{s}\partial_{x}r \ dx\\
&=\widetilde{II}_{1,1}+\widetilde{II}_{1,2}+\widetilde{II}_{1,3}+\widetilde{II}_{1,4}.
\end{align*}
Thus, by Lemma \ref{commestim}, integration by parts, and the Sobolev embedding, we deduce that
\begin{align}
    |\widetilde{II}_{1,1}|+|\widetilde{II}_{1,3}|+|\widetilde{II}_{1,4}|&\lesssim \|D_{x}^{s}r\|_{L^{2}_{x}}^{2}\|\partial_{x}u\|_{L^{\infty}}+\|D_{x}^{s}r\|_{L^{2}_{x}}\|\partial_{x}r\|_{L^{\infty}_{x}}\|D_{x}^{s}u\|_{L^{2}_{x}}  \lesssim \|r\|_{H^{s}_{x}}^{2}\|u\|_{H^{s}_{x}};\\
    |\widetilde{II}_{1,2}|&\lesssim \|D_{x}^{s}r\|_{L^{2}_{x}}\|r\|_{L^{2}_{x}}\|D_{x}^{s}\partial_{x}u\|_{L^{\infty}_{x}}.
\end{align}
The term $II_{2}$ can be treated as follows
\begin{align*}
\widetilde{II}_{2}&=-\int_{\mathbb{R}} D_{x}^{s}r\left\{D_{x}^{s}\partial_{x}(r\mathcal{H}\partial_{x}r_{1})-rD_{x}^{s+1}\partial_{x}r_{1}-s\partial_{x}rD_{x}^{s+1}r_{1}-\mathcal{H}\partial_{x}r_{1}D_{x}^{s}\partial_{x}r-\partial_{x}rD_{x}^{s+1}r_{1}  \right\}\ dx \\
&\quad -\int_{\mathbb{R}}D_{x}^{s}r \ r\ D_{x}^{s+1}\partial_{x}r_{1} \ dx-s\int_{\mathbb{R}}D_{x}^{s}r \ \partial_{x}r\ D_{x}^{s+1}r_{1} \ dx-\int_{\mathbb{R}}D_{x}^{s}r \mathcal{H}\partial_{x}r_{1} D_{x}^{s}\partial_{x}r \ dx\\
&\quad -\int_{\mathbb{R}}D_{x}^{s}r \partial_{x}rD_{x}^{s+1}r_{1} \ dx\\
&=\widetilde{II}_{2,1}+\widetilde{II}_{2,2}+\widetilde{II}_{2,3}+\widetilde{II}_{2,4}+\widetilde{II}_{2,5}.
\end{align*}
The Lemma \ref{geralcommestm}, the Hölder inequality, and integration by parts yield the following
\begin{align}
    |\widetilde{II}_{2,1}|&\lesssim \|D_{x}^{s}r\|_{L^{2}_{x}}\left(\|\mathcal{H}\partial_{x}^{2}r_{1}\|_{L^{\infty}_{x}}\|D_{x}^{s}r\|_{L^{2}_{x}}+\|\partial_{x}^{2}r\|_{L^{\infty}_{x}}\|D_{x}^{s}r_{1}\|_{L^{2}_{x}}\right);\\
    |\widetilde{II}_{2,2}|&\lesssim \|D_{x}^{s}r\|_{L^{2}_{x}}\|r\|_{L^{2}_{x}}\|D_{x}^{s+1}\partial_{x}r_{1}\|_{L^{\infty}_{x}};\\
    |\widetilde{II}_{2,3}|+|\widetilde{II}_{2,5}|&\lesssim \|D_{x}^{s}r\|_{L^{2}_{x}}\|\partial_{x}r\|_{L^{2}_{x}}\|D^{s+1}_{x}r_{1}\|_{L^{\infty}_{x}};\\
    |\widetilde{II}_{2,4}|&\lesssim \|\mathcal{H}\partial_{x}^{2}r_{1}\|_{L^{\infty}_{x}}\|D_{x}^{s}r\|_{L^{2}_{x}}^{2}.
\end{align}
Proceeding similarly with the term $\widetilde{II}_{4}$ we have
\begin{align*}
\widetilde{II}_{4}&=-\int_{\mathbb{R}} D_{x}^{s}r\left\{D_{x}^{s}\partial_{x}(r_{2}\mathcal{H}\partial_{x}r)-r_{2}D_{x}^{s+1}\partial_{x}r-s\partial_{x}r_{2}D_{x}^{s+1}r-\mathcal{H}\partial_{x}rD_{x}^{s}\partial_{x}r_{2}-\partial_{x}r_{2}D_{x}^{s+1}r  \right\}\ dx \\
&\quad -\int_{\mathbb{R}}D_{x}^{s}r \ r_{2}\ D_{x}^{s+1}\partial_{x}r \ dx-\int_{\mathbb{R}}D_{x}^{s}r \mathcal{H}\partial_{x}rD_{x}^{s}\partial_{x}r_{2} \ dx-(s+1)\int_{\mathbb{R}}D_{x}^{s}r \ \partial_{x}r_{2}\ D_{x}^{s+1}r \ dx\\
&=\widetilde{II}_{4,1}+\widetilde{II}_{4,2}+\widetilde{II}_{4,3}+\widetilde{II}_{4,4}.
\end{align*}
Therefore, by using the Hölder inequality and Lemma \ref{geralcommestm}, we obtain
\begin{align}
    &|\widetilde{II}_{4,1}|\lesssim \|D_{x}^{s}r\|_{L^{2}_{x}}\left( 
\|\partial_{x}^{2}r_{2}\|_{L^{\infty}_{x}}\|D_{x}^{s}r\|_{L^{2}_{x}}+\|\mathcal{H}\partial_{x}^{2}r\|_{L^{\infty}_{x}}\|D_{x}^{s}r_{2}\|_{L^{2}_{x}} \right);\\
&|\widetilde{II}_{4,3}|\lesssim \|D_{x}^{s}r\|_{L^{2}_{x}}\|\partial_{x}r\|_{L^{2}_{x}}\|D_{x}^{s}\partial_{x}r_{2}\|_{L^{\infty}_{x}}.
\end{align}
On the other hand, by following the argument used in the derivation of \eqref{eqforII32}, we have
\begin{align*}
\widetilde{II}_{4,2}&=\frac{1}{2}\int_{\mathbb{R}}[\mathcal{H};r_{2}]D_{x}^{s}\partial_{x}^{2}r\ D_{x}^{s}r \ dx+\frac{1}{2}\int_{\mathbb{R}}D_{x}^{s}r \ \partial_{x}^{2}r_{2} \ D_{x}^{s}\mathcal{H}r \ dx+\int_{\mathbb{R}}D_{x}^{s}r \ \partial_{x}r_{2} \ D_{x}^{s+1}r \ dx\\
&=\widetilde{II}_{4,2,1}+\widetilde{II}_{4,2,2}+\widetilde{II}_{4,2,3}.
\end{align*}
Thus, the Hölder inequality and Lemma \ref{commutatorhilbertestimate} yield
\begin{align}
    |\widetilde{II}_{4,2,1}|+|\widetilde{II}_{4,2,2}|&\lesssim \|\partial_{x}^{2}r_{2}\|_{L^{\infty}_{x}}\|D_{x}^{s}r\|_{L^{2}_{x}}^{2};
\end{align}
\indent The terms $\widetilde{II}_{4,2,3}$ and $\widetilde{II}_{4,4}$ will be addressed later, since they require a complete description of the term $\widetilde{VII}$, i.e., there are terms obtained in the description of $\widetilde{VII}$ which will cancel with the terms  $\widetilde{II}_{4,2,3}$ and $\widetilde{II}_{4,4}$.

Regarding the term $\widetilde{II}_{3}$, we can write
\begin{align*}
    \widetilde{II}_{3}&=-\int_{\mathbb{R}} D_{x}^{s}r\left[D_{x}^{s}\partial_{x}\mathcal{H}(r\partial_{x}r_{1}) +rD_{x}^{s-1}\partial_{x}^{3}r_{1}+\partial_{x}r_{1}D_{x}^{s-1}\partial_{x}^{2}r+(s+1)\partial_{x}rD_{x}^{s-1}\partial_{x}^{2}r_{1}\right]\ dx\\
    &\quad +\int_{\mathbb{R}}D_{x}^{s}r\ r\ D_{x}^{s-1}\partial_{x}^{3}r_{1}  \ dx+\int_{\mathbb{R}}D_{x}^{s}r\ \partial_{x}r_{1}\ D_{x}^{s-1}\partial_{x}^{2}r  \ dx+(s+1)\int_{\mathbb{R}}D_{x}^{s}r\ \partial_{x}r\ D_{x}^{s-1}\partial_{x}^{2}r_{1}  \ dx\\
    &=\widetilde{II}_{3,1}+\widetilde{II}_{3,2}+\widetilde{II}_{3,3}+\widetilde{II}_{3,4}.
\end{align*}
Thus, applying the Hölder inequality and Lemma \ref{geralcommestm}, it follows that
\begin{align}
&|\widetilde{II}_{3,1}|\lesssim \|D_{x}^{s}r\|_{L^{2}_{x}}\left( 
\|\partial_{x}^{2}r_{1}\|_{L^{\infty}_{x}}\|D_{x}^{s}r\|_{L^{2}_{x}}+\|\partial_{x}^{2}r\|_{L^{\infty}_{x}}\|D_{x}^{s}r_{1}\|_{L^{2}} \right);\\
&|\widetilde{II}_{3,2}|\lesssim \|D_{x}^{s}r\|_{L^{2}_{x}}\|r\|_{L^{2}_{x}}\|D_{x}^{s-1}\partial_{x}^{3}r_{1}\|_{L^{\infty}_{x}};\\
&|\widetilde{II}_{3,4}|\lesssim \|D_{x}^{s}r\|_{L^{2}_{x}}\|\partial_{x}r\|_{L^{2}_{x}}\|D_{x}^{s-1}\partial_{x}^{2}r_{1}\|_{L^{\infty}_{x}}.
\end{align}
As before, the term $\widetilde{II}_{3,3}$ will also be treated later, because we need to describe $\widetilde{VI}$ to do this.

The term $\widetilde{II}_{5}$ can be handled in the same way as $II_{3}$, namely, note that
\begin{align*}
    \widetilde{II}_{5}&=-\int_{\mathbb{R}} D_{x}^{s}r\left[D_{x}^{s}\partial_{x}\mathcal{H}(r_{2}\partial_{x}r)+r_{2}D_{x}^{s-1}\partial_{x}^{3}r+\partial_{x}rD_{x}^{s-1}\partial_{x}^{2}r_{2}+(s+1)\partial_{x}r_{2}D_{x}^{s-1}\partial_{x}^{2}r \right]\ dx\\
    &\quad +\int_{\mathbb{R}}D_{x}^{s}r\ r_{2}\ D_{x}^{s-1}\partial_{x}^{3}r \ dx+\int_{\mathbb{R}}D_{x}^{s}r \partial_{x}rD_{x}^{s-1}\partial_{x}^{2}r_{2} \ dx+(s+1)\int_{\mathbb{R}}D_{x}^{s}r\partial_{x}r_{2}D_{x}^{s-1}\partial_{x}^{2}r \ dx  \\
    &=\widetilde{II}_{5,1}+\widetilde{II}_{5,2}+\widetilde{II}_{5,3}+\widetilde{II}_{5,4}.
\end{align*}
Now, by using the Hölder inequality and Lemma \ref{geralcommestm}, we see that
\begin{align}
    &|\widetilde{II}_{5,1}|\lesssim \|D_{x}^{s}r\|_{L^{2}_{x}}\left( \|\partial_{x}^{2}r\|_{L^{\infty}_{x}}\|D_{x}^{s}r_{2}\|_{L^{2}_{x}}+\|\partial_{x}^{2}r_{2}\|_{L^{\infty}_{x}}\|D_{x}^{s}r\|_{L^{2}_{x}}\right);\\
 &|\widetilde{II}_{5,3}|\lesssim \|D_{x}^{s}r\|_{L^{2}_{x}}\|\partial_{x}r\|_{L^{2}_{x}}\|D_{x}^{s-1}\partial_{x}^{2}r_{2}\|_{L^{\infty}_{x}}.
\end{align}
On the other hand, as before, the same arguments applied in \eqref{eqforII32} yield
\begin{align*}
\widetilde{II}_{5,2}&=\frac{1}{2}\int_{\mathbb{R}}[\mathcal{H};r_{2}]D_{x}^{s}\partial_{x}^{2}r\ D^{s}_{x}r\ dx+\frac{1}{2}\int_{\mathbb{R}}D_{x}^{s}r \ \partial_{x}^{2}r_{2} \ D_{x}^{s}\mathcal{H}r \ dx-\int_{\mathbb{R}}D_{x}^{s}r \ \partial_{x}r_{2} \ D_{x}^{s-1}\partial_{x}^{2}r \ dx\\
&=\widetilde{II}_{5,2,1}+\widetilde{II}_{5,2,2}+\widetilde{II}_{5,2,3}.
\end{align*}
Thus, Lemma \ref{commutatorhilbertestimate} implies that
\begin{align}
    |\widetilde{II}_{5,2,1}|+|\widetilde{II}_{5,2,2}|\lesssim \|\partial_{x}^{2}r_{2}\|_{L^{\infty}_{x}}\|D_{x}^{s}r\|_{L^{2}_{x}}^{2}.
\end{align}
\indent We will come back to $\widetilde{II}_{5,2,3},\ \widetilde{II}_{5,4}$ and  $\widetilde{II}_{6}$ later, since we first need to describe $\widetilde{V}$ and $\widetilde{VII}$ completely.

Now, using the second equation in \eqref{NLS-BO2} and integration by parts, observe that
\begin{equation*}
    \widetilde{III}=2\textrm{Re}\int_{\mathbb{R}} (-i\partial_{x}^{2}q+\frac{i}{2}(rv+qu))\bar{q} \ dx
    =-\textrm{Im}\int_{\mathbb{R}}rv\bar{q} \ dx-\textrm{Im}\int_{\mathbb{R}} u|q|^{2} \ dx.
\end{equation*}
Thus, by Hölder's inequality
\begin{equation}\label{estimativadeql2}
    |\widetilde{III}|\lesssim (\|v\|_{L^{\infty}_{x}}+\|u\|_{L^{\infty}_{x}})(\|r\|_{L^{2}_{x}}^{2}+\|q\|_{L^{2}_{x}}^{2}).
\end{equation}
\indent Similarly, once again using the second equation in \eqref{NLS-BO2}, we deduce that
\begin{align*}
\widetilde{IV}&=2\textrm{Re}\left(\int_{\mathbb{R}} D_{x}^{s}\left\{-i\partial_{x}^{2}q+\frac{i}{2}(rv+qu)\right\}D_{x}^{s}\bar{q} \ dx  \right) =-\textrm{Im}\int_{\mathbb{R}} D_{x}^{s}(rv)D_{x}^{s}\bar{q} 
 \ dx-\textrm{Im}\int_{\mathbb{R}} D_{x}^{s}(qu)D_{x}^{s}\bar{q} 
 \ dx.
\end{align*}
Now, since $H^{s}(\mathbb{R})$ is a Banach algebra, it follows that
\begin{equation}
    |\widetilde{IV}|\lesssim (\|v\|_{H^{s}_{x}}+\|u\|_{H^{s}_{x}})(\|r\|_{H^{s}_{x}}^{2}+\|q\|_{H^{s}_{x}}^{2})
\end{equation}
\indent Considering the term $\widetilde{V}$, we can write
\begin{align*}
\widetilde{V}&=\textrm{Re}\int_{\mathbb{R}}D_{x}^{s-1}(\bar{v}q)D_{x}^{s-1}\partial_{t}r \ dx+\textrm{Re}\int_{\mathbb{R}}D_{x}^{s-1}(\bar{v}\partial_{t}q)D_{x}^{s-1}r \ dx+\textrm{Re}\int_{\mathbb{R}}D_{x}^{s-1}(\partial_{t}\bar{v}q)D_{x}^{s-1}r \ dx\\
&=\widetilde{V}_{1}+\widetilde{V}_{2}+\widetilde{V}_{3}.
\end{align*}
Then, using the first equation in \eqref{BO-NLS}, note that
\begin{align*}
\widetilde{V}_{1}&=\textrm{Re}\int_{\mathbb{R}}D_{x}^{s-1}(\bar{v}q)D_{x}^{s-1}\mathcal{H}\partial_{x}^{2}r \ dx-\textrm{Re}\int_{\mathbb{R}}D_{x}^{s-1}(\bar{v}q)D_{x}^{s-1}\partial_{x}^{3}r \ dx\\
&\quad +\frac{1}{2}\textrm{Re}\int_{\mathbb{R}}D_{x}^{s-1}(\bar{v}q)D_{x}^{s-1}\partial_{x}(ur) \ dx
 -\textrm{Re}\int_{\mathbb{R}}D_{x}^{s-1}(\bar{v}q)D_{x}^{s-1}\partial_{x}(r\mathcal{H}\partial_{x}r_{1}+\mathcal{H}(r\partial_{x}r_{1})) \ dx\\
 &\quad -\textrm{Re}\int_{\mathbb{R}}D_{x}^{s-1}(\bar{v}q)D_{x}^{s-1}\partial_{x}(r_{2}\mathcal{H}\partial_{x}r+\mathcal{H}(r_{2}\partial_{x}r)) \ dx +\textrm{Re}\int_{\mathbb{R}}D_{x}^{s-1}(\bar{v}q)D_{x}^{s-1}\partial_{x}\textrm{Re}(\bar{v}q) \ dx\\
&=\widetilde{V}_{1,1}+\widetilde{V}_{1,2}+\widetilde{V}_{1,3}+\widetilde{V}_{1,4}+\widetilde{V}_{1,5}+\widetilde{V}_{1,6}.
\end{align*}
Thus, integration by parts and the fact that $H^{s}(\mathbb{R})$ and $H^{s-1}(\mathbb{R})$ are Banach algebras imply that
\begin{align} &|\widetilde{V}_{1,1}|+|\widetilde{V}_{1,3}|+|\widetilde{V}_{1,4}|+|\widetilde{V}_{1,5}|+|\widetilde{V}_{1,6}|\nonumber\\
\lesssim &\|v\|_{H^{s}_{x}}(1+\|u\|_{H^{s}_{x}}+\|r_{1}\|_{H^{s}_{x}}+\|r_{2}\|_{H^{s}_{x}}+\|v\|_{H^{s}_{x}})(\|r\|_{H^{s}_{x}}^{2}+\|q\|_{H^{x}_{x}}^{2}).
\end{align}
On the other hand, we obtain that
\begin{equation*}
    \widetilde{V}_{1,2}=-\textrm{Re}\int_{\mathbb{R}}D_{x}^{s-1}\partial_{x}^{2}(\bar{v}q)D_{x}^{s-1}\partial_{x}r \ dx=-\textrm{Re}\int_{\mathbb{R}}D_{x}^{s}\partial_{x}(\bar{v}q)D_{x}^{s}r \ dx.
\end{equation*}
Therefore,
\begin{equation}
    \widetilde{V}_{1,2}+\widetilde{II}_{6}=0.
\end{equation}
\indent Now, from the second equation in \eqref{NLS-BO2}, we deduce
\begin{align*}
    \widetilde{V}_{2}&=-\textrm{Re}\left(i\int_{\mathbb{R}}D_{x}^{s-1}(\bar{v}\ \partial_{x}^{2}q)D_{x}^{s-1}r \ dx\right)+\frac{1}{2}\textrm{Re}\left(i\int_{\mathbb{R}}D_{x}^{s-1}(r|v|^{2})D_{x}^{s-1}r \ dx\right)\\
    &\quad +\frac{1}{2}\textrm{Re}\left(i\int_{\mathbb{R}}D_{x}^{s-1}(\bar{v}qu)D_{x}^{s-1}r \ dx\right)\\
&=\widetilde{V}_{2,1}+\widetilde{V}_{2,2}+\widetilde{V}_{2,3}.
\end{align*}
But from integration by parts on the first term $\widetilde{VI}_{2,1}$, and using again that $H^{s-1}(\mathbb{R})$ is a Banach algebra, it follows that
\begin{equation}
|\widetilde{V}_{2,1}|+|\widetilde{V}_{2,2}|+|\widetilde{V}_{2,3}|\lesssim \|v\|_{H^{s}_{s}}(\|r\|_{H^{s}_{x}}^{2}+\|q\|_{H^{s}_{x}}^{2})+ \|v\|_{H^{s}_{x}}^{2}\|r\|_{H^{s}_{x}}^{2}+\|v\|_{H^{s}_{x}}\|u\|_{H^{s}_{x}}(\|r\|_{H^{s}_{x}}^{2}+\|q\|_{H^{s}_{x}}^{2}).   
\end{equation}
To conclude the analysis involving the term $\widetilde{V}$, we observe that $v=v(x,t)$ satisfies the following equation
\begin{equation}\label{equationofv}
    i\partial_{t}v-\partial_{x}^{2}v=-rq_{1}+r_{2}q.
\end{equation}
Hence, we can estimate the term $\widetilde{V}_{3}$ exactly as we did with $\widetilde{V_{2}}$. In this case, we obtain the following
\begin{equation}
|\widetilde{V}_{3}|\lesssim \|v\|_{H^{s}_{x}}(\|r\|_{H^{s}_{x}}^{2}+\|q\|_{H^{s}_{x}}^{2})+(\|r\|_{H^{s}_{x}}\|q_{1}\|_{H^{s}_{x}}+\|r_{2}\|_{H^{s}_{x}}\|q\|_{H^{s}_{x}})(\|r\|_{H^{s}_{x}}^{2}+\|q\|_{H^{s}_{x}}^{2}).
\end{equation}
\indent Let us now treat with the term $\widetilde{VI}$. Note then that
\begin{align*}
\frac{1}{\tau_{s,1}}\widetilde{VI}&=\int_{\mathbb{R}}D_{x}^{s-1}\mathcal{H}\partial_{t}r \ r_{1}\ D_{x}^{s-1}\partial_{x}r \ dx+\int_{\mathbb{R}}D_{x}^{s-1}\mathcal{H}r \ \partial_{t}r_{1}\ D_{x}^{s-1}\partial_{x}r \ dx +\int_{\mathbb{R}}D_{x}^{s-1}\mathcal{H}r \ r_{1}\ D_{x}^{s-1}\partial_{x}\partial_{t}r \ dx\\
&=\widetilde{VI}_{1}+\widetilde{VI}_{2}+\widetilde{VI}_{3}.
\end{align*}
Thus, using the first equation \eqref{NLS-BO2}, we have that
\begin{align*}
\widetilde{VI}_{1}&=\int_{\mathbb{R}}D_{x}^{s-1}\mathcal{H}\mathcal{H}\partial_{x}^{2}r  \ r_{1}\ D_{x}^{s-1}\partial_{x}r \ dx-\int_{\mathbb{R}}D_{x}^{s-1}\mathcal{H}\partial_{x}^{3}r  \ r_{1}\ D_{x}^{s-1}\partial_{x}r \ dx\\
&\quad +\frac{1}{2}\int_{\mathbb{R}}D_{x}^{s-1}\mathcal{H}\partial_{x}(ur)  \ r_{1}\ D_{x}^{s-1}\partial_{x}r \ dx
-\int_{\mathbb{R}}D_{x}^{s-1}\mathcal{H}\partial_{x}(r\mathcal{H}\partial_{x}r_{1})  \ r_{1}\ D_{x}^{s-1}\partial_{x}r \ dx\\
&\quad -\int_{\mathbb{R}}D_{x}^{s-1}\mathcal{H}\partial_{x}\mathcal{H}(r\partial_{x}r_{1})  \ r_{1}\ D_{x}^{s-1}\partial_{x}r \ dx
-\int_{\mathbb{R}}D_{x}^{s-1}\mathcal{H}\partial_{x}(r_{2}\mathcal{H}\partial_{x}r)  \ r_{1}\ D_{x}^{s-1}\partial_{x}r \ dx\\
&\quad -\int_{\mathbb{R}}D_{x}^{s-1}\mathcal{H}\partial_{x}\mathcal{H}(r_{2}\partial_{x}r)  \ r_{1}\ D_{x}^{s-1}\partial_{x}r \ dx
+\textrm{Re}\int_{\mathbb{R}}D_{x}^{s-1}\mathcal{H}\partial_{x}(\bar{v}q)  \ r_{1}\ D_{x}^{s-1}\partial_{x}r \ dx=\sum_{j=1}^{8}\widetilde{VI}_{1,j}.
\end{align*}
Now, by using integration by parts, we deduce that
\begin{equation} \label{VI~11}
    |\widetilde{VI}_{1,1}|\lesssim \|\partial_{x}r_{1}\|_{L^{\infty}_{x}}\|D_{x}^{s}r\|_{L^{2}_{x}}^{2}.
\end{equation}
\indent The term $\widetilde{VI}_{1,2}$ will be addressed later, since its analysis relies on the complete description of $\widetilde{VI}_{2}$ and $\widetilde{VI}_{3}$.

See now that the terms $\widetilde{VI}_{1,j},\ j=3,...,8,$ can be estimated using integration by parts, Lemma \ref{commestim}, Sobolev's embedding, along with the fact that $H^{s}(\mathbb{R})$ is an algebra. Namely, we can obtain that
\begin{align}
|\widetilde{VI}_{1,3}|+|\widetilde{VI}_{1,8}|&\lesssim (\|u\|_{H^{s}_{x}}+\|v\|_{H^{s}_{x}})\|r_{1}\|_{H^{s}_{x}}(\|r\|_{H^{s}_{x}}^{2}+\|q\|_{H^{s}_{x}}^{2}),\\
|\widetilde{VI}_{1,4}|+|\widetilde{VI}_{1,5}|&\lesssim \|r_{1}\|_{H^{s}_{x}}^{2}\|r\|_{H^{s}_{x}}^{2}+\|D_{x}^{s+1}r_{1}\|_{L^{\infty}_{x}}\|r_{1}\|_{L^{\infty}_{x}}\|r\|_{L^{2}_{x}}\|D_{x}^{s}r\|_{L^{2}_{x}},\\
|\widetilde{VI}_{1,6}|+|\widetilde{VI}_{1,7}|&\lesssim \|r_{1}\|_{H^{s}_{x}}\|r_{2}\|_{H^{s}_{x}}\|r\|_{H^{s}_{x}}^{2}.
\end{align}
\indent Next, regarding the term $\widetilde{VI}_{2}$, using the first equation in \eqref{BO-NLS}, we can write
\begin{align*}
\widetilde{VI}_{2}&= \int_{\mathbb{R}} D_{x}^{s-1}\mathcal{H}r \ \mathcal{H}\partial_{x}^{2}r_{1} \ D_{x}^{s-1}\partial_{x}r \ dx-\int_{\mathbb{R}} D_{x}^{s-1}\mathcal{H}r \ \partial_{x}^{3}r_{1}\ D_{x}^{s-1}\partial_{x}r \ dx\\
&\quad +\int_{\mathbb{R}} D_{x}^{s-1}\mathcal{H}r\  (r_{1}\partial_{x}r_{1}) \ D_{x}^{s-1}\partial_{x}r \ dx-\int_{\mathbb{R}} D_{x}^{s-1}\mathcal{H}r \  \partial_{x}(r_{1}\mathcal{H}\partial_{x}r_{1}) \ D_{x}^{s-1}\partial_{x}r \ dx\\
&\quad -\int_{\mathbb{R}} D_{x}^{s-1}\mathcal{H}r \ \partial_{x}\mathcal{H}(r_{1}\partial_{x}r_{1}) \ D_{x}^{s-1}\partial_{x}r \ dx+\int_{\mathbb{R}} D_{x}^{s-1}\mathcal{H}r \ \partial_{x}(|q_{1}|^{2})  \ D_{x}^{s-1}\partial_{x}r \ dx=\sum_{j=1}^{6}\widetilde{VI}_{2,j}.
\end{align*}
Then, note that the first term $\widetilde{VI}_{2,1}$ can be estimated as follows
\begin{align}
|\widetilde{VI}_{2,1}|\lesssim \|\mathcal{H}\partial_{x}^{2}r_{1}\|_{L^{\infty}_{x}}\|r\|_{H^{s}_{x}}^{2}.
\end{align}
\indent The term $\widetilde{VI_{2,2}}$ will be treated later together with the term $\widetilde{VI}_{1,2}$ above.
\newline
\indent On the other hand, the terms $\widetilde{VI}_{2,j},\ j=3,\ldots,6$, can be estimated similarly using the Sobolev embedding as follows
\vspace{-2em}
\begin{spacing}{1.3}
\noindent
\begin{align}
&|\widetilde{VI}_{2,3}|+|\widetilde{VI}_{2,4}|+|\widetilde{VI}_{2,5}|+|\widetilde{VI}_{2,6}|\nonumber \\
&\lesssim \|D_{x}^{s-1}\mathcal{H}r\|_{L^{\infty}_{x}}(\|r_{1}\partial_{x}r_{1}\|_{L^{2}_{x}}+\|\partial_{x}(r_{1}\mathcal{H}\partial_{x}r_{1})\|_{L^{2}_{x}}+\|\partial_{x}\mathcal{H}(r_{1}\partial_{x}r_{1})\|_{L^{2}_{x}}+\|\partial_{x}(|q_{1}|^{2})\|_{L^{2}_{x}})\|D_{x}^{s}r\|_{L^{2}_{x}}\nonumber \\
&\lesssim (\|r_{1}\|_{H^{s}_{x}}^{2}+\|r_{1}\|_{H^{s}_{x}}^{2}+\|r_{1}\|_{H^{s}_{x}}\|\mathcal{H}\partial_{x}^{2}r_{1}\|_{L^{\infty}_{x}}+\|r_{1}\|_{H^{s}_{x}}^{2}+\|r_{1}\|_{H^{s}_{x}}\|\partial_{x}^{2}r_{1}\|_{L^{\infty}_{x}}+\|q_{1}\|_{H^{s}_{x}}^{2})  \|r\|_{H^{s}_{x}}^{2}\nonumber \\
&\lesssim \|r_{1}\|_{H^{s}_{x}}(\|r_{1}\|_{H^{s}_{x}}+\|\mathcal{H}\partial_{x}^{2}r_{1}\|_{L^{\infty}_{x}}\|\partial_{x}^{2}r_{1}\|_{L^{\infty}})\|r\|_{H^{s}_{x}}^{2}+\|q_{1}\|_{H^{s}_{x}}^{2}\|r\|_{H^{s}_{x}}^{2}.
\end{align}
\end{spacing}
\vspace{-2em}
Now, concerning the term $\widetilde{VI}_{3}$, from the first equation in \eqref{NLS-BO2}, it follows that
\begin{align*}
\widetilde{VI}_{3} &= \int_{\mathbb{R}}D_{x}^{s-1}\mathcal{H}r \ r_{1}\ D_{x}^{s-1}\partial_{x}\mathcal{H}\partial_{x}^{2}r \ dx-\int_{\mathbb{R}}D_{x}^{s-1}\mathcal{H}r \ r_{1}\ D_{x}^{s-1}\partial_{x}\partial_{x}^{3}r \ dx\\
&\quad +\frac{1}{2}\int_{\mathbb{R}}D_{x}^{s-1}\mathcal{H}r \ r_{1}\ D_{x}^{s-1}\partial_{x}\partial_{x}(ur) \ dx-\int_{\mathbb{R}}D_{x}^{s-1}\mathcal{H}r \ r_{1}\ D_{x}^{s-1}\partial_{x}\partial_{x}(r\mathcal{H}\partial_{x}r_{1}) \ dx\\
&\quad -\int_{\mathbb{R}}D_{x}^{s-1}\mathcal{H}r \ r_{1}\ D_{x}^{s-1}\partial_{x}\partial_{x}\mathcal{H}(r\partial_{x}r_{1}) \ dx-\int_{\mathbb{R}}D_{x}^{s-1}\mathcal{H}r \ r_{1}\ D_{x}^{s-1}\partial_{x}\partial_{x}(r_{2}\mathcal{H}\partial_{x}r) \ dx\\
&\quad -\int_{\mathbb{R}}D_{x}^{s-1}\mathcal{H}r \ r_{1}\ D_{x}^{s-1}\partial_{x}\partial_{x}\mathcal{H}(r_{2}\partial_{x}r) \ dx+\textrm{Re}\int_{\mathbb{R}}D_{x}^{s-1}\mathcal{H}r \ r_{1}\ D_{x}^{s-1}\partial_{x}\partial_{x}(\bar{v}q) \ dx=\sum_{j=1}^{8}\widetilde{VI}_{3,j}.
\end{align*}
Thus, note that integrating by parts and Sobolev's embedding yield
\begin{equation}
|\widetilde{VI}_{3,1}|\lesssim \|\partial_{x}r_{1}\|_{L^{\infty}_{x}}\|D_{x}^{s}r\|_{L^{2}_{x}}^{2}+\|\partial_{x}^{2}r_{1}\|_{L^{\infty}_{x}}\|r\|_{H^{s}_{x}}^{2}\lesssim \|r_{1}\|_{H^{s}_{x}}\|r\|_{H^{s}_{x}}^{2}+\|\partial_{x}^{2}r_{1}\|_{L^{\infty}_{x}}\|r\|_{H^{s}_{x}}^{2}.
\end{equation} \label{VI~31}
Next, from Lemma \ref{sum3derivative} we can see that
\begin{align*}
\widetilde{VI}_{1,2}+\widetilde{VI}_{2,2}+\widetilde{VI}_{3,2}=-3\int_{\mathbb{R}}D_{x}^{s}r \ \partial_{x}r_{1} \ D_{x}^{s-1}\partial_{x}^{2}r \ dx.
\end{align*}
Thus,
\begin{equation}
   \widetilde{II}_{3,3}+ \tau_{s,1}\cdot (\widetilde{VI}_{1,2}+\widetilde{VI}_{2,2}+\widetilde{VI}_{3,2})=0.
\end{equation}
Concerning the terms $\widetilde{VI}_{3,j},\ j=3,\ldots,4$, an integration by parts, the fact that $H^{s}(\mathbb{R})$ and $H^{s-1}(\mathbb{R})$ are algebras, Lemma \ref{commestim}, along with the Sobolev embedding, yield that
\begin{align}
|\widetilde{VI}_{3,3}|+|\widetilde{VI}_{3,8}| & \lesssim(\|u\|_{H^{s}_{x}}+\|v\|_{H^{s}_{x}})\|r_{1}\|_{H^{s}_{x}}(\|r\|_{H^{s}_{x}}^{2}+\|q\|_{H^{s}_{x}}^{2}),\\
|\widetilde{VI}_{3,4}|+|\widetilde{VI}_{3,5}|&\lesssim  (\|r_{1}\|_{H^{s}_{x}}^{2}+\|\partial_{x}^{2}r_{1}\|_{L^{\infty}_{x}}\|r_{1}\|_{H^{s}_{x}})\|r\|_{H^{s}_{x}}^{2}+\|D_{x}^{s}r\|_{L^{2}_{x}}\|r\|_{L^{2}_{x}}\|r_{1}\|_{L^{\infty}_{x}}\|D_{x}^{s}\partial_{x}r_{1}\|_{L^{\infty}_{x}},\\
|\widetilde{VI}_{3,6}|+|\widetilde{VI}_{3,7}|&\lesssim \|r_{1}\|_{H^{s}_{x}}\|r_{2}\|_{H^{s}_{x}}\|r\|_{H^{s}_{x}}^{2}+\|\partial_{x}^{2}r_{1}\|_{L^{\infty}_{x}}\|r_{2}\|_{H^{s}_{x}}\|r\|_{H^{s}_{x}}^{2}.
\label{VII~38}
\end{align}
\indent We now observe that the term $\widetilde{VII}$ can be treated exactly as we did with $\widetilde{VI}$, since we have that $r$ also satisfies the following equation
\begin{equation}
     \partial_{t}r-\mathcal{H}\partial_{x}^{2}r+\partial_{x}^{3}r=\frac{1}{2}\partial_{x}(ur)-\partial_{x}(r\mathcal{H}\partial_{x}r_{2}+\mathcal{H}(r\partial_{x}r_{2}))-\partial_{x}(r_{1}\mathcal{H}\partial_{x}r+\mathcal{H}(r_{1}\partial_{x}r))+\partial_{x}\textrm{Re}(\bar{v}q),
\end{equation}
which is basically the first equation in \eqref{NLS-BO2}, with $r_{1}$ and $r_{2}$ changing roles. In this case, if $M=M(r_{1},r_{2})>0$ is the sum of all the terms on the right hand side of each inequality \eqref{VI~11}-\eqref{VII~38} and that bound $\widetilde{VI}$, then
\begin{equation}
    \left|\frac{1}{\tau_{s,2}}\widetilde{VII}+3\int_{\mathbb{R}}D_{x}^{s}r \ \partial_{x}r_{2} \ D_{x}^{s-1}\partial_{x}^{2}r \ dx\right| \lesssim M(r_{2},r_{1}),
\end{equation}
i.e, we can simply invert $r_{1}$ and $r_{2}$ in terms of $M(r_{1},r_{2})$. On the other hand, note that
\begin{equation}\label{II~44}
\widetilde{II}_{4,2,3}+\widetilde{II}_{4,4}+\widetilde{II}_{5,2,3}+\widetilde{II}_{5,4}-3\tau_{s,2}\int_{\mathbb{R}}D_{x}^{s}r \ \partial_{x}r_{2} \ D_{x}^{s-1}\partial_{x}^{2}r=0.
\end{equation}
Therefore, from \eqref{boundedrq} and \eqref{ddtE~s}-\eqref{II~44}, follow the estimate \eqref{esd2}.

Finally, we proceed with the demonstration of estimate \eqref{esd3}. First, differentiating $\widetilde{E}_{m}^{0}(t)$ in \eqref{em3} with respect to $t$, we obtain
\begin{align}\label{difftoE0}
\frac{d}{dt}\widetilde{E}_{m}^{0}(t)&=\frac{d}{dt}\left(\int_{\mathbb{R}}r^{2} \ dx\right)+\frac{d}{dt}\left(\int_{\mathbb{R}}|q|^{2} \ dx\right)-\tau_{0}\frac{d}{dt}\left(\int_{\mathbb{R}} \mathcal{H}r \ J_{x}^{-1}\mathcal{H}r \ r_{1} \ dx\right) +\frac{d}{dt}\left(\textrm{Re}\int_{\mathbb{R}}J_{x}^{-2}r \ \bar{v}q \ dx\right)\nonumber \\
&=\widetilde{\mathcal{I}}+\mathcal{\widetilde{II}}+\mathcal{\widetilde{III}}+\widetilde{\mathcal{IV}}.
\end{align}
Now, using the first equation in \eqref{NLS-BO2}, we obtain that
\begin{align*}
    \mathcal{\widetilde{I}}&=\frac{1}{2}\int_{\mathbb{R}}r\partial_{x}(ur) \ dx-\int_{\mathbb{R}}r\partial_{x}(r\mathcal{H}\partial_{x}r_{1}) \ dx-\int_{\mathbb{R}}r\partial_{x}\mathcal{H}(r\partial_{x}r_{1}) \ dx\\
    &\quad -\int_{\mathbb{R}}r\partial_{x}(r_{2}\mathcal{H}\partial_{x}r) \ dx-\int_{\mathbb{R}}r\partial_{x}\mathcal{H}(r_{2}\partial_{x}r) \ dx+\textrm{Re}\int_{\mathbb{R}}r\partial_{x}(\bar{v}q) \ dx\\
    &=\widetilde{\mathcal{I}}_{1}+\widetilde{\mathcal{I}}_{2}+\widetilde{\mathcal{I}}_{3}+\widetilde{\mathcal{I}}_{4}+\widetilde{\mathcal{I}}_{5}+\widetilde{\mathcal{I}}_{6}.
\end{align*}
Then, note that integration by parts yields that
\begin{align}
    |\widetilde{\mathcal{I}}_{1}|+  |\mathcal{\widetilde{I}}_{2}|&\lesssim (\|\partial_{x}u\|_{L^{\infty}_{x}}+\|\mathcal{H}\partial_{x}^{2}r_{1}\|_{L^{\infty}_{x}})\|r\|_{L^{2}_{x}}^{2}.
\end{align}
\indent On the other hand, it can be observed that the term \( \widetilde{\mathcal{I}}_{3} \) cannot be directly estimated. As has repeatedly occurred, this term will eventually cancel out with a subsequent one. Nevertheless, such a cancellation requires a complete description of the term \( \widetilde{\mathcal{III}} \), which will be addressed subsequently.
\indent Next, we can write
\begin{align*}
\widetilde{\mathcal{I}}_{4}=\int_{\mathbb{R}}\partial_{x}r\ r_{2}\mathcal{H}\partial_{x}r \ dx=\int_{\mathbb{R}}\partial_{x}\mathcal{H}(\partial_{x}rr_{2})r \ dx=-\widetilde{\mathcal{I}}_{5}.
\end{align*}
Hence,
\begin{equation}
    \widetilde{\mathcal{I}}_{4}+\mathcal{\widetilde{I}}_{5}=0.
\end{equation}
\indent Observe that the term $\widetilde{\mathcal{I}}_{6}$ also presents certain difficulties. Therefore, in order to handle it appropriately, it is necessary to first provide a detailed description of the term $\mathcal{\widetilde{IV}}$.
\newline
\indent The term $\mathcal{\mathcal{II}}$ can be estimated exactly as it was done for the term $\widetilde{III}$ in \eqref{estimativadeql2}. Namely, we have that
\begin{equation}
    |\widetilde{\mathcal{II}}|\lesssim (\|v\|_{L^{\infty}_{x}}+\|u\|_{L^{\infty}_{x}})(\|r\|_{L^{2}_{x}}^{2}+\|q\|_{L^{2}_{x}}^{2}).
\end{equation}
\indent Concerning the term $\widetilde{\mathcal{III}}$, notice that
\begin{align*}
\frac{1}{\tau_{0}}\widetilde{\mathcal{III}}&=-\int_{\mathbb{R}} \mathcal{H}\partial_{t}r \ J_{x}^{-1}\mathcal{H}r \ r_{1} \ dx-\int_{\mathbb{R}} \mathcal{H}r \ J_{x}^{-1}\mathcal{H}\partial_{t}r \ r_{1} \ dx-\int_{\mathbb{R}} \mathcal{H}r \ J_{x}^{-1}\mathcal{H}r \ \partial_{t}r_{1} \ dx\\
&=\widetilde{\mathcal{III}}_{1}+\widetilde{\mathcal{III}}_{2}+\widetilde{\mathcal{III}}_{3}.
\end{align*}
Now, using the first equation in \eqref{NLS-BO2}, we obtain
\begin{align*}
\widetilde{\mathcal{III}}_{1}&=\int_{\mathbb{R}}\partial_{x}^{3}\mathcal{H}r \ J_{x}^{-1}\mathcal{H}r \ r_{1} \ dx-\int_{\mathbb{R}}\mathcal{H}\mathcal{H}\partial_{x}^{2}r \ J_{x}^{-1}\mathcal{H}r \ r_{1} \ dx-\frac{1}{2}\int_{\mathbb{R}}\mathcal{H}\partial_{x}(ur) \ J_{x}^{-1}\mathcal{H}r \ r_{1} \ dx\\
&\quad +\int_{\mathbb{R}}\mathcal{H}\partial_{x}(r\mathcal{H}\partial_{x}r_{1}) \ J_{x}^{-1}\mathcal{H}r \ r_{1} \ dx+\int_{\mathbb{R}}\mathcal{H}\partial_{x}\mathcal{H}(r\partial_{x}r_{1}) \ J_{x}^{-1}\mathcal{H}r \ r_{1} \ dx\\
&\quad 
+\int_{\mathbb{R}}\mathcal{H}\partial_{x}\mathcal{H}(r_{2}\partial_{x}r) \ J_{x}^{-1}\mathcal{H}r \ r_{1} \ dx+\int_{\mathbb{R}}\mathcal{H}\partial_{x}(r_{2}\mathcal{H}\partial_{x}r) \ J_{x}^{-1}\mathcal{H}r \ r_{1} \ dx+\textrm{Re}\int_{\mathbb{R}}\mathcal{H}\partial_{x}(\bar{v}q) \ J_{x}^{-1}r \ r_{1} \ dx\\
&=\widetilde{\mathcal{III}}_{1,1}+\widetilde{\mathcal{III}}_{1,2}+\widetilde{\mathcal{III}}_{1,3}+\widetilde{\mathcal{III}}_{1,4}+\widetilde{\mathcal{III}}_{1,5}+\widetilde{\mathcal{III}}_{1,6}+\widetilde{\mathcal{III}}_{1,7}+\widetilde{\mathcal{III}}_{1,8}.
\end{align*}
\indent The term $\mathcal{\widetilde{III}}_{1,1}$ will be handled later, since we first need to describe $\widetilde{\mathcal{III}}_{2}$ and $\widetilde{\mathcal{III}}_{3}$.
\newline
\indent Considering the term $\mathcal{\widetilde{III}}_{1,2}$, integration by parts yields
\begin{align*}
\widetilde{\mathcal{III}}_{1,2}&= -\int_{\mathbb{R}}\partial_{x}r \ J_{x}^{-1}\mathcal{H}\partial_{x}r \ r_{1} \ dx-\int_{\mathbb{R}}\partial_{x}r \ J_{x}^{-1}\mathcal{H}r \ \partial_{x}r_{1} \ dx  \\
&=\int_{\mathbb{R}}rJ_{x}^{-1}\mathcal{H}\partial_{x}^{2}r \ r_{1} \ dx+2\int_{\mathbb{R}}r \ J_{x}^{-1}\mathcal{H}\partial_{x}r \ \partial_{x}r_{1} \ dx+\int_{\mathbb{R}}r \ J_{x}^{-1}\mathcal{H}r \ \partial_{x}^{2}r \ dx\\
&=\widetilde{\mathcal{III}}_{1,2,1}+\widetilde{\mathcal{III}}_{1,2,2}+\widetilde{\mathcal{III}}_{1,2,3}.
\end{align*}
But observe that we can write
\begin{align*}
\widetilde{III}_{1,2,1}&=\int_{\mathbb{R}}r(J_{x}^{-1}\mathcal{H}\partial_{x}^{2}-\partial_{x})r \ r_{1} \ dx+\int_{\mathbb{R}}r\partial_{x}r\ r_{1} \ dx=\int_{\mathbb{R}}r(J_{x}^{-1}\mathcal{H}\partial_{x}^{2}-\partial_{x})r \ r_{1} \ dx-\frac{1}{2}\int_{\mathbb{R}}r^{2}\partial_{x}r_{1} \ dx.
\end{align*}
Hence, an application of Hölder's inequality and Lemma \ref{boundedoperator1} yield
\begin{align}
    |\widetilde{\mathcal{III}}_{1,2,1}|&\lesssim (\|r_{1}\|_{L^{\infty}_{x}}+\|\partial_{x}r_{1}\|_{L^{\infty}_{x}})\|r\|_{L^{2}_{x}}^{2},\\
    |\widetilde{\mathcal{III}}_{1,2,2}|+|\widetilde{\mathcal{III}}_{1,2,3}|&\lesssim (\|\partial_{x}r_{}
   \|_{L^{\infty}_{x}}+\|\partial_{x}^{2}r\|_{L^{\infty}_{x}} )\|r\|_{L^{2}_{x}}^{2}.
\end{align}
\indent Now, an integration by parts, along with Sobolev's embedding, yields
\begin{align}
|\widetilde{\mathcal{III}}_{1,3}|+|\widetilde{\mathcal{III}}_{1,8}|&\lesssim (\|u\|_{L^{\infty}_{x}}+\|v\|_{L^{\infty}_{x}})(\|r_{1}\|_{L^{\infty}}+\|\partial_{x}r_{1}\|_{L^{\infty}_{x}})(\|r\|_{L^{2}_{x}}^{2}+\|q\|_{L^{2}_{x}}^{2}), \\
|\widetilde{\mathcal{III}}_{1,4}|&\lesssim \|\mathcal{H}\partial_{x}r_{1}\|_{L^{\infty}_{x}}(\|r_{1}\|_{L^{\infty}_{x}}+\|\partial_{x}r_{1}\|_{L^{\infty}_{x}})\|r\|_{L^{2}_{x}}^{2},\\
|\widetilde{\mathcal{III}}_{1,5}|&\lesssim  \|r_{1}\|_{L^{\infty}_{x}}\|\partial_{x}r_{1}\|_{L^{\infty}_{x}}\|r\|_{L^{2}_{x}}^{2}+\|\partial_{x}r_{1}\|_{L^{\infty}_{x}}^{2}\|r\|_{L^{2}_{x}}^{2}.
\end{align}
\indent The term $\widetilde{\mathcal{III}}_{1,6}$ presents an additional difficulty due to the presence of a derivative with respect to r in the nonlinear component. However, we can write
\begin{align*}
 \widetilde{\mathcal{III}}_{1,6}&=- \int_{\mathbb{R}}\partial_{x}^{2}(r_{2} r)J_{x}^{-1}\mathcal{H}r \ r_{1} \ dx+\int_{\mathbb{R}}\partial_{x}(\partial_{x}r_{2}r) \ J_{x}^{-1}\mathcal{H}r \ r_{1} \ dx\\
&=\widetilde{\mathcal{III}}_{1,6,1}+\widetilde{\mathcal{III}}_{1,6,2}.
\end{align*}
On the one hand, by integration by parts
\begin{align*}
\widetilde{\mathcal{III}}_{1,6,1}&=-\int_{\mathbb{R}}r_{2}r \ J_{x}^{-1}\mathcal{H}\partial_{x}^{2}r \ r_{1} \ dx-2\int_{\mathbb{R}}r_{2}r \ J_{x}^{-1}\mathcal{H}\partial_{x}r \ \partial_{x}r_{1} \ dx -\int_{\mathbb{R}}r_{2}r \ J_{x}^{-1}\mathcal{H}r \ \partial_{x}^{2}r_{1} \ dx\\
&=\widetilde{\mathcal{III}}_{1,6,1,1}+\widetilde{\mathcal{III}}_{1,6,1,2}+\widetilde{\mathcal{III}}_{1,6,1,3}.
\end{align*}
Now, note that
\begin{align*}
    \widetilde{\mathcal{III}}_{1,6,1,1}&=-\int_{\mathbb{R}}r_{2}r(J^{-1}_{x}\mathcal{H}\partial_{x}^{2} -\partial_{x}r)\ r_{1} \ dx-\int_{\mathbb{R}}r_{2}r\ \partial_{x}r\ r_{1} \ dx\\
    &=-\int_{\mathbb{R}}r_{2}r(J^{-1}_{x}\mathcal{H}\partial_{x}^{2} \mathcal{H}-\partial_{x}r)\ r_{1} \ dx+\frac{1}{2}\int_{\mathbb{R}}r^{2}\partial_{x}(r_{1}r_{2}) \ dx.
\end{align*}
Hence, Lemma \ref{boundedoperator1} yields
\begin{equation}
|\widetilde{\mathcal{III}}_{1,6,1,1}|\lesssim \|r_{1}\|_{L^{\infty}_{x}}\|r_{2}\|_{L^{\infty}_{x}}\|r\|_{L^{2}_{x}}^{2}+(\|r_{1}\|_{L^{\infty}_{x}}\|\partial_{x}r_{2}\|_{L^{\infty}_{x}}+\|r_{2}\|_{L^{\infty}_{x}}\|\partial_{x}r_{1}\|_{L^{\infty}_{x}})\|r\|_{L^{2}_{x}}^{2}.
\end{equation}
Also, from Hölder's inequality, we have 
\begin{align}
    |\widetilde{\mathcal{III}}_{1,6,1,2}|+|\widetilde{\mathcal{III}}_{1,6,1,3}|&\lesssim (\|r_{2}\|_{L^{\infty}_{x}}\|\partial_{x}r_{1}\|_{L^{\infty}_{x}}+\|\partial_{x}^{2}r_{1}\|_{L^{\infty}_{x}})\|r\|_{L^{2}_{x}}^{2}.
\end{align}
On the other hand,
\begin{align*}
\widetilde{\mathcal{III}}_{1,6,2}=-\int_{\mathbb{R}}\partial_{x}r_{2}r \ J_{x}^{-1}\mathcal{H}\partial_{x}r \ r_{1} \ dx-\int_{\mathbb{R}}\partial_{x}r_{2}r \ J_{x}^{-1}\mathcal{H}r \ \partial_{x}r_{1} \ dx 
\end{align*}
Thus,
\begin{equation}
|\widetilde{\mathcal{III}}_{1,6,2}|\lesssim \|r_{1}\|_{L^{\infty}_{x}}\|\partial_{x}r_{2}\|_{L^{\infty}_{x}}\|r\|_{L^{2}_{x}}^{2}+\|\partial_{x}r_{1}\|_{L^{\infty}_{x}}\|\partial_{x}r_{2}\|_{L^{\infty}_{x}}\|r\|_{L^{2}_{x}}^{2}.
\end{equation}
By applying to $\widetilde{\mathcal{III}}_{1,7}$ the same arguments used for the term $\widetilde{\mathcal{III}}_{1,6}$, we obtain that
\begin{align}
|\widetilde{\mathcal{III}}_{1,7}|&\lesssim \|r_{1}\|_{L^{\infty}_{x}}\|r_{2}\|_{L^{\infty}_{x}}\|r\|_{L^{2}_{x}}^{2}+\|r_{1}\|_{L^{\infty}_{x}}\|\partial_{x}r_{2}\|_{L^{\infty}_{x}}\|r\|_{L^{2}_{x}}^{2}+\|r_{2}\|_{L^{\infty}_{x}}\|\partial_{x}r_{1}\|_{L^{\infty}_{x}}\|r\|_{L^{2}_{x}}^{2}\nonumber \\
&\quad +\|r_{2}\|_{L^{\infty}_{x}}\|\partial_{x}^{2}r_{1}\|_{L^{\infty}_{x}}\|r\|_{L^{2}_{x}}^{2}+\|\partial_{x}r_{1}\|_{L^{\infty}_{x}}\|\partial_{x}r_{2}\|_{L^{\infty}_{x}}\|r\|_{L^{2}_{x}}^{2}.
\end{align}
\indent Next, using again the first equation in \eqref{NLS-BO2}, we can write
\begin{align*}
\widetilde{\mathcal{III}}_{2}&=\int_{\mathbb{R}} \mathcal{H}r \ J_{x}^{-1}\mathcal{H}\partial_{x}^{3}r  \ r_{1} \ dx-\int_{\mathbb{R}} \mathcal{H}r \ J_{x}^{-1}\mathcal{H}\mathcal{H}\partial_{x}^{2}r  \ r_{1} \ dx-\frac{1}{2}\int_{\mathbb{R}} \mathcal{H}r \ J_{x}^{-1}\mathcal{H}\partial_{x}(ur)  \ r_{1} \ dx\\
&\quad +\int_{\mathbb{R}} \mathcal{H}r \ J_{x}^{-1}\mathcal{H}\partial_{x}(r\mathcal{H}\partial_{x}r_{1})  \ r_{1} \ dx+\int_{\mathbb{R}} \mathcal{H}r \ J_{x}^{-1}\mathcal{H}\partial_{x}\mathcal{H}(r\partial_{x}r_{1})  \ r_{1} \ dx\\
&\quad +\int_{\mathbb{R}} \mathcal{H}r \ J_{x}^{-1}\mathcal{H}\partial_{x}\mathcal{H}(r_{2}\partial_{x}r)  \ r_{1} \ dx+\int_{\mathbb{R}} \mathcal{H}r \ J_{x}^{-1}\mathcal{H}\partial_{x}(r_{2}\mathcal{H}\partial_{x}r)  \ r_{1} \ dx
\\
&\quad -\textrm{Re}\int_{\mathbb{R}} \mathcal{H}r \ J_{x}^{-1}\mathcal{H}\partial_{x}(\bar{v}q)  \ r_{1} \ dx\\
&=\widetilde{\mathcal{III}}_{2,1}+\widetilde{\mathcal{III}}_{2,2}+\widetilde{\mathcal{III}}_{2,3}+\widetilde{\mathcal{III}}_{2,4}+\widetilde{\mathcal{III}}_{2,5}+\widetilde{\mathcal{III}}_{2,6}+\widetilde{\mathcal{III}}_{2,7}+\widetilde{\mathcal{III}}_{2,8}.
\end{align*}
\noindent The term $\widetilde{\mathcal{III}}_{2,1}$ will be handled later along with the term $\widetilde{\mathcal{III}}_{1,1}$. Now, note that
\begin{align*}
 \widetilde{\mathcal{III}}_{2,2}&=-\int_{\mathbb{R}}\mathcal{H}r \ \mathcal{H}(J^{-1}\mathcal{H}\partial_{x}^{2}-\partial_{x})r \ r_{1} \ dx-\int_{\mathbb{R}}\mathcal{H}r \ \mathcal{H}\partial_{x}r \ r_{1} \ dx\\
 &=-\int_{\mathbb{R}}\mathcal{H}r \ \mathcal{H}(J^{-1}\mathcal{H}\partial_{x}^{2}-\partial_{x})r \ r_{1} \ dx+\frac{1}{2}\int_{\mathbb{R}}(\mathcal{H}r)^{2}\partial_{x}r_{1} \ dx.
\end{align*}
Therefore, the use of Hölder's inequality and Lemma \ref{boundedoperator1} lead to
\begin{align}
    |\widetilde{\mathcal{III}}_{2,2}|&\lesssim (\|r_{1}\|_{L^{\infty}_{x}}+\|\partial_{x}r_{1}\|_{L^{\infty}_{x}})\|r\|_{L^{2}_{x}}^{2},\\
    |\widetilde{\mathcal{III}}_{2,3}|+|\widetilde{\mathcal{III}}_{2,8}|&\lesssim \|r_{1}\|_{L^{\infty}_{x}}(\|u\|_{L^{\infty}_{x}}+\|v\|_{L^{\infty}_{x}})(\|r\|_{L^{2}_{x}}^{2}+\|q\|_{L^{2}_{x}}^{2}),\\
    |\widetilde{\mathcal{III}}_{2,4}|+|\widetilde{\mathcal{III}}_{2,5}|&\lesssim \|r_{1}\|_{L^{\infty}_{x}}(\|\mathcal{H}\partial_{x}r_{1}\|_{L^{\infty}_{x}}+\|\partial_{x}r_{1}\|_{L^{\infty}_{x}})\|r\|_{L^{2}_{x}}^{2}.
\end{align}
\indent As before, the term $\widetilde{\mathcal{III}}_{2,6}$ is more problematic since it contains a derivative with respect to $r$ in a nonlinear term. But we can write
\begin{align*}
\widetilde{\mathcal{III}}_{2,6}&=\int_{\mathbb{R}}\mathcal{H}r \ J_{x}^{-1}\mathcal{H}^{2}\partial_{x}^{2}(r_{2}r)\ r_{1} \ dx-\int_{\mathbb{R}}\mathcal{H}r \ J_{x}^{-1}\mathcal{H}^{2}\partial_{x}(\partial_{x}r_{2}\ r) \ r_{1} \ dx\\
&=\int_{\mathbb{R}}\mathcal{H}r\ \mathcal{H}(J_{x}^{-1}\mathcal{H}\partial_{x}^{2}-\partial_{x})(r_{2}r) \ r_{1} \ dx+\int_{\mathbb{R}}\mathcal{H}r\ \mathcal{H}\partial_{x}(r_{2}r) \ r_{1} \ dx -\int_{\mathbb{R}}\mathcal{H}r \ J_{x}^{-1}\mathcal{H}^{2}\partial_{x}(\partial_{x}r_{2}\ r) \ r_{1} \ dx\\\
&=\widetilde{\mathcal{III}}_{2,6,1}+\widetilde{\mathcal{III}}_{2,6,2}+\widetilde{\mathcal{III}}_{2,6,3}.
\end{align*}
Thus, an application of Lemma \ref{boundedoperator1} yields  
\begin{equation}
|\widetilde{\mathcal{III}}_{2,6,1}|\lesssim \|r_{1}\|_{L^{\infty}_{x}}\|r_{2}\|_{L^{\infty}_{x}}\|r\|_{L^{2}_{x}}^{2}.
\end{equation}
On the other hand, note that
\begin{align*}
\widetilde{\mathcal{III}}_{2,6,2}&=\int_{\mathbb{R}}\mathcal{H}r\ [D_{x}^{1};r_{2}]r \ r_{1} \ dx+\int_{\mathbb{R}}\mathcal{H}r\ r_{2}D_{x}^{1}r \ r_{1} \ dx\\ 
&=\int_{\mathbb{R}}\mathcal{H}r\ [D_{x}^{1};r_{2}]r \ r_{1} \ dx-\frac{1}{2}\int_{\mathbb{R}}(\mathcal{H}r)^{2}\partial_{x}(r_{1}r_{2}) \ dx.
\end{align*}
Therefore, from Lemma \ref{commestim},  it follows that
\begin{equation}
    |\widetilde{\mathcal{III}}_{2,6,2}|\lesssim \|r\|_{L^{2}_{x}}\|\partial_{x}r_{2}\|_{L^{\infty}_{x}}\|r\|_{L^{2}_{x}}\|r_{1}\|_{L^{\infty}_{x}}+(\|r_{1}\|_{L^{\infty}_{x}}\|\partial_{x}r_{2}\|_{L^{\infty}_{x}}+\|r_{2}\|_{L^{\infty}_{x}}\|\partial_{x}r_{1}\|_{L^{\infty}_{x}})\|r\|_{L^{2}_{x}}^{2}.
\end{equation}
Furthermore, the Hölder inequality yields
\begin{equation}
|\widetilde{\mathcal{III}}_{2,6,3}|\lesssim \|r_{1}\|_{L^{\infty}_{x}}\|\partial_{x}r_{2}\|_{L^{\infty}_{x}}\|r\|_{L^{2}_{x}}^{2}.
\end{equation}
Similarly, we can deduce that
\begin{equation}
|\widetilde{\mathcal{III}}_{2,7}|\lesssim \|r_{1}\|_{L^{\infty}_{x}}(\|r_{2}\|_{L^{\infty}_{x}}+\|\partial_{x}r_{2}\|_{L^{\infty}_{x}})\|r\|_{L^{2}_{x}}^{2}+\|r_{2}\|_{L^{\infty}_{x}}\|\partial_{x}r_{1}\|_{L^{\infty}_{x}})\|r\|_{L^{2}_{x}}^{2}
\end{equation}
\indent In the case of $\widetilde{\mathcal{III}}_{3}$, using now the first equation in \eqref{BO-NLS}, we can write
\begin{align*}
\widetilde{\mathcal{III}}_{3}&=\int_{\mathbb{R}}\mathcal{H}r \ J_{x}^{-1}\mathcal{H}r\ \partial_{x}^{3}r_{1}  \ dx-\int_{\mathbb{R}}\mathcal{H}r \ J_{x}^{-1}\mathcal{H}r\ \mathcal{H}\partial_{x}^{2}r_{1}  \ dx-\int_{\mathbb{R}}\mathcal{H}r \ J_{x}^{-1}\mathcal{H}r\ r_{1}\partial_{x}r_{1} \  dx\\
&\quad +\int_{\mathbb{R}}\mathcal{H}r \ J_{x}^{-1}\mathcal{H}r\  \ \partial_{x}(r_{1}\mathcal{H}\partial_{x}r_{1}) dx+\int_{\mathbb{R}}\mathcal{H}r \ J_{x}^{-1}\mathcal{H}r\ \partial_{x}\mathcal{H}(r_{1}\partial_{x}r_{1}) \ dx-\int_{\mathbb{R}}\mathcal{H}r \ J_{x}^{-1}\mathcal{H}r\ \partial_{x}(|q_{1}|^{2})  \ dx\\
&=\widetilde{\mathcal{III}}_{3,1}+\widetilde{\mathcal{III}}_{3,2}+\widetilde{\mathcal{III}}_{3,3}+\widetilde{\mathcal{III}}_{3,4}+\widetilde{\mathcal{III}}_{3,5}+\widetilde{\mathcal{III}}_{3,6}.
\end{align*}
Then, from Lemma \ref{sum3derivative}, we have
\begin{align*}
\widetilde{\mathcal{III}}_{1,1}+\widetilde{\mathcal{III}}_{2,1}+\widetilde{\mathcal{III}}_{3,1}&=3\int_{\mathbb{R}}D_{x}^{1}r \ J_{x}^{-1}\mathcal{H}\partial_{x}r \ \partial_{x}r_{1} \ dx\\
&=3\int_{\mathbb{R}}D_{x}^{1}r \ r \ \partial_{x}r_{1} \ dx+3\int_{\mathbb{R}}D_{x}^{1}r \ (J_{x}^{-1}\partial_{x}-1)r \ \partial_{x}r_{1} \ dx\\
&=\widetilde{\mathcal{III}}_{3,1,1}+\widetilde{\mathcal{III}}_{3,1,2}.
\end{align*}
Thus,
\begin{equation}
   \widetilde{\mathcal{I}}_{3}+ \tau_{0}\cdot \widetilde{\mathcal{III}}_{3,1,1}=0.
\end{equation}
On the other hand, integration by parts, Lemma \ref{boundedoperator1}, and the Sobolev embedding, yield that
\begin{align}
   |\widetilde{\mathcal{III}}_{3,1,2}|+|\widetilde{\mathcal{III}}_{3,2}|&\lesssim \|r\|_{L^{2}_{x}}^{2}(\|\partial_{x}r_{1}\|_{L^{\infty}_{x}}+\|\partial_{x}^{2}r_{1}\|_{L^{\infty}_{x}}+\|\mathcal{H}\partial_{x}^{2}r_{1}\|_{L^{\infty}_{x}}),\\
    |\widetilde{\mathcal{III}}_{3,3}|+ |\widetilde{\mathcal{III}}_{3,6}|&\lesssim \|r\|_{L^{2}_{x}}^{2}(\|r_{1}\|_{L^{2}_{x}}\|\partial_{x}r_{1}\|_{L^{\infty}_{x}}+\|q_{1}\|_{L^{2}_{x}}\|\partial_{x}q_{1}\|_{L^{\infty}_{x}}),\\
    |\widetilde{\mathcal{III}}_{3,4}|+|\widetilde{\mathcal{III}}_{3,5}|&\lesssim \|r\|_{L^{2}_{x}}^{2}(\|\partial_{x}r_{1}\|_{L^{\infty}_{x}}\|\mathcal{H}\partial_{x}r_{1}\|_{L^{\infty}_{x}}+\|r_{1}\|_{L^{2}_{x}}\|\mathcal{H}\partial_{x}^{2}r_{1}\|_{L^{\infty}_{x}}+\|\partial_{x}^{2}r_{1}\|_{L^{\infty}_{x}}\|r_{1}\|_{L^{2}_{x}}).
\end{align}
\indent Finally, considering the term $\widetilde{\mathcal{IV}}$, we can write
\begin{align*}
\widetilde{\mathcal{IV}}&=\textrm{Re}\int_{\mathbb{R}}J_{x}^{-2}\partial_{t}r \ \bar{v}q \ dx+\textrm{Re}\int_{\mathbb{R}}J_{x}^{-2}r \ \partial_{t}\bar{v}q \ dx+\textrm{Re}\int_{\mathbb{R}}J_{x}^{-2}r \ \bar{v}\partial_{t}q \ dx\\
&=\widetilde{\mathcal{IV}}_{1}+\widetilde{\mathcal{IV}}_{2}+\widetilde{\mathcal{IV}}_{3}.
\end{align*}
Thus, using the second equation in \eqref{NLS-BO2}, it follows that
\begin{align*}
\widetilde{\mathcal{IV}}_{1}&=-\textrm{Re}\int_{\mathbb{R}}J_{x}^{-2}\partial_{x}^{3}r \ \bar{v}q \ dx+\textrm{Re}\int_{\mathbb{R}}J_{x}^{-2}\mathcal{H}\partial_{x}^{2}r \ \bar{v}q \ dx+\frac{1}{2}\textrm{Re}\int_{\mathbb{R}}J_{x}^{-2}\partial_{x}(ur) \ \bar{v}q \ dx\\
&\quad -\textrm{Re}\int_{\mathbb{R}}J_{x}^{-2}\partial_{x}(r\mathcal{H}\partial_{x}r_{1}) \ \bar{v}q \ dx-\textrm{Re}\int_{\mathbb{R}}J_{x}^{-2}\partial_{x}\mathcal{H}(r\partial_{x}r_{1}) \ \bar{v}q \ dx-\textrm{Re}\int_{\mathbb{R}}J_{x}^{-2}\partial_{x}(r_{2}\mathcal{H}\partial_{x}r) \ \bar{v}q \ dx\\
&\quad -\textrm{Re}\int_{\mathbb{R}}J_{x}^{-2}\partial_{x}\mathcal{H}(r_{2}\partial_{x}r) \ \bar{v}q \ dx+\textrm{Re}\int_{\mathbb{R}}J_{x}^{-2}\partial_{x}(\bar{v}q) \ \bar{v}q \ dx\\
&=\widetilde{\mathcal{IV}}_{1,1}+\widetilde{\mathcal{IV}}_{1,2}+\widetilde{\mathcal{IV}}_{1,3}+\widetilde{\mathcal{IV}}_{1,4}+\widetilde{\mathcal{IV}}_{1,5}+\widetilde{\mathcal{IV}}_{1,6}+\widetilde{\mathcal{IV}}_{1,7}+\widetilde{\mathcal{IV}}_{1,8}.
\end{align*}
Now, an integration by parts ensures that
\begin{align*}
\widetilde{\mathcal{VI}}_{1,1}\
&=-\textrm{Re}\int_{\mathbb{R}}(J_{x}^{-2}\partial_{x}^{3}+\partial_{x})r \ \bar{v}q \ dx-\textrm{Re}\int_{\mathbb{R}}r\partial_{x}(\bar{v}q) \ dx\\
&=\widetilde{\mathcal{IV}}_{1,1,1}+\widetilde{\mathcal{IV}}_{1,1,2}.
\end{align*}
Then, Lemma \ref{boundedoperator1} yields
\begin{equation}
    |\widetilde{\mathcal{IV}}_{1,1,1}|\lesssim \|v\|_{L^{\infty}_{x}}\|r\|_{L^{2}_{x}}\|q\|_{L^{2}_{x}}.
\end{equation}
Furthermore, we have
\begin{equation}
    \widetilde{\mathcal{IV}}_{1,1,2}+\mathcal{\widetilde{I}}_{6}=0.
\end{equation}
On the other hand, an application of integration by parts, along with Hölder's inequality, leads us to
\begin{align}
\sum_{j=2}^{8} |\widetilde{\mathcal{IV}}_{1,j}|
&\lesssim
\|v\|_{L^{\infty}_x}
(
1 + \|u\|_{L^{\infty}_x}
+ \|\mathcal{H}\partial_x r_1\|_{L^{\infty}_x}
+ \|\partial_x r_1\|_{L^{\infty}_x}
+ \|r_2\|_{L^{\infty}_x}
+ \|\partial_x r_2\|_{L^{\infty}_x}
+ \|v\|_{L^{\infty}_x}
)
\nonumber\\
&\quad \times (\|r\|_{L^2_x}^2 + \|q\|_{L^2_x}^2).
\end{align}
\indent Next, using equation \eqref{equationofv} and integrating by parts, we have
\begin{align*}
\widetilde{\mathcal{IV}_{2}}
&=\textrm{Im}\int_{\mathbb{R}}J_{x}^{-2}\partial_{x}r\ \partial_{x}\bar{v}q \ dx+\textrm{Im}\int_{\mathbb{R}}J_{x}^{-2}r \ \partial_{x}\bar{v}\partial_{x}q \ dx-\textrm{Re}\int_{\mathbb{R}}J_{x}^{-2}r\ i(r_{1}q_{1}+r_{2}q_{2})q \ dx\\
&=\widetilde{\mathcal{IV}}_{2,1}+\widetilde{\mathcal{IV}}_{2,2}+\widetilde{\mathcal{IV}}_{2,3}.
\end{align*}
Thus, note that
\begin{align}
|\widetilde{\mathcal{IV}}_{2,1}|+|\widetilde{\mathcal{IV}}_{2,3}|&\lesssim (\|\partial_{x}v\|_{L^{\infty}_{x}}+\|r_{1}\|_{L^{\infty}_{x}}\|q_{1}\|_{L^{\infty}}+\|r_{2}\|_{L^{\infty}_{x}}\|q_{2}\|_{L^{\infty}_{x}})\|r\|_{L^{2}_{x}}\|q\|_{L^{2}_{x}}.
\end{align}
Concerning the term $\widetilde{\mathcal{IV}}_{2,2}$, we observe that it will be canceled by a term in $\widetilde{\mathcal{IV}}_{3}$.
In fact, using the second equation in \eqref{NLS-BO2} and integration by parts, we obtain that
\begin{align*}
\widetilde{\mathcal{IV}}_{3}&=\textrm{Im}\int_{\mathbb{R}}J_{x}^{-2}\partial_{x}^{2}r \ \bar{v}q \ dx+\textrm{Im}\int_{\mathbb{R}}J_{x}^{-2}\partial_{x}r \ \partial_{x}\bar{v} \ q \ dx\\
&-\textrm{Im}\int_{\mathbb{R}}J_{x}^{-2}r \ \partial_{x}\bar{v}\partial_{x}q \ dx+\frac{1}{2}\textrm{Re}\int_{\mathbb{R}}J_{x}^{-2}r \ \bar{v}\ i(rv+qu) \ dx\\
&=\widetilde{\mathcal{IV}}_{3,1}+\widetilde{\mathcal{IV}}_{3,2}+\widetilde{\mathcal{IV}}_{3,3}+\widetilde{\mathcal{IV}}_{3,4}.
\end{align*}
Thus, it follows that
\begin{align}
    |\widetilde{\mathcal{IV}}_{3,1}|+|\widetilde{\mathcal{IV}}_{3,2}|&\lesssim (\|v\|_{L^{\infty}_{x}}+\|\partial_{x}v\|_{L^{\infty}_{x}})\|r\|_{L^{2}_{x}}\|q\|_{L^{2}_{x}};\\
    |\widetilde{\mathcal{IV}}_{3,4}|&\lesssim \|v\|_{L^{\infty}_{x}}^{2}\|r\|_{L^{2}_{x}}^{2}+\|v\|_{L^{\infty}_{x}}\|u\|_{L^{\infty}_{x}}\|r\|_{L^{2}_{x}}\|q\|_{L^{2}_{x}}.
\end{align}
On the other hand, 
\begin{equation}\label{IV22~i}
    \widetilde{\mathcal{IV}}_{3,3}+ \widetilde{\mathcal{IV}}_{2,2}=0.
\end{equation}
Then, the estimate \eqref{esd3} follows from combining the estimates \eqref{boundedrq}, \eqref{difftoE0}-\eqref{IV22~i}, and the Sobolev embedding $H^{s-1}(\mathbb{R})\hookrightarrow L^{\infty}(\mathbb{R})$.
\end{proof}
\begin{remark}
The same arguments used in Propositions \ref{propenergyestimate} and \ref{propenergyestimatedifference} can be readily adapted to obtain analogous results for the higher-order Benjamin-Ono equation \eqref{higher-order-BO}.
\end{remark}

\subsection{Well-posedness for smooth initial data}

This subsection is devoted to establish our first local well-posedness result for the HBOS system in \eqref{BO-NLS}. The result is derived from the energy estimates proven in Propositions \ref{propenergyestimate} and \ref{propenergyestimatedifference}. It ensures, in particular, the existence of smooth solutions to the system \eqref{BO-NLS} for sufficiently regular initial data. Our approach is based on parabolic regularization combined with the Bona–Smith approximation method. Furthermore, it is crucial to remark that the existence result is a key ingredient in implementing the compactness argument associated with the gauge transformation.

\begin{theorem}\label{existenceofsmoothsolutions}
Let $s>\frac{5}{2}$. Then, for any $(r_{0},q_{0})\in H^{s}(\mathbb{R})\times H^{s}(\mathbb{R})$, there exist a positive time $T=T(\|(u_{0},q_{0})\|_{H^{s}\times H^{s}})$ and a unique maximal solution $(r,q)$ of the IVP \eqref{BO-NLS} such that
\begin{equation*}
    (r,q)\in C([0,T^{*}):H^{s}(\mathbb{R})\times H^{s}(\mathbb{R})),
\end{equation*}
with $T<T^{*}\leq \infty$. If the maximal time of existence $T^{*}$ is finite, then
\begin{equation*}
    \lim_{t\uparrow T^{*}}\|(r(t),q(t))\|_{H^{s}\times H^{s}}=+\infty.
\end{equation*}
\indent Moreover, for any $0<T'<T$, there exists a neighborhood $\mathcal{V}$ of $(r_{0},q_{0})\in H^{s}\times H^{s}$ such that the flow map data-to-solution 
\begin{align*}
    S: \mathcal{V}\longrightarrow C([0,T']:H^{s}(\mathbb{R})\times H^{s}(\mathbb{R})), \quad (\tilde{r}_{0},\tilde{q}_{0})\mapsto (\tilde{u},\tilde{q}),
\end{align*}
is continuous.
\end{theorem}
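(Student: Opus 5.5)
We prove Theorem \ref{existenceofsmoothsolutions} by parabolic regularization combined with the modified energies of Propositions \ref{propenergyestimate} and \ref{propenergyestimatedifference}, followed by a Bona--Smith argument. By the scaling \eqref{rescaledsolutions} and Remarks \ref{remarkenergyestimate1} and \ref{remarkenergyestimate2}, we may assume the data satisfy $\|(r_0,q_0)\|_{H^s\times H^s}\le \varepsilon$, with $\varepsilon$ small relative to $1/A_s$ and $1/\widetilde A_s$. All bounds below are uniform in the scaling parameter, and at the end we undo the scaling.

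\emph{Step 1 (regularized problem).} For $\mu>0$ we add $-\mu\partial_x^4 r$ to the first equation and $-i\mu\partial_x^4 q$ to the second. Duhamel's formula and the smoothing of $e^{-\mu t\partial_x^4}$, which gains up to three derivatives at the price of $(\mu t)^{-3/4}$, let us treat the second-order nonlinearities by a contraction in $C([0,T_\mu]:H^s\times H^s)$. The resulting solution $(r_\mu,q_\mu)$ is smooth for $t>0$ and persists as long as its $H^s$ norm stays bounded.

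We then redo the computation of Proposition \ref{propenergyestimate} for $E^s_m$ along $(r_\mu,q_\mu)$. The dissipative terms produce $-\mu\|D^s_x\partial_x^2 r_\mu\|^2_{L^2}-2\mu\|D^s_x\partial_x^2 q_\mu\|_{L^2}^2$ with the right sign. They also produce cross terms coming from the corrections $\int D^s(|q|^2)D^{s-2}r$ and $\tau_s\int D^{s-1}\mathcal H r\, r\, D^{s-1}\partial_x r$. After integrating by parts these cross terms are bounded by $\mu\|\cdot\|_{H^{s+2}}\|\cdot\|_{H^s}^{2}$ (cubic), so the smallness absorbs them into the dissipation.

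Since $s>5/2$, we have $\|\partial_x^2 r\|_{L^\infty}+\|\mathcal H\partial_x^2 r\|_{L^\infty}\lesssim \|r\|_{H^s}$. Hence \eqref{es2} becomes $\frac{d}{dt}E^s_m\lesssim (1+E_m^s)E^s_m$. A continuity argument using coercivity \eqref{es1} then gives a time $T=T(\|(r_0,q_0)\|_{H^s\times H^s})$, independent of $\mu$, and a uniform bound $\sup_{[0,T]}\|(r_\mu,q_\mu)\|_{H^s\times H^s}\le 2\|(r_0,q_0)\|_{H^s\times H^s}$.

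\emph{Step 2 (convergence and uniqueness).} We apply the $L^2$ difference estimate \eqref{esd3}, extended to the regularized equations, to two solutions $(r_\mu,q_\mu)$ and $(r_{\mu'},q_{\mu'})$. The viscosity difference adds a term $O(|\mu-\mu'|)$ times norms in $H^{4}\subset H^s$. This requires working at $s\ge4$ first, or putting the error in $H^{-2}$ using the uniform $H^s$ bound; we plan to do the latter via the $J_x^{-2}$ structure of $\widetilde E^0_m$. Gronwall then shows the family is Cauchy in $C([0,T]:L^2\times L^2)$. Interpolation with the uniform $H^s$ bound gives convergence in $C([0,T]:H^{s-}\times H^{s-})$, enough to pass to the limit in the equation. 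The limit lies in $L^\infty_TH^s\cap C_w$. Uniqueness follows directly from \eqref{esd3} with Gronwall, since $\|\partial_x^2r_i\|_{L^\infty}\lesssim\|r_i\|_{H^s}$.

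\emph{Step 3 (Bona--Smith).} This step gives strong continuity in $H^s$, continuous dependence, and the blow-up alternative, and it is the main obstacle. Set $r_0^\epsilon=\rho_\epsilon * r_0$ and $q_0^\epsilon=\rho_\epsilon*q_0$. The standard bounds are $\|(r^\epsilon_0,q_0^\epsilon)\|_{H^{s+l}}\lesssim \epsilon^{-l}$ and $\|(r_0^\epsilon-r_0,q_0^\epsilon-q_0)\|_{H^{s-l}}=o(\epsilon^{l})$. The persistence argument of Step 1 at level $s+l$, with the same $T$, gives $\|(r^\epsilon,q^\epsilon)\|_{H^{s+l}}\lesssim\epsilon^{-l}$. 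From \eqref{esd3}, $\|(r^\epsilon-r^{\epsilon'},q^\epsilon-q^{\epsilon'})\|_{L^2}=o(\epsilon^s)$ for $\epsilon'<\epsilon$.

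We then feed this into \eqref{esd2}. Every term of $f_s$ pairs derivatives of order up to $s+2$ on $r_i=r^{\epsilon'}$ with $L^2$ or $H^1$-type norms of the difference $r$. This is why we take $r_1,r_2$ in the order that puts the high derivatives on the smoother solution $r^\epsilon$; the asymmetric weights $\tau_{s,1},\tau_{s,2}$ have to be chosen accordingly. A typical term such as $\|D^s r\|_{L^2}\|r\|_{L^2}\|D^{s+1}\partial_x r^\epsilon\|_{L^\infty}$ is bounded by $o(\epsilon^{s})\,\epsilon^{-s}\|D^s r\|_{L^2}=o(1)\|D^sr\|_{L^2}$, using Sobolev embedding for the $L^\infty$ norm at order $s+5/2$. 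The $\|\partial_x r\|_{L^2}$ factors are handled by interpolation between $L^2$ and $H^s$.

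Gronwall then yields that $(r^\epsilon,q^\epsilon)$ is Cauchy in $C([0,T]:H^s\times H^s)$. Its limit is the solution, which is therefore in $C([0,T]:H^s\times H^s)$. The same argument, applied uniformly to nearby data in $\mathcal V$, gives continuity of the flow map on $[0,T']$.

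Finally, iterating the local result, whose time depends only on the $H^s$ norm, gives the maximal solution and the blow-up criterion $\lim_{t\uparrow T^*}\|(r,q)\|_{H^s\times H^s}=\infty$ when $T^*<\infty$.
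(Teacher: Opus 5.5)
Your route is the paper's: rescaling to small data, parabolic regularization, a regularization-uniform time from $E^s_m$, an $L^2$ Cauchy argument via \eqref{esd3}, then Bona--Smith via \eqref{esd2}. Using $\partial_x^4$ instead of the paper's $D_x^{5/2}$ is harmless, since the viscosity mismatch can be paired in $H^{-2}\times H^{2}$. The gap is in Step~3, where you yourself locate the difficulty. Ordering $r_1,r_2$ does not place every derivative beyond $s$ on the smoother solution. The $f_s$ of Proposition~\ref{propenergyestimatedifference} contains $\|D_x^{s+1}\partial_x r_1\|_{L^\infty_x}$ and $\|D_x^{s+1}r_1\|_{L^\infty_x}$. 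It also contains $\|D_x^{s+1}\partial_x r_2\|_{L^\infty_x}$, $\|D_x^{s+1}r_2\|_{L^\infty_x}$, $\|D_x^{s}\partial_x r_2\|_{L^\infty_x}$, and $\|D_x^s\partial_x u\|_{L^\infty_x}$ with $u=r_1+r_2$. The $r_2$-terms are forced by the splitting $\partial_x(r\mathcal{H}\partial_x r_1+\dots)+\partial_x(r_2\mathcal{H}\partial_x r+\dots)$. In $D_x^s\partial_x(r_2\mathcal{H}\partial_x r)$, high-frequency $r_2$ against low-frequency $\mathcal{H}\partial_x r$ costs $s+1$ derivatives on $r_2$; see $\widetilde{II}_{4,3}$ and the mirror terms of $\widetilde{VII}$. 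So whichever of $r^\epsilon,r^{\epsilon'}$ you call $r_2$, the rough one enters through terms such as $\|D_x^sr\|_{L^2}\|r\|_{L^2}\|D_x^{s+1}\partial_x r^{\epsilon'}\|_{L^\infty}\lesssim o(\epsilon^{s})(\epsilon')^{-5/2-}$. This is not small uniformly in $\epsilon'<\epsilon$. Adjusting $\tau_{s,1},\tau_{s,2}$ cannot fix it, because those weights are dictated by the cancellations.

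The missing step is to substitute $r_2=r_1-r$ and $q_2=q_1-q$ in the difference system, that is,
\[
A(r_1,r_1)-A(r_2,r_2)=A(r,r_1)+A(r_1,r)-A(r,r),\qquad A(f,g):=\partial_x\big(f\mathcal{H}\partial_x g+\mathcal{H}(f\partial_x g)\big),
\]
together with $\tfrac12\partial_x(ur)=\partial_x(r_1r)-\tfrac12\partial_x(r^2)$ and $\mathrm{Re}(\bar vq)=2\mathrm{Re}(\bar q_1q)-|q|^2$. You then redo the modified-energy computation and recompute the correction weights. The terms quadratic in the difference are handled as in Proposition~\ref{propenergyestimate}, at the cost of $\|\partial_x^2r\|_{L^\infty}+\|\mathcal{H}\partial_x^2r\|_{L^\infty}\lesssim\|r\|_{H^s}$, which is bounded. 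All derivatives beyond $s$ then land on $r_1=r^\epsilon$, and your $o(\epsilon^s)\epsilon^{-s}$ bookkeeping works. The paper's Remark~\ref{remarkenergyestimate2}(2) passes over this same point, so you are not departing from the paper, but as written your Step~3 does not close.

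A related point: persistence at level $s+l$ ``with the same $T$'' does not follow from your Step~1. Proposition~\ref{propenergyestimate} at that level assumes $\|(r,q)\|_{H^{s+l}}\le 1/A_{s+l}$, whereas $\|(r^\epsilon_0,q^\epsilon_0)\|_{H^{s+l}}\sim\epsilon^{-l}$. Rescaling to restore smallness shrinks the time to $0$, and $\frac{d}{dt}E\lesssim(1+E)E$ only gives a time of order $\epsilon^{2l}$. You need the tame version. The corrections in $E^{\sigma}_m$ are bounded by $\|r\|_{L^\infty}\|r\|_{H^\sigma}^2+\|q\|_{L^\infty}\|q\|_{H^\sigma}\|r\|_{H^\sigma}$. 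Moreover, \eqref{es2} can be proved linearly in $E^\sigma_m$ with coefficients depending only on $\|(r,q)\|_{H^s}$, so smallness is needed only at level $s$.
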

We observe that, due to the rescaling argument given in \eqref{rescaledsolutions}, it suffices to prove Theorem \ref{existenceofsmoothsolutions} in the case where the initial data has a sufficiently small norm, i.e., 
\begin{equation}\label{dadopequenosuave}
\|(r_0, q_0)\|_{H^s \times H^s} \leq \varepsilon \ll 1.
\end{equation}
In this setting, the existence and uniqueness of solutions follow by combining Propositions \ref{propenergyestimate} and \ref{propenergyestimatedifference}, along with their corresponding Remarks \ref{remarkenergyestimate1} and \ref{remarkenergyestimate2}, using a standard parabolic regularization argument (see, for instance, \cite{M-Paulsen-System} and \cite{Tanaka-ThirdOrderBenjaminOno}). In what follows, we will present only a sketch of the proof.
\newline
\indent In order to implement the regularization parabolic argument, for all $0<\delta<1$, we consider the following regularized initial value problem associated with \eqref{BO-NLS}
\begin{equation}\label{BO-NLS-regularized}
\left\{
\begin{array}{ll}
     & \partial_{t}r-a\partial_{x}^{3}r-b\mathcal{H}\partial_{x}^{2}r=cr\partial_{x}r-d\partial_{x}(r\mathcal{H}\partial_{x}r+\mathcal{H}(r\partial_{x}r))+\beta \partial_{x}(|q|^{2})\textcolor{red}{-\delta D_{x}^{\frac{5}{2}}r} , \\
     & i\partial_{t}q-\alpha \partial_{x}^{2}q=-\beta qr\textcolor{red}{-i\delta D_{x}^{\frac{5}{2}}q}, \\
     & r(x,0)=r_{0}(x), \ \ q(x,0)=q_{0}(x).
\end{array}
\right.
\end{equation}  
 where $ (x,t)\in \mathbb{R}\times \mathbb{R}_{+}$. Let $\{V_{\delta}(t)\}$ and $\{U_{\delta}(t)\}$ be the semi-groups associated, respectively, to the linear equations
 $$
 \partial_{t}r-a\partial_{x}^{3}r-b\mathcal{H}\partial_{x}^{2}r+\delta D_{x}^{\frac{5}{2}}r=0, \quad \textrm{and} \quad i\partial_{t}q-\alpha\partial_{x}^{2}q+\delta D_{x}^{\frac{5}{2}}q=0,
 $$
i.e., 
 \begin{equation}
     \mathcal{F}_{x}(V_{\delta}(t)f)(\xi)=e^{it(-b\xi|\xi|+a\xi^{3})-\delta t|\xi|^{\frac{5}{2}}}\mathcal{F}_{x}(f), \quad \textrm{and} \quad \mathcal{F}_{x}(U_{\delta}(t)f)(\xi)=e^{it\alpha\xi^{2}-\delta t|\xi|^{\frac{5}{2}}}\mathcal{F}_{x}(f), 
 \end{equation}
for all $\xi\in \mathbb{R}$ and $t\geq 0$. Thus, it is not difficult to see that $V_{\delta}(t)$ and $U_{\delta}(t)$ have the following smoothing effect (see \cite[Theorem 2.1]{IORIO-BENJAMIN-ONO})

\begin{lemma}\label{smoothingeffectgroup}
Let $0<\delta<1$ and $l\geq 0$. It holds that
\begin{align}
    \|V_{\delta}(t)f\|_{H^{l+2}}\leq C(l)\left[ 1+(\delta|t|)^{-\frac{4}{5}}\right]  \|f\|_{H^{l}} \quad \textrm{and} \quad \|U_{\delta}(t)\|_{H^{l+2}}\leq C(l)\left[ 1+(\delta|t|)^{-\frac{4}{5}}\right] \|f\|_{H^{l}},
\end{align}
for all $t>0$.
\end{lemma}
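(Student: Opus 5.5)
The plan is to work entirely on the Fourier side: both $V_{\delta}(t)$ and $U_{\delta}(t)$ are Fourier multipliers, so Plancherel's identity reduces each $H^{l}\to H^{l+2}$ bound to an $L^{\infty}$ bound on an explicit symbol. For $f\in H^{l}$ and $t>0$,
\begin{equation*}
\|V_{\delta}(t)f\|_{H^{l+2}}^{2}=\int_{\mathbb{R}}(1+\xi^{2})^{2}\,e^{-2\delta t|\xi|^{\frac{5}{2}}}\,(1+\xi^{2})^{l}|\widehat{f}(\xi)|^{2}\,d\xi,
\end{equation*}
since the dispersive factor $e^{it(-b\xi|\xi|+a\xi^{3})}$ has modulus one. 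Consequently
\begin{equation*}
\|V_{\delta}(t)f\|_{H^{l+2}}\leq \Big(\sup_{\xi\in\mathbb{R}}(1+\xi^{2})e^{-\delta t|\xi|^{\frac{5}{2}}}\Big)\|f\|_{H^{l}}.
\end{equation*}
The same computation applies verbatim to $U_{\delta}(t)$, whose oscillatory factor $e^{it\alpha\xi^{2}}$ also has modulus one. So the whole lemma comes down to a single scalar supremum bound.

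To bound that supremum, split $(1+\xi^{2})e^{-\delta t|\xi|^{5/2}}\leq 1+\xi^{2}e^{-\delta t|\xi|^{5/2}}$. For the second term, change variables by setting $y=\delta t|\xi|^{5/2}\geq 0$, so that $\xi^{2}=(y/(\delta t))^{4/5}$. This gives
\begin{equation*}
\xi^{2}e^{-\delta t|\xi|^{\frac{5}{2}}}=(\delta t)^{-\frac{4}{5}}\,y^{\frac{4}{5}}e^{-y}\leq (\delta t)^{-\frac{4}{5}}\sup_{y\geq0}y^{\frac45}e^{-y}=\Big(\tfrac{4}{5e}\Big)^{\frac45}(\delta t)^{-\frac45}.
\end{equation*}
Combining the two pieces yields the bound $C\big[1+(\delta t)^{-4/5}\big]$ with an absolute constant $C$, which is in particular uniform in $l$. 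This is consistent with the statement's $C(l)$.

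There is no genuine obstacle here. The only point needing care is the exponent bookkeeping: gaining two derivatives against dissipation of order $\frac52$ gives exactly $(\delta t)^{-2/(5/2)}=(\delta t)^{-4/5}$. One should also note that $(\delta|t|)^{-4/5}$ is integrable near $t=0$, which is what the subsequent parabolic-regularization fixed point argument relies on.
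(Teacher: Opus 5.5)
Your proof is correct, and it is the argument the paper has in mind. The paper gives no proof of its own and only points to Iorio's Theorem 2.1, whose proof is exactly this Plancherel reduction to $\sup_{\xi}(1+\xi^{2})e^{-\delta t|\xi|^{5/2}}$ followed by the scaling $y=\delta t|\xi|^{5/2}$; your computation also shows one may take $C=1$ independently of $l$ and $\delta$, and you correctly read the second estimate as $\|U_{\delta}(t)f\|_{H^{l+2}}$, which fixes the missing $f$ in the statement.
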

\noindent By using Lemma \ref{smoothingeffectgroup} and a standard argument based on the contraction principle, the following theorem can be obtained regarding the local well-posedness of the system \eqref{BO-NLS-regularized}
\begin{theorem}\label{thmparabolicregularization}
Let $s\geq 2$ and $0<\delta <1$. For any $(r_{0},q_{0})\in H^{s}(\mathbb{R})\times H^{s}(\mathbb{R})$ there exist $T_{\delta}\in (0,\infty]$ and a unique solution $(r_{\delta,}q_{\delta})\in C([0,T_{\delta}):H^{s}(\mathbb{R})\times H^{s}(\mathbb{R}))$ to the IVP \eqref{BO-NLS-regularized} such that $T_{\delta}<\infty$ implies that
\begin{equation}\label{explosionregularized}
  \lim_{t\rightarrow T_{\delta}} \|(r_{\delta}(t),q_{\delta}(t))\|_{H^{2}\times H^{2}}=\infty
\end{equation}
holds. Moreover, $(r_{\delta},q_{\delta})$ satisfies
\begin{equation}
(r_{\delta},q_{\delta})\in C((0,T_{\delta}):H^{\infty}(\mathbb{R})\times H^{\infty}(\mathbb{R})).
\end{equation}
\end{theorem}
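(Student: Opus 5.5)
We will prove Theorem \ref{thmparabolicregularization} by a fixed point argument on the Duhamel formulation
\begin{align*}
r(t)&=V_{\delta}(t)r_{0}+\int_{0}^{t}V_{\delta}(t-t')\mathcal{N}_{1}(r,q)(t')\,dt',\\
q(t)&=U_{\delta}(t)q_{0}+i\beta\int_{0}^{t}U_{\delta}(t-t')(qr)(t')\,dt',
\end{align*}
where $\mathcal{N}_{1}(r,q)=cr\partial_{x}r-d\partial_{x}(r\mathcal{H}\partial_{x}r+\mathcal{H}(r\partial_{x}r))+\beta\partial_{x}(|q|^{2})$. We work in the closed ball of radius $2\|(r_{0},q_{0})\|_{H^{s}\times H^{s}}$ in $X_{T}:=C([0,T]:H^{s}\times H^{s})$. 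The nonlinearity loses at most two derivatives, and this is exactly what Lemma \ref{smoothingeffectgroup} recovers, at the cost of the integrable singularity $(\delta(t-t'))^{-4/5}$. Since $s\geq 2$, $H^{s-1}$ is an algebra and embeds in $W^{1,\infty}$. Together with the boundedness of $\mathcal{H}$ on $H^{\sigma}$, this gives
\[
\|\mathcal{N}_{1}(r,q)\|_{H^{s-2}}\lesssim \|r\|_{H^{s}}^{2}+\|r\|_{H^{s}}+\|q\|_{H^{s}}^{2},\qquad \|qr\|_{H^{s-2}}\lesssim\|q\|_{H^{s}}\|r\|_{H^{s}}.
\]
The analogous bilinear difference estimates hold as well. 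Applying Lemma \ref{smoothingeffectgroup} with $l=s-2$ bounds the Duhamel terms by $C(T+\delta^{-4/5}T^{1/5})\,P(\|(r,q)\|_{X_{T}})$. Choosing $T=T(\delta,\|(r_{0},q_{0})\|_{H^{s}\times H^{s}})$ small makes the map a contraction. Continuity in time with values in $H^{s}$ follows from the strong continuity of the semigroups and dominated convergence. Uniqueness in $C([0,T]:H^{s}\times H^{s})$ follows from the same difference estimate combined with a Gronwall-type inequality with the weight $(t-t')^{-4/5}$.

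Next we extend to a maximal solution. The existence time depends only on the $H^{s}$ norm, so the $H^{s}$ norm must blow up at $T_{\delta}$ if $T_{\delta}<\infty$. To reduce the blow-up criterion to $H^{2}\times H^{2}$, we run the local theory at level $2$ and show persistence of regularity: the $H^{s}$ norm cannot blow up while the $H^{2}$ norm stays bounded. For this we rewrite the nonlinear estimate tamely. By the Kato–Ponce/Leibniz rules (Lemma \ref{fracleib} and the remark after it), we obtain
\[
\|\mathcal{N}_{1}\|_{H^{s-2}}\lesssim (1+\|r\|_{H^{2}}+\|q\|_{H^{2}})(\|r\|_{H^{s}}+\|q\|_{H^{s}}).
\]
In this bound the $H^{s}$ norms appear only linearly. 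The smoothing estimate then yields a linear Volterra inequality with kernel $1+(\delta(t-t'))^{-4/5}$ for $\|(r,q)(t)\|_{H^{s}\times H^{s}}$. A singular Gronwall lemma (Henry's lemma) bounds it on any interval where the $H^{2}$ norm is bounded. Hence the $H^{s}$ solution lives exactly as long as the $H^{2}$ one, which gives \eqref{explosionregularized}.

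Finally, we prove instantaneous smoothing. For $t_{0}\in(0,T_{\delta})$, the linear part satisfies $V_{\delta}(t)r_{0}\in H^{\infty}$ for $t>0$, because the symbol carries the factor $e^{-\delta t|\xi|^{5/2}}$. For the Duhamel part, we bootstrap. If $(r,q)\in C([\epsilon,T']:H^{\sigma})$, then rerunning the Volterra argument from time $\epsilon$ with $l=\sigma-2+\frac12$, still using the smoothing effect but with a gain of only half a derivative so that the kernel remains integrable, gives $(r,q)\in C([2\epsilon,T']:H^{\sigma+1/2})$. Iterating this yields $H^{\infty}$ on $(0,T_{\delta})$.

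The main obstacle is the tame estimate in the persistence step. The quadratic term $\partial_{x}(r\mathcal{H}\partial_{x}r)$ in $H^{s-2}$ must be bounded linearly in $\|r\|_{H^{s}}$ with coefficient depending only on the $H^{2}$ norm. This requires the commutator and Leibniz estimates, together with a careful treatment of $\mathcal{H}$ through $P_{\pm}$. If we only care about the criterion at level $s$, this step is routine; the genuine work is placing the derivatives correctly.
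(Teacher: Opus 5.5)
Your scheme is the ``standard argument'' the paper invokes without details: a contraction in $C([0,T]:H^{s}\times H^{s})$ using Lemma \ref{smoothingeffectgroup} with $l=s-2$, continuation to a maximal time, and persistence of regularity via a tame estimate plus the singular Gronwall lemma. Those parts are sound. Two small remarks. First, $H^{s-1}\hookrightarrow W^{1,\infty}$ is false for $2\le s\le\frac52$, but you only use the algebra property of $H^{s-1}$. Second, the tame bound is not really an obstacle. It follows from $\|fg\|_{H^{\sigma}}\lesssim\|f\|_{L^\infty}\|g\|_{H^\sigma}+\|f\|_{H^\sigma}\|g\|_{L^\infty}$ with $\sigma=s-1$ and $\|\partial_x r\|_{L^\infty}+\|\mathcal{H}\partial_x r\|_{L^\infty}\lesssim\|r\|_{H^2}$; no $P_{\pm}$ decomposition is needed.

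The genuine gap is in the instantaneous smoothing step. If you only know $(r,q)\in C([\epsilon,T']:H^{\sigma}\times H^{\sigma})$, then $\mathcal{N}_1(r,q)$ lies only in $H^{\sigma-2}$, because $\partial_x(r\mathcal{H}\partial_x r)$ costs two derivatives. So Lemma \ref{smoothingeffectgroup} with $l=\sigma-\frac32$ does not apply. Going from $H^{\sigma-2}$ to $H^{\sigma+\frac12}$ needs a gain of $\frac52$ derivatives, and $\sup_\xi|\xi|^{5/2}e^{-\delta\tau|\xi|^{5/2}}\sim(\delta\tau)^{-1}$ is not integrable. Half a derivative beyond two is exactly the borderline case, not a safe one.

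If instead ``rerunning the Volterra argument'' means the tame Volterra inequality for $y(t)=\|(r,q)(t)\|_{H^{\sigma+1/2}}$, that is only an a priori bound. It presupposes $y<\infty$ on $(\epsilon,T']$, which is what you are trying to prove. In your persistence step this issue did not arise, because there the $H^{s}$ norm was already known to be finite and continuous.

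The repair is simple. You have $\mathcal{N}_1(r,q)\in C([\epsilon,T']:H^{\sigma-2})$ and
$\|V_\delta(\tau)f\|_{H^{\sigma+\gamma}}\lesssim\big(1+(\delta\tau)^{-\frac{2(2+\gamma)}{5}}\big)\|f\|_{H^{\sigma-2}}$,
and this kernel is integrable in $\tau$ for every $\gamma<\frac12$. Hence the Duhamel term written from time $\epsilon$ lies directly in $C([\epsilon,T']:H^{\sigma+\gamma})$, say with $\gamma=\frac14$, with no a priori information on the higher norm. The same holds for $U_\delta$ and $qr$. The free part $V_\delta(t-\epsilon)r(\epsilon)$ lies in every $H^m$ for $t>\epsilon$. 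To reach $H^{m}$ on $[t_0,T']$ you then iterate finitely many times, starting from $\epsilon=2^{-k}t_0$. Alternatively, you can make your Volterra bound rigorous by running the contraction in the space normed by $\sup_t(t-\epsilon)^{1/5}\|u(t)\|_{H^{\sigma+1/2}}$ and then invoking uniqueness.
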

\begin{remark}
We observe that, with slight modifications, Propositions \ref{propenergyestimate} and \ref{propenergyestimatedifference} also apply to the regularized system \eqref{BO-NLS-regularized}.
\end{remark}
We are now ready to present the proof of Theorem \ref{existenceofsmoothsolutions}.

\begin{proof}[Proof of Theorem \ref{existenceofsmoothsolutions}]

Let $(r_{0},q_{0})\in H^{s}(\mathbb{R})\times H^{s}(\mathbb{R})$ be an initial data satisfying \eqref{dadopequenosuave}. Thus, for any $0<\delta <1$, consider
$$
(r_{\delta},q_{\delta})\in C([0, T_{\delta}):H^{s}\times H^{s}),
$$
the solution of the IVP \eqref{BO-NLS-regularized} obtained by Theorem \ref{thmparabolicregularization}. Then, define the following set
\begin{equation}
    X_{\delta}:=\big\{t\in [0,T_{\delta}) \ : \ \|(r_{\delta}(t),q_{\delta}(t))\|_{H^{s}\times H^{s}}>8\|(r_{0},q_{0})\|_{H^{s}}\big\}.
\end{equation}
First, assume that $X_{\delta}$ is not empty for some $\delta \in (0,1)$. In this case, it follows that there is $T_{\delta}^{*}=\inf X_{\delta}$. Note that, by continuity, $0<T_{\delta}^{*}\leq T_{\delta}.$ On the other hand, for any $0<t<T_{\delta }^{*}$, by definition of $T_{\delta}^{*}$, we have that
\begin{equation*}
    \|(r_{\delta}(t),q_{\delta}(t))\|_{H^{s}\times H^{s}}\leq 8\|(r_{0},q_{0})\|_{H^{s}\times H^{s}}.
\end{equation*}
Thus, considering $T\in (0,T_{\delta}^*]$ arbitrary, as 
\begin{equation}
    \sup_{[0,T]}\|(r_{\delta}(t),q_{\delta}(t))\|_{H^{s}\times H^{s}}\leq 8\|(r_{0},q_{0})\|_{H^{s}\times H^{s}}\leq 8\varepsilon,
\end{equation}
then, taking $\varepsilon$ small enough, we can apply Proposition \ref{propenergyestimate} and Sobolev's embedding to obtain that
\begin{align}
    \sup_{ [0,T]}\|(r_{\delta}(t),q_{\delta}(t))\|_{H^{s}_{x}\times H^{s}_{x}}
        &\leq 2\textrm{exp}\left\{B_{s}T \left(1+16\|(r_{0},q_{0})\|_{H^{s}\times H^{s}}\right)\right\}\|(r_{0},q_{0})\|_{H^{s}\times H^{s}},
\end{align}
Therefore, by defining $ T:=\min\left\{T_{\delta}^{*}, \frac{1}{B_{s}(1+16\|(r_{0},q_{0})\|_{H^{s}\times H^{s}})}\right\}$, it follows that
\begin{equation}\label{supsolutionHs111}
    \sup_{[0,T]}\|(r_{\delta}(t),q_{\delta}(t))\|_{H^{s}\times H^{s}}\leq 2e\|(r_{0},q_{0})\|_{H^{s}\times H^{s}}.
\end{equation}
Then, from this and the definition of $T_{\delta}^{*}$, we have $T=\frac{1}{B_{s}(1+16\|(r_{0},q_{0})\|_{H^{s}\times H^{s}})}$, which is independent of $\delta$. Next, suppose that $X_{\delta}$ is empty. By property \eqref{explosionregularized}, it follows that $T_{\delta}=\infty $. Furthermore, note that the following estimate holds for $(r_{\delta},q_{\delta})$
\begin{equation}\label{supsolutionHs}
    \sup_{[0,T]}\|(r_{\delta}(t),q_{\delta}(t))\|_{H^{s}\times H^{s}}\leq 8\|(r_{0},q_{0})\|_{H^{s}\times H^{s}}.
\end{equation}
In fact, by \eqref{supsolutionHs111}, it follows that estimate \eqref{supsolutionHs} holds true for all $0<\delta<1$.

Now, since we have already obtained a uniform existence time \( T \) for the solutions \( (r_{\delta}, q_{\delta}) \), we can apply estimate \eqref{esd3} to \( (u,v)=(u_{\delta,\delta'},v_{\delta,\delta'}) := (r_{\delta} - r_{\delta'}, q_{\delta} - q_{\delta'} )\) (after taking \( \varepsilon \) sufficiently small, if necessary), together with estimate \eqref{supsolutionHs}, to deduce, on the interval \( [0,T] \), that
\begin{align*}
    \frac{d}{dt}\tilde{E}_{m}^{0}(u,v)(t)&\leq C\tilde{E}_{m}^{0}(u,v)(t)+|\delta-\delta'|\|D_{x}^{\frac{5}{2}}r_{\delta}\|_{L^{2}_{x}} +|\delta-\delta'|\|D_{x}^{\frac{5}{2}}q_{\delta}\|_{L^{2}_{x}}\\
    &\leq C\tilde{E}_{m}^{0}(u,v)(t)+C_{1}(|\delta-\delta'|+\delta'),
\end{align*}
where the last two terms in the first inequality arise from the additional terms present in the regularized system \eqref{BO-NLS-regularized}. Thus, estimate \eqref{esd1} and Grownwall's inequality yields that
\begin{equation}
    \sup_{[0,T]}\|(u(t),v(t))\|_{L^{2}_{x}\times L^{2}_{x}}\leq 2C_{1}|\delta-\delta'|Te^{CT}.
\end{equation}
Therefore, the sequence \( (u_{\delta, \delta'}, v_{\delta, \delta'}) \) converges to \( (r, q) \in C([0,T]; L^2(\mathbb{R}) \times L^2(\mathbb{R})) \) as \( \delta, \delta' \to 0 \). Next, given \( l \in [0,s) \), this convergence, together with estimate \eqref{supsolutionHs}, and an interpolation inequality between \( L^2(\mathbb{R}) \) and \( H^{s}(\mathbb{R}) \), ensures that \( (r, q) \in C([0,T]; H^{l}(\mathbb{R}) \times H^{l}(\mathbb{R})) \). Furthermore, it is not difficult to verify that $(r,q)$ satisfies \eqref{BO-NLS} on $[0,T]$.
\newline
\indent At this stage, the persistence of regularity and the continuous dependence on initial conditions can be established by employing the estimate in \eqref{esd2} along with the Bona–Smith approximation method (see \cite{IORIO-BENJAMIN-ONO}, \cite{M-Paulsen-System}, \cite{Bona-Smith-Kdv}, and \cite{Tanaka-ThirdOrderBenjaminOno}).
\end{proof}

\section{A priori estimates in $H^{s}(\mathbb{R})\times H^{s'}(\mathbb{R})$}\label{aprioriestimates1,2}

Given that Theorem \ref{existenceofsmoothsolutions} already establishes the existence of smooth solutions to the system \eqref{BO-NLS}, the aim of this section is to derive \textit{a priori} estimates for these solutions in the Sobolev space setting \( H^{s} \times H^{s'} \), where \( s \geq 1 \) and \( s\leq s'\leq s+ 1 \). In what follows, we shall employ the linear estimates established in Subsection 2.3 of \cite{HOBOinH1-Didier-Molinet} (specifically, Lemmas 2.5 and 2.6) to do this. Then, we emphasize that the proofs of these estimates do not impose any restrictions on the parameter~$a$ in the equation considered in this work. Hence, they remain valid for all~$a \neq 0$. In the next section, we shall show that the estimates derived here are sufficient to establish both local and global well-posedness, as stated in Theorems \ref{maintheorem} and \ref{globalwellposedness}.


\subsection{Gauge tranformation} \label{subsectionGT}

We will use the gauge transformation introduced by Tao in \cite{GWP-BO-Tao} and also employed by Molinet and Didier in \cite{HOBOinH1-Didier-Molinet}, which represents a nonlinear transformation on the first equation of \eqref{BO-NLS} to weaken the high-low frequency interaction in the nonlinearity. First, we will define an anti-derivative $F=F[r]$ of $r$ (i.e. $\partial_{x}F=r$), where $r$ satisfies the first equation in \eqref{BO-NLS}. So, we define, in terms of this function $F$, the gauge function which will be used in this work. For this, consider a smooth function $\psi\in C^{\infty}_{0}(\mathbb{R})$ such that $\int_{\mathbb{R}}\psi\  dy=1$. Thus, define 
\begin{equation}
    F(x,t):=A\int_{\mathbb{R}}\psi(y)\int_{y}^{x}r(z,t) \ dz dy+G(t),  \quad x,t\in \mathbb{R}.
\end{equation}
where $A>0$ will be chosen later. Then, from the first equation in \eqref{BO-NLS}, if we define $G=G(t)$ by
\begin{equation*}  G(t):=A\int_{0}^{t}\int_{\mathbb{R}}\psi'(y)\partial_{y}r(y,t')+\psi(y)\left[b\mathcal{H}\partial_{y}r+\frac{c}{2}r^{2}-d(r\mathcal{H}\partial_{y}r+\mathcal{H}(r\partial_{y}r))+\beta|q|^{2}\right](y,t') dydt',
\end{equation*}
we get that $F$ will satisfy the following equations
\begin{align}\label{eqofF}
\begin{cases}
&\partial_{t}F-b\mathcal{H}\partial_{x}^{2}F-a\partial_{x}^{3}F=\displaystyle A^{-1}\frac{c}{2}(\partial_{x}F)^{2}-A^{-1}d(\partial_{x}F\mathcal{H}\partial_{x}^{2}F+\mathcal{H}(\partial_{x}F\partial_{x}^{2}F))+A\beta (|q|^{2}),\\
& \partial_{x}F=Ar.
\end{cases}
\end{align}
\begin{remark}
The function $G=G(t)$ is well-defined since the functions $r$ and $q$ belong to the space $H^{1}(\mathbb{R})$.
\end{remark}
\noindent Next, we introduce the new unknown (Gauge function)
\begin{equation}\label{gaugefunction}
    W:=P_{+hi}(e^{iF}) \quad \textrm{and} \quad w:=\partial_{x}W=iAP_{+hi}(e^{iF}r).
\end{equation}
Then, choosing $A=\frac{2d}{3a}$ (this choice allows us to cancel terms), using the equations in \eqref{eqofF}, and the identities
\begin{equation*}
\mathcal{H}=-i+2iP_{-} \quad \textrm{and} \quad P_{+}\mathcal{H}=-iP_{+}, 
\end{equation*}
we can deduce that the function $w$ satisfies the equation
\begin{align}\label{eqofw}
\partial_{t}w+ib\partial_{x}^{2}w-a\partial_{x}^{3}w =&\partial_{x}P_{+hi}( e^{iF}(r^{2}+r^{3}))+\partial_{x}P_{+hi}(WP_{-}(\partial_{x}r))+\partial_{x}P_{+hi}(P_{lo}(e^{iF})P_{-}\partial_{x}r)
    \nonumber \\
    &+\partial_{x}P_{+hi}(wP_{-}\partial_{x}r)
    +\partial_{x}P_{+hi}(P_{lo}(e^{iF}r)P_{-}\partial_{x}r)+\partial_{x}P_{+hi}(WP_{-}(r\partial_{x}r))\nonumber \\
    &+\partial_{x}P_{+hi}(P_{lo}(e^{iF})P_{-}(r\partial_{x}r))
    +\partial_{x}P_{+hi}(e^{iF}|q|^{2})\nonumber \\
    :=&N_{1}(e^{iF},v,W,w)+N_{2}(e^{iF},q).
\end{align}
where, for simplicity, we assume all complex constants that may arise in the nonlinearity are equal to 1.
\newline
\indent Moreover, we can recover the function $r$ in terms of the function $w$ as follows
\begin{equation}\label{rintermsofw}
    iAr=e^{-iF}\partial_{x}(e^{iF})=e^{-iF}w+e^{-iF}\partial_{x}P_{lo}(e^{iF})+e^{-iF}\partial_{x}P_{-hi}(e^{iF}).
\end{equation}
In particular, applying the operator $P_{+HI}$ in the above equation and using frequency localization, we have
\begin{equation}\label{rintermofw2}
    iAP_{+HI}r=P_{+HI}(e^{-iF}w)+P_{+HI}(P_{+hi}e^{-iF}\partial_{x}P_{lo}(e^{iF}))+P_{+HI}(P_{+HI}e^{-iF}\partial_{x}P_{-hi}(e^{iF})).
\end{equation}

We also observe that, if $(r,q)$ is a solution to the system in \eqref{BO-NLS}, then using the first equation in \eqref{eqofF}, we can deduce that the function $e^{iF}q$ satisfies the following differential equation of the Schrödinger type
\begin{align}\label{equationexpq}
    i\partial_{t}(e^{iF}q)-\alpha \partial_{x}^{2}(e^{iF}q)&=-Ae^{iF}q\left[b\mathcal{H}\partial_{x}r+a\partial_{x}^{2}r+\displaystyle \frac{c}{2}r^{2}-d(r\mathcal{H}\partial_{x}r+\mathcal{H}(r\partial_{x}r))+\beta (|q|^{2})\right] \nonumber \\
    &\quad -\alpha \partial_{x}^{2}(e^{iF})q-2\alpha \partial_{x}(e^{iF})\partial_{x}q-\beta e^{iF}rq.
\end{align}
\indent Then, we have the following \textit{a priori} estimates for $r$ and $q$ (in the case of $r$ in terms of $w$).

\begin{prop}\label{estimatesaprioriofrq}
Let $s\geq 1,\ s\leq s'\leq s+1,\ 0<T\leq 1,\ 0\leq \theta \leq 1,$ and $(r,q)$ be a solution to \eqref{BO-NLS} in the time interval $[0,T]$. Then, it holds that
\begin{equation}\label{estimatebourgainr}
\|r\|_{X^{s-2\theta,\theta}_{T}}\lesssim T^{\frac{1}{2}}\|r\|_{L^{\infty}_{T}H^{s}_{x}}+T^{\frac{1}{2}}\|r\|_{L^{\infty}_{T}H^{s}_{x}}^{2}+\|J^{s}_{x}r\|_{L^{4}_{T,x}}^{2}+T^{\frac{1}{2}}\|q\|_{L^{\infty}_{T}H^{s}_{x}}^{2},
\end{equation}
\begin{equation}\label{estimatebourgainq}
    \|q\|_{Y^{s,1}_{T}}\lesssim \|q_{0}\|_{H^{s}}+T^{\frac{1}{2}}(\|r\|_{L^{\infty}_{T}H^{1}_{x}}\|q\|_{L^{\infty}_{T}H^{s}_{x}}+\|r\|_{L^{\infty}_{T}H^{s}_{x}}\|q\|_{L^{\infty}_{T}H^{1}_{x}}).
\end{equation}
Moreover, if $1\leq s\leq \frac{3}{2}$ and $(p,q)=(\infty,2)$ or $(4,4)$, it holds that
\begin{align}\label{estimatebourgainexpq}
\|e^{iF}q\|_{Y^{s-\theta,\theta}_{T}}
&\lesssim T^{\frac{1}{2}}(1+\|r\|_{L^{\infty}_{T}H^{1}_{x}}^{3}+\|q\|_{L^{\infty}_{T}H^{1}_{x}}^{3})\big[\|q\|_{L^{\infty}_{T}H^{s}_{x}}+\|r\|_{L^{\infty}_{T}H^{s}_{x}}\big]\nonumber \\
&\quad + (1+\|r\|_{L^{\infty}_{T}H^{1}_{x}}^{2})\big[(T^{\frac{1}{2}}\|r\|_{L^{\infty}_{T}H^{1}_{x}}+\|J^{1}_{x}\partial_{x}r\|_{\widetilde{L^{\infty}_{x}L^{2}_{T}}})(\|q\|_{L^{2}_{x}L^{\infty}_{T}}+\|D_{x}^{s-1}q\|_{\widetilde{L^{2}_{x}L^{\infty}_{T}}})\nonumber \\
&\quad +\|q\|_{L^{2}_{x}L^{\infty}_{T}}\|J^{s}_{x}\partial_{x}r\|_{\widetilde{L^{\infty}_{x}L^{2}_{T}}}   \big],
\end{align}
\begin{equation}\label{estimateofrinhs}
    \|J^{s}_{x}r\|_{L^{p}_{T}L^{q}_{x}}\lesssim \|r_{0}\|_{H^{1}} +\|q\|_{L^{\infty}_{T}H^{1}_{x}}^{2}+(1+\|r\|_{L^{\infty}_{T}H^{1}_{x}}^{2})(\|w\|_{X^{s,\frac{1}{2},1}_{T}}+\|r\|_{L^{\infty}_{T}H^{1}_{x}}^{2}),
\end{equation}
\begin{equation}\label{estimateofqHs'}
    \|J^{s'}_{x}q\|_{L^{\infty}_{T}L^{2}_{x}}\lesssim \|q_{0}\|_{H^{s'}}+\|r\|_{L^{\infty}_{T}H^{1}_{x}}\|q\|_{L^{\infty}_{T}H^{s'}_{x}}+\|q\|_{L^{2}_{x}L^{\infty}_{T}}\|J^{s}_{x}\partial_{x}r\|_{\widetilde{L^{\infty}_{x}L^{2}_{T}}},
\end{equation}
\begin{equation}\label{estimatel2linfq}
    \|D_{x}^{s-1}q\|_{L^{2}_{x}L^{\infty}_{T}}\lesssim \|q_{0}\|_{H^{s'}}+\|q\|_{L^{\infty}_{T}H^{1}_{x}}\|r\|_{L^{\infty}_{T}H^{s}_{x}}+\|q\|_{L^{\infty}_{T}H^{s'}_{x}}\|r\|_{L^{\infty}_{T}H^{1}_{x}}, 
\end{equation}
\begin{equation}\label{estimatel2linfr}
    \|r\|_{L^{2}_{x}L^{\infty}_{T}}\lesssim \|r_{0}\|_{H^{1}}+\|r\|_{L^{\infty}_{T}H^{1}_{x}}^{2}+\|q\|_{L^{\infty}_{T}H^{1}_{x}}^{2}+(1+\|r\|_{L^{\infty}_{T}H^{1}_{x}})\|w\|_{X^{1,\frac{1}{2},1}_{T}}+\|r\|_{L^{\infty}_{T}H^{1}_{x}}\|r\|_{L^{2}_{x}L^{\infty}_{T}},
\end{equation}
and
\begin{align}\label{estimatelinfl2r}
    \|J^{s}_{x}\partial_{x}r\|_{\widetilde{L^{\infty}_{x}L^{2}_{T}}} & \lesssim \|r_{0}\|_{H^{1}}+\|q\|_{L^{\infty}_{T}H^{1}_{x}}^{2}+\|w\|_{X^{s,\frac{1}{2},1}_{T}}+\|r\|_{L^{\infty}_{T}H^{1}_{x}}(\|J^{s}_{x}\partial_{x}r\|_{\widetilde{L^{\infty}_{x}L^{2}_{T}}}+\|r\|_{L^{\infty}_{T}H^{1}_{x}})\nonumber\\
    &+\|w\|_{X^{1,\frac{1}{2},1}_{T}}\|J^{s}_{x}\partial_{x}r\|_{\widetilde{L^{\infty}_{x}L^{2}_{T}}}.
\end{align}
\end{prop}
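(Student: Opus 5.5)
The plan is to prove the estimates one at a time, since they are largely independent, using the Duhamel formulation in Bourgain spaces. For the Bourgain estimates, the Schrödinger equation (for \eqref{estimatebourgainq}) and the transformed equation \eqref{equationexpq} (for \eqref{estimatebourgainexpq}) are handled through the linear estimates \eqref{linearestimateNLS}–\eqref{nonlinearestimateNLS}, while the smoothing/gauge information for $r$ comes from the estimates on $w$ in Lemmas 2.5–2.6 of \cite{HOBOinH1-Didier-Molinet}.

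\textbf{Estimate \eqref{estimatebourgainr}.} I would follow Molinet–Pilod and interpolate between $\theta=0$ and $\theta=1$. The case $\theta=0$ is $\|r\|_{L^2_TH^s}\le T^{1/2}\|r\|_{L^\infty_TH^s}$. For $\theta=1$ I would use the equation: $\|r\|_{X^{s-2,1}_T}\lesssim \|r\|_{L^2_TH^{s-2}}+\|\partial_tr-a\partial_x^3r-b\mathcal H\partial_x^2r\|_{L^2_TH^{s-2}}$. The nonlinear terms are then bounded as follows:
\begin{itemize}
\item $\partial_x(r\mathcal H\partial_xr)$ in $H^{s-2}$ is controlled by $\|J^s_xr\|_{L^4_{T,x}}^2$ via the fractional Leibniz rule (Lemma \ref{fracleib});
\item $r\partial_xr$ is controlled by $T^{1/2}\|r\|_{L^\infty_TH^s}^2$;
\item the new term $\partial_x(|q|^2)$ in $H^{s-2}$ is controlled by $T^{1/2}\|q\|^2_{L^\infty_TH^s}$, because $H^{s-1}$ is an algebra (or by Sobolev when $s=1$).
\end{itemize}
Interpolating the $\theta=0$ and $\theta=1$ bounds gives the statement for $0\le\theta\le1$.

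\textbf{Estimate \eqref{estimatebourgainq}.} I take the $Y^{s,1}$ norm directly from the equation. The Duhamel term is bounded by $\|rq\|_{L^2_TH^s}\le T^{1/2}\|rq\|_{L^\infty_TH^s}$, and a Leibniz estimate with $H^1\hookrightarrow L^\infty$ gives the bilinear form stated.

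\textbf{Estimate \eqref{estimatebourgainexpq}.} This is the heaviest of the Bourgain estimates, and I would again interpolate between $\theta=0$ and $\theta=1$, estimating each forcing term of \eqref{equationexpq} in $L^2_TH^{s-1}$.
\begin{itemize}
\item Terms with at most one derivative on $r$ are absorbed by Lemma \ref{lemmaestimateexponencialinHs}, giving the polynomial factor $(1+\|r\|^2_{H^1})$ times $T^{1/2}$.
\item The dangerous terms are $e^{iF}q\,\partial_x^2r$, $e^{iF}q\,\mathcal H\partial_xr$ and $\partial_x(e^{iF})\partial_xq$. The first two need the local smoothing norm $\|J^s_x\partial_xr\|_{\widetilde{L^\infty_xL^2_T}}$ paired with the maximal norms $\|q\|_{L^2_xL^\infty_T}$ and $\|D^{s-1}_xq\|_{\widetilde{L^2_xL^\infty_T}}$ through Hölder in $L^2_xL^\infty_T\times L^\infty_xL^2_T$. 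For the last one I would trade a derivative onto $r$ by writing $\partial_x(e^{iF}\partial_x q)$ minus the remaining terms.
\end{itemize}

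\textbf{Estimates \eqref{estimateofrinhs}, \eqref{estimatel2linfr} and \eqref{estimatelinfl2r}.} These all go through the inversion formula \eqref{rintermofw2}.
\begin{itemize}
\item Low frequencies and $P_{-}$ are handled by reality of $r$ together with the equation for $P_{LO}r$; the $\|q\|^2_{H^1}$ term enters from the Duhamel integral of $\partial_x|q|^2$ at low frequency.
\item The main piece $P_{+HI}(e^{-iF}w)$ is bounded using Lemma \ref{lemmaestimateexponencialinHs} together with $X^{s,\frac12,1}\hookrightarrow L^4_TW^{s,4}$ (Strichartz) and $X^{s,\frac12,1}\hookrightarrow L^\infty_TH^s$ from \eqref{bourgaindiadictoHs}.
\item The remaining pieces are quadratic in $e^{iF}$ with one factor at low frequency and are bounded by $\|r\|^2_{H^1}$.
\end{itemize}
For the maximal-function and smoothing norms in \eqref{estimatel2linfr} and \eqref{estimatelinfl2r}, I would use the corresponding linear estimates for $V(t)$ from Lemmas 2.5–2.6 of \cite{HOBOinH1-Didier-Molinet}, transferred to $X^{s,\frac12,1}$, plus Molinet's Lemma \ref{LemmaFractionalDerivativeMolinet} for the high–low products. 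This produces the self-referential terms $\|r\|_{H^1}\|J^s_x\partial_xr\|$ and $\|w\|_{X^{1,\frac12,1}}\|J^s_x\partial_x r\|$.

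\textbf{Estimates \eqref{estimateofqHs'} and \eqref{estimatel2linfq}.} For \eqref{estimateofqHs'} I would use an energy estimate for $J^{s'}_xq$. The product $J^{s'}(rq)$ is split by a Kato–Ponce commutator estimate: the piece with derivatives on $q$ gives $\|r\|_{H^1}\|q\|_{H^{s'}}$, and the piece where all $s'\le s+1$ derivatives fall on $r$ is paired in $L^2_xL^\infty_T\times L^\infty_xL^2_T$, yielding $\|q\|_{L^2_xL^\infty_T}\|J^s_x\partial_xr\|$ after integrating in time. For \eqref{estimatel2linfq} I would use Sjölin's maximal estimate, Lemma \ref{maximalestimateNLS}, with $H^{1/2+}$ regularity costing at most $s'-s+\frac12\le\frac32$ derivatives, on the Duhamel formula, placing $rq$ in $L^1_TH^{s-\frac12+}$.

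\textbf{Main obstacle.} I expect \eqref{estimatebourgainexpq} and \eqref{estimatelinfl2r} to be the hardest step: one must gain exactly one derivative on $r$ through local smoothing without losing regularity on $q$, and the constraint $s'\le s+1$ is precisely what should make the required smoothing balance.
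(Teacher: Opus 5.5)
Apart from one step, your plan follows the paper: a single extension with interpolation between the $\theta=0$ and $\theta=1$ endpoints for \eqref{estimatebourgainr} and \eqref{estimatebourgainexpq}; Duhamel with \eqref{linearestimateNLS}--\eqref{nonlinearestimateNLS} for \eqref{estimatebourgainq}; the inversion formula \eqref{rintermofw2} plus Propositions 2.7 and 3.2 of \cite{HOBOinH1-Didier-Molinet} for the norms of $r$; and Sjölin's estimate on dyadic blocks for \eqref{estimatel2linfq}.

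The step that fails as written is the Kato--Ponce split in \eqref{estimateofqHs'}. A commutator estimate such as Lemma \ref{commestim} is a fixed-time $L^p_x$ inequality. So the piece with all derivatives on $r$ comes out as $\|J^{s'}_xr(t)\|_{L^{p_1}_x}\|q(t)\|_{L^{p_2}_x}$, and this is not controlled by the right-hand side of \eqref{estimateofqHs'}; for $s'>s$ it is not even controlled by $\|r(t)\|_{H^s}$. The extra derivative on $r$ only exists after averaging in $L^\infty_xL^2_T$. Once you have taken fixed-time norms, you can no longer pair in $L^2_xL^\infty_T\times L^\infty_xL^2_T$. Nor is there a mixed-norm commutator estimate available with these endpoint exponents and the dyadic norm.

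The paper does the splitting block by block, via \eqref{frequencylocproduct}:
\begin{itemize}
\item When $q$ carries the top frequency $2^j$, the weight $2^{ls'}$ goes on $q$ and $r$ is taken in $L^\infty_{T,x}$. This gives $\|r\|_{L^\infty_TH^1_x}\|q\|_{L^\infty_TH^{s'}_x}$.
\item When $r$ carries the top frequency, one applies Hölder in $L^2_xL^\infty_T\times L^\infty_xL^2_T$ on each block and uses $2^{ls'}\|P_{2^j}r\|_{L^\infty_xL^2_T}\lesssim 2^{(l-j)s'}\|P_{2^j}J^s_x\partial_xr\|_{L^\infty_xL^2_T}$. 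This is exactly where $s'\le s+1$ enters.
\item The resulting discrete convolution is then summed in $\ell^2$ by Young's inequality, which is why the dyadic norm $\widetilde{L^{\infty}_{x}L^{2}_{T}}$ appears.
\end{itemize}
The same blockwise argument is what makes your Hölder pairing for $q\,\partial_x^2r$ in \eqref{estimatebourgainexpq} rigorous.

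Smaller corrections:
\begin{itemize}
\item The third term of \eqref{rintermofw2}, $P_{+HI}(P_{+HI}e^{-iF}\,\partial_xP_{-hi}e^{iF})$, has no low-frequency factor: the negative-frequency factor can be comparable in size to the positive one. It is precisely the term handled by Lemma \ref{LemmaFractionalDerivativeMolinet}, as in Proposition 2.7 of \cite{HOBOinH1-Didier-Molinet}; that lemma is not a tool for high--low products.
\item In \eqref{equationexpq}, the terms $\partial_x(e^{iF})\partial_xq=iAe^{iF}r\,\partial_xq$ and $e^{iF}q\,\mathcal H\partial_xr$ are harmless in $L^2_TH^{s-1}_x$. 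The paper bounds them directly by $T^{\frac12}$ times products of $H^1$ and $H^s$ norms, so no derivative needs to be traded; only $q\,\partial_x^2r$ needs local smoothing.
\item Interpolating in the time-restricted spaces requires one extension that is good at both endpoints. An example is the paper's $\tilde r=\eta_T(t)V(t)v(t)$ with $v=V(-t)r$ frozen outside $[0,T]$, and its analogue with $U$ for $e^{iF}q$.
\item $H^{s-1}$ is not an algebra for $1<s\le\frac32$, but $\||q|^2\|_{H^{s-1}}\lesssim\|q\|_{L^\infty}\|q\|_{H^{s-1}}$ is enough for \eqref{estimatebourgainr}.
\end{itemize}
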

\begin{proof}
We begin with the estimate \eqref{estimatebourgainr}. To this end, we will construct a suitable extension $\tilde{r}$ de $r$ following the same procedure used in Proposition 3.2 in \cite{BOinL2-Didier}. Let $v$ be a function defined by $v(t):=V(-t)r(t)$ for all $t\in [0,T]$ and extend this function on $[-2T,2T]$ by setting $\partial_{t}v=0$ on $[-2T,2T]\setminus [0,T]$. And so, define $\tilde{r}$ by 
\begin{equation*}
    \tilde{r}(x,t)=\eta_{T}(t)V(t)v(t), \quad  x,t\in \mathbb{R},
\end{equation*}
where we define $\eta_{T}(t)=\eta\left(\frac{t}{T}\right)$. First, note that $\tilde{r}_{|[0,T]}=r.$ Furthermore, from the properties of Bourgain spaces and the previous construction, we obtain that
\begin{align}\label{eqbourgaintheta1}
    \|\tilde{r}\|_{X^{s-2,1}}
    \lesssim T^{\frac{1}{2}}\|v\|_{L^{\infty}_{T}H^{s-2}_{x}}+\|\partial_{t}v\|_{L^{2}_{T}H^{s-2}_{x}} \quad \textrm{and } \quad 
    \|\tilde{r}\|_{X^{s,0}}\lesssim T^{\frac{1}{2}}\|v\|_{L^{\infty}_{T}H^{s}_{x}}.
\end{align}
On the other hand, by interpolation in Bourgain spaces, we have
$
    \|\tilde{r}\|_{X^{1-2\theta,\theta}}\lesssim \|\tilde{r}\|_{X^{s-2,1}}+\|\tilde{r}\|_{X^{s,0}}.
$
Therefore, \eqref{estimatebourgainr} follows easily from \eqref{eqbourgaintheta1}, the fractional Leibniz rule and the Sobolev embedding.

Regarding the estimate \eqref{estimatebourgainq}, consider $\tilde{q}$ an extension of $q$ supported in $[-2T,2T]$ such that $\tilde{q}_{|[0,T]}=q$ and $\partial_{t}\tilde{q}=0$ in $[-2T,2T]\setminus [0,T]$. Thus, using the Duhamel formula satisfied by $q$ and the estimates \eqref{linearestimateNLS} and \eqref{nonlinearestimateNLS}, along with Lemma \ref{fracleib}, we can obtain
\begin{align}\label{estimateqbourgain1}
\|q\|_{Y^{s,1}_{T}}\lesssim \|\eta(t)\tilde{q}\|_{Y^{s,1}}&\lesssim \|q_{0}\|_{H^{s}}+\|\tilde{q}r\|_{Y^{s,0}}\nonumber \\
&=\|q_{0}\|_{H^{s}}+\|\tilde{q}r\|_{L^{2}_{t}H^{s}_{x}}\nonumber\\
&\lesssim \|q_{0}\|_{H^{s}_{x}}+T^{\frac{1}{2}}(\|q\|_{L^{\infty}_{T}H^{1}_{x}}\|r\|_{{L^{\infty}_{T}H^{s}_{x}}}+\|q\|_{L^{\infty}_{T}H^{s}_{x}}\|r\|_{{L^{\infty}_{T}H^{1}_{x}}}),
\end{align}
Therefore, from \eqref{estimateqbourgain1}, the estimate \eqref{estimatebourgainq} follows.

Now, we will apply the same arguments used in the proof of \eqref{estimatebourgainr} to prove \eqref{estimatebourgainexpq}. Then, let $u$ be an extension to $e^{iF}q$ on the interval $[0,T]$ constructed exactly as done for $r$ before, but using the group $\{U(t)\}$. Thus, since $e^{iF}q$ satisfies the equation \eqref{equationexpq}, using Lemma \ref{lemmaestimateexponencialinHs}, together with its version for the range $0\leq s\leq \frac{1}{2}$ (see Lemma 3.3 in \cite{ILWinL2-Chapouto}), it follows that (see \eqref{eqbourgaintheta1})
\begin{equation}\label{estimatebourgainexpq1}
    \|u\|_{Y^{s,0}}\lesssim T^{\frac{1}{2}}\|e^{iF}q\|_{L^{\infty}_{T}H^{s}_{x}}\lesssim T^{\frac{1}{2}}(1+\|r\|_{L^{\infty}_{T}H^{1}_{x}}^{2})\|q\|_{L^{\infty}_{T}H^{s}_{x}},
\end{equation}
and,
\begin{align}
&\|u\|_{Y^{s-1,1}}-T^{\frac{1}{2}}(1+\|r\|_{L^{2}_{T}H^{1}_{x}}^{2})\|q\|_{L^{\infty}_{T}H^{s}_{x}}\nonumber \\
&\lesssim \|Ae^{iF}q[b\mathcal{H}\partial_{x}r+a\partial_{x}^{2}r+\displaystyle \frac{c}{2}r^{2}-d(r\mathcal{H}\partial_{x}r+\mathcal{H}(r\partial_{x}r))+\beta (|q|^{2})]\|_{L^{2}_{T}H^{s-1}_{x}}\nonumber \\
&\quad + \|\alpha \partial_{x}^{2}(e^{iF})q+2\alpha \partial_{x}(e^{iF})\partial_{x}q +\beta e^{iF}rq\|_{L^{2}_{T}H^{s-1}_{x}} \nonumber \\
&\lesssim (1+\|r\|_{L^{\infty}_{T}L^{2}_{x}})\big(\|q\mathcal{H}\partial_{x}r\|_{L^{2}_{T}H^{s-1}_{x}}+\|q\partial_{x}^{2}r\|_{L^{2}_{T}H^{s-1}_{x}}+\|qr^{2}\|_{L^{2}_{T}H^{s-1}_{x}}+\|q(r\mathcal{H}\partial_{x}r+r\partial_{x}r)\|_{L^{2}_{T}H^{s-1}_{x}} \nonumber \\
& \quad + \|q|q|^{2}\|_{L^{2}_{T}H^{s-1}_{x}}\big)+\|\partial_{x}(e^{iF}r)q\|_{L^{2}_{T}H^{s-1}_{x}}+\|(e^{iF}r)\partial_{x}q\|_{L^{2}_{T}H^{s-1}_{x}}+\| e^{iF}rq\|_{L^{2}_{T}H^{s-1}_{x}}.
\end{align}
On the one hand, using that $H^{s}$ is a Banach algebra and the Lemma \ref{lemmaestimateexponencialinHs}, we have
\begin{align}
    \|qr^{2}\|_{L^{2}_{T}H^{s-1}_{x}}+\|q|q|^{2}\|_{L^{2}_{T}H^{s-1}_{x}}&\lesssim T^{\frac{1}{2}}(\|q\|_{L^{\infty}_{T}H^{1}_{x}}^{2}+\|r\|_{L^{\infty}_{T}H^{1}_{x}}^{2})(\|q\|_{L^{\infty}_{T}H^{s}_{x}}+\|r\|_{L^{\infty}_{T}H^{s}_{x}});\\
    \|e^{iF}rq\|_{L^{2}_{T}H^{s-1}_{x}}&\lesssim T^{\frac{1}{2}}(1+\|r\|_{L^{\infty}_{T}H^{1}_{x}}^{2})(\|q\|_{L^{\infty}_{T}H^{1}_{x}}\|r\|_{L^{\infty}_{T}H^{s}_{x}}+\|q\|_{L^{\infty}_{T}H^{s}_{x}}\|r\|_{L^{\infty}_{T}H^{1}_{x}}). 
\end{align}
On the other hand, as $s-1\geq 0$ an application of Hölder's inequality, Sobolev's embedding, and Lemma \ref{fracleib} (or the variant version given by Theorem 1 in \cite{Grafakos03062014}) yields that 
\begin{align}
    \|q\mathcal{H}\partial_{x}r\|_{L^{2}_{T}H^{s-1}_{x}}&\lesssim T^{\frac{1}{2}}(\|J^{s-1}_{x}q\|_{L^{\infty}_{T,x}}\|\mathcal{H}\partial_{x}r\|_{L^{\infty}_{T}L^{2}_{x}}+\|q\|_{L^{\infty}_{T,x}}\|J^{s-1}_{x}\mathcal{H}\partial_{x}r\|_{L^{\infty}_{T}L^{2}_{x}})\nonumber\\
    &\lesssim T^{\frac{1}{2}}(\|q\|_{L^{\infty}_{T}H^{s}_{x}}\|r\|_{L^{\infty}_{T}H^{1}_{x}}+\|q\|_{L^{\infty}_{T}H^{1}_{x}}\|r\|_{L^{\infty}_{T}H^{s}_{x}}).
\end{align}
By the same arguments, along with Lemma \ref{lemmaestimateexponencialinHs}, we have that
\begin{align}
\|q(r\mathcal{H}\partial_{x}r+\mathcal{H}(r\partial_{x}r))\|_{L^{2}_{T}H^{s-1}_{x}} &\lesssim T^{\frac{1}{2}}(\|q\|_{L^{\infty}_{T}H^{1}_{x}}^{2}+\|r\|_{L^{\infty}_{T}H^{1}_{x}}^{2})(\|q\|_{L^{\infty}_{T}H^{s}_{x}}+\|r\|_{L^{\infty}_{T}H^{s}_{x}});\\
\|\partial_{x}(e^{iF}r)q\|_{L^{2}_{T}H^{s-1}_{x}}+\|(e^{iF}r)\partial_{x}q\|_{L^{2}_{T}H^{s-1}_{x}}&\lesssim T^{\frac{1}{2}}(1+\|r\|_{L^{\infty}_{T}H^{1}_{x}}^{2})(\|r\|_{L^{\infty}_{T}H^{1}_{x}}\|q\|_{L^{\infty}_{T}H^{s}}+\|r\|_{L^{\infty}_{T}H^{s}_{x}}\|q\|_{L^{\infty}_{T}H^{1}_{x}}).
\end{align}
Therefore, it suffices to estimate the more complicated term $\|q\partial_{x}^{2}r\|_{L^{2}_{T}H^{s-1}_{x}}.$ But, from Bernstein's inequality, note that
\begin{align}\label{estimatebourgainexpq2}
    \|q\partial_{x}^{2}r\|_{L^{2}_{T}H^{s-1}_{x}}&\lesssim \|P_{LO}(q\partial_{x}^{2}r)\|_{L^{2}_{T}H^{s-1}_{x}}+\|P_{HI}(q\partial_{x}^{2}r)\|_{L^{2}_{T}H^{s-1}_{x}}\nonumber\\
    &\lesssim \|q\|_{L^{2}_{x}L^{\infty}_{T}}\|\partial_{x}^{2}r\|_{L^{\infty}_{x}L^{2}_{T}} +\|P_{HI}(q\partial_{x}^{2}r)\|_{L^{2}_{T}H^{s-1}_{x}}\nonumber \\
    &\lesssim \|q\|_{L^{2}_{x}L^{\infty}_{T}}(T^{\frac{1}{2}}\|r\|_{L^{\infty}_{T}H^{1}_{x}}+\|J^{1}_{x}\partial_{x}r\|_{\widetilde{L^{\infty}_{x}L^{2}_{T}}})+\|P_{HI}(q\partial_{x}^{2}r)\|_{L^{2}_{T}H^{s-1}_{x}}.
\end{align}
On the other hand, by using the frequency localization due to the projections, we can rewrite for any $l\geq 4$
\begin{equation}\label{frequencylocproduct}
    P_{2^{l}}(fg)=P_{2^{l}}\left(\sum_{j\geq l-3}P_{2^{j}}f\ P_{\leq 2^{j}}g\right)+P_{2^{l}}\left(\sum_{j\geq l-3}P_{\leq 2^{j-1}}f\ P_{2^{j}}g\right).
\end{equation}
Thus, using the Hölder and Bernstein inequalities, we have for all $l\geq 4$
\begin{align*}
&\|P_{2^{l}}(q\partial_{x}^{2}r)\|_{L^{2}_{T}H^{s-1}_{x}}\nonumber \\
&\lesssim 2^{l(s-1)}\sum_{j\geq l-3}\big \|P_{2^{j}}\partial_{x}^{2}r\|_{L^{\infty}_{x}L^{2}_{T}}\|P_{\leq 2^{j}}q\|_{L^{2}_{x}L^{\infty}_{T}}+2^{l(s-1)}\sum_{j\geq l-3}\big \|P_{\leq 2^{j-1}}\partial_{x}^{2}r\|_{L^{\infty}_{x}L^{2}_{T}}\|P_{2^{j}}q\|_{L^{2}_{x}L^{\infty}_{T}} \nonumber\\
&\lesssim \|q\|_{L^{2}_{x}L^{\infty}_{T}}\sum_{j\geq l-3} 2^{(l-j)(s-1)} \|P_{2^{j}}J_{x}^{s}\partial_{x}r\|_{L^{\infty}_{x}L^{2}_{T}}+\|\partial_{x}^{2}r\|_{L^{\infty}_{x}L^{2}_{T}}\sum_{j\geq l-3}2^{(l-j)(s-1)} \|D_{x}^{s-1}P_{2^{j}}q\|_{L^{2}_{x}L^{\infty}_{T}}.
\end{align*}
Now, observe that the expressions in the last inequality can be interpreted as discrete convolutions between the sequence $(2^{j})_{j \leq -3} \in \ell^{1}(\mathbb{Z})$ and the sequences of norms $$\left( \|P_{2^{j}} J_{x}^{s} \partial_{x} r \|_{L^{\infty}_{x} L^{2}_{T}} \right)_{j \geq 1}\quad \textrm{ and} \quad \left( \| D_{x}^{s-1} P_{2^{j}} q \|_{L^{2}_{x} L^{\infty}_{T}} \right)_{j \geq 1}.$$ Therefore, Young's inequality and Plancherel's identity yield
\begin{equation}\label{estimatebourgainexpq3}
    \|P_{HI}(q\partial_{x}^{2}r)\|_{L^{2}_{T}H^{s-1}_{x}}\lesssim \|q\|_{L^{2}_{x}L^{\infty}_{T}}\|J_{x}^{s}\partial_{x}r\|_{\widetilde{L^{\infty}_{x}L^{2}_{T}}}+(T^{\frac{1}{2}}\|r\|_{L^{\infty}_{T}H^{1}_{x}}+\|J^{1}_{x}\partial_{x}r\|_{\widetilde{L^{\infty}_{x}L^{2}_{T}}})\|D_{x}^{s-1}q\|_{\widetilde{L^{2}_{x}L^{\infty}_{T}}}.
\end{equation}
Thus, gathering the estimates \eqref{estimatebourgainexpq1}-\eqref{estimatebourgainexpq2} and \eqref{estimatebourgainexpq3} and using interpolation, follow the estimate \eqref{estimatebourgainexpq}.

Next, we turn to the proof of \eqref{estimateofrinhs}. First, since $r$ is a real valued function, then $P_{-}r=\overline{P_{+}r}$, and therefore, for $(p,q)=(2,\infty)$ or $(4,4)$, we have
\begin{equation}\label{eqlpqr}
    \|J_{x}^{s}r\|_{L^{p}_{T}L^{q}_{x}}\lesssim \|P_{LO}r\|_{L^{p}_{T}L^{q}_{x}}+\|D_{x}^{s}P_{+HI}r\|_{L^{p}_{T}L^{q}_{x}}.
\end{equation}
On the one hand, since $P_{+HI}r$ satisfies the identity \eqref{rintermofw2}, then by Proposition $2.7$ in \cite{HOBOinH1-Didier-Molinet}, we can obtain the following
\begin{equation}\label{eqhighrpq}
\|P_{+HI}D_{x}^{s}r\|_{L^{p}_{T}L^{q}_{x}}\lesssim \|r_{0}\|_{H^{1}}+(1+\|r\|_{L^{\infty}_{T}H^{1}_{x}})(\|w\|_{X_{T}^{s,\frac{1}{2},1}}+\|r\|_{L^{\infty}_{T}H^{1}_{x}}^{2}).
\end{equation}
Regarding the low frequencies, we use the Sobolev embedding $\dot{H}^{\frac{1}{4}}(\mathbb{R})\hookrightarrow L^{\frac{1}{4}}(\mathbb{R})$ to obtain
\begin{equation*}
    \|P_{LO}r\|_{L^{p}_{T}L^{q}_{x}}\lesssim T^{\frac{1}{p}}\|P_{LO}r\|_{L^{\infty}_{T}L^{2}_{x}}.
\end{equation*}
Furthermore, since $r$ is a solution of \eqref{BO-NLS}, $P_{LO}r$ satisfies the following integral equation.  
\begin{equation*}
    P_{LO}r=V(t)r_{0}+\int_{0}^{t}V(t-t')P_{LO}[cr\partial_{x}r-d\partial_{x}(r\mathcal{H}\partial_{x}r\mathcal{H}(r\partial_{x}r))+\beta\partial_{x}(|q|^{2})](t')dt'.
\end{equation*}
Thus, by Minkowski's inequality, Hölder inequality, and the Sobolev embedding $H^{1}(\mathbb{R})\hookrightarrow L^{\infty}(\mathbb{R})$, it follows that
\begin{align}\label{eqlowrpq}
\|P_{LO}r\|_{L^{p}_{T}L^{q}_{x}}
&\lesssim \|r_{0}\|_{L^{2}}+\int_{0}^{T}\|P_{LO}[cr\partial_{x}r-d\partial_{x}(r\mathcal{H}\partial_{x}r+\mathcal{H}(r\partial_{x}r))+\beta\partial_{x}(|q|^{2})](t')\|_{L^{2}_{x}}dt'\nonumber \\
&\lesssim\|r_{0}\|_{H^{1}}+\|r\|_{L^{\infty}_{T,x}}\|\partial_{x}r\|_{L^{\infty}_{T}L^{2}_{x}}+\|r\|_{L^{\infty}_{T,x}}\|\mathcal{H}\partial_{x}r\|_{L^{\infty}_{x}L^{2}_{x}}+\|q\|_{L^{\infty}_{T,x}}\|q\|_{L^{\infty}_{T}L^{2}_{x}}\nonumber \\
&\lesssim \|r_{0}\|_{H^{1}_{x}}+\|r\|_{L^{\infty}_{T}H^{1}_{x}}^{2}+\|q\|_{L^{\infty}_{T}H^{1}}^{2}.
\end{align}
From \eqref{eqlpqr}, \eqref{eqhighrpq}, and \eqref{eqlowrpq} follows the estimate \eqref{estimateofrinhs}.

To estimate the norm $\|q\|_{L^{\infty}_{T}H^{s'}_{x}}$, we first need to consider the integral equation satisfied by $q$, i.e.
\begin{equation}
    q(t)=U(t)q_{0}+i\beta\int_{0}^{t}U(t-t')(qr)(t')dt'.
\end{equation}
Thus, by Hölder's inequality, frequency localization, and Sobolev embedding, it follows that
\begin{align}\label{estimateqhsk0}
    \|q\|_{L^{\infty}_{T}H^{s'}_{x}}&\lesssim \|q_{0}\|_{H^{s'}}+\int_{0}^{T}\|(qr)(t')\|_{H^{s'}}dt'\nonumber \\
    &\lesssim \|q_{0}\|_{H^{s'}}+T^{\frac{1}{2}}\|qr\|_{L^{2}_{T}H^{s'}_{x}}\nonumber \\
    &\lesssim \|q_{0}\|_{H^{s'}}+\|P_{LO}(qr)\|_{L^{2}_{T}H^{s'}_{x}}+\|P_{HI}(qr)\|_{L^{2}_{T}H^{s'}_{x}}\nonumber \\
    &\lesssim \|q_{0}\|_{H^{s'}}+\|q\|_{L^{\infty}_{T}L^{2}_{x}}\|r\|_{L^{\infty}_{T}H^{1}_{x}}+\|P_{HI}(qr)\|_{L^{2}_{T}H^{s'}_{x}}.
\end{align}
Therefore, we need only estimate the norm $\|P_{HI}(qr)\|_{L^{2}_{T}H^{s'}_{x}}$, which cannot be directly estimated for $s<s'\leq s+1$ because $r$ has a lower regularity than $q$. But, note that the identity \eqref{frequencylocproduct}, the Sobolev embedding and the fact that $s'\leq s+1$ imply that
\begin{align}\label{estimatesobolevq1}
    &\|P_{2^{l}}(qr)\|_{L^{2}_{T}H^{s'}_{x}}\nonumber\\
    &\lesssim  2^{ls'}\left(\sum_{j\geq l-3}\|P_{2^{j}}q\|_{L^{2}_{T,x}}\|P_{\leq 2^{j}}r\|_{L^{\infty}_{T,x}}+\sum_{j\geq l-3}\|P_{\leq 2^{j-1}}q\|_{L^{2}_{x}L^{\infty}_{T}}\|P_{2^{j}}r\|_{L^{\infty}_{x}L^{2}_{T}}\right)\nonumber \\
    & \lesssim  \|r\|_{L^{\infty}_{T}H^{1}_{x}}\sum_{j\geq l-3}2^{(l-j)s'}\|P_{2^{j}}J_{x}^{s'}q\|_{L^{2}_{T,x}}+\|q\|_{L^{2}_{x}L^{\infty}_{x}}\sum_{j\geq l-3}2^{(l-j)s'}\|P_{2^{j}}J_{x}^{s}\partial_{x}r\|_{L^{\infty}_{x}L^{2}_{T}}. 
\end{align}
Thus, we can apply Young's inequality and Plancherel's identity in \eqref{estimatesobolevq1} to obtain that
\begin{equation}\label{estimatesobolevq2}
\|P_{HI}(qr)\|_{L^{2}_{T}H^{s+k}_{x}}\lesssim \|r\|_{L^{\infty}_{T}H^{1}_{x}}\|q\|_{L^{2}_{T}H^{s'}_{x}}+\|q\|_{L^{2}_{x}L^{\infty}_{T}}\|J^{s}_{x}\partial_{x}r\|_{\widetilde{L^{\infty}_{x}L^{2}_{T}}}.
\end{equation}
Thus, the estimate \eqref{estimateofqHs'} follows from \eqref{estimateqhsk0} and \eqref{estimatesobolevq2}.

Now, concerning the solutions of the linear Schrödinger equation, we have an estimate for the maximal operator $ U^{*}(f):=\sup_{[0,T]} |U(t)f|$ in Sobolev spaces, as stated in Lemma \ref{maximalestimateNLS}. Thus, it is possible for us to handle the norm \(\|D^{s-1}_{x}q\|_{\widetilde{L^2_x L^\infty_T}}\). In fact, from \eqref{eqmaximalestimateNLS}, the Sobolev embedding, along with Duhamel's formula satisfied by $q$, we have that for all $N$ dyadic 
\begin{align}\label{estimatesobolevq3}
    \|D^{s-1}_{x}P_{N}q\|_{L^{2}_{x}L^{\infty}_{T}}&\lesssim \|D_{x}^{s-1}P_{N}U(t)q_{0}\|_{L^{2}_{x}L^{\infty}_{T}}+\int_{0}^{T}\|D_{x}^{s-1}P_{N}U(t-t')(qr)(t')\|_{L^{2}_{x}L^{\infty}_{T}}dt'\nonumber \\
    &\lesssim \|P_{N}D_{x}^{s-1}q_{0}\|_{H^{\frac{1}{2}+}}+\int_{0}^{T}\|D_{x}^{s-1}P_{N}(qr)(t')\|_{H^{\frac{1}{2}+}_{x}}dt'\nonumber \\
    &\lesssim \|P_{N}q_{0}\|_{H^{s'}}+\|P_{N}(qr)\|_{L^{2}_{T}H^{s}_{x}}
\end{align}
Therefore, taking the square in the estimate \eqref{estimatesobolevq3}, summing over $N$, and using Lemma \ref{fracleib}, the estimate \eqref{estimatel2linfq} follows.

Regarding the estimates \eqref{estimatel2linfr} and \eqref{estimatelinfl2r}, we first look at the low frequencies. Therefore, the Sobolev embedding, the maximal estimate (2.28) in \cite{HOBOinH1-Didier-Molinet}, and similar arguments to those applied in \eqref{eqlowrpq} yield
\begin{align}\label{plowl2linfr}
\|P_{LO}r\|_{L^{2}_{x}L^{\infty}_{T}}+\|P_{LO}J_{x}^{s}\partial_{x}r\|_{\widetilde{L^{\infty}_{x}L^{2}_{T}}}&\lesssim \|r_{0}\|_{H^{1}}+\|r\|_{L^{\infty}_{T}H^{1}_{x}}^{2}+\|q\|_{L^{\infty}_{T}H^{1}_{x}}^{2}.
\end{align}
Finally, using the identity \eqref{rintermofw2}, the norms $\|P_{+HI}r\|_{L^{2}_{x}L^{\infty}_{T}}$ and $\|P_{+HI}J_{x}^{s}\partial_{x}r\|_{\widetilde{L^{\infty}_{x}L^{2}_{T}}}$ can be estimated exactly as done in Proposition 3.2 in \cite{HOBOinH1-Didier-Molinet}, i.e.
\begin{equation}\label{P+l2linfr}
    \|P_{+HI}r\|_{L^{2}_{x}L^{\infty}_{T}}\lesssim (1+\|r\|_{L^{\infty}_{T}H^{1}_{x}})\|w\|_{X^{1,\frac{1}{2},1}_{T}}+\|r\|_{L^{\infty}_{T}H^{1}_{x}}\|r\|_{L^{2}_{x}L^{\infty}_{T}},
\end{equation}
and
\begin{equation}\label{p+linfl2r}
    \|P_{+HI}J_{x}^{s}\partial_{x}r\|_{\widetilde{L^{\infty}_{x}L^{2}_{T}}}\lesssim  \|w\|_{X^{s,\frac{1}{2},1}_{T}}+\|w\|_{X^{1,\frac{1}{2},1}_{T}}\|J_{x}^{s}\partial_{x}r\|_{\widetilde{L^{\infty}_{x}L^{2}_{T}}}+\|r\|_{L^{\infty}_{T}H^{1}_{x}} \|J_{x}^{s}\partial_{x}r\|_{\widetilde{L^{\infty}_{x}L^{2}_{T}}},
\end{equation}
Note that, to justify the preceding step, we rely on Lemmas 2.5 and 2.6 from \cite{HOBOinH1-Didier-Molinet}. Thus, the estimates \eqref{plowl2linfr}, \eqref{P+l2linfr}, and \eqref{p+linfl2r} imply \eqref{estimatel2linfr} and \eqref{estimatelinfl2r}. It completes the proof of the proposition.
\end{proof}

\subsection{Bilinear estimates}\label{subsectionbilinearestimates}

The objective of this subsection is to derive an estimate for \( \|w\|_{X^{s,\frac{1}{2},1}} \). To achieve this, we will utilize the equation \eqref{eqofw} satisfied by \( w \). Specifically, observe that the expression in \eqref{eqofw} corresponds to the same differential equation derived in (3.4) of \cite{HOBOinH1-Didier-Molinet} for \( w \), up to the term
\[
\partial_{x} P_{+hi} \left( e^{iF} |q|^{2} \right),
\]
which allows us to take advantage of the bilinear estimates already established for similar terms. In fact, we have the following proposition
\begin{prop}\label{bilinearestimate}
Let $0<T\leq 1$, $1\leq s\leq \frac{3}{2}$, $s\leq s'\leq s+1$, and $(r,q)$ be a solution to \eqref{BO-NLS} on the time interval $[0,T]$ and $w$ defined in \eqref{gaugefunction}. Then it holds that
\begin{align}\label{estimatebourgainw}
    &\|w\|_{X^{s,\frac{1}{2},1}_{T}}\lesssim (1+\|r_{0}\|_{H^{1}}^{2})\|r_{0}\|_{H^{s}}+\|w\|_{X^{s,\frac{1}{2},1}}\left(\sup_{0\leq \theta \leq 1}\|r\|_{X^{1-2\theta,\theta}}+\|r\|_{L^{4}_{T}W^{1,4}_{x}}+\|r\|_{L^{\infty}_{T}H^{1}_{x}}\|r\|_{L^{4}_{T}W^{1,4}_{x}}\right)\nonumber \\
    &+\|r\|_{L^{4}_{T,x}}^{2}+(1+\|r\|_{L^{\infty}_{T}H^{1}_{x}}^{2})(1+\|r\|_{L^{\infty}_{T}H^{1}_{x}})\left(\|r\|_{L^{\infty}_{T}H^{1}_{x}}^{2}\|r\|_{L^{\infty}_{T}H^{s}_{x}}+\|r\|_{L^\infty_xH^1_x}^{2}+\|r\|_{L^{2}_{x}L^{\infty}_{T}}\|J_{x}^{s}\partial_{x}r\|_{\widetilde{L^{\infty}_{x}L^{2}_{T}}}\right)\nonumber\\
    &+\|q\|_{L^{\infty}_{T}H^{1}_{x}}^{2}+\sup_{0\leq \theta \leq 1}\|e^{iF}q\|_{Y_{T}^{1-\theta,\theta}}\|q\|_{Y_{T}^{s,1}}+\sup_{0\leq \theta \leq 1}\|e^{iF}q\|_{Y_{T}^{s-\theta,\theta}}\|q\|_{Y_{T}^{1,1}}.
\end{align}
\end{prop}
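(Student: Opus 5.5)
We will run the standard Duhamel argument in the dyadic Bourgain space $X^{s,\frac12,1}$ for the equation \eqref{eqofw} satisfied by $w$. Fix extensions $\tilde r$, $\tilde q$, $\widetilde{e^{iF}q}$ of $r$, $q$, $e^{iF}q$ outside $[0,T]$ that almost realize the time-restricted norms. We then write
\[
w(t)=\eta(t)V_{+}(t)w(0)+\eta(t)\int_0^t V_{+}(t-t')\bigl(N_1+N_2\bigr)(t')\,dt',
\]
where $V_+$ is the linear group of $\partial_t+ib\partial_x^2-a\partial_x^3$. On positive frequencies this group coincides with $V(t)$, because $b\xi|\xi|=b\xi^2$ for $\xi>0$, so \eqref{linearestimate} and \eqref{nonlinearestimate} apply to it.

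\textbf{Linear term.} We bound $\|w(0)\|_{H^s}=\|\partial_xP_{+hi}(e^{iF_0})\|_{H^s}\lesssim\|J^{s-1}_x(e^{iF_0}r_0)\|_{L^2}$. Lemma \ref{lemmaestimateexponencialinHs}, in its version for exponent $s-1\in[0,\frac12]$, then gives at most $(1+\|r_0\|_{H^1}^2)\|r_0\|_{H^s}$.

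\textbf{The $N_1$ terms.} The nonlinearity $N_1$ is exactly the one in (3.4) of \cite{HOBOinH1-Didier-Molinet}. The one new feature is the time-dependent $G(t)$, which contains $\beta|q|^2$. This term enters only through the phase $e^{iF}$, which is unimodular, and through the equation \eqref{eqofF} used to bound $\partial_t F$. It therefore changes nothing in their bilinear and trilinear estimates, except that wherever $\partial_tF$ or $\partial_t r$ is controlled, one gets an additional $\|q\|_{L^\infty_TH^1_x}^2$ contribution. We will quote their Proposition 3.3 (the estimate of $\|w\|_{X^{s,\frac12,1}}$ for $1\le s\le\frac32$) verbatim for $N_1$. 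This produces all terms in \eqref{estimatebourgainw} involving only $r$ and $w$, namely $\|w\|_{X^{s,\frac12,1}}(\dots)$, $\|r\|_{L^4_{T,x}}^2$, and the polynomial in the $H^1$ norms times $\|r\|_{L^\infty_TH^s_x}$ and $\|r\|_{L^2_xL^\infty_T}\|J^s_x\partial_xr\|_{\widetilde{L^\infty_xL^2_T}}$. The $\|q\|_{L^\infty_TH^1_x}^2$ term comes from the $P_{LO}$ contributions handled as in \eqref{eqlowrpq}.

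\textbf{The new term $N_2=\partial_xP_{+hi}(e^{iF}|q|^2)$.} This is the main obstacle. By \eqref{nonlinearestimate} it suffices to bound
\[
\|\partial_xP_{+hi}\bigl((e^{iF}q)\,\bar q\bigr)\|_{X^{s,-\frac12,1}}.
\]
We treat it as a bilinear estimate between a Schrödinger wave $u=e^{iF}q$, which lives in $Y^{\cdot,\cdot}_{\tau=\alpha\xi^2}$, and $\bar q$, which lives in $Y_{\tau=-\alpha\xi^2}$, with the output measured against the cubic modulation $\tau-b\xi|\xi|+a\xi^3$. We decompose dyadically $P_N$ in output frequency and $Q_L$ in output modulation, and likewise for the inputs. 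Since the output frequency is $N\ge 1$ and the phases are quadratic while the output phase is cubic, the resonance function satisfies $|\sigma-\sigma_1-\sigma_2|\gtrsim N^3$ when $N$ is large compared to the input frequencies. This leaves two cases.

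In the \emph{high-low} interactions, the largest modulation is $\gtrsim N^3$. If the output modulation $L$ is the largest, then $L^{-1/2}N^{s+1}\lesssim N^{s-1/2}$, and the product is estimated in $L^2$ via Hölder together with the Strichartz bound $L^6$/$L^4$ from Lemma \ref{lemmaimersionBourgainNLS}, using $\|q\|_{Y^{s,1}}$ on the high factor. If instead an input modulation is the largest, we place that factor in $Y^{s-\theta,\theta}$ with $\theta=1$, and the other factor in $L^\infty$-type norms controlled by $Y^{1,1}$. Interpolating over $\theta$ yields the $\sup_\theta\|e^{iF}q\|_{Y^{1-\theta,\theta}}\|q\|_{Y^{s,1}}$ and $\sup_\theta\|e^{iF}q\|_{Y^{s-\theta,\theta}}\|q\|_{Y^{1,1}}$ structure.

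In the \emph{high-high} interactions, both inputs are at frequency $\sim M\ge N$. The derivative loss is only $N$, and $N\le M$ is absorbed by placing $s$ derivatives on one factor and one derivative on the other. The $\ell^1$ summation in $L$ is recovered either from the $N^3$ gain in modulation or, in the low-modulation case, by placing $\theta$ slightly above $\frac12$ via \eqref{Hstobourgain}.

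Summing all the pieces gives \eqref{estimatebourgainw}. The delicate point is the regime of comparable input frequencies with output frequency $N\sim M$, where the resonance need not be large. There we plan to use $X^{s,-\frac12+\delta}\hookleftarrow L^{1+\delta'}_tH^s_x$ from \eqref{Hstobourgain}, together with $T^{\frac12}$ factors, rather than any resonance gain.
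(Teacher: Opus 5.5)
Your architecture matches the paper's. You use Duhamel for \eqref{eqofw} with \eqref{linearestimate}--\eqref{nonlinearestimate}, quote Molinet--Pilod's term-by-term estimates for $N_1$, and prove a new bilinear estimate for $N_2$ viewed as a product of $e^{iF}q$ and $\bar q$, with a cubic output symbol against two quadratic input symbols. Note that what you quote should be their Propositions 4.2--4.4, not their final bound for $w$, which is tied to their equation. Three steps are wrong as written.

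\emph{The linear term.} Since $w(0)=iAP_{+hi}(e^{iF(0)}r_0)$, the norm $\|w(0)\|_{H^s}$ is comparable to $\|P_{+hi}(e^{iF(0)}r_0)\|_{H^s}$, which involves $J^s_x$, not $J^{s-1}_x$. Your bound by $\|J^{s-1}_x(e^{iF_0}r_0)\|_{L^2}$ is false. The correct step is \eqref{estimateexponencialinHs} with $p=2$ at exponent $s\in[1,\frac32]$, which is exactly the lemma's range, as in \eqref{proofbilinearestimate2}.

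\emph{The $q$ term.} The $\|q\|^2_{L^\infty_TH^1_x}$ term does not come from \eqref{eqlowrpq}, which concerns $P_{LO}r$ rather than $w$. In the paper it comes from the output frequencies $2\le N\le 2^3$ of $N_2$. These are split off first and bounded crudely via \eqref{Hstobourgain}, because for bounded $N$ the $N^3$ gain is worthless.

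\emph{The resonance analysis} is the more substantive error, because it misplaces the difficulty. Take $-a=b=\alpha=1$ and $\xi>0$ (forced by $P_+$), with $\sigma=\tau-\xi|\xi|-\xi^3$, $\sigma_1=\tau_1-\xi_1^2$, $\sigma_2=\tau_2+\xi_2^2$. Then $\sigma-\sigma_1-\sigma_2=-\xi(\xi^2+2\xi_2)$, which is $\gtrsim\xi^3$ unless $\xi_2<0$ and $|\xi_2|\approx\xi^2/2$.

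Consequences for your case split:
\begin{itemize}
\item The resonance is large in every high--low interaction. There $N$ is comparable to the top input frequency, not large compared with it, so your stated reason for the $N^3$ gain is wrong even where you use the gain correctly.
\item It is also large in the regime $N\sim M$ that you single out as delicate.
\item The only resonant configuration is $P_{+hi}(e^{iF}q)\cdot P_{-hi}\bar q$ with both inputs at frequency $\approx N^2/2$. There the gain $(N/M)^{s+1}$ makes crude bounds suffice.
\end{itemize}

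Your plan for $N\sim M$, namely \eqref{Hstobourgain} plus $T^{\frac12}$ factors and no resonance, actually loses $N^{1/2}$ if only Sobolev-type norms are used. One factor must go into $L^\infty_x$ (or one applies Bernstein on $P_N$), which costs $M^{1/2}\sim N^{1/2}$. There you need either the $N^3$ modulation gain, which is available, or a genuine dispersive bound.

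The paper takes the dispersive route and uses no resonance at all in the high--high case. It proves the estimate in $X^{s,\gamma}$ with $-\frac12<\gamma<-\frac{11}{24}$; since $X^{s,\gamma}\hookrightarrow X^{s,-\frac12,1}$, this also settles the $\ell^1$ summation in $L$. It then bounds the dual form by
$\|(\langle\sigma\rangle^{\gamma}|h|)^{\vee}\|_{L^6_{x,t}}$ times the $L^2_{x,t}$ norm of $J^1_x$ applied to one input, times the $L^3_{x,t}$ norm of $J^s_x$ applied to the other. This uses $X^{0,\frac49+}\hookrightarrow L^6_{x,t}$ and $Y^{0,\frac14+}\hookrightarrow L^3_{x,t}$.
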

The proof of Proposition \ref{bilinearestimate} relies on the following key bilinear estimates 
\begin{spacing}{1.2}
\begin{prop}\label{propetesimativabilinearcrucial}
For all $s\geq 1$ and $-\frac{1}{2}<\gamma<-\frac{11}{24}$, the following estimate holds true
\begin{align}
\|\partial_{x}P_{+HI}(u\bar{v})\|_{X^{s,\gamma}}&\lesssim\sup_{0\leq \theta \leq 1}\|u\|_{Y^{s-\theta,\theta}}\sup_{0\leq \theta \leq 1}\|v\|_{Y^{1-\theta,\theta}}+\sup_{0\leq \theta \leq 1}\|u\|_{Y^{1-\theta,\theta}}\sup_{0\leq \theta \leq 1}\|v\|_{Y^{s-\theta,\theta}} \nonumber\\
&\quad + \|u\|_{Y^{1,0}}\|v\|_{Y^{s,\frac{1}{4}+}}. \label{esimativabilinearcrucial2}
\end{align}
\end{prop}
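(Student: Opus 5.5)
We argue by duality and a dyadic decomposition in space–time frequencies. Write $\xi=\xi_1-\xi_2$ (the frequency of $u\bar v$), with $\widehat{u}$ supported near $\tau_1=\alpha\xi_1^2$ and $\widehat{\bar v}$ near $\tau_2=-\alpha\xi_2^2$. The modulations are
\[
\sigma=\tau-b\xi|\xi|+a\xi^3,\qquad \sigma_1=\tau_1-\alpha\xi_1^2,\qquad \sigma_2=\tau_2+\alpha\xi_2^2.
\]
Since $P_{+HI}$ forces $\xi\gtrsim 2^4$, the left-hand side is bounded by
\[
\sup_{\|h\|_{L^2}\le 1}\int \frac{|\xi|\langle\xi\rangle^{s}}{\langle\sigma\rangle^{-\gamma}}\,\widehat{u}(\xi_1,\tau_1)\,\widehat{\bar v}(\xi_2,\tau_2)\,h(\xi,\tau).
\]
We then split dyadically: $|\xi|\sim N$, $|\xi_1|\sim N_1$, $|\xi_2|\sim N_2$, and the modulations $\sim L,L_1,L_2$.

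The key algebraic input is the resonance relation
\[
\sigma-\sigma_1-\sigma_2=a\xi^3-b\xi|\xi|+\alpha(\xi_1^2-\xi_2^2)=a\xi^3-b\xi|\xi|+\alpha\xi(\xi_1+\xi_2).
\]
Because the KdV part dominates, whenever $N_1,N_2\lesssim N$ we get $\max(L,L_1,L_2)\gtrsim |a|N^3$. In the remaining case $N_1\sim N_2\gg N$, we have
\[
\max(L,L_1,L_2)\gtrsim \max\bigl(N^3,\,N\,|\xi_1+\xi_2|\bigr)\gtrsim N N_1,
\]
except on the region where $a\xi^2\approx -\alpha(\xi_1+\xi_2)$. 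That exceptional region is a genuine resonance: it occurs at $N_1\sim N^2$ with the sign of $\xi_1+\xi_2$ fixed.

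The cases are handled as follows.

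\textbf{Case 1 ($L$ dominant).} Here $\langle\sigma\rangle^{\gamma}\lesssim N^{3\gamma}$ in the non-resonant regime. We estimate $u\bar v$ in $L^2$ using the Strichartz embeddings of Lemma \ref{lemmaimersionBourgainNLS}, with $L^4$ or $L^6$ on each factor. The derivatives are distributed according to which of $N_1$, $N_2$ is largest: $\langle\xi\rangle^s$ goes onto the high-frequency factor and one derivative onto the other. This yields the $\sup_\theta Y^{s-\theta,\theta}\times Y^{1-\theta,\theta}$ terms. The gain $N^{1+3\gamma}$ is negative since $\gamma<-\tfrac13$.

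\textbf{Case 2 ($L_1$ or $L_2$ dominant).} We trade that modulation for spatial derivatives through the $\theta$-family: $\|u\|_{Y^{s-\theta,\theta}}$ with $\theta=1$ gives $L_1\,N_1^{s-1}$. This explains why the right-hand side is stated with a supremum over $\theta$. Since $L_1\gtrsim NN_1$ or $N^3$, the available power $L_1^{\theta}$ compensates the derivative $|\xi|$ together with the $\langle\xi\rangle^s$ weight transferred from output to inputs.

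\textbf{Case 3 (the resonant region $N_1\sim N_2\sim N^2$).} This is the main obstacle. No modulation is large, and we cannot trade output derivatives, since $N^{1+s}$ must be paid while the inputs only supply $N_1^{s}\sim N^{2s}$. The fact that inputs are much larger than the output actually helps: $N^{1+s}\le N_1^{(1+s)/2}$. So it suffices to place $\|u\|_{Y^{1,0}}$ in $L^2$ and $\|v\|_{Y^{s,1/4+}}$ in $L^4_{x,t}$ via Lemma \ref{lemmaimersionBourgainNLS} with $\theta=1/2$. We then integrate $h$ against this product using the one-dimensional measure of the resonant set, i.e.\ a bilinear Strichartz/transversality bound. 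The Schrödinger group velocity differences $2\alpha(\xi_1-\xi_2)$ versus the KdV group velocity $3a\xi^2$ are transversal, giving a gain of $(N N_1)^{-1/2}$. The condition $\gamma<-\tfrac{11}{24}$ should come out of balancing this gain against $L^{-\gamma}$ and the $L_2^{1/4+}$ loss. The precise exponent bookkeeping here is what I expect to require most care.

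Finally, summing the dyadic pieces is done with Cauchy–Schwarz in $N$ and the geometric decay in $L$.
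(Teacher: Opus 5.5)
Your overall architecture is the paper's: duality, a split by frequency configuration and by which of $|\sigma|,|\sigma_1|,|\sigma_2|$ is maximal, the resonance function giving $\sigma_{\max}\gtrsim\xi^{3}$ whenever the output frequency is comparable to an input, and H\"older with the embeddings of Lemmas \ref{lemmaMergulhoLpBpurgain} and \ref{lemmaimersionBourgainNLS}. The gap is Case 3. As written it is not an argument. The ``bilinear Strichartz/transversality bound'' is never formulated, the gain $(NN_1)^{-1/2}$ is asserted rather than derived, and you leave the exponent balance open.

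It is also aimed at an obstacle that does not exist. The resonant set lies inside the high--high regime $|\xi_1|\sim|\xi_2|\gtrsim|\xi|$, and there the derivative count is favourable. Your remark that the inputs ``only supply $N^{2s}$'' overlooks that $2s\ge s+1$ for $s\ge 1$, and $u$ supplies a further $N_1$. So no modulation information is needed: $|\xi|\langle\xi\rangle^{s}\lesssim\langle\xi_1\rangle\langle\xi_2\rangle^{s}$, and
\[
|I|\lesssim\|(\langle\sigma\rangle^{\gamma}|h|)^{\vee}\|_{L^{6}_{x,t}}\,\|(J^{1}_{x}u)^{\wedge}\|_{L^{2}_{\xi,\tau}}\,\|(|(J^{s}_{x}v)^{\wedge}|)^{\vee}\|_{L^{3}_{x,t}}\lesssim\|h\|_{L^{2}_{\xi,\tau}}\|u\|_{Y^{1,0}}\|v\|_{Y^{s,\frac14+}}.
\]
This uses $X^{0,\frac49+}\hookrightarrow L^{6}$, valid since $\gamma<-\frac{11}{24}<-\frac49$. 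It also uses $Y^{0,\frac14+}\hookrightarrow L^{3}$; note that Lemma \ref{lemmaimersionBourgainNLS} with $\theta=\frac12$ gives $L^{3}$, not $L^{4}$. This bound is exactly the origin of the third term on the right-hand side, and it is how the paper treats every high--high interaction, resonant or not. You were one step from it: once $u$ is in $Y^{1,0}$ and $v$ in $Y^{s,\frac14+}$, the dual factor $\langle\sigma\rangle^{\gamma}h$ goes into $L^{6}$ for free.

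Your account of the hypothesis $\gamma<-\frac{11}{24}$ is also wrong, and it hides a bookkeeping error in Case 1. In the first two products on the right-hand side each factor is measured in $\sup_{\theta}Y^{\cdot-\theta,\theta}$. So a modulation power $\theta$ spent on a Strichartz embedding costs $\theta$ spatial derivatives.

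In the high--low case with $|\sigma|\sim\xi^{3}$ maximal, the paper puts both inputs in $L^{4}$ via $Y^{0,\frac38+}\hookrightarrow L^{4}$. It then needs $\xi^{s+1+3\gamma}\lesssim\xi^{s-\frac38-}$ on the high-frequency factor, i.e.\ $\gamma<-\frac{11}{24}$. This, not the resonant region, is where the hypothesis is used. With only $\gamma<-\frac13$, as you claim suffices, the high factor would end up in $Y^{s,\frac38+}$, which the right-hand side does not control.

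In Case 2 you must likewise keep one factor at the $1$-level and the other at the $s$-level. That is harmless there because the maximal modulation is large. With these corrections, and Case 3 replaced by the H\"older bound above, your outline closes.
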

\vspace{-3em}
\end{spacing}
\begin{proof}
We will only prove the case where $a<0$ in the system \eqref{BO-NLS}. The other case follows from similar arguments. First, without loss of generality, let us consider $-a=b=\alpha=1$ in \eqref{BO-NLS}. Now, observe that the following identities hold true 
\begin{equation*}
    P_{+HI}(P_{lo}fP_{lo}g)=P_{+HI}(P_{lo}fP_{-hi}g)=0.
\end{equation*}
Therefore, we need only to prove the estimate \eqref{esimativabilinearcrucial2} for the following frequency interactions
\begin{equation}\label{frequencyinteractionAB}
    \|\partial_{x}P_{+HI}(P_{A}uP_{B}\bar{v})\|_{X^{s,\gamma}},
\end{equation}
where $(A,B)\in \{(lo,+hi),(+hi,lo),(-hi,+hi),(+hi,-hi),(+hi,+hi)\}.$ By duality and the identity $$\|\bar{f}\|_{Y^{s,l}_{\tau=-\xi^{2}}}=\|f\|_{Y_{\tau=\xi^{2}}^{s,l}}=\|f\|_{Y^{s,l}},$$ to estimate these terms, it is sufficient to prove that
\begin{align*}
  |I|&\lesssim \|h\|_{L^{2}_{\xi,\tau}}\big(\sup_{0\leq \theta \leq 1}\|u\|_{Y^{s-\theta,\theta}}\sup_{0\leq \theta \leq 1}\|v\|_{Y_{\tau=-\xi^{2}}^{1-\theta,\theta}}+\sup_{0\leq \theta \leq 1}\|u\|_{Y^{1-\theta,\theta}}\sup_{0\leq \theta \leq 1}\|v\|_{Y_{\tau=-\xi^{2}}^{s-\theta,\theta}}\\
  &\quad +\|u\|_{Y^{1,0}}\|v\|_{Y^{s,\frac{1}{4}+}_{\tau=-\xi^{2}}}\big),
\end{align*}
 where
\begin{align}
    I&=\int_{\mathcal{D}}\langle \xi \rangle^{s}\langle \sigma \rangle^{\gamma}\xi h(\xi,\tau)\widehat{u}(\xi_{1},\tau_{1})\widehat{v}(\xi_{2},\tau_{2})d\mu, \label{integralI}\\
    &d\mu =d\xi_{1} d\tau_{1}  d \xi d\tau, \quad \xi=\xi_{1}+\xi_{2},\quad \tau=\tau_{1}+\tau_{2}, \label{eqinxitau} \\
    &\sigma= \tau -|\xi|\xi -\xi^{3}, \quad \sigma_{1}=\tau_{1}- \xi_{1}^{2}, \quad \sigma_{2}=\tau_{2}+ \xi_{2}^{2}, \label{symbolssigma}
\end{align}
and the set $\mathcal{D}$ depends on the frequency interactions present in each case. Concerning the first term given by $(A,B)=(lo,+hi)$ in \eqref{frequencyinteractionAB}, the set $\mathcal{D}$ is given by
\begin{equation}\label{conjDPloP+hi}
    \mathcal{D}=\{(\xi,\tau,\xi_{1},\tau_{1})\in \mathbb{R}^{4}\ \ | \ \xi\geq 8, \ |\xi_{1}|\leq 2,\ \xi_{2}\geq 1\}.
\end{equation}
Then, observe that by \eqref{eqinxitau} and \eqref{conjDPloP+hi}, we always have in $\mathcal{D}$
\begin{equation}\label{desigbilinearlo+hi}
    \xi\leq 3\xi_{2} \quad \mathrm{and} \quad \xi_{2}\leq \frac{5}{4}\xi,
\end{equation}
i.e., we have always $\xi \sim \xi_{2}$ in $\mathcal{D}$. Furthermore, from \eqref{eqinxitau}, \eqref{symbolssigma}, and \eqref{conjDPloP+hi}, the following resonance identity holds in $\mathcal{D}$
\begin{equation}\label{resonanceidentitycucial}
    \sigma-\sigma_{1}-\sigma_{2}=-\xi^{3}-2\xi\xi_{2}.
\end{equation}
Therefore, the resonance identity \eqref{resonanceidentitycucial}, along with \eqref{conjDPloP+hi} and \eqref{desigbilinearlo+hi}, yields that
\begin{equation*}
    |\sigma-\sigma_{1}-\sigma_{2}|\geq \xi^{3}-2\xi\xi_{2}\gtrsim \xi^{3},
\end{equation*}
and,
\begin{equation*}
    |\sigma-\sigma_{1}-\sigma_{2}|\lesssim \xi^{3}.
\end{equation*}
Thus, defining $\sigma_{max},\ \sigma_{med}$ and $\sigma_{min}$ as the minimum, median, and maximum of $|\sigma|,|\sigma_{1}|$ and $|\sigma_{2}|$, respectively, we can always assume that
\begin{equation}\label{estLmaxPloP+hi}
    \sigma_{max}\sim \max\{\sigma_{med},\xi^{3}\} .
\end{equation}
Therefore, we will estimate $I$ in the cases where $\sigma_{max}= |\sigma|,\ |\sigma_{1}|$ or $|\sigma_{2}|$.
\newline
\indent First, assume the case $|\sigma_{2}|=\sigma_{max}$. In this case, using that $\xi\sim \xi_{2}$, it follows that
\begin{align*}
    |I|&\lesssim  \int_{\mathcal{D}}\xi^{s+1} \langle \sigma\rangle^{\gamma} |h(\xi,\tau)||\widehat{u}(\xi_{1},\tau_{1})||\widehat{v}(\xi_{2},\tau_{2})|d\mu \\
    &\lesssim \int_{\mathcal{D}}  \frac{ \xi^{s+1} }{\xi^{s-\frac{1}{2}}\langle \sigma_{2} \rangle^{\frac{1}{2}} }\langle \sigma \rangle^{\gamma}|h(\xi,\tau)||\widehat{u}(\xi_{1},\tau_{1})||\langle \sigma_{2} \rangle^{\frac{1}{2}}(J^{s-\frac{1}{2}}_{x}v)^{\wedge}(\xi_{2},\tau_{2})|d\mu.
\end{align*}
 On the other hand, by \eqref{estLmaxPloP+hi} we have that $\langle \sigma_{2} \rangle \gtrsim \xi^{3}$. Thus, $\frac{ \xi^{s+1} }{\xi^{s-\frac{1}{2}}\langle \sigma_{2} \rangle^{\frac{1}{2}} }\lesssim 1$. Therefore, by Hölder's inequality, the immersions $X^{0,\frac{4}{9}+}\hookrightarrow L^{6}_{x,t}$ and $Y^{0,\frac{1}{4}+}\hookrightarrow L^{3}_{x,t}$, ensured by Lemmas \ref{lemmaMergulhoLpBpurgain} and \ref{lemmaimersionBourgainNLS}, along with Plancherel's identity, it follows that
\begin{align*}
|I|&\lesssim\left\|\left(\langle \sigma \rangle^{\gamma}|h|\right)^{\vee}\right\|_{L^{6}_{x,t}}\|(|\widehat{u}|)^{\vee}\|_{L^{3}_{x,t}}\|\langle \sigma_{2}\rangle^{\frac{1}{2}}(J^{s-\frac{1}{2}}_{x}v)^{\wedge}\|_{L^{2}_{\xi_{2},\tau_{2}}}\\
&\lesssim \|\langle \sigma \rangle^{\gamma+\frac{4}{9}+} h\|_{L^{2}_{\xi,\tau}}\|u\|_{Y^{0,\frac{1}{4}+}}\|v\|_{Y_{\tau=-\xi^{2}}^{s-\frac{1}{2},\frac{1}{2}}}\\
&\lesssim \|h\|_{L^{2}_{\xi,\tau}}\|u\|_{Y^{1-\left(\frac{1}{4}+\right),\frac{1}{4}+}}\|v\|_{Y_{\tau=-\xi^{2}}^{s-\frac{1}{2},\frac{1}{2}}},
\end{align*}
where we have used that $-\frac{1}{2}<\gamma<-\frac{4}{9}$.
\newline 
\indent In the case that $|\sigma_{1}|=\sigma_{max}$, the integral $I$ can be estimated using the same arguments applied to the previous case. In fact, we can obtain
\begin{equation*}
   |I|\lesssim \|h\|_{L^{2}_{\xi,\tau}}\|u\|_{Y^{\frac{1}{2},\frac{1}{2}}}\|v\|_{Y_{\tau=-\xi^{2}}^{s-\left(\frac{1}{4}+\right),\frac{1}{4}+}}.
\end{equation*}
\indent Now, assume that $|\sigma|=\sigma_{max}$. In this case, according to the relation \eqref{estLmaxPloP+hi}, we may assume that \(|\sigma| \sim \xi^3\). Indeed, if this does not hold, then it follows that \(|\sigma| \sim \sigma_{\mathrm{med}}\), where \(\sigma_{\mathrm{med}} = |\sigma_1|\) or \(|\sigma_2|\). Therefore, we are reduced to the previous cases. Thus, note that Holder's inequality, the immersions $Y^{0,\frac{3}{8}+}_{\tau=\pm \xi^{2}} \hookrightarrow L^{4}_{x,t}$ (Lemma \ref{lemmaMergulhoLpBpurgain}), and Plancherel's identity yield that  
\begin{align}\label{casePloPhisigma}
|I|&\lesssim \int_{\mathcal{D}} \xi^{s+1} \langle \xi^{3}\rangle^{\gamma} |h(\xi,\tau)||\widehat{u}
(\xi_{1},\tau_{1})||\widehat{v}(\xi_{2},\tau_{2})|d\mu \nonumber\\
&\lesssim \int_{\mathcal{D}} \frac{\xi^{s+1}\xi^{3\gamma}}{\xi^{s+1+3\gamma}}|h(\xi,\tau)||\widehat{u}(\xi_{1},\tau_{1})||({J^{s+1+3\gamma}_{x}v})^{\wedge}(\xi_{2},\tau_{2})|d\mu \nonumber \\
&\lesssim\|h\|_{L^{2}_{\xi,\tau}}\|(|\widehat{u}|)^{\vee}\|_{L^{4}_{x,t}}\|(|(J^{s+1+3\gamma}_{x}v)^{\wedge}|)^{\vee}\|_{L^{4}_{x,t}} \nonumber\\
&\lesssim \|h\|_{L^{2}_{\xi,\tau}}\|u\|_{Y^{1-\left(\frac{3}{8}+\right),\frac{3}{8}+}}\|v\|_{Y_{\tau=-\xi^{2}}^{s-\left(\frac{3}{8}+\right),\frac{3}{8}+}},
\end{align}
since $\gamma $ satisfies $-\frac{1}{2}<\gamma<-\frac{11}{24}$. 
\newline
\indent We observe that the next term given by $(A,B)=(+hi,lo)$ in \eqref{frequencyinteractionAB} can be treated exactly as done in the preceding case. 
\newline
\indent We will now consider $(A,B)=(-hi,+hi)$ in \eqref{frequencyinteractionAB}. In this case, the set $\mathcal{D}$ is given by
\begin{equation}\label{conjDP-hiP+hi}
    \mathcal{D}=\{(\xi,\tau,\xi_{1},\tau_{1})\in \mathbb{R}^{4}\ \ | \ \xi\geq 8,\  \xi_{1}\leq -1, \xi_{2}\geq 1\}.
\end{equation}
Note that by \eqref{eqinxitau} and \eqref{conjDP-hiP+hi}, we always have in $\mathcal{D}$ that
\begin{equation}\label{desigbilinear-hi+hi}
    \xi\leq \xi_{2} \quad \textrm{and} \quad |\xi_{1}|\leq \xi_{2}.
\end{equation}
Therefore, from equation \eqref{eqinxitau} and the inequalities in \eqref{desigbilinear-hi+hi}, we can always assume that one of the following cases holds:
\begin{itemize}
    \item[1)] High-low interaction: $\xi\sim \xi_{2}$ and $|\xi_{1}|\leq \xi_{2}$;
    \item[2)] High-high interaction : $|\xi_{1}|\sim \xi_{2}$ and $\xi\leq \xi_{2}$.
\end{itemize}
Now, note that the resonance identity \eqref{resonanceidentitycucial} also holds true in this case. In fact, this identity is not dependent on the set $\mathcal{D}$. Thus, from \eqref{conjDP-hiP+hi}, it follows that
\begin{equation*}
    |\sigma-\sigma_{1}-\sigma_{2}|=\xi^{3}+2\xi\xi_{2}\geq \xi^{3}.
\end{equation*}
Therefore, in the case of high-low interactions, the fact that $\xi\sim \xi_{2}$ yields
\begin{equation}\label{estLmaxP-hiP+hi}
    \sigma_{max}\sim \max\{\sigma_{med},\xi^{3}\}.
\end{equation}
In what follows, we proceed to estimate the integral \( I \) considering the high-low and high-high frequency interactions, and whether $\sigma_{max}=|\sigma|,|\sigma_{1}|$ or $|\sigma_{2}|$. 

First, consider the case $\sigma_{max}=|\sigma_{2}|$ and high-low interactions. In this case, note that from the frequency localization
\begin{align*}
    |I|&\lesssim \int_{\mathcal{D}} 
     \frac{ \xi^{s+1}}{\langle\xi_{1}\rangle ^{1-\left(\frac{1}{4}+\right)}\xi^{s-\frac{1}{2}}\langle \sigma_{2}\rangle^{\frac{1}{2}} } \langle \sigma \rangle^{\gamma}|h(\xi,\tau)|(J^{1-\left(\frac{1}{4}+\right)}_{x}u)^{\wedge}(\xi_{1},\tau_{1})|\langle \sigma_{2} \rangle^{\frac{1}{2}}|({J^{s-\frac{1}{2}}_{x}v})^{\wedge}(\xi_{2},\tau_{2})|d\mu.
\end{align*}
Therefore, from Hölder's inequality, the estimate \eqref{estLmaxP-hiP+hi}, Lemmas \ref{lemmaimersionBourgainNLS} and \ref{lemmaMergulhoLpBpurgain}, and Plancherel's identity, it follows that
\begin{align*}
    |I|&\lesssim \left\|\left(\langle \sigma \rangle^{\gamma} |h|\right)^{\vee}\right\|_{L^{6}_{x,t}}\|(|J^{1-\left(\frac{1}{4}+\right)}_{x}u)^{\wedge}|)^{\vee}\|_{L^{3}_{x,t}}\|\langle \sigma_{2} \rangle^{\frac{1}{2}}(J^{s-\frac{1}{2}}_{x}v)^{\wedge}\|_{L^{2}_{\xi_{2},\tau_{2}}}\\
    &\lesssim\|\langle \sigma \rangle^{\gamma +\frac{4}{9}+} h\|_{L^{2}_{\xi,\tau}}\|u\|_{Y^{1-\left(\frac{1}{4}+\right),\frac{1}{4}+}}\|v\|_{Y_{\tau=-\xi^{2}}^{s-\frac{1}{2},\frac{1}{2}}}\\
    &\lesssim \|h\|_{L^{2}_{\xi,\tau}}\|u\|_{Y^{1-\left(\frac{1}{4}+\right),\frac{1}{4}+}}\|v\|_{Y_{\tau=-\xi^{2}}^{s-\frac{1}{2},\frac{1}{2}}}.
\end{align*}
\indent Considering the case $\sigma_{max}=|\sigma_{1}|$ and high-low interactions, the same arguments used in the preceding case can be applied here. In fact, we can see that
\begin{equation*}
    |I|\lesssim \|h\|_{L^{2}_{\xi,\tau}}\|u\|_{Y^{\frac{1}{2},\frac{1}{2}}}\|v\|_{Y_{\tau=-\xi^{2}}^{s-\left(\frac{1}{4}+\right),\frac{1}{4}+}}.
\end{equation*}
\indent Next, assume the case $\sigma_{max}=|\sigma|$ and high-low interactions. Then, according to relation \eqref{estLmaxP-hiP+hi}, we may assume that $|\sigma|\sim \xi^{3}$. Therefore, by repeating the argument used in \eqref{casePloPhisigma}, we conclude that 
\begin{align*}
|I|
&\lesssim  \|h\|_{L^{2}_{\xi,\tau}}\|u\|_{Y^{1-\left(\frac{3}{8}+\right),\frac{3}{8}+}}\|v\|_{Y_{\tau=-\xi^{2}}^{s-\left(\frac{3}{8}+\right),\frac{3}{8}+}}.
\end{align*}
\indent Now, concerning the high–high interactions, observe that we do not need to invoke an estimate such as \eqref{estLmaxP-hiP+hi} in order to estimate the integral $I$. In fact, since $|\xi_{1}|\sim \xi_{2}$ and $\xi\leq \xi_{2}$ in this case, we have that
\begin{align*}
   |I| &\lesssim  \int_{\mathcal{D}}   \frac{\xi ^{s+1}\langle \sigma \rangle^{\gamma}}{\xi_{2}\xi_{2}^{s}}|h(\xi,\tau)||(J^{1}_{x}u)^{\wedge}(\xi_{1},\tau_{1})||(J^{s}_{x}v)^{\wedge}(\xi_{2},\tau_{2})|d\mu\\
   &\lesssim \int_{\mathcal{D}} \langle \sigma\rangle ^{\gamma}|h(\xi,\tau)||(J^{1}_{x}u)^{\wedge}(\xi_{1},\tau_{1})||(J^{s}_{x}v)^{\wedge}(\xi_{2},\tau_{2})|d\mu
\end{align*}
Thus, the same arguments used in the preceding cases ensure that
\begin{align*}
    |I|&\lesssim \left\|\left(\langle  \sigma \rangle^{\gamma}|h|\right)^{\vee}\right\|_{L^{6}_{x,t}}\|(J_{x}^{1}u)^{\wedge}\|_{L^{2}_{\xi_{1},\tau_{1}}}\|(|(J_{x}^{s}u)^{\wedge}|)^{\vee}\|_{L^{3}_{x,t}}
    \\
    &\lesssim \|\langle \sigma \rangle^{\gamma +\frac{4}{9}+}h\|_{L^{2}_{\xi,\tau}}\|u\|_{Y^{1,0}}\|v\|_{Y_{\tau=-\xi^{2}}^{s,\frac{1}{4}+}}   \\
    &\lesssim \|h\|_{L^{2}_{\xi,\tau}}\|u\|_{Y^{1,0}}\|v\|_{Y_{\tau=-\xi^{2}}^{s,\frac{1}{4}+}}.
\end{align*}
\indent Note that, considering $(A,B)=(+hi,-hi)$ in \eqref{frequencyinteractionAB}, the estimate of this term  does not present any additional difficulties. Indeed, in this case, we have 
\begin{equation*}
    \mathcal{D}=\{(\xi,\tau,\xi_{1},\tau_{1})\in \mathbb{R}^{4}\ \ | \ \xi\geq 8,\  \xi_{1}\geq 1, \xi_{2}\leq -1\}.
\end{equation*}
Thus, it follows that
\begin{equation}\label{casoP_{+hi}P_{-hi}}
    \xi\leq \xi_{1} \quad \textrm{and} \quad |\xi_{2}|\leq \xi_{1}.
\end{equation}
Then, by using \eqref{eqinxitau} and \eqref{casoP_{+hi}P_{-hi}}, we only need to consider that the following cases hold:
\begin{itemize}
    \item[1)] High-low interaction : $\xi\sim \xi_{1}$ and $|\xi_{2}|\leq \xi_{1}$;
    \item[2)] High-high interaction : $|\xi_{2}|\sim 
    \xi_{1}$ and $\xi\leq \xi_{1}$
\end{itemize}
Considering high-low interactions and using the resonance identity \eqref{resonanceidentitycucial}, we deduce once again the following relation
\begin{equation}\label{LmaxrelationP+hiP+hi}
    \sigma_{max}\sim \max\{\sigma_{med},\xi^{3}\}.
\end{equation}
Therefore, the same arguments used to estimate 
$\|\partial_{x}P_{+HI}(P_{-hi}u\,P_{+hi}\bar{v})\|_{X^{s,\gamma}}$ 
can be applied here as well, covering both the high–low and high–high frequency interactions. Furthermore, it is important to note that, as we have $\xi \sim \xi_{1}$ in the high–low interaction case, the integral $I$ is estimated with $s - \theta$ derivatives falling on $u$ rather than on $v$.
\newline
\indent Finally, we proceed with the estimate of the term given by $(A,B)=(+hi,+hi)$ in \eqref{frequencyinteractionAB}. Observe that the set $\mathcal{D}$ is given by
\begin{equation}\label{conjDP+hiP+hi}
    \mathcal{D}=\{(\xi,\tau,\xi_{1},\tau_{1})\in \mathbb{R}^{4}\ \ | \ \xi\geq 8,\  \xi_{1}\geq 1,\ \xi_{2}\geq 1\}.
\end{equation}
Thus, from \eqref{eqinxitau} and \eqref{conjDP+hiP+hi}, we have in $\mathcal{D}$
\begin{equation}\label{bilinearinequalityDP+hiP+hi}
    \xi_{1}\leq \xi \quad \textrm{and} \quad \xi_{2}\leq \xi. 
\end{equation}
In what follows, \eqref{eqinxitau} and \eqref{bilinearinequalityDP+hiP+hi} ensure that only two cases of frequency interactions hold:
\begin{itemize}
    \item[1)] High-low interaction : $\xi_{1}\sim \xi$ and $\xi_{2}\leq \xi$;
    \item[2)] High-high interaction : $\xi_{2}\sim \xi$ and $\xi_{1}\leq \xi$.
\end{itemize}
By invoking once again the resonance identity \eqref{resonanceidentitycucial}, together with \eqref{conjDP+hiP+hi} and \eqref{bilinearinequalityDP+hiP+hi}, we deduce that the same relation given in \eqref{LmaxrelationP+hiP+hi} holds in $\mathcal{D}$ in both cases under consideration.
\newline
\indent First, we will consider the case $\sigma_{max}=|\sigma_{1}|$ and high-low interactions. Thus, the integral in \eqref{integralI} can be estimated as follows
\begin{align*}
    |I|&\lesssim \int_{\mathcal{D}} \frac{\xi^{s+1}}{\xi^{s-\frac{1}{2}}\xi_{2}^{1-(\frac{1}{4}+)}\langle \sigma_{1} \rangle^{\frac{1}{2}}} \langle \sigma \rangle^{\gamma}|h(\xi,\tau)||\langle \sigma_{1} \rangle^{\frac{1}{2}}(J_{x}^{s-\frac{1}{2}}u)^{\wedge}(\xi_{1},\tau_{1})||(J_{x}^{1-(\frac{1}{4}+)}v)^{\wedge}(\xi_{2},\tau_{2})|d\mu.
\end{align*}
In this case, using Hölder's inequality, Plancherel's identity, the Lemmas \ref{lemmaMergulhoLpBpurgain} and \ref{lemmaimersionBourgainNLS}, along with \eqref{LmaxrelationP+hiP+hi}, we can deduce that
\begin{align*}
|I|& \lesssim \|(\langle \sigma \rangle^{\gamma}|h|)^\vee\|_{L^{6}_{x,t}}\|\langle \sigma_{1} \rangle^{\frac{1}{2}}(J^{s-\frac{1}{2}}_{x}u)^{\wedge}\|_{L^{2}_{\xi_{1},\tau_{1}}}\|(|(J^{1-(\frac{1}{4}+)}_{x}v)^{\wedge}|)^{\vee}\|_{L^{3}_{x,t}}\\
&\lesssim \|h\|_{L^{2}_{\xi,\tau}}\|u\|_{Y^{s-\frac{1}{2},\frac{1}{2}}}\|v\|_{Y^{s-\left(\frac{1}{4}+\right),\frac{1}{4}+}}
\end{align*}
\indent Concerning the case $\sigma_{max}=|\sigma_{1}|$ and high-high interactions, we can handle it with arguments analogous to those applied in the preceding case. Indeed, we can obtain
\begin{equation*}
    |I|\lesssim \|h\|_{L^{2}_{\xi,\tau}}\|u\|_{Y^{\frac{1}{2},\frac{1}{2}}}\|u\|_{Y_{\tau=-\xi^{2}}^{s-\left(\frac{1}{4}+\right),\frac{1}{4}+}}.
\end{equation*}
\indent Similarly, in both cases where \(|\sigma_2| = \sigma_{\max}\), involving either high-low or high-high frequency interactions, no significant difficulties are encountered. Thus, the same arguments used for the case $|\sigma_{1}|=\sigma_{max}$ are enough to estimate the integral in \eqref{integralI}. In fact, we have the following estimates
\begin{equation*}
     |I|\lesssim \|h\|_{L^{2}_{\xi,\tau}}\|u\|_{Y^{s-\left(\frac{1}{4}+\right),\frac{1}{4}+}}\|v\|_{Y_{\tau=-\xi^{2}}^{\frac{1}{2},\frac{1}{2}}} \quad \textrm{and} \quad |I|\lesssim \|h\|_{L^{2}_{\xi,\tau}}\|u\|_{Y^{1-\left(\frac{1}{4}+\right),\frac{1}{4}+}}\|v\|_{Y_{\tau=-\xi^{2}}^{s-\frac{1}{2},\frac{1}{2}}},
\end{equation*}
corresponding to the high-low and high-high frequency interactions, respectively.
\newline
\indent Finally, assume that \(\sigma_{\max} = |\sigma|\). In this case, according to the relation \eqref{LmaxrelationP+hiP+hi}, we may assume that \(|\sigma| \sim \xi^3\). Thus, considering high-low interactions and proceeding similarly to \eqref{casePloPhisigma}, we can obtain that 
\begin{align*}
   |I|\lesssim \|h\|_{L^{2}_{\xi,\tau}}\|u\|_{Y^{s-\frac{3}{8}+,\frac{3}{8}+}}\|v\|_{Y_{\tau=-\xi^{2}}^{1-\frac{3}{8}+,\frac{3}{8}+}}.
\end{align*}
Regarding the high-high interactions, similar arguments yield that
\begin{equation*}
    |I|\lesssim \|h\|_{L^{2}_{\xi,\tau}}\|u\|_{Y^{1-\frac{3}{8}+,\frac{3}{8}+}}\|v\|_{Y_{\tau=-\xi^{2}}^{s-\frac{3}{8}+,\frac{3}{8}+}}.
\end{equation*}
Therefore, it concludes the proof of the proposition.
\end{proof}

\begin{remark}\
\vspace{-1em}
\begin{itemize}
    \item[1.] Note that one could establish the bilinear estimates as before by employing the first space \( X^{s,-\frac{1}{2},1} \) instead of \( X^{s,\gamma} \). However, this would only make the proof more extensive, as it is not necessary for subsequent arguments. Nevertheless, as done in \cite{HOBOinH1-Didier-Molinet}, it is crucial to use this space to establish the bilinear estimate concerning the term \( \partial_{x}P_{+hi}(wP_{-}\partial_{x}r) \).
    \item[2.] We also emphasize that the crucial argument used to establish these bilinear estimates lies in noting that the relation \( \sigma_{\max} \sim \max\{\sigma_{med},\xi^{3}\} \) holds in certain cases of frequency interactions. Furthermore, note that we can establish such a relation only because we have a cubic symbol \( \xi^{3} \) interacting with two quadratic symbols \( \xi_{1}^{2} \) and \( \xi_{2}^{2} \), i.e., at high frequencies, the cubic term prevails.  
\end{itemize}
\end{remark}
\indent The previous proposition provides us with the estimate that we needed to handle the term $$\|\partial_{x}P_{+hi}(e^{iF}|q|^{2})\|_{X^{s,-\frac{1}{2},1}}.$$
In fact, we have the following result

\begin{prop}\label{propbilinearestimateonq2}
Let $0<T\leq 1$, $1\leq s \leq \frac{3}{2}$, $s\leq s'\leq s+1$, and $(r,q)\in L^{\infty}(\mathbb{R}:L^{2} (\mathbb{R})\times L^{2} (\mathbb{R}))$ a solution to \eqref{BO-NLS} supported in the interval $[0,2T]$. Then, it holds that
\begin{align}\label{bilinearestimateonq2}
    \|\partial_{x}P_{+hi}(e^{iF}|q|^{2})\|_{X^{s,-\frac{1}{2},1}}\lesssim \|q\|_{L^{\infty}_{t}H^{1}_{x}}^{2}+\sup_{0\leq \theta \leq 1}\|e^{iF}q\|_{Y^{1-\theta,\theta}}\|q\|_{Y^{s,1}}+\sup_{0\leq \theta \leq 1}\|e^{iF}q\|_{Y^{s-\theta,\theta}}\|q\|_{Y^{1,1}}.
\end{align}
\end{prop}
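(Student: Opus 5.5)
The plan is to write $e^{iF}|q|^{2}=(e^{iF}q)\,\bar q$ and reduce \eqref{bilinearestimateonq2} to Proposition \ref{propetesimativabilinearcrucial} with $u=e^{iF}q$ and $v=q$. The only mismatches are the projection, $P_{+hi}$ instead of $P_{+HI}$, and the target space, $X^{s,-\frac12,1}$ instead of $X^{s,\gamma}$.

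\emph{Splitting off the intermediate frequencies.} We split $P_{+hi}=P_{+HI}+P_{+}(P_{hi}-P_{HI})$. The second piece is supported in $2\le|\xi|\le 2^{5}$, so $\partial_{x}$ and $\langle\xi\rangle^{s}$ are harmless there. For this piece we use the first inclusion in \eqref{bourgainstandardtodiadic} in its dual form, $\|f\|_{X^{s,-\frac12,1}}\lesssim\|f\|_{X^{s,-\frac12+\delta}}$, and then \eqref{Hstobourgain}. This gives
\[
\|\partial_{x}P_{+}(P_{hi}-P_{HI})(e^{iF}|q|^{2})\|_{X^{s,-\frac12,1}}\lesssim\|e^{iF}|q|^{2}\|_{L^{1+\delta'}_{t}L^{2}_{x}}\lesssim\|q\|_{L^{\infty}_{t}L^{\infty}_{x}}\|q\|_{L^{\infty}_{t}L^{2}_{x}}.
\]
Time integrability costs nothing, because $q$ is supported in $[0,2T]$ with $T\le1$. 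By Sobolev's embedding the right-hand side is bounded by $\|q\|_{L^{\infty}_{t}H^{1}_{x}}^{2}$, which is the first term on the right of \eqref{bilinearestimateonq2}.

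\emph{High-frequency piece: passing to $X^{s,\gamma}$.} For the $P_{+HI}$ part we need $\|f\|_{X^{s,-\frac12,1}}\lesssim\|f\|_{X^{s,\gamma}}$ for some $\gamma\in(-\frac12,-\frac{11}{24})$. Fix a dyadic $N$. By Cauchy--Schwarz in $L$,
\[
\sum_{L}\langle L\rangle^{-\frac12}\|P_{N}Q_{L}f\|_{L^{2}}\le\Big(\sum_{L}\langle L\rangle^{-1-2\gamma}\Big)^{\frac12}\Big(\sum_{L}\langle L\rangle^{2\gamma}\|P_{N}Q_{L}f\|^{2}_{L^{2}}\Big)^{\frac12},
\]
and the first factor is finite since $-1-2\gamma<0$. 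Squaring, weighting by $\langle N\rangle^{2s}$ and summing over $N$ gives the embedding.

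\emph{Applying Proposition \ref{propetesimativabilinearcrucial}.} With $u=e^{iF}q$ and $v=q$, the proposition bounds the $P_{+HI}$ part by
\[
\sup_{\theta}\|e^{iF}q\|_{Y^{s-\theta,\theta}}\sup_{\theta}\|q\|_{Y^{1-\theta,\theta}}+\sup_{\theta}\|e^{iF}q\|_{Y^{1-\theta,\theta}}\sup_{\theta}\|q\|_{Y^{s-\theta,\theta}}+\|e^{iF}q\|_{Y^{1,0}}\|q\|_{Y^{s,\frac14+}}.
\]
Since $\langle\xi\rangle^{-\theta}\langle\tau-\xi^{2}\rangle^{\theta}\le\langle\tau-\xi^{2}\rangle$ for $\theta\in[0,1]$, we have $\|q\|_{Y^{\sigma-\theta,\theta}}\lesssim\|q\|_{Y^{\sigma,1}}$ for $\sigma\in\{1,s\}$. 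Likewise $\|q\|_{Y^{s,\frac14+}}\le\|q\|_{Y^{s,1}}$, and $\|e^{iF}q\|_{Y^{1,0}}$ is the $\theta=0$ instance of $\sup_{\theta}\|e^{iF}q\|_{Y^{1-\theta,\theta}}$. These bounds give exactly the last two terms of \eqref{bilinearestimateonq2}.

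\emph{Main obstacle.} The delicate point is the frequency bookkeeping and the conjugation conventions. The factor $\bar q$ lives on the paraboloid $\tau=-\xi^{2}$. Proposition \ref{propetesimativabilinearcrucial} is already formulated with $v$ conjugated, through the identity $\|\bar f\|_{Y_{\tau=-\xi^{2}}}=\|f\|_{Y}$, so the main thing to check is that the pairing matches. One should also make sure that the region where $P_{+hi}$ and $P_{+HI}$ differ truly contains only bounded output frequency, so that the crude $L^{1+\delta'}_{t}L^{2}_{x}$ bound there is legitimate.
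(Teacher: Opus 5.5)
Your proposal is correct and is essentially the paper's proof. You split $P_{+hi}$ into two pieces. The bounded-frequency piece $P_{+}(P_{2}+P_{4}+P_{8})$ is controlled through \eqref{Hstobourgain}, Bernstein and the time support by $\|q\|_{L^{\infty}_{t}H^{1}_{x}}^{2}$. For the $P_{+HI}$ piece you pass from $X^{s,-\frac12,1}$ to $X^{s,\gamma}$, then apply Proposition \ref{propetesimativabilinearcrucial} with $u=e^{iF}q$, $v=q$ and absorb the resulting norms into $Y^{s,1}$ and $Y^{1,1}$; your Cauchy--Schwarz argument in $L$ only makes explicit what the paper takes from Proposition \ref{propimersionsbourgainspaces}.

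The one slip is cosmetic: $P_{hi}-P_{HI}=P_{2}+P_{4}+P_{8}$ is supported in $1\le|\xi|\le 2^{4}$, not $2\le|\xi|\le 2^{5}$, which changes nothing.
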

\begin{proof}
Indeed, using Proposition \ref{propimersionsbourgainspaces}, Bernstein's inequality, and the Sobolev embedding $H^{1}\hookrightarrow L^{\infty}$, we deduce that
\begin{align}\label{bilinearestimateonq21}
    \|\partial_{x}P_{+hi}(e^{iF}|q|^{2})\|_{X^{s,-\frac{1}{2},1}}&\lesssim  \|\partial_{x}(P_{+hi}-P_{+HI})(e^{iF}|q|^{2})\|_{X^{s,-\frac{1}{2},1}}+ \|\partial_{x}P_{+HI}(e^{iF}|q|^{2})\|_{X^{s,-\frac{1}{2},1}} \nonumber \\
    &\lesssim \sum_{2\leq N\leq 2^{3}}\|\partial_{x}P_{N}(e^{iF}|q|^{2})\|_{X^{1,-\frac{1}{2}+}}+\|\partial_{x}P_{+HI}(e^{iF}|q|^{2})\|_{X^{s,-\frac{1}{2},1}}\nonumber \\
    &\lesssim \sum_{2\leq N\leq 2^{3}}\|\partial_{x}P_{N}(e^{iF}|q|^{2})\|_{L^{1+}_{t}H^{s}_{x}}+\|\partial_{x}P_{+HI}(e^{iF}|q|^{2})\|_{X^{s,-\frac{1}{2},1}}\nonumber \\
    &\lesssim T^{1-}\|e^{iF}|q|^{2}\|_{L^{\infty}_{t}L^{2}_{x}}+\|\partial_{x}P_{+HI}(e^{iF}|q|^{2})\|_{X^{s,-\frac{1}{2},1}}\nonumber \\
    &\lesssim \|q\|_{L^{\infty}_{t}H^{1}_{x}}^{2}+\|\partial_{x}P_{+HI}(e^{iF}|q|^{2})\|_{X^{s,-\frac{1}{2},1}}.
\end{align}
Then, we need only to estimate the term $\|\partial_{x}P_{+HI}(e^{iF}|q|^{2})\|_{X^{s,-\frac{1}{2},1}}$. In this regard, consider any $\gamma\in (-\frac{1}{2},-\frac{11}{24})$. Therefore, by using again Proposition \ref{propimersionsbourgainspaces}, along with Proposition \ref{propetesimativabilinearcrucial}, it follows that 
\begin{align}\label{bilinearestimateonq23}
&\quad \ \|\partial_{x}P_{+HI}(e^{iF}|q|^{2})\|_{X^{s,-\frac{1}{2},1}}\nonumber \\ 
&\lesssim \|\partial_{x}P_{+HI}(e^{iF}|q|^{2})\|_{X^{s,\gamma}}\nonumber \\
&\lesssim \sup_{0\leq \theta \leq 1}\|e^{iF}q\|_{Y^{1-\theta,\theta}}\sup_{0\leq \theta \leq 1}\|q\|_{Y^{s-\theta,\theta}}+\sup_{0\leq \theta \leq 1}\|e^{iF}q\|_{Y^{s-\theta,\theta}}\sup_{0\leq \theta \leq 1}\|q\|_{Y^{1-\theta,\theta}} +\|e^{iF}q\|_{Y^{1,0}}\|q\|_{Y^{s,\frac{1}{4}+}}\nonumber\\
&\lesssim \sup_{0\leq \theta \leq 1}\|e^{iF}q\|_{Y^{1-\theta,\theta}}\|q\|_{Y^{s,1}}+\sup_{0\leq \theta \leq 1}\|e^{iF}q\|_{Y^{s-\theta,\theta}}\|q\|_{Y^{1,1}}.
\end{align}
Thus, from estimates \eqref{bilinearestimateonq21} and \eqref{bilinearestimateonq23}, the result follows.
\end{proof}

\begin{remark}
We observe that an alternative strategy to estimate the term 
$
\|\partial_{x}P_{+hi}(e^{iF}|q|^{2})\|_{X^{s,-\frac{1}{2},1}}
$
would be to apply the embeddings 
$
L^{2}_{t}H^{s}_{x}\hookrightarrow X^{s,-\frac{1}{2}+}
\hookrightarrow X^{s,-\frac{1}{2},1},
$
which would allow us to remove the problematic exponential factor in the Bourgain space. However, in order to obtain local well-posedness in the energy space, this approach would require establishing the estimate
\begin{equation*}
    \|\partial_{x}(|q|^{2})\|_{L^{2}_{t}H^{s}_{x}}
    \lesssim 
    \|q\|_{Y^{s,b}}^{2},
\end{equation*}
for some $\frac{1}{2}<b\leq 1$. On the other hand, according to the proof of Theorem 4.2 in \cite{Shap-wellposedenss-Leandro}, this estimate fails.
\end{remark}

We are now in a position to prove the previous Proposition \ref{bilinearestimate}.

\begin{proof}[Proof of Proposition \ref{bilinearestimate}]
Let $(\tilde{r},\tilde{q})$ and $\tilde{w}$ be extensions of, respectively, $(r,q)$ and $w$ such that 
\begin{equation*}
\|\tilde{v}\|_{X^{1-2\theta,\theta}}\leq 2\|v\|_{X^{1-2\theta,\theta}_{T}},\ \|\tilde{q}\|_{Y^{s,1}}\lesssim 2\|q\|_{Y^{s,1}_{T}},\ \|e^{i\tilde{F}}\tilde{q}\|_{Y^{s-\theta,\theta}}\lesssim 2\|e^{iF}q\|_{Y_{T}^{s-\theta,\theta}}  ,\  \|\tilde{w}\|_{X^{s,\frac{1}{2},1}}\leq 2\|w\|_{X^{s,\frac{1}{2},1}_{T}},
\end{equation*}
for all $0\leq \theta \leq 1$. Since \( w \) satisfies the equation in \eqref{eqofw}, it follows from Duhamel's principle that \( w \) satisfies the integral formulation given by
\begin{equation}\label{duhamelofw}
    w(t)=\eta(t)V(t)w(0)+\eta(t)\int_{0}^{t}V(t-t')[N_{1}(e^{iF},v,W,w)+N_{2}(e^{iF},q)]dt'.
\end{equation}
Therefore, gathering the Propositions $4.2,\ 4.3$ and $4.4$ in \cite{HOBOinH1-Didier-Molinet} and the estimates \eqref{linearestimate}, \eqref{nonlinearestimate}, along with Proposition \ref{propbilinearestimateonq2}, we can deduce the following 
\begin{align}\label{proofbilinearestimate1}
    &\|w\|_{X^{s,\frac{1}{2},1}_{T}}\lesssim \|w(0)\|_{H^{s}}+\|w\|_{X^{s,\frac{1}{2},1}_{T}}\left(\sup_{0\leq \theta \leq 1}\|r\|_{X_{T}^{1-2\theta,\theta}}+\|r\|_{L^{4}_{T}W^{1,4}_{x}}+\|r\|_{L^{\infty}_{T}H^{1}_{x}}\|r\|_{L^{4}_{T}W^{1,4}_{x}}\right)\nonumber \\
    &+\|r\|_{L^{4}_{T,x}}^{2}+(1+\|r\|_{L^{\infty}_{T}H^{1}_{x}}^{2})(1+\|r\|_{L^{\infty}_{T}H^{1}_{x}})\left(\|r\|_{L^{\infty}_{T}H^{1}_{x}}^{2}\|r\|_{L^{\infty}_{T}H^{s}_{x}}+\|r\|_{L^\infty_xH^1_x}^{2}+\|r\|_{L^{2}_{x}L^{\infty}_{T}}\|J_{x}^{s}\partial_{x}r\|_{\widetilde{L^{\infty}_{x}L^{2}_{T}}}\right)\nonumber\\
    &+\|q\|_{L^{\infty}_{T}H^{1}_{x}}^{2}+\sup_{0\leq \theta \leq 1}\|e^{iF}q\|_{Y_{T}^{1-\theta,\theta}}\|q\|_{Y^{s,1}_{T}}+\sup_{0\leq \theta \leq 1}\|e^{iF}q\|_{Y_{T}^{s-\theta,\theta}}\|q\|_{Y_{T}^{1,1}}.
\end{align}
On the other hand, since $1\leq s\leq \frac{3}{2}$ then from estimate \eqref{estimateexponencialinHs} follows that
\begin{equation}\label{proofbilinearestimate2}
    \|w(0)\|_{H^{s}}=\|\partial_{x}P_{+hi}(e^{iF(0)})\|_{H^{s}}\lesssim\|e^{iF(0)}r_{0}\|_{H^{s}_{x}}\lesssim (1+\|r_{0}\|_{H^{1}}^{2})\|r_{0}\|_{H^{s}}.
\end{equation}
From estimates \eqref{proofbilinearestimate1} and \eqref{proofbilinearestimate2} the result follows.
\end{proof}

\begin{remark}
It is worth emphasizing that the estimate established in Proposition~4.2 of~\cite{HOBOinH1-Didier-Molinet} remains valid even in the case where \( a < 0 \) in equation (1.1) considered in that work. Indeed, the key resonance identity in this setting is given by
\begin{equation*}
    \sigma - \sigma_{1} - \sigma_{2} = -2\xi_{1}\xi_{2} - 3\xi\xi_{1}\xi_{2},
\end{equation*}
where
\[
    \sigma = \tau - \xi|\xi| - \xi^{3}
    \quad \text{and} \quad
    \sigma_{i} = \tau_{i} - \xi_{i}|\xi_{i}| - \xi_{i}^{3}.
\]
This identity is sufficient to establish the same bilinear estimate proved in the proposition mentioned earlier.
\end{remark}

\section{Local well-posedness for the HBOS system}

In this section, we gather the estimates established in the previous Section \ref{aprioriestimates1,2} and present the local and global well-posedness for the HBOS system in \eqref{BO-NLS}, as stated in Theorem \ref{maintheorem} and \ref{globalwellposedness}. In what follows, our proof is based on the approach used in Section 4 of \cite{BOinL2-Didier} and Section 5 of \cite{HOBOinH1-Didier-Molinet} (see also \cite{ILWinL2-Chapouto}). Then, some analogous or repetitive details will be omitted. Furthermore, it is important to note that, due to Remark \ref{remarkS>3/2}, we will only consider the case $1\leq s\leq \frac{3}{2}$, and the case $s>\frac{3}{2}$ follows from similar arguments since we can obtain the necessary estimates to prove this case. 

Fix $1\leq s\leq \frac{3}{2}$. Then, note that it is sufficient to prove Theorem \ref{maintheorem} for the space $H^{s}\times H^{s+k}$, for each $k\in [0,1]$ also fixed. We decided to use the notation $H^{s}\times H^{s+k}$ instead of $H^{s}\times H^{s'}$ because it makes the definition of the quantity $N_{T}^{s,k}(r,q)$ given in \eqref{Nquantity} below simpler. Furthermore, as in Theorem \ref{existenceofsmoothsolutions}, without loss of generality, we only need to prove the theorem in the case where the initial $(r_{0},q_{0})$ has a sufficiently small $H^{1}\times H^{1+k}$-norm. Thus, suppose that
\begin{equation}\label{datainitialnorm}
    \|(r_{0},q_{0})\|_{H^{1}\times H^{1+k}}=\varepsilon \leq \varepsilon_{0},
\end{equation}
for $0<\varepsilon_{0}\leq 1$ small enough to be chosen later.


\subsection{A priori estimates for smooth solutions}\label{WPaprioriestimates}

First, we will derive \textit{a priori} estimates on smooth solutions of \eqref{BO-NLS}. Let
$$(r,q)\in C([0,T_{0}]:H^{\infty}\times H^{\infty}),\quad  T_{0}=T_{0}(\|(r_{0},q_{0})\|_{H^{3}\times H^{3}})\in (0,1],$$ be a smooth solution to the system \eqref{BO-NLS} with initial data $(r_{0},q_{0})\in H^{\infty}\times H^{\infty}$ obtained by Theorem \ref{existenceofsmoothsolutions} and satisfying \eqref{datainitialnorm}. Furthermore, let $w$ be the gauge function defined as in \eqref{gaugefunction}. Then, we define
\begin{equation}\label{Nquantity}
    N_{T}^{s,k}(r,q):=\max \left\{ \|r\|_{L^{\infty}_{T}H^{s}_{x}},\|r\|_{L^{4}_{T}W^{s,4}_{x}},\|r\|_{L^{2}_{x}L^{\infty}_{T}},\|J^{s}_{x}\partial_{x}r\|_{\widetilde{L^{\infty}_{x}L^{2}_{T}}},\|q\|_{L^{\infty}_{T}H^{s+k}_{x}},\|w\|_{X^{s,\frac{1}{2},1}_{T}}\right\}.
\end{equation}
Since $v,q$ and $w$ are smooth functions, it follows that $T\mapsto N^{s,k}_{T}(r,q)$ is a continuous and nondecreasing function (see, for instance, \cite[Lemma 8.1]{CubicNLS-Oh-Guo} and \cite[Lemma A.8]{CubicNLS-Tadahiro-Oh}). We observe also the estimates \eqref{estimatebourgainr}, \eqref{estimatebourgainq}, \eqref{estimatebourgainexpq}, and \eqref{estimatebourgainw}, along with \eqref{datainitialnorm}, ensure that
\begin{equation}\label{limitN}
    \lim_{T\rightarrow 0^{+}}N_{T}^{s,k}(r,q)\lesssim \|r_{0}\|_{H^{s}}+\|q_{0}\|_{H^{s+k}}.
\end{equation}
Furthermore, gathering estimates \eqref{estimatebourgainr}-\eqref{estimatelinfl2r}, \eqref{estimatebourgainw} and using the definition \eqref{Nquantity}, we obtain 
\begin{equation}\label{estimateNs}
  N_{T}^{s,k}(q,r)\lesssim (1+\|r_{0}\|_{H^{1}}^{2})(\|r_{0}\|_{H^{s}}+\|q_{0}\|_{H^{s+k}})+P(N_{T}^{1,k}(q,r))N_{T}^{s,k}(r,q),  
\end{equation}
where $P$ is a polynomial with positive coefficients and no constant term. On the other hand, from \eqref{limitN} with $s=1$, we can obtain a positive time $0<T_{1}\leq T_{0}$ such that
\begin{equation}
    N_{T}^{1,k}(r,q)\leq C(\|r_{0}\|_{H^{1}}+\|q_{0}\|_{H^{1+k}})+\varepsilon,
\end{equation}
for all $0<T\leq T_1$, where $C$ is a constant. Therefore, using that initial data is satisfying \eqref{datainitialnorm}, it follows that
\begin{equation}\label{limitationN1}
    N_{T}^{1,k}(r,q)\leq C\varepsilon, \quad \forall\  T\in [0,T_1],
\end{equation}
where $C$ is not dependent on $\varepsilon$. Now, by a continuity argument and using the estimate \eqref{estimateNs} with $s=1$, we can ensure that \eqref{limitationN1} holds for all $0<T\leq T_{0}$, provided that $\varepsilon_{0}$ is sufficiently small. In this case, by making $\varepsilon_0$ smaller if necessary and using the estimates \eqref{estimateNs}, \eqref{limitationN1}, together with the fact that $P$ has positive coefficients and no constant term, we obtain that
\begin{equation}\label{boundNs}
    N_{T}^{s,k}(r,q)\lesssim \|r_{0}\|_{H^{s}}+\|q_{0}\|_{H^{s+k}},
\end{equation}
for all $0<T\leq T_0$.

\begin{remark}\label{Tindependentonj}
 Observe that, since $\|(r_0,q_0)\|_{H^1 \times H^{1+k}}$ is sufficiently small, the estimate \eqref{boundNs} allows us to apply Theorem \ref{existenceofsmoothsolutions} a finite number of times to extend the solution $(r,q)$ constructed above to the interval $[0,1]$. This is important to obtain a uniform existence time for the solutions $(r_j,q_j)$ considered in the compactness argument in Subsection \ref{subectionLocalWellposedness}.
\end{remark}

\subsection{Lipschitz bound for initial data having the same low frequency part}\label{WPcontflow}

Our goal here is to establish both the uniqueness and the continuity of the data-to-solution map by considering initial data with the same low-frequency. More precisely, we derive a Lipschitz bound for the solution map restricted to an affine subspace of $H^{s}\times H^{s+k}$. Since our strategy relies on estimates for the difference between two solutions, this restriction enables us to control the difference between the primitives of these solutions, which is essential for our analysis. In the next section, we shall see that these estimates are sufficient to deduce the uniqueness and continuity of the flow map in general.

Let $(r_{1}, q_{1}), (r_{2}, q_{2}) \in C([0, T]: H^{s}(\mathbb{R})\times H^{s+k}(\mathbb{R})),\  0<T\leq 1,$ be two solutions to the system \eqref{BO-NLS} corresponding to initials data $(\varphi_{1}, \psi_{1}),\  (\varphi_{2}, \psi_{2}) \in H^{s}(\mathbb{R}) \times H^{s+k}(\mathbb{R})$, respectively. Then, assume that the low-frequency projections of the initials data $\varphi_{1}$ and $\varphi_{2}$ coincide, i.e., $P_{LO} \varphi_{1} = P_{LO} \varphi_{2}$. 
Furthermore, suppose that the initial data satisfy
\begin{equation}\label{boundinitialdatai}
    \|\varphi_{j}\|_{H^{1}} + \|\psi_{j}\|_{H^{1+k}} \leq \varepsilon \leq \varepsilon_{0}, \quad \text{for } j = 1, 2.
\end{equation}
for $0<\varepsilon_{0}\leq 1$ small enough to be chosen later. Let $F_{j}$ denote the spatial primitive of $r_{j}$ as defined in \eqref{eqofF}, and let $W_{j}$ and $w_{j}$ be the associated gauge functions constructed from $F_{j}$ as specified in \eqref{gaugefunction}. Consequently, by invoking the estimates established in the preceding subsection (\eqref{limitN}-\eqref{boundNs}), the smallness condition on the initial data given in \eqref{boundinitialdatai} yields the a priori bound
\begin{equation}\label{boundNsr1r2}
\begin{array}{ll}
    &N_{T}^{1,k}(r_{j}, q_{j}) \lesssim \varepsilon \leq \varepsilon_{0},\\
    &N_{T}^{s,k}(r_{j},q_{j})\lesssim M,
    \end{array}
\end{equation}
for $j=1,2,$ and $M:=\displaystyle\max_{j=1,2}(\|\varphi_{j}\|_{H^{s}}+\|\psi_{j}\|_{H^{s+k}})$.
Define the following functions
\begin{equation*}
    r:=r_{1}-r_{2},\quad\quad q:=q_{1}-q_{2}, \quad W=W_{1}-W_{2}, \quad w:=w_{1}-w_{2}.
\end{equation*}
 Observe that, since $P_{LO}\varphi_{1}=P_{LO}\varphi_{2}$, we can apply the same argument as in (4-3) of \cite{BOinL2-Didier} to deduce that
 \begin{equation*}
     P_{lo}F_{1}(x,0)=P_{lo}F_{2}(x,0),
 \end{equation*}
 In this case, the mean-value theorem, together with Bernstein's inequality, implies that
 \begin{equation}\label{mvtF}
     \|e^{\pm iF_{1}(0)}-e^{\pm iF_{2}(0)}\|_{L^{\infty}}\lesssim \|\varphi_{1}-\varphi_{2}\|_{L^{2}}.
 \end{equation}
 Furthermore, by once again using that $P_{LO}\varphi_{1}=P_{LO}\varphi_{2}$, together with the Duhamel principle of \eqref{eqofF}, Bernstein's inequality, and the estimate \eqref{boundNsr1r2}, we obtain that (see, for instance, Lemma 4.1 in \cite{BOinL2-Didier})
 \begin{equation}\label{mvtFtx}
     \|e^{\pm iF_{1}}-e^{\pm iF_{2}}\|_{L^{\infty}_{T,x}}\lesssim \|r\|_{L^{\infty}_{T}H^{1}_{x}}+\|q\|_{L^{\infty}_{T}H^{1}_{x}}.
 \end{equation}
Thus, note that from Lemma \ref{lemmaestimateexponencialinHs} and the estimates \eqref{boundinitialdatai}-\eqref{mvtF}, we have
\begin{align}\label{boundw0}
    \|w(0)\|_{H^{s}}  & \lesssim  \|P_{+hi}(e^{iF_{1}(0)}\varphi_{1}-e^{iF_{2}(0)}\varphi_{2})\|_{H^{s}_{x}}\nonumber \\
    &\lesssim  \|P_{+hi}(e^{iF_{1}(0)}(\varphi_{1}-\varphi_{2}))\|_{H^{s}_{s}}+\|P_{+hi}(e^{iF_{1}(0)}-e^{iF_{2}(0)})\varphi_{2}\|_{H^{s}_{x}}\nonumber \\
    &\lesssim  (1+\|\varphi_{1}\|_{H^{1}}^{2})\|\varphi_{1}-\varphi\|_{H^{s}}\nonumber \\
    &\quad + (1+\|\varphi_{1}\|_{H^{1}}^{2}+\|\varphi_{2}\|_{H^{1}}^{2})(\|\varphi_{1}-\varphi_{2}\|_{H^{1}}+\|e^{iF(0)}-e^{iF_{2}(0)}\|_{L^{\infty}})\|\varphi_{2}\|_{H^{s}}\nonumber \\
    &\lesssim  \|\varphi_{1}-\varphi_{2}\|_{H^{s}_{x}}.
\end{align}
Now, since $w_{j}$ satisfies the equation \eqref{eqofw}, we deduce that

\vspace{-2.5em}
\begin{spacing}{1.2}
\begin{equation}\label{eqdifferencew1w2}
\begin{array}{ll}
    &\quad \partial_{t}w-b\mathcal{H}\partial_{x}^{2}w-a\partial_{x}^{3}w\\
    &=\partial_{x}P_{+hi}(e^{iF_{1}}(r_{1}^{2}-r_{2}^{2}))+ \partial_{x}P_{+hi}((e^{iF_{1}}-e^{iF_{2}})r_{2}^{2})+\partial_{x}P_{+hi}(e^{iF_{1}}(r_{1}^{3}-r_{2}^{3}))
    \\&\quad +\partial_{x}P_{+hi}((e^{iF_{1}}-e^{iF_{2}})r_{2}^{3})+\partial_{x}P_{+hi}(W_{1}P_{-}\partial_{x}r)
    +\partial_{x}P_{+hi}(WP_{-}\partial_{x}r_{2})\\
    &\quad +\partial_{x}P_{+hi}(P_{lo}e^{iF_{1}}P_{-}\partial_{x}r)+\partial_{x}P_{+hi}(P_{lo}(e^{iF_{1}}-e^{iF_{2}})P_{-}\partial_{x}r_{2})+\partial_{x}P_{+hi}(w_{1}P_{-}\partial_{x}r) \\
    &\quad +\partial_{x}P_{+hi}(wP_{-}\partial_{x}r_{2})+\partial_{x}P_{+hi}(P_{lo}(e^{iF_{1}}r_{1})P_{-}\partial_{x}r)+\partial_{x}P_{+hi}(P_{lo}(e^{iF_{1}}r_{1}-e^{iF_{2}}r_{2})P_{-}\partial_{x}r_{2}) \\
    &\quad +\frac{1}{2}\partial_{x}P_{+hi}(W_{1}P_{-}\partial_{x}(r_{1}^{2}-r_{2}^{2}))+\frac{1}{2}\partial_{x}P_{+hi}(WP_{-}\partial_{x}(r_{2}^{2}))+\frac{1}{2}\partial_{x}P_{+hi}(P_{lo}e^{iF_{1}}P_{-}\partial_{x}(r_{1}^{2}-r_{2}^{2}))\\
    &\quad +\frac{1}{2}\partial_{x}P_{+hi}(P_{lo}(e^{iF_{1}}-e^{iF_{2}})P_{-}\partial_{x}(r_{2}^{2}))+\partial_{x}P_{+hi}((e^{iF_{1}}q_{1}-e^{iF_{2}}q_{2})\bar{q_{1}})+\partial_{x}P_{+hi}(e^{iF_{2}}q_{2}\bar{q}).
\end{array}
\end{equation}
\end{spacing}
\vspace{-1.5em}
\noindent Therefore, the equation \eqref{eqdifferencew1w2}, the estimates \eqref{linearestimate}, \eqref{nonlinearestimate}, \eqref{boundNsr1r2}, \eqref{mvtFtx}, and \eqref{boundw0},  Lemmas \ref{LemmaFractionalDerivativeMolinet} and \ref{lemmaestimateexponencialinHs}, Propositions \ref{estimatesaprioriofrq} and \ref{propetesimativabilinearcrucial}, along with Propositions 4.2, 4.3, and 4.4 of \cite{HOBOinH1-Didier-Molinet}, yield the following
\begin{align}\label{contflowestimatew}
\|w\|_{X^{s,\frac{1}{2},1}}
&\lesssim \|\varphi_{1}-\varphi_{2}\|_{H^{s}}+\sup_{0\leq \theta \leq 1}\|e^{iF_{1}}q_{1}-e^{iF_{2}}q_{2}\|_{Y_{T}^{1-\theta,\theta}}+\|q\|_{Y^{1,1}_{T}}+\sup_{0\leq \theta\leq 1}\|r\|_{X^{1-2\theta,\theta}_{T}} +\|r\|_{L^{4}_{T}W^{1,4}_{x}}\nonumber\\
&\quad +\|r\|_{L^{2}_{x}L^{\infty}_{T}}+\varepsilon\big( \|r\|_{L^{\infty}_{T}H^{s}_{x}}+\|q\|_{L^{\infty}_{T}H^{s+k}_{x}}+\|J_{x}^{s}\partial_{x}r\|_{\widetilde{L^{^\infty}_{x}L^{2}_{T}}} +\|w\|_{X^{s,\frac{1}{2},1}_{T}}\nonumber\\
&\quad +\sup_{0\leq \theta \leq 1}\|e^{iF_{1}}q_{1}-e^{iF_{2}}q_{2}\|_{Y_{T}^{s-\theta,\theta}}+\|q\|_{Y_{^T}^{s,1}}\big).
\end{align}
Next, note that $r$ satisfies the following equation
\begin{align}\label{contfloweqofrdiferrence}
 \partial_{t}r-b\mathcal{H}\partial_{x}^{2}r+a\partial_{x}^{3}r&=\frac{c}{2}\partial_{x}((r_{1}+r_{2})r)-d\partial_{x}(r_{1}\mathcal{H}\partial_{x}r+\mathcal{H}(r_{1}\partial_{x}r)) \nonumber -d\partial_{x}(r\mathcal{H}\partial_{x}r_{2}+\mathcal{H}(r\partial_{x}r_{2})) \nonumber \\
 &\quad +\beta\partial_{x}(q_{1}\bar{q}+q\bar{q_{2}}).
\end{align}
In this case, proceeding as in the proof of the estimate \eqref{estimatebourgainr} and using the estimate \eqref{boundNsr1r2}, we obtain
\begin{align}\label{contflowestimateBourgainr}
    \sup_{0\leq \theta \leq 1}\|r\|_{X^{1-2\theta,\theta}_{T}}
    &\lesssim \|r\|_{L^{\infty}_{T}H^{1}_{x}}+\varepsilon(\|r\|_{L^{4}_{T}W^{s,4}_{x}}+\|q\|_{L^{\infty}_{T}H^{1}_{x}}).
\end{align}
On the other hand, the equations satisfied by $q_{j}$ and $e^{iF_{j}}q_{j}$ ensure that 
\begin{align}
&i\partial_{t}q-\alpha\partial_{x}^{2}q=-\beta(q_{1}r-qr_{2}), \label{eqofqdifferencesol}
\end{align}
and,
\begin{align}\label{equationdifferenceexpqj}
    &\quad \ (i\partial_{t}-\alpha\partial_{x}^{2})(e^{iF_{1}}q_{1}-e^{iF_{2}}q_{2})\nonumber\\
    &=-Ae^{iF_{1}}q\big[bH\partial_{x}r_{1}+a\partial_{x}^{2}r_{1}+\frac{c}{2}r_{1}^{2}-d(r_{1}\mathcal{H}\partial_{x}r_{1}+\mathcal{H}(r_{1}\partial_{x}r_{1}))+\beta|q_{1}|^{2}\big] \nonumber \\
    &\quad -A(e^{iF_{1}}-e^{iF_{2}})q_{2}[bH\partial_{x}r_{1}+a\partial_{x}^{2}r_{1}+\frac{c}{2}r_{1}^{2}-d(r_{1}\mathcal{H}\partial_{x}r_{1}+\mathcal{H}(r_{1}\partial_{x}r_{1}))+\beta|q_{1}|^{2}\big] -Ae^{iF_{2}}q_{2}\times \nonumber \\
    &\quad \times[bH\partial_{x}r+a\partial_{x}^{2}r+\frac{c}{2}(r_{1}+r_{2})r-d(r_{1}\mathcal{H}\partial_{x}r+\mathcal{H}(r_{1}\partial_{x}r)+r\mathcal{H}\partial_{x}r_{2}+\mathcal{H}(r\partial_{x}r_{2}))+\beta( q_{1}\bar{q}+q\bar{q_{2}})\big] \nonumber\\
    &\quad -\alpha\partial_{x}^{2}(e^{iF_{1}})q-\alpha\partial_{x}^{2}(e^{iF_{1}}-e^{iF_{2}}) q_{2}-2\alpha \partial_{x}(e^{iF_{1}})\partial_{x}q-2\alpha\partial_{x}(e^{iF_{1}}-e^{iF_{2}})\partial_{x}q_{2}-\beta e^{iF_{1}}r_{1}q\nonumber\\
    &\quad -\beta (e^{iF_{1}}r_{1}-e^{iF_{2}}r_{2})q_{2}.
\end{align}
Therefore, using the equations \eqref{eqofqdifferencesol} and \eqref{equationdifferenceexpqj}, and proceeding exactly as done in the proof of the estimates \eqref{estimatebourgainq} and \eqref{estimatebourgainexpq}, along with Lemma 3.3 in \cite{ILWinL2-Chapouto} and the estimates \eqref{boundNsr1r2} and \eqref{mvtFtx}, we deduce that
\begin{align}
\|q\|_{Y^{s,1}_{T}}&\lesssim \|\psi_{1}-\psi_{2}\|_{H^{s}}+ \|q _{1}\|_{L^{\infty}_{T}H^{s}_{x}}\|r\|_{L^{\infty}_{T}H^{s}_{x}}+\|q\|_{L^{\infty}_{T}H^{s}_{x}}\|r_{2}\|_{L^{\infty}_{T}H^{s}_{x}},\label{flowdiffbourgainq}\\
\|q\|_{Y^{1,1}_{T}}&\lesssim \|\psi_{1}-\psi_{2}\|_{H^{1}}+\varepsilon(\|r\|_{L^{\infty}_{T}H^{1}_{x}}+\|q\|_{L^{\infty}_{T}H^{1}_{x}}),
\end{align}
\begin{align}
\sup_{0\leq \theta \leq 1}\|e^{iF_{1}}q_{1}-e^{iF_{2}}q_{2}\|_{Y_{T}^{s-\theta,\theta}}
&\lesssim \|r\|_{L^{\infty}_{T}H^{s}_{x}}+\|q\|_{L^{\infty}_{T}H^{s}_{x}}+\|D_{x}^{s-1}q\|_{\widetilde{L^{2}_{x}L^{\infty}_{T}}}+\|J_{x}^{s}\partial_{x}r\|_{\widetilde{L^{\infty}_{x}L^{2}_{T}}},\label{flowdiffbourgainexpq}\\
    \sup_{0\leq \theta \leq 1}\|e^{iF_{1}}q_{1}-e^{iF_{2}}q_{2}\|_{Y_{T}^{1-\theta,\theta}}&\lesssim \varepsilon(\|r\|_{L^{\infty}_{T}H^{1}_{x}}+\|D_{x}^{s-1}q\|_{\widetilde{L^{2}_{x}L^{\infty}_{T}}}+\|J_{x}^{s}\partial_{x}r\|_{\widetilde{L^{\infty}_{x}L^{2}_{T}}})+\|q\|_{L^{\infty}_{T}H^{1}_{x}},\label{flowdiffbourgainexpq1}
\end{align}
where we have used the following estimates obtained from the Bernstein inequality
\begin{equation}\label{estimatefrom1tos}
\|q\|_{\widetilde{L^{2}_{x}L^{\infty}_{T}}}\lesssim (1+\|r\|_{L^{\infty}_{T}H^{1}_{x}})\|q\|_{L^{\infty}_{T}H^{1}_{x}}+\|D_{x}^{s-1}q\|_{\widetilde{L^{2}_{x}L^{\infty}_{T}}} \quad \textrm{and} \quad \|J_{x}^{1}\partial_{x}r\|_{\widetilde{L^{\infty}_{x}L^{2}_{T}}}\lesssim \|r\|_{L^{\infty}_{T}H^{1}_{x}}+\|J_{x}^{s}\partial_{x}r\|_{\widetilde{L^{\infty}_{x}L^{2}_{T}}},
\end{equation}
\indent Also, since $r_{1}$ and $r_{2}$ satisfy the equation \eqref{rintermofw2}, then $r$ must satisfy
\begin{align}\label{eqPHIrdifference}
  iAP_{+HI}r&=P_{+HI}(e^{-iF_{1}}w)+P_{+HI}((e^{-iF_{1}}-e^{iF_{2}})w_{2})+P_{+HI}(P_{+hi}e^{-iF_{1}}\partial_{x}P_{lo}(e^{iF_{1}}-e^{iF_{2}}))\nonumber\\
  &\quad+P_{+HI}(P_{+hi}(e^{iF_{1}}-e^{iF_{2}})\partial_{x}P_{lo}e^{iF_{2}})+P_{+HI}(P_{+HI}e^{-iF_{1}}\partial_{x}P_{-hi}(e^{iF_{1}}-e^{iF_{2}})) \nonumber \\
  &\quad +P_{+HI}(P_{+HI}(e^{-iF_{1}}-e^{iF_{2}})\partial_{x}P_{-hi}e^{iF_{2}}).
\end{align}
Therefore, the same arguments applied in the proof of the estimate \eqref{estimateofrinhs}, together with the estimates \eqref{estimateexponencialinHs}, \eqref{estimateexponencialinHs2}, and \eqref{mvtFtx}, yield the following
\begin{align}\label{contflowestimateHsr}
&\|J^{s}_{x}r\|_{L^{p}_{T}L^{q}_{x}} \lesssim  \|\varphi_{1}-\varphi_{2}\|_{H^{s}}+ \|w\|_{X^{s,\frac{1}{2},1}_{T}}+ \varepsilon(\|r\|_{L^{\infty}_{T}H^{1}_{x}}+\|q\|_{L^{\infty}_{T}H^{1}_{x}}),
\end{align}
for $(p,q)=(\infty,2)$ or $(4,4)$, where we used the preceding argument twice (also for $\|r\|_{L^{\infty}_{T}H^{1}_{x}}$ and $\|q\|_{L^{\infty}_{T}H^{1}_{x}}$), along with the estimates \eqref{estimatebourgainw} and \eqref{boundNsr1r2}, in order to obtain the $\varepsilon$ in the estimate.
In the same way, using once again that $q$ satisfies the equation \eqref{eqofqdifferencesol}, proceeding as we have done for \eqref{estimateofqHs'}, and applying the estimate \eqref{boundNsr1r2}, we can obtain that
\begin{align}\label{contflowestimateHskq}
\|J^{s+k}_{x}q\|_{L^{\infty}_{T}L^{2}_{x}}
&\lesssim \|\psi_{1}-\psi_{2}\|_{H^{s+k}}+\varepsilon(\|J^{s}_{x}\partial_{x}r\|_{\widetilde{L^{\infty}_{x}L^{2}_{T}}}+\|q\|_{L^{\infty}_{T}H^{s+k}_{x}})+\|q\|_{\widetilde{L^{2}_{x}L^{\infty}_{T}}}+\|r\|_{L^{\infty}_{T}H^{1}_{x}}.
\end{align}
\noindent Regarding the term $\|r\|_{L^{2}_{x}L^{\infty}_{T}}$, an application of the equations \eqref{contfloweqofrdiferrence} and \eqref{eqPHIrdifference}, the estimates \eqref{boundNsr1r2} and \eqref{mvtFtx}, and proceeding as done in the proof of \eqref{estimatel2linfr}, yields that
\begin{equation}\label{contflowestimateL2Linftyr}
    \|r\|_{L^{2}_{x}L^{\infty}_{T}}\lesssim \|\varphi_{1}-\varphi_{2}\|_{H^{1}}+\varepsilon(\|r\|_{L^{\infty}_{T}H^{1}_{x}}+\|q\|_{L^{\infty}_{T}H^{1}_{x}}+\|r\|_{L^{2}_{x}L^{\infty}_{T}})+\|w\|_{X^{1,\frac{1}{2},1}}
\end{equation}
Now, once again employing the fact that $q$ satisfies \eqref{eqofqdifferencesol}, together with Duhamel’s principle, the bound \eqref{boundNsr1r2}, and following the same procedure used to derive \eqref{estimatel2linfq}, we obtain
\begin{equation}\label{contflowestimateL2LinftyQ}
    \|D_{x}^{s-1}q\|_{\widetilde{L^{2}_{x}L^{\infty}_{T}}}\lesssim \|\psi_{1}-\psi_{2}\|_{H^{s}}+\varepsilon(\|r\|_{L^{\infty}_{T}H^{s}_{x}}+\|q\|_{L^{\infty}_{T}H^{s}_{x}})+\|r\|_{L^{\infty}_{T}H^{1}_{x}}+\|q\|_{L^{\infty}_{T}H^{1}_{x}}.
\end{equation}
Lastly, on the one hand, from ideas applied to derive \eqref{estimatelinfl2r}, the fact that $r$ satisfies \eqref{contfloweqofrdiferrence} and \eqref{eqPHIrdifference}, and the estimates  \eqref{boundNsr1r2} and \eqref{mvtFtx}, it follows that
\begin{align}\label{contflowestmateLinftyL2R}
\|J^{s}_{x}\partial_{x}r\|_{\widetilde{L^{\infty}_{x}L^{2}_{T}}}\lesssim \|\varphi_{1}-\varphi_{2}\|_{H^{1}_{x}}+\varepsilon\|J^{s}_{x}\partial_{x}r\|_{\widetilde{L^{\infty}_{x}L^{2}_{T}}}+\|r\|_{L^{\infty}_{T}H^{1}_{x}}+\|q\|_{L^{\infty}_{T}H^{1}_{x}}+\|w\|_{X^{s,\frac{1}{2},1}_{T}}.
\end{align}
Consequently, by combining the estimates \eqref{contflowestimatew}, \eqref{contflowestimateBourgainr}, \eqref{flowdiffbourgainq}-\eqref{flowdiffbourgainexpq1}, and \eqref{contflowestimateHsr}-\eqref{contflowestmateLinftyL2R}, and upon choosing $\varepsilon > 0$ sufficiently small, we deduce the following 
\begin{align}\label{flowmapboundlipchistz}
    &\|J_{x}^{s}r\|_{L^{\infty}_{T}L^{2}_{x}} + \|J^{s}_{x}r\|_{L^{4}_{T,x}} + \sup_{0 \leq \theta \leq 1} \|r\|_{X^{1 - 2\theta, \theta}_{T}} + \|r\|_{L^{2}_{x}L^{\infty}_{T}} + \|J^{s}_{x}\partial_{x}r\|_{\widetilde{L^{\infty}_{x}L^{2}_{T}}} + \|w\|_{X^{s,\frac{1}{2},1}_{T}} \nonumber \\
    & \quad  + \|J^{s+k}_{x}q\|_{L^{\infty}_{T}L^{2}_{x}} + \|D^{s-1}_{x}q\|_{\widetilde{L^{2}_{x}L^{\infty}_{T}}}+\|q\|_{Y^{s,1}_{T}}+\sup_{0\leq \theta \leq 1}\|e^{iF_{1}}q_{1}-e^{iF_{2}}q_{2}\|_{Y_{T}^{s-\theta,\theta}} \nonumber \\
    &\leq C(M) \big( \|\varphi_{1} - \varphi_{2}\|_{H^{s}} + \|\psi_{1} - \psi_{2}\|_{H^{s+k}} \big),
\end{align}
where the constant $C(M)$ depends on $M$.

\subsection{Local well-posedness} \label{subectionLocalWellposedness}

This subsection is dedicated to establishing the existence, uniqueness, and continuous dependence on initial data for the HBOS system \eqref{BO-NLS}, as stated in Theorem \ref{maintheorem}. In this regard, the estimates derived in Subsections \ref{WPaprioriestimates} and \ref{WPcontflow} will play a fundamental role in the proof. 

Let $(r_{0},q_{0}) \in H^{s}(\mathbb{R})\times H^{s+k}(\mathbb{R})$ be any arbitrary initial data satisfying the condition \eqref{datainitialnorm}. Then, define the following sequences
\begin{equation}\label{WPdefinitionsequences}
    r_{0,j}:=\mathcal{F}_{x}^{-1}(\mathcal{X}_{\{|\xi|\leq j\}}\mathcal{F}_{x}r_{0}) \quad \textrm{and} \quad q_{0,j}:=\mathcal{F}_{x}^{-1}(\mathcal{X}_{\{|\xi|\leq j\}}\mathcal{F}_{x}q_{0}),
\end{equation}
for all $j\in \mathbb{Z}_{+}$. It is not difficult to obtain that 
\begin{equation}\label{limitrjqj}
    (r_{0,j},q_{0,j})\in H^{\infty}\times H^{\infty}, \quad \lim_{j\rightarrow \infty}\|r_{0,j}-r_{0}\|_{H^{s}}=\lim_{j\rightarrow\infty}\|q_{0,j}-q_{0}\|_{H^{s+k}}=0.
\end{equation}
Furthermore, we have that $P_{LO} r_{0,j} = P_{LO} r_{0,j'}$ and $P_{LO} q_{0,j} = P_{LO} q_{0,j'}$ for all $j, j' \geq 10$. Now, let $(r_{j}, q_{j}) \in C([0,T]; H^{\infty}(\mathbb{R}) \times H^{\infty}(\mathbb{R}))$ be the solutions obtained from Theorem~\ref{existenceofsmoothsolutions} and associated with the initial data $(r_{0,j}, q_{0,j})$, and $w_{j}$ the gauge function as defined in \eqref{gaugefunction} (the time $T$ can be taken to be independent of $j$, according to Remark \ref{Tindependentonj}). Therefore, by the arguments presented in Subsections \ref{WPaprioriestimates} and \ref{WPcontflow}, which were used to derive the estimates \eqref{boundNs} and \eqref{flowmapboundlipchistz}, respectively, it follows that
\begin{equation*}
    N_{T}^{s,k}(r_{j},q_{j})\lesssim \|(r_{0,j},q_{0,j})\|_{H^{s}\times H^{s+k}}\leq \|(r_{0},q_{0})\|_{H^{s}\times H^{s+k}}
\end{equation*}
and,
\begin{align}\label{WPCauchysequence}
    & \|J_{x}^{s}(r_{j} - r_{j'})\|_{L^{\infty}_{T} L^{2}_{x}} + \|J_{x}^{s}(r_{j} - r_{j'})\|_{L^{4}_{T,x}} + \sup_{0 \leq \theta \leq 1} \|r_{j} - r_{j'}\|_{X^{1 - 2\theta, \theta}_{T}} + \|r_{j} - r_{j'}\|_{L^{2}_{x} L^{\infty}_{T}} \nonumber \\
    & \quad + \|J_{x}^{s} \partial_{x} (r_{j} - r_{j'})\|_{\widetilde{L^{\infty}_{x} L^{2}_{T}}} + \|w_{j} - w_{j'}\|_{X^{s, \frac{1}{2}, 1}_{T}} + \|J_{x}^{s+k} (q_{j} - q_{j'})\|_{L^{\infty}_{T} L^{2}_{x}} + \|D_{x}^{s-1}(q_{j} - q_{j'})\|_{\widetilde{L^{2}_{x} L^{\infty}_{T}}} \nonumber \\
    &\quad +  \|q_{j}-q_{j'}\|_{Y^{s,1}_{T}}+\|e^{iF_{j}}q_{j}-e^{iF_{j'}}q_{j'}\|_{Y^{s-\theta,\theta}_{T}} \nonumber \\
    & \lesssim \|r_{0,j} - r_{0,j'}\|_{H^{s}} + \|q_{0,j} - q_{0,j'}\|_{H^{s+k}},
\end{align}
for all $j, j'$ sufficiently large. Thus, by using \eqref{limitrjqj}, it follows that there exist $(r,q)\in C([0,T]:H^{s}(\mathbb{R})\times H^{s+k}(\mathbb{R}))$ and $w\in X^{s,\frac{1}{2},1}_{T}$ which are limits of the sequences $(r_{j},q_{j})$ and $w_{j}$, respectively, such that 
\begin{align*}
r\in L^{4}_{T}W^{s,4}_{x}\cap L^{2}_{x}L^{\infty}_{T}\cap X^{s-2\theta,\theta}_{T},  \quad \textrm{and} \quad q\in Y^{s,1}_{T}\cap L^{2}_{x}L^{\infty}_{T},
\end{align*}
for all $\theta \in [0,1]$. It is a standard argument to verify that $(r,q)$ is a solution to the system \eqref{BO-NLS}, that $w$ satisfies the identity \eqref{identityofw}, and that the initial condition is satisfied. This proves the existence.
\newline
\indent The proof of the uniqueness follows readily by combining the estimate \eqref{flowmapboundlipchistz} and a scaling argument.
\newline
\indent Finally, we establish the continuity of the flow map. Let $(r_{0},q_{0})\in H^{s}\times H^{s+k}$ satisfy the bound \eqref{datainitialnorm}, and denote by $(r,q)$ the corresponding solution to \eqref{BO-NLS} on $[0,T]$ with this initial data. Fix $\lambda>0$. We shall prove that there exists $\delta>0$ such that, if $(u,v)$ is another solution to \eqref{BO-NLS} on $[0,T]$ with initial condition $(u(0),v(0))=(u_{0},v_{0})$ and 
$
     \|r_{0}-u_{0}\|_{H^{s}}+ \|q_{0}-v_{0}\|_{H^{s+k}} \leq \delta,
$ then 
\begin{equation*}
    \|r-u\|_{L^{\infty}_{T}H^{s}_{x}}+ \|q-v\|_{L^{\infty}_{T}H^{s+k}_{x}}  \leq \lambda.
\end{equation*}
In fact, let $(r_{0,j},q_{0,j})$ and $(u_{0,j},v_{0,j})$ be the sequences constructed as in \eqref{WPdefinitionsequences}, and denote by $(r_{j},q_{j})$ and $(u_{j},v_{j})$ the corresponding solutions to \eqref{BO-NLS} with initial data $(r_{0,j},q_{0,j})$ and $(u_{0,j},v_{0,j})$, respectively. Then we observe that
\begin{align}\label{WPcontflow1}
\|r-u\|_{L^{\infty}_{T}H^{s}_{x}}+ \|q-v\|_{L^{\infty}_{T}H^{s+k}_{x}}
&\leq \|r_{j}-r\|_{L^{\infty}_{T}H^{s}_{x}}+\|r_{j}-u_{j}\|_{L^{\infty}_{T}H^{s}_{x}}+\|u_{j}-u\|_{L^{\infty}_{T}H^{s}_{x}} \nonumber \\
&\quad+ \|q_{j}-q\|_{L^{\infty}_{T}H^{s+k}_{x}}+\|q_{j}-v_{j}\|_{L^{\infty}_{T}H^{s+k}_{x}}+\|v_{j}-v\|_{L^{\infty}_{T}H^{s+k}_{x}}.
\end{align}
On the one hand, from the estimate \eqref{WPCauchysequence}, it follows that there exists $j_{0}$ sufficiently large such that
\begin{equation}\label{WPcontflow2}
    \|r_{j}-r\|_{L^{\infty}_{T}H^{s}_{x}}+\|q_{j}-q\|_{L^{\infty}_{T}H^{s+k}_{x}}
    +\|u_{j}-u\|_{L^{\infty}_{T}H^{s}_{x}}+\|v_{j}-v\|_{L^{\infty}_{T}H^{s+k}_{x}}
    \leq \tfrac{\lambda}{2},
\end{equation}
for every $j\geq j_{0}$. On the other hand, by the definition of the initial data, we have the following estimate
\begin{equation*}
    \|r_{0,j}-u_{0,j}\|_{H^{3}}+\|q_{0,j}-v_{0,j}\|_{H^{3}}
    \lesssim j^{3-s}\|r_{0}-u_{0}\|_{H^{s}}
    + j^{3-s-k}\|q_{0}-v_{0}\|_{H^{s+k}}
    \leq j^{3-s}\delta.
\end{equation*}
Therefore, by the continuity of the flow map ensured by Theorem \ref{existenceofsmoothsolutions}, and taking $\delta$ sufficiently small, we deduce that
\begin{equation}\label{WPcontflow3}
    \|r_{j_{0}}-u_{j_{0}}\|_{L^{\infty}_{T}H^{s}_{x}}
    +\|q_{j_{0}}-v_{j_{0}}\|_{L^{\infty}_{T}H^{s+k}_{x}}
    \leq \tfrac{\lambda}{2}.
\end{equation}
Combining estimates \eqref{WPcontflow1}, \eqref{WPcontflow2}, and \eqref{WPcontflow3}, the desired result follows.

\subsection{Global well-posedness}

In this subsection, we take advantage of the conserved quantities \eqref{Hamiltoniano}-\eqref{Momento} to prove the global result stated in Theorem \ref{globalwellposedness}. Indeed, the idea is that these conserved quantities can provide an estimate of $H^{1}\times H^{1}$-norm of the solutions of HBOS system \eqref{BO-NLS}, which is independent on time. 
\newline
\indent Assume that $a<0$ in the system \eqref{BO-NLS}, and let $s\geq 1$. Let $(r,q)\in C([0,T])
:H^{s}(\mathbb{R})\times H^{s}(\mathbb{R}))$ be a solution associated with the initial data $(r_{0},q_{0})\in H^{s}\times H^{s}$ obtained from Theorem \ref{maintheorem}, where $T=T(\|(r_{0},q_{0})\|_{H^{1}\times H^{1}})$. Suppose that $\|(r_0,q_{0})\|_{H^{1}\times H^1}\leq \delta_{0}$ for $\delta_{0}>0$ small enough to be fixed later. By the continuous dependence provided by Theorem \ref{BO-NLS}, the solution $(r,q)$ satisfies the conserved quantities \eqref{Hamiltoniano}, \eqref{Mass}, and \eqref{Momento}. In particular, using the conserved quantity \eqref{Hamiltoniano}, we deduce that
\begin{align}\|\partial_{x}r\|_{L^{2}_{x}}^{2}+\|\partial_{x}q\|_{L^{2}_{x}}^{2}&\lesssim |E_{1}(r_{0},q_{0})|+\left|\int_{\mathbb{R}}r\mathcal{H}\partial_{x}rdx\right|+\left|\int_{\mathbb{R}}r^{3}dx\right|+\left| \int_{\mathbb{R}}r|q|^{2}dx\right|+\left|\int_{\mathbb{R}}r^{2}\mathcal{H}\partial_{x}r  dx\right| \nonumber \\
&=|E_{1}(r_{0},q_{0})|+I_{1}+I_{2}+I_{3}+I_{4}, \label{gweq0}
\end{align}
where the constant in the inequality depends only on $a,b,c,d,\alpha$ and $\beta$. On the other hand, from Young's inequality, it follows that
\begin{equation}
    I_{1}\lesssim  \|r\|_{L^{2}_{x}}^{2}+\varepsilon\|\partial_{x}r\|_{L^{2}_{x}}^{2}, \label{gweq1}
\end{equation}
for any $\varepsilon>0$.
Now, the Gagliardo-Nirenberg inequality, Young's inequality, and the conserved quantity \eqref{Mass} yield 
\begin{align}
    I_{2}&\lesssim \|r\|_{L^{2}_{x}}^{\frac{5}{2}}\|\partial_{x}r\|_{L^{2}_{x}}^{\frac{1}{2}}\lesssim \|r\|_{L^{2}_{x}}^{\frac{10}{3}}+\varepsilon\|\partial_{x}r\|_{L^{2}_{x}}^{2}, \label{gweq2}\\
    I_{3}&\lesssim \|r\|_{L^{2}_{x}}^{2}+\|q\|_{L^{4}_{x}}^{4}\lesssim \|r\|_{L^{2}_{x}}^{2}+\|q_{0}\|_{L^{2}}^{3}\|\partial_{x}q\|_{L^{2}_{x}}\lesssim \|r\|_{L^{2}_{x}}^{2}+\|q_{0}\|_{L^{2}}^{6}+\varepsilon \|\partial_{x}q\|_{L^{2}_{x}}^{2}, \label{gweq3}\\
    I_{4}&\lesssim \|r\|_{L^{4}_{x}}^{2}\|\partial_{x}r\|_{L^{2}_{x}}\lesssim \|r\|_{L^{2}_{x}}^{\frac{3}{2}}\|\partial_{x}r\|_{L^{2}_{x}}^{\frac{3}{2}}\lesssim \|r\|_{L^{2}_{x}}^{6}+\varepsilon\|\partial_{x}r\|_{L^{2}_{x}}^{2}. \label{gweq4}
\end{align}
Therefore, gathering the estimates \eqref{gweq0}-\eqref{gweq4} along with \eqref{Mass}, and choosing $\varepsilon$ small enough, we obtain
\begin{equation}\label{gweq8}
\|(r,q)\|_{H^{1}_x\times H^{1}_x}^{2}=\|q\|_{L^{2}_{x}}^{2}+\|r\|_{L^{2}}^{2}+\|\partial_{x}r\|_{L^{2}_{x}}^{2}+\|\partial_{x}q\|_{L^{2}_{x}}^{2}\leq C_{0}+C_{1}\|r\|_{L^{2}}^{6},
\end{equation}
where $C_{0}=C_{0}(\|r_{0}\|_{H^{1}},\|q_{0}\|_{H^{1}})$ and $C_{1}$ are positive constants. We observe that the same arguments used to estimate $I_{j}$ ensure that $|E_{1}(r_{0},q_{0})|\lesssim C(\|r_{0}\|_{H^{1}},\|r_{0}\|_{H^{1}})$. Now, using the conserved quantity \eqref{Momento} and Hölder's inequality, it follows that
\begin{equation}\label{gweq9}
    \|r\|_{L^{2}_{x}}^{2}\leq 2\left(|E_{3}(r_{0},q_{0})|+\|q_{0}\|_{L^{2}}\|\partial_{x}q\|_{L^{2}_{x}}\right).
\end{equation}
Thus, from the estimates \eqref{gweq8} and \eqref{gweq9}, we conclude
\begin{equation}\label{gweq10}
    \gamma(t)^{2}\leq \tilde{C}_{0}+\tilde{C_{1}}\delta_{0}^{3}\gamma(t)^{3},
\end{equation}
where $\gamma(t):=\|(r(t),q(t))\|_{H^{1}_x\times H^{1}_x}$ and $\tilde{C_{0}}=\tilde{C_{0}}(\|r_{0}\|_{H^{1}},\|q_{0}\|_{H^{1}})$.
Therefore, by taking $\delta_{0}$ small enough, the estimate \eqref{gweq10}, together with a continuity argument, leads us to
\begin{equation}\label{estimateGlobal}
    \sup_{0\leq t\leq T}\gamma(t)\leq M,
\end{equation}
where $M=2\max\{\tilde{C_{0}}^{\frac{1}{2}},\gamma(0)\}$ depends only on $\|(r_{0},q_{0})\|_{H^{1}\times H^{1}}$. The estimate \eqref{estimateGlobal} allows us to repeatedly apply Theorem \ref{maintheorem}, thereby extending the solution $(r,q)$ globally in time. It completes the proof of the theorem.

\noindent\textbf{Acknowledgements.} The author is grateful to Prof. Dr. Felipe Linares for valuable discussions and helpful comments related to this work. This work was partially supported by CNPq grant 141534/2023-0.

\nocite{*}
\bibliographystyle{plain}
\bibliography{referencias}

\noindent F. Santana, \textsc{IMPA, Instituto de Matemática Pura e Aplicada, Rio de Janeiro,
RJ, 22460-320, Brazil.}\\
\noindent \textit{E-mail address}: \texttt{fauster.silva@impa.br.}

\end{document}